\documentclass[oneside,english,a4paper]{amsart}
\usepackage[T1]{fontenc}
\usepackage[latin9]{inputenc}
\usepackage{verbatim}
\usepackage{mathrsfs}
\usepackage{amsthm}
\usepackage{amstext}
\usepackage{amssymb}

\makeatletter

\providecommand{\tabularnewline}{\\}

\numberwithin{equation}{section}
\numberwithin{figure}{section}
\numberwithin{table}{section}
  \theoremstyle{plain}
  \newtheorem*{thm*}{\protect\theoremname}
\theoremstyle{plain}
\newtheorem{thm}{\protect\theoremname}[section]
  \theoremstyle{definition}
  \newtheorem{defn}[thm]{\protect\definitionname}
  \theoremstyle{plain}
  \newtheorem{prop}[thm]{\protect\propositionname}
  \theoremstyle{remark}
  \newtheorem{notation}[thm]{\protect\notationname}
  \theoremstyle{remark}
  \newtheorem{rem}[thm]{\protect\remarkname}
  \theoremstyle{definition}
  \newtheorem{example}[thm]{\protect\examplename}
  \theoremstyle{plain}
  \newtheorem{lem}[thm]{\protect\lemmaname}
  \theoremstyle{plain}
  \newtheorem{cor}[thm]{\protect\corollaryname}
  \theoremstyle{definition}
  \newtheorem{problem}[thm]{\protect\problemname}

\usepackage{extarrow}
\usepackage{enumerate}
\usepackage{fontenc}
\usepackage{graphicx}
\usepackage[colorlinks=true, linkcolor=blue]{hyperref}
\usepackage{latexsym}
\usepackage{mathrsfs}
\usepackage{mathtext}
\usepackage{tikz}
\usepackage{textcomp}
\usepackage[colorinlistoftodos]{todonotes}
\usepackage{upgreek}
\usepackage{xy}

\usetikzlibrary{arrows}
\usetikzlibrary{matrix}
\usetikzlibrary{shapes}
\usetikzlibrary{snakes}
\usetikzlibrary{matrix}

\DeclareMathOperator{\Aff}{\textup{Aff}}
\DeclareMathOperator{\Alg}{\textup{Alg}}
\DeclareMathOperator{\Ann}{\textup{Ann}}
\DeclareMathOperator{\Arr}{\textup{Arr}}
\DeclareMathOperator{\Art}{\textup{Art}}
\DeclareMathOperator{\Ass}{\textup{Ass}}
\DeclareMathOperator{\Aut}{\textup{Aut}}
\DeclareMathOperator{\Autsh}{\underline{\textup{Aut}}}
\DeclareMathOperator{\Bi}{\textup{B}}
\DeclareMathOperator{\CaCl}{\textup{CaCl}}
\DeclareMathOperator{\Cart}{\textup{Cart}}
\DeclareMathOperator{\Cl}{\textup{Cl}}
\DeclareMathOperator{\Coh}{\textup{Coh}}
\DeclareMathOperator{\Coker}{\textup{Coker}}
\DeclareMathOperator{\Cov}{\textup{Cov}}
\DeclareMathOperator{\Der}{\textup{Der}}
\DeclareMathOperator{\Div}{\textup{Div}}
\DeclareMathOperator{\End}{\textup{End}}
\DeclareMathOperator{\Endsh}{\underline{\textup{End}}}
\DeclareMathOperator{\Ext}{\textup{Ext}}
\DeclareMathOperator{\Extsh}{\underline{\textup{Ext}}}
\DeclareMathOperator{\FGrad}{\textup{FGrad}}
\DeclareMathOperator{\FMod}{\textup{FMod}}
\DeclareMathOperator{\FVect}{\textup{FVect}}
\DeclareMathOperator{\Fibr}{\textup{Fibr}}
\DeclareMathOperator{\Fix}{\textup{Fix}}
\DeclareMathOperator{\Funct}{\textup{Funct}}
\DeclareMathOperator{\GL}{\textup{GL}}
\DeclareMathOperator{\GRis}{\textup{GRis}}
\DeclareMathOperator{\GRiv}{\textup{GRiv}}
\DeclareMathOperator{\Gal}{\textup{Gal}}
\DeclareMathOperator{\Gl}{\textup{Gl}}
\DeclareMathOperator{\Grad}{\textup{Grad}}
\DeclareMathOperator{\Hilb}{\textup{Hilb}}
\DeclareMathOperator{\Hl}{\textup{H}}
\DeclareMathOperator{\Hom}{\textup{Hom}}
\DeclareMathOperator{\Homsh}{\underline{\textup{Hom}}}
\DeclareMathOperator{\ISym}{\textup{Sym}^*}
\DeclareMathOperator{\Imm}{\textup{Im}}

\DeclareMathOperator{\Iso}{\textup{Iso}}
\DeclareMathOperator{\Isosh}{\underline{\textup{Iso}}}
\DeclareMathOperator{\Ker}{\textup{Ker}}
\DeclareMathOperator{\Left}{\textup{L}}

\DeclareMathOperator{\MHilb}{\textup{M-Hilb}}
\DeclareMathOperator{\Map}{\textup{Map}}
\DeclareMathOperator{\Mod}{\textup{Mod}}
\DeclareMathOperator{\Ob}{\textup{Ob}}
\DeclareMathOperator{\Obj}{\textup{Obj}}
\DeclareMathOperator{\PDiv}{\textup{PDiv}}
\DeclareMathOperator{\PGL}{\textup{PGL}}
\DeclareMathOperator{\Pic}{\textup{Pic}}
\DeclareMathOperator{\Picsh}{\underline{\textup{Pic}}}
\DeclareMathOperator{\Pro}{\textup{Pro}}
\DeclareMathOperator{\Proj}{\textup{Proj}}
\DeclareMathOperator{\QCoh}{\textup{QCoh}}
\DeclareMathOperator{\R}{\textup{R}}
\DeclareMathOperator{\Riv}{\textup{Riv}}
\DeclareMathOperator{\SFibr}{\textup{SFibr}}
\DeclareMathOperator{\SchI}{\textup{SchI}}
\DeclareMathOperator{\Sh}{\textup{Sh}}
\DeclareMathOperator{\Soc}{\textup{Soc}}
\DeclareMathOperator{\Spec}{\textup{Spec}}
\DeclareMathOperator{\Specsh}{\underline{\textup{Spec}}}
\DeclareMathOperator{\Stab}{\textup{Stab}}
\DeclareMathOperator{\Supp}{\textup{Supp}}
\DeclareMathOperator{\Sym}{\textup{Sym}}
\DeclareMathOperator{\TMod}{\textup{TMod}}
\DeclareMathOperator{\Top}{\textup{Top}}
\DeclareMathOperator{\Tor}{\textup{Tor}}
\DeclareMathOperator{\Vect}{\textup{Vect}}
\DeclareMathOperator{\alt}{\textup{ht}}
\DeclareMathOperator{\car}{\textup{char}}
\DeclareMathOperator{\codim}{\textup{codim}}
\DeclareMathOperator{\degtr}{\textup{degtr}}
\DeclareMathOperator{\depth}{\textup{depth}}
\DeclareMathOperator{\divis}{\textup{div}}
\DeclareMathOperator{\et}{\textup{et}}
\DeclareMathOperator{\h}{\textup{h}}
\DeclareMathOperator{\ilim}{\displaystyle{\lim_{\longrightarrow}}}
\DeclareMathOperator{\indim}{\textup{inj dim}}
\DeclareMathOperator{\lf}{\textup{LF}}
\DeclareMathOperator{\ord}{\textup{ord}}
\DeclareMathOperator{\pd}{\textup{pd}}
\DeclareMathOperator{\plim}{\displaystyle{\lim_{\longleftarrow}}}
\DeclareMathOperator{\pr}{\textup{pr}}
\DeclareMathOperator{\pt}{\textup{pt}}
\DeclareMathOperator{\rk}{\textup{rk}}
\DeclareMathOperator{\set}{\textup{set}}
\DeclareMathOperator{\tr}{\textup{tr}}
\DeclareMathOperator{\type}{\textup{r}}
\DeclareMathOperator*{\colim}{\textup{colim}}

\makeatother

\usepackage{babel}
  \providecommand{\corollaryname}{Corollary}
  \providecommand{\definitionname}{Definition}
  \providecommand{\examplename}{Example}
  \providecommand{\lemmaname}{Lemma}
  \providecommand{\notationname}{Notation}
  \providecommand{\problemname}{Problem}
  \providecommand{\propositionname}{Proposition}
  \providecommand{\remarkname}{Remark}
  \providecommand{\theoremname}{Theorem}
\providecommand{\theoremname}{Theorem}

\begin{document}

\title{Stacks of Ramified Abelian Covers}

\author{Fabio Tonini}

\address{Scuola Normale Superiore, Piazza dei Cavalieri 7, 56126 Pisa, Italy}

\email{fabio.tonini@sns.it}

\maketitle

\global\long\def\A{\mathbb{A}}

\global\long\def\Ab{(\textup{Ab})}

\global\long\def\Cat{(\textup{cat})}

\global\long\def\Di#1{\textup{D}(#1)}

\global\long\def\E{\mathcal{E}}

\global\long\def\F{\mathbb{F}}

\global\long\def\GCov{G\textup{-Cov}}

\global\long\def\Gcat{(\textup{Galois cat})}

\global\long\def\Gfsets#1{#1\textup{-fsets}}

\global\long\def\Gm{\mathbb{G}_{m}}

\global\long\def\GrCov#1{\textup{D}(#1)\textup{-Cov}}

\global\long\def\Grp{(\textup{Grp})}

\global\long\def\Gsets#1{(#1\textup{-sets})}

\global\long\def\HCov{H\textup{-Cov}}

\global\long\def\MCov{\textup{D}(M)\textup{-Cov}}

\global\long\def\N{\mathbb{N}}

\global\long\def\PGor{\textup{PGor}}

\global\long\def\PGrp{(\textup{Profinite Grp})}

\global\long\def\PP{\mathbb{P}}

\global\long\def\Pj{\mathbb{P}}

\global\long\def\Q{\mathbb{Q}}

\global\long\def\RR{\mathbb{R}}

\global\long\def\Sch{\textup{Sch}}

\global\long\def\WW{\textup{W}}

\global\long\def\Z{\mathbb{Z}}

\global\long\def\alA{\mathscr{A}}

\global\long\def\alB{\mathscr{B}}

\global\long\def\arr{\longrightarrow}

\global\long\def\arrdi#1{\xlongrightarrow{#1}}

\global\long\def\catC{\mathscr{C}}

\global\long\def\catD{\mathscr{D}}

\global\long\def\catF{\mathscr{F}}

\global\long\def\catG{\mathscr{G}}

\global\long\def\comma{,\ }

\global\long\def\covU{\mathcal{U}}

\global\long\def\covV{\mathcal{V}}

\global\long\def\covW{\mathcal{W}}

\global\long\def\duale#1{{#1}^{\vee}}

\global\long\def\fasc#1{\widetilde{#1}}

\global\long\def\fsets{(\textup{f-sets})}

\global\long\def\iL{r\mathscr{L}}

\global\long\def\id{\textup{id}}

\global\long\def\odi#1{\mathcal{O}_{#1}}

\global\long\def\sets{(\textup{sets})}

\global\long\def\shA{\mathcal{A}}

\global\long\def\shB{\mathcal{B}}

\global\long\def\shC{\mathcal{C}}

\global\long\def\shD{\mathcal{D}}

\global\long\def\shE{\mathcal{E}}

\global\long\def\shF{\mathcal{F}}

\global\long\def\shG{\mathcal{G}}

\global\long\def\shH{\mathcal{H}}

\global\long\def\shI{\mathcal{I}}

\global\long\def\shJ{\mathcal{J}}

\global\long\def\shK{\mathcal{K}}

\global\long\def\shL{\mathcal{L}}

\global\long\def\shM{\mathcal{M}}

\global\long\def\shN{\mathcal{N}}

\global\long\def\shO{\mathcal{O}}

\global\long\def\shP{\mathcal{P}}

\global\long\def\shQ{\mathcal{Q}}

\global\long\def\shR{\mathcal{R}}

\global\long\def\shS{\mathcal{S}}

\global\long\def\shT{\mathcal{T}}

\global\long\def\shU{\mathcal{U}}

\global\long\def\shV{\mathcal{V}}

\global\long\def\shW{\mathcal{W}}

\global\long\def\shX{\mathcal{X}}

\global\long\def\shY{\mathcal{Y}}

\global\long\def\shZ{\mathcal{Z}}

\global\long\def\st{\ | \ }

\global\long\def\stB{\mathcal{B}}

\global\long\def\stF{\mathcal{F}}

\global\long\def\stG{\mathcal{G}}

\global\long\def\stX{\mathcal{X}}

\global\long\def\stY{\mathcal{Y}}

\global\long\def\stZ{\mathcal{Z}}

\global\long\def\then{\ \Longrightarrow\ }

\begin{abstract}
Given a flat, finite group scheme $G$ finitely presented over a base
scheme we introduce the notion of ramified Galois cover of group $G$
(or simply $G$-cover), which generalizes the notion of $G$-torsor.
We study the stack of $G$-covers, denoted with $\GCov$, mainly in
the abelian case, precisely when $G$ is a finite diagonalizable group
scheme over $\Z$. In this case we prove that $\GCov$ is connected,
but it is irreducible or smooth only in few finitely many cases. On
the other hand, it contains a 'special' irreducible component $\stZ_{G}$,
which is the closure of $\Bi G$ and this reflects the deep connection
we establish between $\GCov$ and the equivariant Hilbert schemes.
We introduce 'parametrization' maps from smooth stacks, whose objects
are collections of invertible sheaves with additional data, to $\stZ_{G}$
and we establish sufficient conditions for a $G$-cover in order to
be obtained (uniquely) through those constructions. Moreover a toric
description of the smooth locus of $\stZ_{G}$ is provided.
\end{abstract}
\maketitle

\section*{Introduction}

In this paper we study $G$-Galois covers of very general schemes.
Let $G$ be a flat, finite group scheme finitely presented over a
base scheme (say over a field, or, as in this paper, over $\Z)$.
We define a (ramified) $G$\emph{-cover} as a finite morphism $f\colon X\arr Y$
with an action of $G$ on $X$ such that $f$ is $G$-equivariant
and $f_{*}\odi X$ is fppf-locally isomorphic to the regular representation
$\odi Y[G]$ as $\odi Y[G]$-comodule. This definition is somehow
the most natural: it generalizes the notion of $G$-torsors and, under
suitable hypothesis, coincides with the usual definition of Galois
cover when the group $G$ is constant (see for example \cite{Pardini1991,Alexeev2011,Easton2008}).
Moreover, as explained below, in the abelian case $G$-covers are
strictly related to the theory of equivariant Hilbert schemes (see
for example \cite{Nakamura2001,Stillman2002,Haiman2002,Alexeev2003}).

We call $\GCov$ the stack of $G$-covers and the aim of this article
will be to describe its structure, especially in the abelian case.
Our first result is:
\begin{thm*}
{[}\ref{pro:BG open in GCov},\ref{cor:GCov is algebraic}{]} The
stack $\GCov$ is algebraic and finitely presented over $S$. Moreover
$\Bi G$ is an open substack of $\GCov$.
\end{thm*}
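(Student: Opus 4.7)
The plan is to present $\GCov$ as a quotient stack $[V/\shG]$, where $V$ is an affine $S$-scheme of finite presentation and $\shG$ is an affine flat group scheme of finite presentation acting on it, and then to extract openness of $\Bi G$ from openness of the étale locus.

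First, I would reinterpret a $G$-cover on $T$ as a quasi-coherent sheaf $\shA$ of $\odi{T}[G]$-comodule algebras on $T$ that is, as a comodule, fppf-locally isomorphic to the regular representation $\odi{T}[G]$. I would then introduce the functor $V$ on $S$-schemes sending $T$ to the set of $G$-equivariant, associative, commutative, unital multiplications
\[
m\colon \odi{T}[G]\otimes_{\odi{T}}\odi{T}[G]\arr\odi{T}[G].
\]
Since $\odi{T}[G]$ is locally free of rank $n=\rk G$, the affine space of all $\odi{T}$-linear maps between these tensor modules is representable over $S$, and each of the four axioms cuts out a finitely presented closed subscheme. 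Hence $V$ is affine and of finite presentation over $S$.

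Next I would let $\shG$ be the group scheme of comodule automorphisms of the regular representation of $G$, regarded as a closed subgroup of $\GL_n$ defined by the $G$-equivariance condition; it is an affine flat group scheme of finite presentation over $S$. By transport of structure $\shG$ acts on $V$, and the tautological morphism $V\arr\GCov$ sending $m$ to $\Spec(\odi{T}[G],m)\arr T$ is an fppf surjection: surjectivity is precisely the fppf-local triviality built into the definition of a $G$-cover, while the $\shG$-action on trivializations identifies the fiber over each fixed cover. Thus $\GCov\simeq[V/\shG]$, and the quotient of an affine finitely presented scheme by an affine flat finitely presented group scheme is an algebraic stack of finite presentation over $S$.

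For the openness of $\Bi G$, a $G$-cover $f\colon X\arr Y$ is a $G$-torsor precisely when the canonical map $G\times_S X\arr X\times_Y X$ is an isomorphism, which for a $G$-cover (already finite and locally free on the base) is equivalent to $f$ being étale. Étaleness of a finite flat morphism is an open condition on the base, detectable by nonvanishing of the discriminant of $f_*\odi{X}$. Pulling this back along $V\arr\GCov$ yields a $\shG$-invariant open that descends to the desired open substack $\Bi G\subset\GCov$. The main obstacle is the verification that $V\arr\GCov$ truly exhibits $\GCov$ as the stack quotient $[V/\shG]$: given a $G$-cover over $T$, one must check that all fppf-local comodule trivializations differ by a unique $\shG$-cocycle, a delicate but standard fppf-descent argument in which the "locally isomorphic to the regular representation as comodule" clause of the definition enters crucially.
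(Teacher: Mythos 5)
Your overall strategy---presenting $\GCov$ as the quotient of the affine scheme of $G$-equivariant algebra structures on the regular representation by the group of comodule automorphisms---is exactly the paper's (its $X_G$ and $\Autsh^G\WW(\alA)$ are your $V$ and $\shG$). However, your argument for the openness of $\Bi G$ contains a genuine error. You claim that for a $G$-cover $f\colon X\arr Y$, the map $G\times_S X\arr X\times_Y X$ being an isomorphism is equivalent to $f$ being \'etale, and you propose to detect this by the nonvanishing of the discriminant. This fails whenever $G$ itself is not \'etale, which is the central case of the paper: for $G=\mu_p$ over a base of characteristic $p$ (or $\mu_d$ over $\Z$ at primes dividing $d$), every $G$-torsor --- including the trivial one --- is finite locally free but \emph{not} \'etale, and its discriminant vanishes identically. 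Your proposed open locus would then be empty rather than $\Bi G$. The correct argument, as in the paper, works directly with the map $h\colon\alB\otimes\alB\arr\alB\otimes\odi{}[G]$ inducing $G\times X\arr X\times_Y X$: since both sides are locally free of the same rank, $h$ is an isomorphism exactly where it is surjective, i.e.\ on the complement of the (closed) support of $\Coker h$. No \'etaleness enters.

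A secondary gap: you assert that $\shG=\Autsh^G\WW(\alA)$ is flat and of finite presentation, but a closed subgroup of $\GL_n$ cut out by linear equivariance conditions need not be flat over an arbitrary base (the kernel of a map of vector bundles can fail to be a vector bundle). The paper handles this by exhibiting an explicit isomorphism $\Endsh^G\WW(\alA)\simeq\WW(\duale{\odi S[G]})$, $f\mapsto(f\otimes\id)\circ\Delta$ with inverse $\phi\mapsto\varepsilon\circ\phi$, which shows the equivariant endomorphism scheme is a vector bundle; hence $\shG$ is an open subscheme of a vector bundle, in particular \emph{smooth} and finitely presented, and $X_G\arr\GCov$ is a smooth atlas without any appeal to Artin's theorem on fppf presentations. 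You would need to supply this identification (or an equivalent flatness argument) to complete your proof.
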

In many concrete problems, one is interested in a more direct and
concrete description of a $G$-cover $f\colon X\arr Y$. This is very
simple and well known when $G=\mu_{2}$: such a cover $f$ is given
by an invertible sheaf $\shL$ on $Y$ with a section of $\shL^{\otimes2}$.
Similarly, when $G=\mu_{3}$, a $\mu_{3}$-cover $f$ is given by
a pair $(\shL_{1},\shL_{2})$ of invertible sheaves on $Y$ with maps
$\shL_{1}^{\otimes2}\arr\shL_{2}$ and $\shL_{2}^{\otimes2}\arr\shL_{1}$
(see \cite[§ 6]{Arsie2004}).

In general, however, there is no comparable description of $G$-covers.
Very little is known when $G$ is not abelian, beyond the cases $G=S_{d}$
with $d=3,4,5$: see \cite{Easton2008} for the case $G=S_{3}$ and
\cite{Miranda1985,Hahn1999,Casnati1996,Casnati1996a} for the non-Galois
case (of course, ramified covers of degree $d$ are strictly linked
with ramified $S_{d}$-covers).

Even in the abelian case, the situation become complicated very quickly
when the order of $G$ grows. The paper that inspires our work is
\cite{Pardini1991}; here the author describes $G$-covers $X\arr Y$
when $G$ is an abelian group, $Y$ is a smooth variety over an algebraically
closed field of characteristic prime to $|G|$ and $X$ is normal,
in terms of certain invertible sheaves on $Y$, generalizing the description
given above for $G=\mu_{2}$ and $G=\mu_{3}$.

Here we concentrate on the case when $G$ is a finite diagonalizable
group scheme over $\Spec\Z$; thus, $G$ is isomorphic to a finite
direct product of group scheme of the form $\mu_{d,\Z}$ for $d\geq1$.
We consider the dual finite abelian group $M=\Hom(G,\Gm)$ so that,
by standard duality results (see \cite{SGA1}), $G$ is the fppf sheaf
of homomorphisms $M\arr\Gm$ and a decomposition of $M$ into a product
of cyclic groups yields the decomposition of $G$ into a product of
$\mu_{d}$'s.

In this case we have an explicit description of a $G$-cover in terms
of sequences of invertible sheaves. Indeed a $G$-cover over $Y$
is of the form $X=\Spec\alA$ where $\alA$ is a coherent sheaf of
algebras over $Y$ with a decomposition
\[
\alA=\bigoplus_{m\in M}\shL_{m}\text{ s.t. }\shL_{0}=\odi Y\comma\shL_{m}\text{ invertible}\text{ and }\shL_{m}\shL_{n}\subseteq\shL_{m+n}\text{ for any }m,n\in M
\]
So a $G$-cover corresponds to a sequence of invertible sheaves $(\shL_{m})_{m\in M}$
with maps $\psi_{m,n}\colon\shL_{m}\otimes\shL_{n}\arr\shL_{m+n}$
satisfying certain rules and our principal aim will be to simplify
the data necessary to describe such covers. For instance $G$-torsors
correspond to sequences where all the maps $\psi_{m,n}$ are isomorphism.
Therefore, if $G=\mu_{l}$, a $G$-torsor is simply given by an invertible
sheaf $\shL=\shL_{1}$ and an isomorphism $\shL^{\otimes l}\simeq\odi{}$. 

When $G=\mu_{2}$ or $G=\mu_{3}$ the description given above shows
that the stack $\GCov$ is smooth, irreducible and very easy to describe.
In the general case its structure turns out to be extremely intricate.
For instance, as we will see, $\GCov$ is almost never irreducible,
but has a 'special' irreducible component, called $\stZ_{G}$, which
is the scheme-theoretically closure of $\Bi G$. This parallels what
happens in the theory of $M$-equivariant Hilbert schemes (see \cite[Remark 5.1]{Haiman2002}).
It turns out that this theory and the one of $G$-covers is deeply
connected: given an action of $G$ on $\A^{r}$, induced by elements
$\underline{m}=m_{1},\dots,m_{r}\in M$, the equivariant Hilbert scheme
$M\text{-Hilb }\A^{r}$, that we will denote by $\MHilb^{\underline{m}}$
to underline the dependency on the sequence $\underline{m}$, can
be viewed as the functor whose objects are $G$-covers whit an equivariant
closed immersion in $\A^{r}$. The forgetful map $\vartheta\colon\MHilb^{\underline{m}}\arr\GCov$
is smooth and an atlas provided that $\underline{m}$ contains all
the elements in $M-\{0\}$ (\ref{pro:MHlb --> MCov has irreducible fibers}).
Moreover $\vartheta^{-1}(\stZ_{G})$ coincides with the main component
of $\MHilb^{\underline{m}}$, first studied by Nakamura in \cite{Nakamura2001}.

We will prove the following results on the structure of $\GCov$.
\begin{thm*}
{[}\ref{thm:Mcov geom connected},\ref{cor:Mcov reducible},\ref{pro:smooth DMCov},\ref{rem:MCov for M=00003DZfour}{]}
$\GCov$ is
\begin{itemize}
\item flat and of finite type over $\Z$ with geometrically connected fibers,
\item smooth if and only if $G\simeq\mu_{2},\mu_{3},\mu_{2}\times\mu_{2}$,
\item normal if $G\simeq\mu_{4}$,
\item reducible if $|G|\geq8$ and $G\not\simeq(\mu_{2})^{3}$.
\end{itemize}
The above properties continue to hold if we replace $\GCov$ with
$\MHilb^{\underline{m}}$ if $M-\{0\}\subseteq\underline{m}$.
\end{thm*}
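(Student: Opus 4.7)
The plan is to work with the explicit description of a $G$-cover over $Y$ as an $M$-graded quasi-coherent $\odi Y$-algebra $\alA=\bigoplus_{m\in M}\shL_m$, with $\shL_0=\odi Y$, each $\shL_m$ invertible, and multiplication encoded in maps $\psi_{m,n}\colon \shL_m\otimes\shL_n\arr\shL_{m+n}$ satisfying commutativity, associativity, and unit axioms. I will use the smooth surjective atlas $\vartheta\colon \MHilb^{\underline{m}}\arr\GCov$ from Proposition~\ref{pro:MHlb --> MCov has irreducible fibers}, available whenever $M-\{0\}\subseteq\underline{m}$. Since $\vartheta$ has geometrically irreducible fibres, the properties of being flat, of finite type, smooth, normal, geometrically connected, and the number of irreducible components all pass between $\MHilb^{\underline{m}}$ and $\GCov$; it therefore suffices to prove each clause on whichever of the two is more convenient, and the ``same properties'' assertion for $\MHilb^{\underline{m}}$ is obtained as a free consequence.

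Finite presentation over $\Z$ is immediate from this graded-algebra presentation, since after trivialising each $\shL_m$ a chart of $\GCov$ becomes the quotient by a torus action of $\Spec$ of the $\Z$-algebra generated by the $\psi_{m,n}$ modulo the finitely many associativity and commutativity relations, all of which have integer coefficients. Flatness over $\Z$ is verified by checking that the defining relations do not introduce torsion, which can be done directly on the explicit generators and relations. For geometric connectedness of the fibres, I would exhibit on any geometric fibre a $\Gm$-action sending $\psi_{m,n}\mapsto t\psi_{m,n}$ (for $m,n\neq 0$), which degenerates every $G$-cover as $t\to 0$ to the single ``fat'' cover with vanishing non-trivial products. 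This $\A^1$-family connects every point of the fibre to a common origin, so each fibre is connected.

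For the smoothness classification, the cases $G=\mu_2,\mu_3,\mu_2\times\mu_2$ admit presentations with no non-trivial associativity constraint on the $\psi_{m,n}$: for $\mu_2$ a cover is just $(\shL,s\in\Gamma(\shL^{\otimes 2}))$, giving $\GCov\simeq[\A^1/\Gm]$; for $\mu_3$ it is a pair of line bundles with two unrelated multiplication maps; and for $\mu_2\times\mu_2$ it is three line bundles with three unrelated section-type data. For every other diagonalizable $G$ the associativity relations among the $\psi_{m,n}$ are genuinely non-trivial and the cover with all products vanishing has Zariski tangent space strictly larger than the local dimension of $\GCov$, a calculation that can be carried out explicitly from the graded-algebra presentation; this produces a singular point and rules out smoothness. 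In the borderline case $G=\mu_4$ the defining ideal has a particularly simple form, so the resulting chart is a complete intersection (hence $S_2$), and a direct check shows the singular locus has codimension at least $2$; Serre's criterion $R_1+S_2$ then yields normality.

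The reducibility statement is the deepest ingredient. For each diagonalizable $G$ with $|G|\geq 8$ and $G\not\simeq(\mu_2)^3$, one must produce a $G$-cover over an algebraically closed field that is not a flat limit of $G$-torsors, so that it cannot lie in $\stZ_G=\overline{\Bi G}$ and must belong to a different irreducible component. The construction is by case analysis on the structure of $M$: one exhibits an $M$-graded monomial algebra whose shape is incompatible with the fibres of covers in $\stZ_G$, and transfers the obstruction back through $\vartheta^{-1}(\stZ_G)=\text{main component of }\MHilb^{\underline{m}}$. The main obstacle of the proof is precisely this explicit production, group by group, of covers outside $\stZ_G$, together with the complementary verification that $(\mu_2)^3$ admits no such extra component and that the four small exceptional groups $\mu_2,\mu_3,\mu_2\times\mu_2,\mu_4$ really exhaust the smooth (resp.\ normal) locus of the classification.
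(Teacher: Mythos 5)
Your overall architecture --- reduce everything to the explicit affine model $\Spec R_{M}$ with its torus action and transfer properties along the atlas $\vartheta_{\underline{m}}$ --- matches the paper, and several pieces are essentially right: connectedness via a contracting $\Gm$-action is the same as the paper's positive $\N$-grading argument (note only that $\psi_{m,-m}$ must be scaled by $t^{2}$, not $t$, to preserve the relation $\psi_{m,-m}=\psi_{-m,t}\psi_{-m+t,m}$); flatness is immediate once one observes that $R_{M}=\Z[\tilde{K}_{+}]$ is a monoid algebra and hence a free $\Z$-module, rather than "checking the relations introduce no torsion"; and the $\mu_{4}$ case is exactly the quadric cone $\Z[x_{1,2},x_{3,3},x_{2,3},x_{1,1}]/(x_{1,2}x_{3,3}-x_{2,3}x_{1,1})$ handled by Serre's criterion.

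Two genuine gaps remain. First, your non-smoothness argument compares the tangent space at the ``all products zero'' point with ``the local dimension of $\GCov$'' there --- but that local dimension is the maximum of the dimensions of all irreducible components through that point, and those components are precisely what you do not know (for most $M$ there are extra components of uncontrolled dimension). The argument can be repaired: smoothness at that point would force the local ring to be a domain, and since the positive grading puts every component through that point, $\GCov$ would then be irreducible of dimension $|M|-1$, which one contradicts by counting the variables $x_{m,n}$; but as written the comparison is circular. The paper instead works on the main component $k[K_{+}]$, which is factorial, and shows the binomial relations $x_{m,n}x_{m+n,t}=x_{n,t}x_{n+t,m}$ violate unique factorization of degree-one elements. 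Second, and more seriously, the reducibility clause is where the real work lies and it is not done. Your route requires, for each $M$ with $|M|\geq8$ and $M\not\simeq(\Z/2\Z)^{3}$, a $\{0,1\}$-valued multiplication satisfying all associativity relations whose vanishing set is not the support of any ray in $\duale K_{+}$, checked against the characterization $\stZ_{\phi}(\overline{k})=\{\lambda0^{\E}\}$ --- a nontrivial combinatorial assertion for which you supply neither the algebras nor the verification. The paper takes a different and more checkable route: it exhibits an explicit binomial $z=\underline{x}^{\alpha}-\underline{x}^{\beta}$ (built from a quadruple $m,n,t,a\in M$ subject to a list of inequations) that kills $\mu^{N}=\prod x_{m,n}^{N}$, hence lies in the minimal prime of the main component, and proves $z$ is not nilpotent by showing no $\N$-combination of $\alpha,\beta$ is ``transformable'' by the relations; the residual content is then the case-by-case production of $m,n,t,a$. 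Naming this as ``the main obstacle'' does not discharge it, so the theorem's hardest assertion is not established by the proposal.
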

We don't know whether $\GCov$ is integral for $G\simeq\mu_{5},\mu_{6},\mu_{7},(\mu_{2})^{3}$.
So $\GCov$ is usually reducible, its structure is extremely complicated
and we have little hope of getting to a real understanding of the
components that don't contain $\Bi G$. Therefore we will focus on
the main irreducible component $\stZ_{G}$ of $\GCov$. The main idea
behind this paper, inspired by the results in \cite{Pardini1991},
is to try to decompose the multiplications $\psi_{m,n}\in\shL_{m+n}\otimes\shL_{m}^{-1}\otimes\shL_{n}^{-1}$
as a tensor product of sections of other invertible sheaves. Following
this idea we will construct parametrization maps $\pi_{\underline{\E}}\colon\stF_{\underline{\E}}\arr\stZ_{G}\subseteq\GCov$,
where $\stF_{\underline{\E}}$ are 'nice' stacks, for example smooth
and irreducible, whose objects are those decompositions. This construction
can be better understood locally, where a $G$-cover over $Y=\Spec R$
is just $X=\Spec A$, where $A$ is an $R$-algebra with an $R$-basis
$\{v_{m}\}_{m\in M}$, $v_{0}=1$ $(\shL_{m}=\odi Yv_{m})$, so that
the multiplications are elements $\psi_{m,n}\in R$ such that $v_{m}v_{n}=\psi_{m,n}v_{m+n}$.

Consider $a\in R$, a collection of natural numbers $\E=(\E_{m,n})_{m,n\in N}$
and set $\psi_{m,n}=a^{\E_{m,n}}$. The condition that the product
structure on $A=\oplus_{m}Rv_{m}$ defined by the $\psi_{m,n}$ yields
an associative, commutative $R$-algebra, i.e. makes $\Spec A$ into
a $G$-cover over $\Spec R$, translates in some additive relations
on the numbers $\E_{m,n}$. Call $\duale K_{+}$ the set of such collections
$\E$. More generally given $\underline{\E}=\E^{1},\dots,\E^{r}\in\duale K_{+}$
we can define a parametrization
\[
R^{r}\ni(a_{1},\dots,a_{r})\arr\psi_{m,n}=a_{1}^{\E_{m,n}^{1}}\cdots a_{r}^{\E_{m,n}^{r}}
\]
This is essentially the local behavior of the map $\pi_{\underline{\E}}\colon\stF_{\underline{\E}}\arr\GCov$.
In the global case the elements $a_{i}$ will be sections of invertible
sheaves.

From this point of view the natural questions are: given a $G$-cover
over a scheme $Y$ when does there exist a lift to an object of $\stF_{\underline{\E}}(Y)$?
Is this lift unique? How can we choose the sequence $\underline{\E}$?

The key point is to give an interpretation to $\duale K_{+}$ (that
also explains this notation). Consider $\Z^{M}$ with canonical basis
$(e_{m})_{m\in M}$ and define $v_{m,n}=e_{m}+e_{n}-e_{m+n}\in\Z^{M}/<e_{0}>$.
If $p\colon\Z^{M}/<e_{0}>\arr M$ is the map $p(e_{m})=m$, the $v_{m,n}$
generate $\Ker p$. Now call $K_{+}$ the submonoid of $\Z^{M}/<e_{0}>$
generated by the $v_{m,n}$, $K=\Ker p$ its associated group and
also consider the torus $\shT=\Homsh(\Z^{M}/<e_{0}>,\Gm)$, which
acts on $\Spec\Z[K_{+}]$. By construction we have that a collection
of natural numbers $(\E_{m,n})_{m,n\in M}$ belongs to $\duale K_{+}$
if and only if the association $v_{m,n}\arr\E_{m,n}$ defines an additive
map $K_{+}\arr\N$. Therefore, as the symbol suggests, we can identify
$\duale K_{+}$ with $\Hom(K_{+},\N)$, the dual monoid of $K_{+}$.
Its elements will be called \emph{rays}. More generally a monoid map
$\psi\colon K_{+}\arr(R,\cdot)$, where $R$ is a ring, yields a multiplication
$\psi_{m,n}=\psi(v_{m,n})$ and therefore we obtain a map $\Spec\Z[K_{+}]\arr\stZ_{G}$.
We will prove that (see \ref{cor:MCov as global quotient}):
\begin{thm*}
We have $\stZ_{G}\simeq[\Spec\Z[K_{+}]/\shT]$ and $\Bi G\simeq[\Spec\Z[K]/\shT]$.
\end{thm*}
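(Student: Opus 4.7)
The plan is to construct a morphism $\Phi\colon\Spec\Z[K_{+}]\to\GCov$, show it is $\shT$-invariant (so descends to $\overline{\Phi}\colon[\Spec\Z[K_{+}]/\shT]\to\GCov$) and factors through $\stZ_{G}$, and then verify that the induced morphism is an equivalence of stacks; the statement for $\Bi G$ drops out as a byproduct. On $R$-points, an element of $\Spec\Z[K_{+}](R)$ is a monoid map $\psi\colon K_{+}\to(R,\cdot)$, to which I associate the $G$-cover $\Spec A\to\Spec R$ with $A=\bigoplus_{m\in M}Rv_{m}$, $v_{0}=1$, and $v_{m}v_{n}=\psi(v_{m,n})v_{m+n}$. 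Associativity reduces to the identity $v_{m,n}+v_{m+n,k}=v_{m,n+k}+v_{n,k}$ in $K_{+}$ (both sides equal $e_{m}+e_{n}+e_{k}-e_{m+n+k}$ in $\Z^{M}/\langle e_{0}\rangle$), commutativity to $v_{m,n}=v_{n,m}$, the unit to $v_{0,m}=0$. The torus $\shT$ acts on $\Spec\Z[K_{+}]$ through the grading coming from $K_{+}\subseteq H:=\Z^{M}/\langle e_{0}\rangle$: a point $\lambda\in\shT(R)=\Hom(H,R^{\times})$, with $\lambda_{m}:=\lambda(e_{m})$, sends $\psi$ to $\lambda\cdot\psi$ where $(\lambda\cdot\psi)(v_{m,n})=\lambda_{m}\lambda_{n}\lambda_{m+n}^{-1}\psi(v_{m,n})$, and the rescaling $v_{m}\mapsto\lambda_{m}v_{m}$ is an isomorphism between the $G$-covers associated to $\psi$ and $\lambda\cdot\psi$.

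For the statement about $\Bi G$, a $\shT$-torsor on $Y$ is, by Cartier duality, a homomorphism $H\to\Pic(Y)$, equivalently a family $(\shL_{h})_{h\in H}$ of invertible sheaves with $\shL_{0}=\odi Y$ and $\shL_{h+h'}\simeq\shL_{h}\otimes\shL_{h'}$, while a $\shT$-equivariant map to $\Spec\Z[K]$ amounts to a multiplicative family of nowhere-vanishing sections $s_{k}\in\shL_{k}$ for $k\in K$. Since $K$ is generated by the $v_{m,n}$, such data are equivalent to a collection $(\shL_{m})_{m\in M}$ with $\shL_{0}=\odi Y$ together with isomorphisms $\shL_{m}\otimes\shL_{n}\simeq\shL_{m+n}$, i.e.\ to a homomorphism $M\to\Pic(Y)$, which is exactly a $G$-torsor. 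This gives $[\Spec\Z[K]/\shT]\simeq\Bi G$.

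Now $K_{+}$ is cancellative (it lies in the torsion-free group $K$), so $\Z[K_{+}]\subseteq\Z[K]$ is a domain and $\Spec\Z[K]$ is open dense in the integral scheme $\Spec\Z[K_{+}]$. Consequently $[\Spec\Z[K_{+}]/\shT]$ is integral with dense open $[\Spec\Z[K]/\shT]\simeq\Bi G$, and $\overline{\Phi}$ restricts on this open to the inclusion $\Bi G\hookrightarrow\GCov$. By the universal property of scheme-theoretic image, $\overline{\Phi}$ factors through the scheme-theoretic closure $\stZ_{G}$ of $\Bi G$.

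It remains to prove $\overline{\Phi}\colon[\Spec\Z[K_{+}]/\shT]\to\stZ_{G}$ is an equivalence. Given $(\shL_{m},\psi_{m,n})\in\stZ_{G}(Y)$, a lift consists of multiplicative sections $s_{k}\in\shL_{k}$ for $k\in K_{+}$ with $s_{v_{m,n}}=\psi_{m,n}$; writing $k=\sum_{i}a_{i}v_{m_{i},n_{i}}$ forces $s_{k}=\prod_{i}\psi_{m_{i},n_{i}}^{a_{i}}$, so the lift is necessarily unique, and its existence amounts to this formula being independent of the representation, i.e.\ to the $\psi_{m,n}$ satisfying every additive identity holding between the $v_{m,n}$ in $K_{+}$. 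These identities cut out a closed substack $\mathcal{U}\subseteq\GCov$; $\mathcal{U}$ contains $\Bi G$, because on a $G$-torsor the $\psi_{m,n}$ are invertible and $v_{m,n}\mapsto\psi_{m,n}$ extends to a group homomorphism $K\to R^{\times}$ satisfying every relation in $K$, hence $\mathcal{U}$ contains the scheme-theoretic closure $\stZ_{G}$. This yields existence of the lift, thus essential surjectivity together with the previous faithfulness, and concludes the proof. The main obstacle is exactly this last scheme-theoretic step: a relation in $K_{+}$ such as $v_{1,1}+v_{2,2}+v_{3,3}=2v_{1,3}$ (already arising for $G=\mu_{4}$) is only a polynomial consequence of associativity modulo invertibility of some $\psi_{m',n'}$, so the corresponding identity on the $\psi$'s need not hold on all of $\GCov$ — which is the reason $\GCov$ is reducible — but it does hold on $\stZ_{G}$ because it holds on the scheme-theoretically dense open $\Bi G$.
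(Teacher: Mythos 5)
Your proof is correct in substance but takes a genuinely different route from the paper's. The paper first identifies $\GCov\simeq\stX_{\phi_M}=[\Spec\Z[\tilde{K}_{+}]/\shT]$ for the monoid $\tilde{K}_{+}$ presented by generators $e_{m,n}$ and the commutativity/associativity/unit relations, and then reduces the theorem to a purely monoid-theoretic computation: the associated group of $\tilde{K}_{+}$ is $K$ and its associated integral monoid is $K_{+}$, after which the general dictionary $\stB_{\phi}=\stX_{\hat{\phi}}$, $\stZ_{\phi}=\stX_{\phi^{int}}$ from Section 2 yields both isomorphisms simultaneously. You instead start from $K_{+}$, build $[\Spec\Z[K_{+}]/\shT]\arr\GCov$ directly (associativity now being an identity inside $\Z^{M}/\langle e_{0}\rangle$ rather than an imposed relation), identify the open locus $[\Spec\Z[K]/\shT]$ with $\Bi G$, and prove the map onto $\stZ_{G}$ is an equivalence by a lifting argument: the additive identities holding among the $v_{m,n}$ in $K_{+}$ cut out a closed substack containing $\Bi G$, hence containing its reduced closure $\stZ_{G}$. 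This trades the paper's monoid computation for a density argument; both are legitimate, and your closing $\mu_{4}$ example correctly isolates why the lifting fails on all of $\GCov$ but succeeds on $\stZ_{G}$.

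The one step you must not leave as stated is the claim that on a torsor ``$v_{m,n}\mapsto\psi_{m,n}$ extends to a group homomorphism $K\to R^{\times}$ satisfying every relation in $K$.'' Asserting that the assignment extends to $K$ is precisely asserting that it respects all relations among the $v_{m,n}$, which is the point at issue, so as written this is circular. The missing one-line argument is the same one the paper uses in its lemma: on a torsor the graded basis elements $v_{m}$ are units of the total algebra $A$, so $e_{m}\mapsto v_{m}$ defines a homomorphism from the free group $\Z^{M}/\langle e_{0}\rangle$ to $A^{*}$; its restriction to $K$ sends $v_{m,n}$ to $v_{m}v_{n}v_{m+n}^{-1}=\psi_{m,n}\in R^{*}$, and factoring through a free group is what guarantees every relation is respected. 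With that inserted the argument closes. A smaller cosmetic point: ``a homomorphism $M\to\Pic(Y)$'' undersells a $G$-torsor, which carries the isomorphisms $\shL_{m}\otimes\shL_{n}\simeq\shL_{m+n}$ as data (cf.\ the paper's characterization of $\Di M$-torsors as covers with all multiplication maps isomorphisms); your preceding sentence already records the correct data, so simply drop that rephrasing.
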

Given $\underline{\E}=\E^{1},\dots,\E^{r}\in\duale K_{+}$ we have
defined a map $\pi_{\underline{\E}}\colon\stF_{\underline{\E}}\arr\stZ_{G}$.
Notice that if $\underline{\gamma}$ is a subsequence of $\underline{\E}$
then $\stF_{\underline{\gamma}}$ is an open substack of $\stF_{\underline{\E}}$
and $(\pi_{\underline{\E}})_{|\stF_{\underline{\gamma}}}=\pi_{\underline{\gamma}}$.
The lifting problem for the maps $\pi_{\underline{\E}}$ clearly depends
on the choice of the sequence $\underline{\E}$. Considering larger
$\underline{\E}$ allows to parametrize more covers, but also makes
uniqueness of the lifting unlikely. In this direction we have proved
that:
\begin{thm*}
{[}\ref{pro:characterization of points of Zphi}{]} Let $k$ be an
algebraically closed field and suppose to have a collection $\underline{\E}$
whose rays generate the rational cone $\duale K_{+}\otimes\Q$. Then
$\stF_{\underline{\E}}(k)\arr\stZ_{G}(k)$ is essentially surjective.
In other words a $G$-cover of $\Spec k$ in the main component $\stZ_{G}$
has a multiplication of the form $\psi_{m,n}=0^{\E_{m,n}}$ for some
$\E\in\duale K_{+}$. 
\end{thm*}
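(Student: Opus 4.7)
The plan is to use the presentation $\stZ_{G}\simeq[\Spec\Z[K_{+}]/\shT]$: a $k$-point of $\stZ_{G}$ is an $\shT(k)$-orbit of monoid maps $\psi\colon K_{+}\arr k$, whereas a $k$-point of $\stF_{\underline{\E}}$, once the invertible sheaves have been trivialized, is a tuple $(a_{1},\dots,a_{r})\in k^{r}$ mapping to the monoid map $v\mapsto\prod_{i}a_{i}^{\E^{i}(v)}$. Essential surjectivity therefore reduces to writing any $\psi$ in this product form up to the $\shT(k)$-action.

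First I would normalize $\psi$. The subset $F=\psi^{-1}(k^{*})\subseteq K_{+}$ is a face of the monoid $K_{+}$ (since $\psi(a+b)=\psi(a)\psi(b)$ is nonzero only when both factors are), so $\psi|_{F}\colon F\arr k^{*}$ extends to a group homomorphism $F^{\textup{gp}}\arr k^{*}$. Because $k$ is algebraically closed, $k^{*}$ is divisible, hence injective in the category of abelian groups, and this homomorphism extends to some $t\in\Hom(\Z^{M}/\langle e_{0}\rangle,k^{*})=\shT(k)$. Replacing $\psi$ by $t^{-1}\cdot\psi$, I may assume $\psi\equiv 1$ on $F$ and $\psi\equiv 0$ on $K_{+}\setminus F$.

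Now let $S=\{i:\E^{i}|_{F}=0\}$ and set $a_{i}=0$ for $i\in S$ and $a_{i}=1$ otherwise. Then $\prod_{i}a_{i}^{\E^{i}(v)}$ equals $1$ when every $\E^{i}(v)$ with $i\in S$ is zero, and $0$ otherwise. For $v\in F$ this product is $1$ by definition of $S$. For $v\in K_{+}\setminus F$ one must find some $i\in S$ with $\E^{i}(v)>0$; this is where the hypothesis enters. Since $\underline{\E}$ generates the rational cone $\duale K_{+}\otimes\Q$, it must contain positive multiples of every extreme ray and in particular of every extreme ray of the face $F^{\perp}=\{\E\in\duale K_{+}\otimes\Q:\E|_{F}=0\}$; hence $\{\E^{i}\}_{i\in S}$ generates $F^{\perp}$ as a cone. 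By biduality for rational polyhedral cones (which applies because $F$ is saturated in $K_{+}$: $nv\in F$ with $v\in K_{+}$ forces $v\in F$), for each $v\in K_{+}\setminus F$ some $\E\in F^{\perp}$ satisfies $\E(v)>0$, and writing $\E$ as a nonnegative combination of the $\E^{i}$ ($i\in S$) gives the required $i$. Thus $(a_{1},\dots,a_{r})$ lifts the normalized $\psi$ to $\stF_{\underline{\E}}(k)$, proving essential surjectivity.

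Taking $\E=\sum_{i\in S}\E^{i}\in\duale K_{+}$ one moreover has $\E^{-1}(0)\cap K_{+}=F$, and the normalized $\psi$ takes the form $\psi(v)=0^{\E(v)}$ under the convention $0^{0}=1$; this is the reformulation stated after ``in other words''. The main obstacle I anticipate is the convex-geometric input, namely the biduality $F^{\perp\perp}=F$ and the spanning property of $\underline{\E}$ on each face $F^{\perp}$ of $\duale K_{+}$; both rest on the fact that $K_{+}$ generates a strictly convex rational polyhedral cone in $K\otimes\Q$, a property readily checked from the explicit generators $v_{m,n}$.
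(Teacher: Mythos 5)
Your proof is correct and follows essentially the same route as the paper's proof of Theorem \ref{pro:characterization of points of Zphi}: both arguments normalize the unit part $a^{-1}(k^{*})$ by extending it to a character of the torus using divisibility of $k^{*}$, and both identify the zero locus as the support of an element of $\duale K_{+}$ --- the paper packages this combinatorial step as the statement that the monomial part of a prime of $k[K_{+}]$ equals $p_{\E}$ for some $\E\in\duale K_{+}$ (quoting Ogus), whereas you rederive it by hand from the face/dual-face correspondence for rational polyhedral cones. Your explicit construction of the lift with $a_{i}\in\{0,1\}$ merely spells out the deduction of essential surjectivity from the first assertion, which the paper leaves implicit.
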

On the other hand small sequences $\underline{\E}$ can guarantee
uniqueness but not existence. The solution we have found is to consider
a particular class of rays, called extremal, that have minimal non
empty support. Set $\underline{\eta}$ for the sequence of all extremal
rays (that is finite). Notice that extremal rays generate $\duale K_{+}\otimes\Q$.
We prove that:
\begin{thm*}
{[}\ref{pro:smooth locus of Z[T +]}, \ref{thm:fundamental theorem for the smooth locus of ZM}{]}
The smooth locus $\stZ_{G}^{\textup{sm}}$ of $\stZ_{G}$ is of the
form $[X_{G}/\shT]$ where $X_{G}$ is a smooth toric variety over
$\Z$ (whose maximal torus is $\Spec\Z[K]$). Moreover $\pi_{\underline{\eta}}\colon\stF_{\underline{\eta}}\arr\stZ_{G}$
induces an isomorphism of stacks
\[
\pi_{\underline{\eta}}^{-1}(\stZ_{G}^{\textup{sm}})\arrdi{\simeq}\stZ_{G}^{\textup{sm}}
\]

\end{thm*}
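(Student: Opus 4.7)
The plan is to translate everything to toric geometry using $\stZ_{G}\simeq[\Spec\Z[K_{+}]/\shT]$ from \ref{cor:MCov as global quotient}. Since $K_{+}$ is a finitely generated submonoid of the finitely generated free abelian group $K$, the scheme $\Spec\Z[K_{+}]$ is an affine toric $\Z$-scheme with big torus $\Spec\Z[K]\subseteq\shT$, and its smooth locus $X_{G}$ is an open $\shT$-invariant toric subscheme. Because $\shT$ is smooth, $\stZ_{G}^{\textup{sm}}=[X_{G}/\shT]$ is immediate. Let $\sigma\subseteq K^{\vee}\otimes\RR$ be the cone with $\duale K_{+}=\sigma\cap K^{\vee}$; by minimality of support, the extremal rays are exactly the primitive generators of the edges of $\sigma$, hence finite in number $r$. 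The standard toric dictionary identifies $\shT$-orbits of $\Spec\Z[K_{+}]$ with faces of $\sigma$, and smooth orbits with \emph{smooth} faces, namely those whose edges form part of a $\Z$-basis of $K^{\vee}$. Since every face is spanned by its edges, smooth faces are of the form $\tau_{I}=\sum_{i\in I}\RR_{\geq 0}\eta_{i}$ for subsets $I\subseteq\{1,\dots,r\}$ with $\{\eta_{i}\}_{i\in I}$ part of a basis, and the corresponding opens $U_{\tau_{I}}$ cover $X_{G}$.

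Next I would describe $\pi_{\underline{\eta}}$ in toric language. Locally $\stF_{\underline{\eta}}\simeq[\A^{r}/\Gm^{r}]$, with each $\Gm$-factor scaling its own coordinate $a_{i}$, and $\pi_{\underline{\eta}}$ is induced by the monoid morphism $K_{+}\to\N^{r}$, $v\mapsto(\eta_{i}(v))_{i}$, together with the dual torus homomorphism $\Gm^{r}\to\shT$ encoded by the lattice map $\Z^{r}\to K^{\vee}$, $e_{i}\mapsto\eta_{i}$. Fix a smooth face $\tau_{I}$ and set $V_{I}=\{a_{j}\neq 0\text{ for all }j\notin I\}\subseteq\A^{r}$, so that $\pi_{\underline{\eta}}^{-1}([U_{\tau_{I}}/\shT])=[V_{I}/\Gm^{r}]$. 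Completing $\{\eta_{i}\}_{i\in I}$ to a $\Z$-basis of $K^{\vee}$ produces compatible splittings $\Gm^{r}\simeq\Gm^{I}\times\Gm^{I^{c}}$ and $V_{I}\simeq\A^{I}\times\Gm^{I^{c}}$: the $\Gm^{I^{c}}$-factor acts freely on the invertible coordinates and maps isomorphically onto a subtorus of $\shT$ that acts freely on the torus part of $U_{\tau_{I}}\simeq\A^{|I|}\times\Gm^{\rk K^{\vee}-|I|}$, while the $\Gm^{I}$-factor matches the toric action on $\A^{|I|}$ with standard weights. Quotienting exhibits $\pi_{\underline{\eta}}$ itself as a canonical isomorphism $[V_{I}/\Gm^{r}]\simeq[U_{\tau_{I}}/\shT]$.

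The hard part is the gluing: one must check that the chart-by-chart inverses obtained above do not depend on the basis extension chosen and therefore patch together into a global section of $\pi_{\underline{\eta}}$ over $X_{G}$. I expect this to reduce to the observation that for smooth faces $\tau_{J}\subseteq\tau_{I}$, i.e.\ $J\subseteq I$, any $\Z$-basis of $K^{\vee}$ extending $\{\eta_{i}\}_{i\in I}$ automatically extends $\{\eta_{j}\}_{j\in J}$, so the inverse on $U_{\tau_{J}}$ agrees with the restriction of the inverse on $U_{\tau_{I}}$. A preliminary point worth verifying up front is the strong convexity of $\sigma$, so that edges (hence extremal rays) are well defined; this follows from the fact that the $v_{m,n}$ generate $K$ as a group.
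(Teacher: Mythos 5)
Your overall strategy is the paper's: pass to the atlas $\Spec\Z[K_+]$ of $\stZ_G\simeq[\Spec\Z[K_+]/\shT]$, observe that the smooth locus descends because the atlas is a $\shT$-torsor, describe $X_G$ as a union of $\shT$-invariant affine opens indexed by certain collections of extremal rays, and invert $\pi_{\underline{\eta}}$ over each such open. The first step and the identification of extremal rays with the edges of the dual cone are fine. But there is a genuine gap at the central step, the determination of $X_G$. You invoke the dictionary ``smooth orbits $\leftrightarrow$ smooth faces of $\sigma$ (edges forming part of a $\Z$-basis)'', which is a theorem about \emph{normal} toric varieties, i.e.\ about $\Spec\Z[\sigma^\vee\cap K]$ for a \emph{saturated} monoid. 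The monoid $K_+$ is only known to be finitely generated and integral; nothing in the paper (or in your argument) shows it is saturated, so $\Spec\Z[K_+]$ may be non-normal, and then its smooth locus is not a function of the fan of $\duale K_+\otimes\Q$ at all. The standard counterexample is $T_+=\langle 2,3\rangle_\N\subseteq\Z$: the cone is generated by a primitive vector, hence ``smooth'' in your sense, yet $\Spec k[t^2,t^3]$ is singular at the torus-fixed point. This is exactly why the paper's notion of \emph{smooth sequence} carries two extra conditions beyond the cone being unimodular --- the dual elements $v_j$ must lie in $T_+^{int}$ itself, and $\Ker\underline\E\cap T_+^{int}$ must generate $\Ker\underline\E$ --- and why Lemma \ref{pro:smooth locus of Z[T +]} computes $p_\E/p_\E^2$ and the height of $p_\E$ directly instead of quoting toric geometry. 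Your argument as written proves the statement for the normalization of $\Spec\Z[K_+]$, not for $\Spec\Z[K_+]$.

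Two smaller points. First, the equality $\pi_{\underline\eta}^{-1}\bigl([U_{\tau_I}/\shT]\bigr)=[V_I/\Gm^r]$ (that the coordinates $z_j$ for $j\notin I$ are automatically invertible over the chart $U_{\tau_I}$) needs the argument given in the first half of \ref{lem:fundamental lemma for the smooth locus of X phi}: on a geometric point, $z_{\eta^j}=0$ forces $\Supp\eta^j\subseteq\Supp\underline\E_I$, hence $\eta^j\in\langle\E^i\rangle_{i\in I,\N}$, hence $\eta^j=\E^i$ by indecomposability. Second, you correctly flag the gluing of the chart-by-chart inverses as the hard part but do not carry it out; the paper sidesteps this entirely by writing down, for each smooth sequence with $T_+=T_+^{\underline\E}=\Ker\underline\E\oplus\N^q$, an explicit monoid-level section $s\colon\stX_\phi\arr\stF_{\underline\E}$ and an explicit natural isomorphism $s\circ\pi_{\underline\E}\simeq\id$, then glues at the level of open substacks via \ref{pro:piE for theta isomorphism}, where no choice of basis extension intervenes. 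If you repair the smooth-locus step (either by proving $K_+$ saturated, which is not done anywhere and is likely false in general, or by redoing the tangent-space computation as in \ref{pro:smooth locus of Z[T +]}), the rest of your outline can be completed along the paper's lines.
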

Among the extremal rays there are special rays, called smooth, that
can be defined as extremal rays $\E$ whose associated multiplication
$\psi_{m,n}=0^{\E_{m,n}}$ yields a cover in $\stZ_{G}^{\textup{sm}}$.
Set $\underline{\xi}$ for the sequence of smooth extremal rays. It
turns out that theorem above holds if we replace $\underline{\eta}$
with $\underline{\xi}$.

If we set $\Picsh X$ for the category of invertible sheaves on $X$
and any map we also have: 
\begin{thm*}
{[}\ref{cor:lift when Picsh is the same}{]} Consider a $2$-commutative
diagram    \[   \begin{tikzpicture}[xscale=2.0,yscale=-1.0]     \node (A0_0) at (0, 0) {$Y$};     \node (A0_1) at (1, 0) {$\stF_{\underline \E}$};     \node (A1_0) at (0, 1) {$X$};     \node (A1_1) at (1, 1) {$\GCov$};     \path (A0_0) edge [->]node [auto] {$\scriptstyle{}$} (A0_1);     \path (A1_0) edge [->,dashed]node [auto] {$\scriptstyle{}$} (A0_1);     \path (A0_0) edge [->]node [auto,swap] {$\scriptstyle{f}$} (A1_0);     \path (A0_1) edge [->]node [auto] {$\scriptstyle{\pi_{\underline \E}}$} (A1_1);     \path (A1_0) edge [->]node [auto] {$\scriptstyle{}$} (A1_1);   \end{tikzpicture}   \] where
$X,Y$ are schemes and $\underline{\E}$ is a sequence of elements
of $\duale K_{+}$. If $\Picsh X\arrdi{f^{*}}\Picsh Y$ is fully faithful
(an equivalence) the dashed lifting is unique (exists).
\end{thm*}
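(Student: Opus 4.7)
The plan is to unwind the structure of $\stF_{\underline{\E}}$ so that both the data of a lift and the data of an isomorphism between two lifts become encoded entirely by the category $\Picsh$. An object of $\stF_{\underline{\E}}(T)$ consists of a tuple $(\shM_1,s_1,\dots,\shM_r,s_r)$, where each $\shM_i$ is an invertible sheaf on $T$ and $s_i$ is a global section, subject to the compatibility that the $G$-cover whose multiplications are the tensor products $\psi_{m,n}=\bigotimes_i s_i^{\otimes\E^i_{m,n}}$ is precisely the one given by the chosen map to $\GCov$. Since a global section of $\shM$ is nothing but a morphism $\odi{}\to\shM$ in $\Picsh$, and since a $G$-cover itself is a finite collection of invertible sheaves $\shL_m$ together with $\odi{}$-linear multiplications $\shL_m\otimes\shL_n\to\shL_{m+n}$, both the objects of $\stF_{\underline{\E}}$ and their images in $\GCov$ live inside $\Picsh$. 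This is the conceptual basis making the hypothesis on $f^*$ exactly what is needed.

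For uniqueness, suppose two lifts $X\to\stF_{\underline{\E}}$ are given, with associated data $(\shM_i,s_i)$ and $(\shM_i',s_i')$ on $X$. The 2-commutativity produces, after applying $f^*$, isomorphisms $\varphi_i\colon f^*\shM_i\to f^*\shM_i'$ in $\Picsh Y$ intertwining the pulled-back sections. Full faithfulness of $f^*\colon\Picsh X\to\Picsh Y$ lifts each $\varphi_i$ uniquely to a morphism $\shM_i\to\shM_i'$ on $X$, which is still an isomorphism since its inverse lifts analogously. Moreover the intertwining identities, being equalities of morphisms in $\Picsh X$, transfer as well, so the two lifts are canonically isomorphic as objects of $\stF_{\underline{\E}}(X)$.

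For existence, start with the data $(\shN_i,t_i)$ on $Y$ coming from the given map $Y\to\stF_{\underline{\E}}$. Essential surjectivity of $f^*$ produces invertible sheaves $\shM_i$ on $X$ together with chosen isomorphisms $f^*\shM_i\simeq\shN_i$, and full faithfulness promotes each $t_i$ to a unique section $s_i\in\Gamma(X,\shM_i)$ pulling back to it. The substantive point is to verify that this Picard data assembles into a lift of the given $X\to\GCov$, i.e.\ that the $G$-cover built from $(\shM_i,s_i)$ by the formula $\psi_{m,n}=\bigotimes_i s_i^{\otimes\E^i_{m,n}}$ is canonically isomorphic to the one prescribed on $X$. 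Both $G$-covers have the same $f$-pullback on $Y$ by construction, and the required comparison is a collection of isomorphisms of invertible sheaves $\shL_m$ on $X$ identifying the multiplications, that is, a finite amount of data in $\Picsh X$; the equivalence $f^*\colon\Picsh X\to\Picsh Y$ produces and is certified to identify it uniquely, concluding the argument.

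The main obstacle is this final compatibility: translating into a statement on $X$ the abstract fact that the two $G$-covers agree downstairs. The key observation making this work is that a $G$-cover is intrinsically a Picard-theoretic datum, so the hypothesis on $f^*$ is precisely strong enough to descend both its underlying sheaves and its multiplication maps from $Y$ to $X$.
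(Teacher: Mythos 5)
Your argument is correct and is in substance the paper's own: the paper first proves (\ref{pro:the map X(Y) --> X(X) is fully faithful (equivalence) is it so between Pic}) that $f^{*}$ being fully faithful (an equivalence) on $\Picsh$ forces the same for the sections of any stack of the form $\stX_{\phi}$, and then applies this simultaneously to $\stF_{\underline{\E}}$ (which is itself an $\stX_{\sigma_{\underline{\E}}}$) and to $\GCov\simeq\stX_{\phi_{M}}$, which is exactly the descent of invertible sheaves, sections and relations that you carry out by hand. The only point you should make explicit is that an object of $\stF_{\underline{\E}}$ also carries the trivializations $\lambda_{t}\colon\underline{\shL}^{\phi(t)}\arrdi{\simeq}\underline{\shM}^{\underline{\E}(t)}$ (not just the sheaves $\shM_{i}$ and sections $s_{i}$), but these are morphisms in $\Picsh$ and descend from $Y$ to $X$ by the very same full-faithfulness argument you use for the sections.
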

In particular theorems above allow to conclude that:
\begin{thm*}
{[}\ref{thm:fundamental theorem for the smooth locus of ZM}, \ref{thm:fundamental theorem for locally factorial schemes}{]}
Let $X$ be a locally noetherian and locally factorial scheme. A cover
$\chi\in\GCov(X)$ such that $\chi_{|k(p)}\in\stZ_{G}^{\textup{sm}}(k(p))$
for any $p\in X$ with $\codim_{p}X\leq1$ lifts uniquely to $\stF_{\underline{\xi}}(X)$.
\end{thm*}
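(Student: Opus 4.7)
The plan is to first reduce to the smooth locus of the main component $\stZ_{G}$, where an explicit lift exists by the smooth-locus isomorphism, and then extend across the codimension $\geq 2$ bad locus using local factoriality via the $\Picsh$-based lifting criterion of Corollary~\ref{cor:lift when Picsh is the same}.

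\textbf{Step 1 (landing in $\stZ_{G}$).} First I would verify that $\chi$ factors through the closed substack $\stZ_{G}\subseteq\GCov$. The preimage $\chi^{-1}(\stZ_{G})$ is a closed subscheme of $X$; if its open complement $U$ were nonempty it would contain a generic point $\eta$ of some irreducible component of $X$, necessarily of codimension $0$, and by hypothesis $\chi_{|k(\eta)}\in\stZ_{G}^{\textup{sm}}(k(\eta))\subseteq\stZ_{G}(k(\eta))$, a contradiction. Since local factoriality implies normality and hence reducedness of $X$, this set-theoretic inclusion promotes to the scheme-theoretic factorization of $\chi$ through $\stZ_{G}$.

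\textbf{Step 2 (lift on the smooth locus).} Let $V=\chi^{-1}(\stZ_{G}^{\textup{sm}})\subseteq X$; this is an open subscheme, and by the codimension hypothesis it contains every point of codimension $\leq 1$, so the closed complement $X\setminus V$ has codimension $\geq 2$. The variant of Theorem~\ref{thm:fundamental theorem for the smooth locus of ZM} for the smooth extremal sequence $\underline{\xi}$ gives an isomorphism $\pi_{\underline{\xi}}^{-1}(\stZ_{G}^{\textup{sm}})\arrdi{\simeq}\stZ_{G}^{\textup{sm}}$, so composing $\chi_{|V}$ with its inverse produces a canonical, and automatically unique, lift $\tilde{\chi}_{V}\in\stF_{\underline{\xi}}(V)$.

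\textbf{Step 3 (extension to $X$).} To extend $\tilde{\chi}_{V}$ to all of $X$, I would apply Corollary~\ref{cor:lift when Picsh is the same} to the open immersion $f\colon V\hookrightarrow X$, with $\chi$ playing the role of the bottom horizontal map $X\arr\GCov$ and $\tilde{\chi}_{V}$ playing the role of the top horizontal map $V\arr\stF_{\underline{\xi}}$. What remains to check is that the restriction functor $f^{*}\colon\Picsh X\arr\Picsh V$ is an equivalence: full faithfulness will yield uniqueness of the lift, and essential surjectivity will yield existence. Both are classical Hartogs-type statements for locally noetherian, locally factorial $X$ with complement of codimension $\geq 2$: full faithfulness reduces to extending sections of a line bundle from $V$ to the normal scheme $X$; essential surjectivity uses that $\Pic=\Cl$ on any locally factorial scheme, combined with the fact that Weil divisors on $V$ extend uniquely across codimension $\geq 2$ closed subsets. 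The resulting $X\arr\stF_{\underline{\xi}}$ is then the required lift.

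\textbf{Main obstacle.} The heavy structural input has already been done by the earlier results, so the only genuinely new ingredient is the codimension-$2$ Picard extension, and this is precisely where local factoriality (rather than mere normality) is indispensable; mere normality would give full faithfulness by Hartogs but not essential surjectivity. A minor but genuine subtlety worth recording is that $V$ is itself locally factorial, its local rings being local rings of $X$, so that $\Pic(V)=\Cl(V)$ and the standard divisor-extension argument applies on $V$ as well.
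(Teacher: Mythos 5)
Your proof is correct and follows essentially the same route as the paper: the paper's proof of Theorem \ref{thm:fundamental theorem for locally factorial schemes} likewise factors $\chi$ through the reduced closed substack $\stZ_{G}$ via the generic points, observes that $g^{-1}(\stZ_{G}^{\textup{sm}})$ is open with complement of codimension $\geq2$, and then invokes the equivalence $\Picsh X\simeq\Picsh U$ together with Corollary \ref{cor:lift when Picsh is the same} and the smooth-locus isomorphism of Theorem \ref{thm:fundamental theorem for the smooth locus of ZM}. The only cosmetic difference is that the paper proves the more general statement indexed by a collection $\Theta$ of smooth sequences and then specializes $\Theta$ to all smooth sequences, whereas you argue the special case directly.
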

An interesting problem is to describe all (smooth) extremal rays.
This seems very difficult and it is related to the problem of finding
$\Q$-linearly independent sequences among the $v_{m,n}\in K_{+}$.
A natural way of obtaining extremal rays is trying to describe $G$-covers
with special properties. The first examples of them arise looking
at covers with normal total space. Indeed in \cite{Pardini1991} the
author is able to describe the multiplications yielding regular $G$-covers
of a DVR. This description, using the language introduced above, yields
a sequence $\underline{\delta}=(\E^{\phi})_{\phi\in\Phi_{M}}$ of
smooth extremal rays, where $\Phi_{M}$ is the set of surjective maps
$M\arr\Z/d\Z$ with $d>1$. In this paper we will define a stratification
of $\GCov$ by open substacks $\Bi G=U_{0}\subseteq U_{1}\subseteq\cdots\subseteq U_{|G|-1}=\GCov$
and we will prove that there exists an explicitly given sequence $\underline{\E}$
of smooth integral extremal rays (defined in \ref{pro:classification sm int ray htwo})
containing $\underline{\delta}$ such that:
\begin{thm*}
{[}\ref{thm:fundamental theorem for hleqone}, \ref{thm:fundamental thm for hleqtwo}{]}
We have that $U_{2}\subseteq\stZ_{G}^{\textup{sm}}$ and that $\pi_{\underline{\E}}\colon\stF_{\underline{\E}}\arr\stZ_{G}$
induces isomorphisms of stacks
\[
\pi_{\underline{\E}}^{-1}(U_{2})\arrdi{\simeq}U_{2}\comma\pi_{\underline{\delta}}^{-1}(U_{1})=\pi_{\underline{\E}}^{-1}(U_{1})\arrdi{\simeq}U_{1}
\]

\end{thm*}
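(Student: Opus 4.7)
The plan is to prove the three claims by reducing to a pointwise ray-activation analysis at codimension-one and codimension-two points. The structural inputs are the smooth-locus isomorphism $\pi_{\underline{\xi}}^{-1}(\stZ_G^{\textup{sm}}) \simeq \stZ_G^{\textup{sm}}$ of \ref{thm:fundamental theorem for the smooth locus of ZM} (with $\underline{\xi}$ the full sequence of smooth extremal rays) and the lifting/uniqueness results \ref{thm:fundamental theorem for locally factorial schemes} and \ref{cor:lift when Picsh is the same}. Once $U_k \subseteq \stZ_G^{\textup{sm}}$ is established, the smooth-locus theorem yields $\pi_{\underline{\xi}}^{-1}(U_k) \simeq U_k$; to replace $\underline{\xi}$ by the subsequence $\underline{\E} \subseteq \underline{\xi}$ (respectively $\underline{\delta} \subseteq \underline{\E}$ for $k=1$) one then verifies the pointwise statement that no ray outside $\underline{\E}$ (respectively $\underline{\delta}$) can be activated by a $U_k$-cover, so the lift factors through the open substack $\stF_{\underline{\E}} \hookrightarrow \stF_{\underline{\xi}}$ described earlier.

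For the $U_1$ case I would appeal directly to Pardini's analysis of normal abelian covers of a DVR \cite{Pardini1991}. The stratum $U_1$ should encode the condition that at every codimension-one point of the base the local cover is of Pardini type; translated into the monoid language of $K_+$, Pardini's description reads that every local multiplication $\psi_{m,n}$ is a monomial in the uniformizer whose exponent vector is a single ray of $\underline{\delta}$. This simultaneously gives $U_1 \subseteq \stZ_G^{\textup{sm}}$ (each ray of $\underline{\delta}$ is smooth and extremal) and the pointwise activation statement needed to conclude $\pi_{\underline{\delta}}^{-1}(U_1) = \pi_{\underline{\E}}^{-1}(U_1) \simeq U_1$.

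The $U_2$ argument is formally identical but based on codimension-two data: the local models are $G$-covers of two-dimensional regular local rings obtained from strict henselizations at codimension-two points of a locally factorial base. The relevant input is \ref{pro:classification sm int ray htwo}, which enumerates all smooth integral extremal rays of height $\leq 2$ and thereby defines the enlarged sequence $\underline{\E}$. The main obstacle is carrying out the matching codimension-two local classification: one must show that every multiplication system produced by a $U_2$-cover over a two-dimensional regular local ring is a monomial in two local generators whose exponent vector is a smooth integral extremal ray of height $\leq 2$, and that conversely every such ray does arise in this way. This requires an explicit computation imposing associativity of $\psi_{m,n}$ together with the smoothness criterion of the previous sections, combined with the combinatorial list of rays from \ref{pro:classification sm int ray htwo}. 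Once this is in place, the reduction in the first paragraph gives $U_2 \subseteq \stZ_G^{\textup{sm}}$ and the isomorphism $\pi_{\underline{\E}}^{-1}(U_2) \simeq U_2$, with uniqueness guaranteed by \ref{cor:lift when Picsh is the same} (two-dimensional regular local rings having trivial Picard group, so the Picard-comparison hypothesis is automatic on the strictly henselian local models and descends).
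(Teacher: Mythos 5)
There is a genuine gap, and it starts with a misreading of what $U_{1}$ and $U_{2}$ are. These are the open substacks $\{h\leq 1\}$ and $\{h\leq 2\}$ of $\GCov$, where $h$ is the fiberwise invariant counting the minimal number of degrees in which the algebra is generated after dividing out the maximal torsor; membership is tested on geometric (field-valued) points, not at codimension-one or codimension-two points of a base. Consequently the classification problem underlying the theorem is: describe all $M$-graded algebras over an algebraically closed field $k$ (yielding $\Di M$-covers of $\Spec k$) that are generated in one, respectively two, degrees. Your proposed local models --- covers of DVRs for $U_{1}$ and covers of two-dimensional regular local rings obtained from strict henselizations at codimension-two points for $U_{2}$ --- are the wrong objects: a cover of $\Spec k$ can lie in $U_{2}$, and even the downstream scheme-level corollaries (\ref{thm:for hleqone}, \ref{thm:fundamental thm locally factoria hleqtwo}) only ever test points of codimension $\leq 1$. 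The Picard-group and henselization apparatus you invoke for uniqueness is likewise beside the point here: the isomorphism $\pi_{\underline{\E}}^{-1}(U_{k})\arrdi{\simeq}U_{k}$ is an isomorphism of open substacks obtained from the explicit monoid-level section of \ref{lem:fundamental lemma for the smooth locus of X phi} packaged in \ref{pro:piE for theta isomorphism}; \ref{cor:lift when Picsh is the same} only enters in the separate locally-factorial corollaries.

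Your high-level skeleton (prove $U_{k}\subseteq\stZ_{G}^{\textup{sm}}$, determine which smooth sequences are ``activated,'' then apply the general open-immersion machinery) does match the paper's strategy, but the entire mathematical content of the $U_{2}$ case is exactly what you defer as ``the main obstacle.'' In the paper this is the classification of $M$-graded algebras over a field generated in two degrees $m,n$: one reduces to $M\simeq M_{r,\alpha,N}$, introduces the invariant $\overline{q}_{A}\in\Omega_{N-\alpha,N}$ and the good pairs $(\E_{l},\delta_{l})$, shows every such algebra is isomorphic to a universal model $A_{\overline{q},\lambda}$ (\ref{pro:Universal algebras generated by m,n with overline q fixed}), computes that its multiplication is $a^{\Lambda}b^{\Delta}$ for the smooth sequence $(\Lambda,\Delta)$ of \ref{lem:lambda,delta for overlineq}, and matches these against the enumeration \ref{pro:classification sm int ray htwo}. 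Without carrying out this field-level classification (and the analogous, much easier, one-degree classification for $U_{1}$, which shows $B\simeq C[x]/(x^{l})$ over the maximal torsor with multiplication $0^{\E^{\eta}}$), neither the containment $U_{2}\subseteq\stZ_{G}^{\textup{sm}}$ nor the identification of the activated rays --- hence neither isomorphism in the statement --- is established.
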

Theorem above implies that $\MHilb\A^{2}$ is smooth and irreducible
(\ref{cor:MHilbmn smooth and irreducible}). In this way we get an
alternative proof of the result in \cite{Maclagan2002} (later generalized
in \cite{Maclagan2010}) in the particular case of equivariant Hilbert
schemes.
\begin{thm*}
{[}\ref{thm:for hleqone}, \ref{thm:fundamental thm locally factoria hleqtwo}{]}
Let $X$ be a locally noetherian and locally factorial scheme and
$\chi\in\GCov(X)$. If $\chi_{|k(p)}\in U_{1}$ for any $p\in X$
with $\codim_{p}X\leq1$, then $\chi$ lifts uniquely to $\stF_{\underline{\delta}}(X)$.
If $\chi_{|k(p)}\in U_{2}$ for any $p\in X$ with $\codim_{p}X\leq1$,
then $\chi$ lifts uniquely to $\stF_{\underline{\E}}(X)$.
\end{thm*}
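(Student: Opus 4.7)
The plan is to restrict to the largest open subset of $X$ on which one of the earlier fundamental theorems (for $h\leq 1$ or $h\leq 2$) applies, lift the cover there, and then extend the resulting lift to all of $X$ by means of Corollary \ref{cor:lift when Picsh is the same}.

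Handle the two assertions in parallel: let $U\in\{U_{1},U_{2}\}$ be the relevant open substack of $\GCov$ and $\underline{\gamma}\in\{\underline{\delta},\underline{\E}\}$ the corresponding sequence of rays. First I would set $V=\chi^{-1}(U)$, which is open in $X$ because $U$ is open in $\GCov$. The codimension hypothesis in the statement says precisely that $V$ contains every point of $X$ of codimension $\leq 1$, so $X\setminus V$ has codimension $\geq 2$ in $X$. By Theorem \ref{thm:fundamental theorem for hleqone} in the $U_{1}$ case and by Theorem \ref{thm:fundamental thm for hleqtwo} in the $U_{2}$ case, the restriction of $\pi_{\underline{\gamma}}$ over $U$ is an isomorphism of stacks, so the restricted cover $\chi_{|V}$ admits a unique lift $\widetilde{\chi}_{V}\in\stF_{\underline{\gamma}}(V)$.

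Next, to extend this lift from $V$ to $X$, I would apply Corollary \ref{cor:lift when Picsh is the same} to the open immersion $j\colon V\hookrightarrow X$, with cover $\chi\in\GCov(X)$ below and lift $\widetilde{\chi}_{V}\in\stF_{\underline{\gamma}}(V)$ above. The only hypothesis to verify is that the restriction functor $j^{*}\colon\Picsh X\arr\Picsh V$ is an equivalence of categories, which is what the corollary needs in order to deliver both existence and uniqueness. This is a classical consequence of the locally noetherian and locally factorial hypotheses together with $\codim_{X}(X\setminus V)\geq 2$: invertible sheaves on $X$ correspond to Weil divisors, which are determined by their codimension-one support and therefore by their restriction to $V$, and a Hartogs-type extension of sections across closed subsets of codimension $\geq 2$ in the normal scheme $X$ gives the isomorphism on $\Hom$-sets. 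The corollary then delivers the unique lift $\widetilde{\chi}\in\stF_{\underline{\gamma}}(X)$ of $\chi$.

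The bulk of the difficulty is packaged inside Theorems \ref{thm:fundamental theorem for hleqone} and \ref{thm:fundamental thm for hleqtwo}, which supply the crucial isomorphisms $\pi_{\underline{\delta}}^{-1}(U_{1})\simeq U_{1}$ and $\pi_{\underline{\E}}^{-1}(U_{2})\simeq U_{2}$ and hence the unique local lift on $V$; once those are in hand, the passage from $V$ to $X$ is purely formal, relying only on the Picard equivalence for locally factorial schemes and on Corollary \ref{cor:lift when Picsh is the same}. The only point I expect to require genuine verification in the present argument is therefore the translation of the codimension-$\leq 1$ hypothesis on $\chi$ into $\codim_{X}(X\setminus V)\geq 2$, which follows from openness of $U_{1}$ and $U_{2}$ in $\GCov$.
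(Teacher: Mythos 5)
Your argument is correct and is essentially the paper's own: the paper deduces both statements from the general Theorem \ref{thm:fundamental theorem for locally factorial schemes} applied with $\Theta=\{(\E^{\eta})\}_{\eta\in\Phi_{M}}$ resp.\ $\Theta=\Theta_{M}^{2}$, and the proof of that general theorem is precisely your scheme — restrict to an open $V$ with $\codim_{X}(X\setminus V)\geq2$ over which the isomorphisms $\pi_{\underline{\delta}}^{-1}(U_{1})\simeq U_{1}$ and $\pi_{\underline{\E}}^{-1}(U_{2})\simeq U_{2}$ give a unique lift, then extend to $X$ via the equivalence $\Picsh X\simeq\Picsh V$ and Corollary \ref{cor:lift when Picsh is the same}. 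The only (harmless) difference is that you obtain $V$ directly as $\chi^{-1}(U_{i})$ using openness of $U_{i}$ in $\GCov$, whereas the paper's more general argument must first factor the classifying map through $\stZ_{\phi}$ to see that the relevant locus is open.
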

Notice that $\underline{\E}=\underline{\delta}$ if and only if $G\simeq(\mu_{2})^{l}$
or $G\simeq(\mu_{3})^{l}$ (\ref{pro:when sigmaM is empty}). Finally
we prove:
\begin{thm*}
{[}\ref{thm:regular in codimension 1 covers}, \ref{thm:NC in codimension one}{]}
Let $X$ be a locally noetherian and locally factorial integral scheme
with $\dim X\geq1$ and $(\car X,|M|)=1$ and $Y/X$ be a $G$-cover.
If $Y$ is regular in codimension $1$ it is normal and $Y/X$ comes
from a unique object of $\stF_{\underline{\delta}}(X)$. If $Y$ is
normal crossing in codimension $1$ (see \ref{def:normal crossing codimesion one})
then $Y/X$ comes from a unique object of $\stF_{\underline{\gamma}}(X)$,
where $\underline{\delta}\subseteq\underline{\gamma}\subseteq\underline{\E}$
is an explicitly given sequence.
\end{thm*}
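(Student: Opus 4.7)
The plan is to deduce both statements from the codimension-one lifting theorems \ref{thm:fundamental theorem for hleqone} and \ref{thm:fundamental thm locally factoria hleqtwo}, reducing matters to verifying the fibre hypothesis at points of $X$ of codimension at most one. I first dispose of normality in the regular case by Serre's criterion. The base $X$ is locally factorial, hence normal and in particular $S_2$; the morphism $f\colon Y\arr X$ is finite and flat (every $G$-cover is flat), so $Y$ inherits $S_2$ from $X$ via the identity $\depth\odi{Y,y}=\depth\odi{X,f(y)}+\depth(\text{fibre})$, which for $y\in Y$ with $\dim\odi{Y,y}\geq 2$ forces $\depth\odi{Y,y}\geq 2$. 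Combined with the hypothesis $R_1$ this gives normality of $Y$.

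Next I verify the fibre hypothesis. Let $p\in X$ with $\codim_p X\leq 1$. If $p$ is the generic point, then $(\car X,|M|)=1$ makes the fibre $\chi_{|k(p)}$ an \'etale $G$-torsor, so it belongs to $\Bi G=U_0\subseteq U_1\subseteq U_2$. If $\codim_p X=1$, then $\odi{X,p}$ is a DVR (since $X$ is locally factorial) and $Y_p=Y\times_X\Spec\odi{X,p}$ is a $G$-cover of a DVR whose total space is regular (resp.\ has normal crossings) at every closed point, because $Y\arr X$ is finite flat and therefore preserves codimensions, so the points of $Y$ above $p$ are precisely codimension-one points of $Y$ at which the hypothesis applies. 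Pardini's classification of abelian covers of a DVR in the tame case — which, translated into the ray-theoretic language of the present paper, is exactly the construction of the sequence $\underline{\delta}$, and which extends to the normal crossing setting by superposing several branches indexed by the subsequence $\underline{\gamma}\subseteq\underline{\E}$ — then forces $\chi_{|k(p)}\in U_1$ (resp.\ $\in U_2$).

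With the fibre condition verified, \ref{thm:fundamental theorem for hleqone} and \ref{thm:fundamental thm locally factoria hleqtwo} immediately yield the desired unique lifts to $\stF_{\underline{\delta}}(X)$ and $\stF_{\underline{\E}}(X)$ respectively. The refinement in the NC case that the lift actually lies in $\stF_{\underline{\gamma}}(X)$ follows because the rays of $\underline{\E}$ appearing in any lift are determined fibre-by-fibre at codimension-one points (by the uniqueness statement applied to restrictions), and by the local description only rays in $\underline{\gamma}$ can appear there; uniqueness inside $\stF_{\underline{\gamma}}(X)$ is inherited from uniqueness inside $\stF_{\underline{\E}}(X)$.

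The main obstacle, as expected, lies in the fibre classification: translating regularity (resp.\ normal crossing in codimension one) of the total space of a $G$-cover of a DVR into the assertion that the multiplications are of the monomial form prescribed by $\underline{\delta}$ (resp.\ $\underline{\gamma}$). Concretely, fixing a uniformizer $\pi$ of $\odi{X,p}$, one writes each structure constant as $\psi_{m,n}=\pi^{a_{m,n}}u_{m,n}$ with $u_{m,n}$ a unit, and encodes regularity of the fibre ring $A\otimes k(p)$ as additive constraints on the exponents $(a_{m,n})_{m,n\in M}$. One must then check that every solution of these constraints is a ray of $\underline{\delta}$ (resp.\ a sum of rays of $\underline{\gamma}$ indexed by the local branches of the NC locus of $Y$ above $p$). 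This is a combinatorial verification, carried out by transporting Pardini's analysis from \cite{Pardini1991} into the monoid $\duale K_+$ and using the explicit description of smooth integral extremal rays.
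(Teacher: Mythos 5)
Your overall architecture coincides with the paper's: normality of $Y$ via Serre's criterion ($S_2$ descends along the finite flat map since the fibres are Cohen--Macaulay, and $R_1$ is the hypothesis); the bound $h_f(q)\leq\dim_{k(p)}m_p/m_p^2$ at codimension-one points, giving $Reg_X^1\subseteq\catD_X^1$ and $NC_X^1\subseteq\catD_X^2$; then Theorems \ref{thm:for hleqone} and \ref{thm:fundamental thm locally factoria hleqtwo} to produce the unique lift, refined by a local analysis over a DVR. Two small corrections on the easy part: at the generic point the fibre is a torsor not because of tameness alone (e.g.\ $k[x]/(x^2)$ is a tame $\mu_2$-cover of $k$ that is not a torsor) but because $h$ is upper semicontinuous and $X$ has no isolated points, or because regularity at the generic points of $Y$ forces the generic fibre to be reduced, hence a torsor; and for the lifting statement itself the regular case needs no further combinatorics beyond $Reg_X^1\subseteq\catD_X^1$, since Theorem \ref{thm:for hleqone} already lands in $\stF_{\underline{\delta}}(X)$.

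The genuine gap is in the normal-crossing half, precisely at the step you defer as ``a combinatorial verification \dots by superposing several branches indexed by $\underline{\gamma}$.'' The sequence $\underline{\gamma}$ is not an input to be quoted from \cite{Pardini1991}; it is the output of a classification that your picture does not capture. Over a complete DVR $R=k[[z]]$ with $\widehat{\odi{}}_{Y,p}$ \'etale-locally $k[[s,t]]/(st)$, the group $M$ acts on the two branches, and the case where every element preserves them is exactly the case excluded by Lemma \ref{lem:NC excluding stupid subgroups}: there $A^M\simeq k[[s,t]]/(st)$ is not a DVR, so it never occurs over a regular base. The only relevant actions contain a branch-swapping element $I(\xi,\eta)$, and these are not a ``superposition'' of regular branches at all; classifying them requires the arguments of Lemma \ref{lem:classification of actions for NC} (equivariant Cohen presentation, Weierstrass preparation, the dichotomy $M=\langle I(\id,\underline{B})\rangle$ versus $M=\langle(\underline{C},\underline{C})\rangle\times\langle I\rangle$) and then the computation of invariants and structure constants in Proposition \ref{pro:NC complete description of invariants, algebras multiplication}, which is what produces the new rays $\Delta^{2,2,l,1,\phi}$ and $\Delta^{1,2l+1,4l,2,\phi}$ alongside $2\E^{\id}$ and $\E^{\pr_1}+\E^{\pr_2}$, i.e.\ determines $\underline{\gamma}$ and the valuation conditions characterizing $\pi_{\underline{\gamma}}^{-1}(NC_X^1)$. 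Without this classification your refinement step ``only rays in $\underline{\gamma}$ can appear'' has no content, because $\underline{\gamma}$ has not been identified.
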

The part concerning regular in codimension $1$ covers is essentially
a rewriting of Theorem $2.1$ and Corollary $3.1$ of \cite{Pardini1991}
extended to locally noetherian and locally factorial schemes, while
the last part generalizes Theorem $1.9$ of \cite{Alexeev2011}.

\subsection*{Table of contents.}

We now briefly summarize how this paper is divided. 

\emph{Section 1. }We will introduce the notion of $G$-covers and
prove some general facts about them, e.g. the algebraicity of $\GCov$. 

All the other sections will be dedicated to the study of $\GCov$
when $G$ is a finite diagonalizable group with dual group $M=\Hom(G,\Gm)$.

\emph{Section 2.} $\GCov$ and some of its substacks, like $\stZ_{G}$
and $\Bi G$, share a common structure, i.e. they are all of the form
$\stX_{\phi}=[\Spec\Z[T_{+}]/\shT]$, where $T_{+}$ is a finitely
generated commutative monoid whose associated group is free of finite
rank, $\shT$ is a torus over $\Z$ and $\phi\colon T_{+}\arr\Z^{r}$
is an additive map, that induces the action of $\shT$ on $\Spec\Z[T_{+}]$.
Section $2$ will be dedicated to the study of such stacks. As we
will see many facts about $\GCov$ are just applications of general
results about such stacks. For instance the existence of a special
irreducible component $\stZ_{\phi}$ of $\stX_{\phi}$ as well as
the use of $\duale T_{+}=\Hom(T_{+},\N)$ for the study of the smooth
locus of $\stZ_{\phi}$ are properties that can be stated in this
setting.

\emph{Section 3.} We will explain how $\GCov$ can be viewed as a
stack of the form $\stX_{\phi}$ and how it is related to the equivariant
Hilbert schemes. Then we will study the properties of connectedness,
irreducibility and smoothness for $\GCov$. Finally we will introduce
the stratification $U_{0}\subseteq U_{1}\subseteq\cdots\subseteq U_{|G|-1}=\GCov$
and we will characterize the locus $U_{1}$. 

\emph{Section 4. }We will study the locus $U_{2}$ and $G$-covers
whose total space is normal crossing in codimension $1$.

\subsection*{Acknowledgments.}

The first person I would like to thank is surely my advisor Angelo
Vistoli, first of all for having proposed me the problem I will discuss
and, above all, for his continuous support and encouragement. I also
acknowledge Prof. Rita Pardini, Prof. Diane Maclagan and Prof. Bernd
Sturmfels for the useful conversations we had and all the suggestions
I have received from them. A special thank goes to Tony Iarrobino,
who first suggested me the relation between $G$-covers and equivariant
Hilbert schemes. Finally I want to thank Mattia Talpo and Dajano Tossici
for the countless times we found ourselves staring at the blackboard
trying to answer some mathematical question.

\section*{Notations}

A map of schemes $f\colon X\arr Y$ will be called a \emph{cover}
if it is finite, flat and of finite presentation or, equivalently,
if it is affine and $f_{*}\odi X$ is locally free of finite rank.
If $X$ is a scheme and $p\in X$ we set $\codim_{p}X=\dim\odi{X,p}$
and we will denote with $X^{(1)}=\{p\in X\st\codim_{p}X=1\}$ the
set of codimension $1$ points of $X$. 

If $N$ is an abelian group we set $\Di N=\Homsh_{\textup{groups}}(N,\Gm)$
for the diagonalizable group associated to it, while if $f\colon G\arr S$
is an affine group scheme we set $\odi S[G]=f_{*}\odi G$.

Given an element $f=\sum_{i}a_{i}e_{i}\in\Z^{r}$ and invertible sheaves
$\shL_{1},\dots,\shL_{r}$ on a scheme we will use the notation
\[
\underline{\shL}^{f}=\bigotimes_{i}\shL_{i}^{\otimes a_{i}}\comma\ISym\underline{\shL}=\ISym(\shL_{1},\dots,\shL_{r})=\bigoplus_{g\in\Z^{r}}\underline{\shL}^{g}
\]
Notice also that, if for any $i$ we have $\shL_{i}=\odi{}$, then
there is a canonical isomorphism $\underline{\shL}^{f}\simeq\odi{}$.

Finally if $\stX$ is an algebraic stack we denote with $|\stX|$
its associated topological space.

\section{$G$-covers}

In this section we will fix a base scheme $S$ and a group scheme
$G$ over it, which is also a cover of $S$. We will denote by $\alA$
the regular representation of $G$, i.e. $\alA=\odi S[G]$ with the
$G$-comodule structure $\mu\colon\alA\arr\alA\otimes\odi S[G]$ induced
by the multiplication of $G$.

The aim of this section is to introduce the notion of a ramified Galois
cover and prove that the associated stack is algebraic.
\begin{defn}
\label{def:Gcovers}Given a scheme $T$ over $S$, a \emph{ramified
Galois cover of group $G$}, or simply a $G$\emph{-cover,} over it
is a cover $X\arrdi fT$ together with an action of $G_{T}$ on it
such that there exists an fppf coverings $\{U_{i}\arr T\}$ and isomorphisms
of $G$-comodules
\[
(f_{*}\odi X)_{|U_{i}}\simeq\alA_{|U_{i}}
\]
We will call $\GCov(T)$ the groupoid of $G$-covers over $T$.
\end{defn}
The $G$-covers form a stack $\GCov$ over $S$. Moreover any $G$-torsor
is a $G$-cover and more precisely we have:
\begin{prop}
\label{pro:BG open in GCov}$\Bi G$ is an open substack of $\GCov$.\end{prop}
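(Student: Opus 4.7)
The plan is to show that for any $S$-scheme $T$ and any $G$-cover $f\colon X\to T$ classified by a morphism $\chi\colon T\to\GCov$, the locus $U\subseteq T$ over which $f$ is a $G$-torsor is open, and to identify it with the $2$-fibered product $T\times_{\GCov}\Bi G$. The key observation is that a $G$-cover $f\colon X\to T$ is a $G$-torsor if and only if the action morphism
\[
\sigma\colon G_{T}\times_{T}X\longrightarrow X\times_{T}X,\qquad (g,x)\longmapsto(g\cdot x,x),
\]
is an isomorphism. The ``only if'' direction is the definition of a torsor; for the ``if'' direction, once $\sigma$ is an iso one uses that $X\to T$ is faithfully flat (being a cover whose rank agrees fppf-locally with that of $G\to S$, hence positive) and pulls $X$ back to itself, where the diagonal $X\to X\times_{T}X\cong G_{X}$ yields an $X$-point of $G_{X}\to X$ trivializing the pulled-back cover.

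\textbf{First step.} Both $G_{T}\times_{T}X$ and $X\times_{T}X$ are finite locally free over $T$: the first because $G\to S$ and $X\to T$ are so; the second because $f$ is a cover and covers are stable under base change. The fppf-local isomorphism $f_{*}\odi X\simeq\alA_{|T}$ of $G$-comodules forces the rank of $f_{*}\odi X$ to equal the rank $n$ of $G\to S$, so both schemes have rank $n^{2}$ over $T$. Passing to the associated algebras, $\sigma$ corresponds to a map
\[
\sigma^{\sharp}\colon f_{*}\odi X\otimes_{\odi T}f_{*}\odi X\longrightarrow\odi T[G]\otimes_{\odi T}f_{*}\odi X
\]
of locally free $\odi T$-modules of the same finite rank $n^{2}$.

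\textbf{Second step.} A morphism between two locally free sheaves of the same finite rank is an isomorphism precisely on the complement of the vanishing locus of $\det\sigma^{\sharp}$, which is open in $T$. Hence the locus $U\subseteq T$ where $\sigma$ is an isomorphism (equivalently, where $\chi$ becomes a $G$-torsor) is open.

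\textbf{Third step.} Unwinding the definition, a $T$-point of $T\times_{\GCov}\Bi G$ is the datum of a $G$-torsor $P\to T$ together with an isomorphism of $G$-covers $P\simeq X$; such a datum exists (and is then unique up to unique isomorphism) exactly when $X\to T$ is itself a $G$-torsor, that is, exactly over $U$. Therefore the projection $T\times_{\GCov}\Bi G\to T$ is representable by the open immersion $U\hookrightarrow T$, proving that $\Bi G\hookrightarrow\GCov$ is an open substack.

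\textbf{Main obstacle.} The only non-routine point is the second half of the equivalence in the opening paragraph, namely that $\sigma$ being an isomorphism suffices to make a $G$-cover into an fppf-locally trivial $G$-sheaf; once this is settled, the remainder reduces to the standard fact that the non-vanishing of a determinant cuts out an open subscheme and to an unwinding of the $2$-fibered product.
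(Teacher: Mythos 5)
Your proposal is correct and follows essentially the same route as the paper: both reduce to the fact that a $G$-cover is a torsor exactly where the action map $G\times_T X\to X\times_T X$ (equivalently the corresponding map of locally free $\odi T$-modules of equal rank) is an isomorphism, and both observe that this locus is open — the paper via the vanishing of the cokernel, you via the non-vanishing of the determinant, which amounts to the same thing. Your extra care in justifying that an isomorphic action map forces fppf-local triviality, and in unwinding the $2$-fibered product, only makes explicit what the paper leaves implicit.
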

\begin{proof}
Given a scheme $U$ over S and a $G$-cover $X=\Spec\alB$ over $U$,
$X$ is a $G$-torsor if and only if the map $G\times X\arr X\times X$
is an isomorphism. This map is induced by a map $\alB\otimes\alB\arrdi h\alB\otimes\odi{}[G_{U}]$
and so the locus over which $X$ is a $G$-torsor is given by the
vanishing of $\Coker h$, which is an open subset.\end{proof}
\begin{defn}
\label{def:the main component of GCov}The main component $\stZ_{G}$
of $\GCov$ is the reduced closed substack induced by the closure
of $\Bi G$ in $\GCov$.
\end{defn}
In order to prove that $\GCov$ is an algebraic stack we will present
it as a quotient stack by a smooth group scheme.
\begin{notation}
Let $S$ be a scheme and $\shF\in\QCoh S$. We define $\WW(\shF)\colon(\Sch/S)^{\textup{op}}\arr\set$
as
\[
\WW(\shF)(U\arrdi fS)=\Hl^{0}(U,f^{*}\shF)
\]
Remember that if $\shF$ is a locally free sheaf of finite rank, then
$\WW(\shF)$ is smooth, affine and finitely presented over $S$.\end{notation}
\begin{prop}
The functor   \[   \begin{tikzpicture}[xscale=4.5,yscale=-0.7]     \node (A0_0) at (0, 0) {$(\Sch/S)^{\textup{op}}$};     \node (A0_1) at (1, 0) {$\set$};     \node (A1_0) at (0, 1) {$T$};     \node (A1_1) at (1, 1) {$\{\odi{}[G_T]\text{-coalgebra structures on } \alA_T \}$};     \path (A0_0) edge [->] node [auto] {$\scriptstyle{X_G}$} (A0_1);     \path (A1_0) edge [|->,gray] node [auto] {$\scriptstyle{}$} (A1_1);   \end{tikzpicture}   \] 
is an affine scheme finitely presented over $S$.\end{prop}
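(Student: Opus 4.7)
The plan is to realize $X_G$ as a closed subscheme of a smooth affine finitely presented $S$-scheme built out of $\WW(-)$ of locally free sheaves of finite rank on $S$. Since $G$ is a cover of $S$ by hypothesis, $\alA=\odi S[G]$ is locally free of finite rank, and so are the internal Hom and tensor product sheaves I will need. First I would encode the datum of an $\odi T[G_T]$-coalgebra structure on $\alA_T$ as a finite list of $\odi T$-linear maps between pullbacks of locally free $\odi S$-modules of finite rank — concretely a ``multiplication'' $m\colon\alA_T\otimes\alA_T\arr\alA_T$ and a ``unit'' $u\colon\odi T\arr\alA_T$ (or, dually, a comultiplication and counit according to the convention), with no axioms yet imposed.

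Each such map is a $T$-point of the $\WW$ of a locally free $\odi S$-module of finite rank, and the product
\[
Y=\WW(\Homsh_{\odi S}(\alA\otimes_{S}\alA,\alA))\times_{S}\WW(\alA)
\]
is therefore smooth, affine and finitely presented over $S$ by the fact recalled just before the statement, with $Y(T)$ parametrizing unconstrained pairs $(m,u)$. Next I would impose the axioms. Associativity, commutativity and the unit axiom for $(m,u)$, together with compatibility of $m$ and $u$ with the fixed $\odi T[G_T]$-comodule structure on $\alA_T$ that is pulled back from $S$, each amount to an equality between two $\odi T$-linear morphisms between pullbacks of locally free sheaves of finite rank, or equivalently the vanishing of a single section of such a sheaf. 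The corresponding locus in $Y$ is the fibre product of a morphism $Y\arr\WW(\shG)$ with the zero section $S\arr\WW(\shG)$, and is therefore closed. Intersecting these finitely many closed subschemes of $Y$ produces a closed subscheme that represents $X_G$; being closed inside an affine $S$-scheme of finite presentation, it is itself affine and of finite presentation over $S$.

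The construction is entirely routine; the only point worth paying attention to is that the $G$-equivariance condition is genuinely a \emph{closed} condition (not merely locally closed), but this is immediate because both sides of the equivariance equation are sections of the same locally free sheaf pulled back from $S$, so there is no need to remove any open subscheme. I therefore do not expect any substantial obstacle — the statement is a purely formal consequence of the local freeness of $\alA$ and of the representability of $\WW$ of locally free sheaves of finite rank.
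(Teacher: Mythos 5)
Your proof is correct and follows essentially the same route as the paper: both realize $X_G$ inside an affine, finitely presented $S$-scheme parametrizing unconstrained pairs $(m,e)$ built from $\WW$ of locally free sheaves, and cut it out by the polynomial (hence closed, locally finitely many) conditions expressing the algebra axioms and compatibility with the comodule structure $\mu$. The only cosmetic difference is that the paper first notes that $e$ is forced to be an isomorphism onto $\alA^{G}=\odi T$ and accordingly takes $\Gm$ as the second factor, whereas you keep the full $\WW(\alA)$ and let the invertibility of the unit fall out automatically; both variants give the claim.
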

\begin{proof}
Denote by $m_{G}\colon\odi{}[G]\otimes\odi{}[G]\arr\odi{}[G]$ the
multiplication map of $\odi{}[G]$. Let also $T$ be a scheme over
$S$. An element of $X_{G}(T)$ is given by maps
\[
\alA_{T}\otimes\alA_{T}\arrdi m\alA_{T}\comma\odi T\arrdi e\alA_{T}
\]
for which $\alA$ becomes a sheaf of algebras with multiplication
$m$ and identity $e(1)$ and such that $\mu$ is a homomorphism of
algebras over $\odi T$. In particular $e$ has to be an isomorphism
onto $\alA^{G}=\odi T$. Therefore we have an inclusion $X_{G}\subseteq\Homsh(\WW(\alA\otimes\alA),\WW(\alA))\times\mathbb{G}_{m}$
which is locally scheme-theoretically defined by the vanishing of
certain polynomials.\end{proof}
\begin{prop}
$\Autsh^{G}\WW(\alA)$ is a smooth group scheme finitely presented
over $S$.\end{prop}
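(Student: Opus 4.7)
The plan is to realize $\Autsh^{G}\WW(\alA)$ as an open subscheme of a scheme of the form $\WW(\shF)$ for $\shF$ a locally free sheaf of finite rank on $S$, and then conclude by the known smoothness/finite presentation of such schemes.

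First, I interpret the functor. For a scheme $T/S$, a section of $\Autsh^{G}\WW(\alA)$ over $T$ is an $\odi{T}[G_{T}]$-comodule automorphism of $\alA_{T}$, or equivalently a $G_{T}$-equivariant $\odi{T}$-linear automorphism of $\WW(\alA)_{T}$. This functor sits as a subfunctor of the endomorphism functor
\[
\Endsh^{G}\WW(\alA)\colon T\longmapsto \End^{\odi{T}[G_{T}]}(\alA_{T}).
\]
The automorphism condition (invertibility of the underlying $\odi{T}$-linear map) is open inside the endomorphism functor, cut out on affine opens of $S$ where $\alA$ is free by non-vanishing of the determinant of the endomorphism regarded as a matrix. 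So once I control $\Endsh^{G}\WW(\alA)$, taking an open subscheme will preserve smoothness, affineness and finite presentation.

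The heart of the argument is the identification
\[
\Endsh^{G}\WW(\alA)\;\simeq\;\WW(\alA^{\vee}),\qquad \alA^{\vee}=\Homsh_{\odi{S}}(\alA,\odi{S}).
\]
This is a manifestation of the cofree--forgetful adjunction for comodules over the coalgebra $\alA$: for every quasi-coherent sheaf $\shN$ on $S$ and every $\alA$-comodule $\shM$ one has a natural isomorphism
\[
\Hom^{\alA}(\shM,\alA\otimes\shN)\;\simeq\;\Hom_{\odi{S}}(\shM,\shN),\qquad f\longmapsto(\epsilon\otimes\id)\circ f,
\]
where $\epsilon$ is the counit of $\alA$. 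Specializing to $\shM=\alA$, $\shN=\odi{S}$ and using that the comodule structure on $\alA\otimes\odi{S}=\alA$ induced from $\shN=\odi{S}$ coincides with the regular one, I obtain $\End^{G}(\alA)\simeq\alA^{\vee}$. Naturality of this adjunction in $S$ gives the same statement after arbitrary base change $T\to S$, which upgrades the isomorphism to one of fppf sheaves.

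Finally, because $G$ is a cover of $S$, the sheaf $\alA$, and therefore $\alA^{\vee}$, is locally free of finite rank over $\odi{S}$; hence $\WW(\alA^{\vee})$ is smooth, affine and finitely presented over $S$. The open subscheme $\Autsh^{G}\WW(\alA)\subseteq\WW(\alA^{\vee})$ inherits all three properties, and composition of comodule automorphisms equips it with its group scheme structure. The only genuinely non-formal step is the comodule-theoretic identification in the middle paragraph; once that is in place, the rest is routine.
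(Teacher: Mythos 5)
Your proof is correct and is essentially the paper's argument: the isomorphism $\Endsh^{G}\WW(\alA)\simeq\WW(\duale{\odi S[G]})$ you obtain from the cofree--forgetful adjunction is realized in the paper by the explicit mutually inverse maps $\phi\mapsto\varepsilon\circ\phi$ and $f\mapsto(f\otimes\id)\circ\Delta$, which are exactly the counit/coaction maps of that adjunction specialized to $\shM=\alA$, $\shN=\odi S$. The remaining steps (openness of the automorphism locus, local freeness of $\alA^{\vee}$) match the paper as well.
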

\begin{proof}
The morphisms   \[   \begin{tikzpicture}[xscale=2.5,yscale=-0.6]     \node (A0_0) at (0, 0) {$\varepsilon \circ \phi$};     \node (A0_1) at (1, 0) {$\phi$};     \node (A1_0) at (0, 1) {$\duale{\odi{S}[G]}$};     \node (A1_1) at (1, 1) {$\Endsh^G \alA$};     \node (A2_0) at (0, 2) {$f$};     \node (A2_1) at (1, 2) {$(f \otimes \id) \circ \Delta$};     \path (A1_0) edge [->]node [auto] {$\scriptstyle{}$} (A1_1);     \path (A2_0) edge [|->,gray]node [auto] {$\scriptstyle{}$} (A2_1);     \path (A0_1) edge [|->,gray]node [auto] {$\scriptstyle{}$} (A0_0);   \end{tikzpicture}   \] 
where $\Delta$ and $\varepsilon$ are respectively the co-multiplication
and the co-unity of $\odi S[G]$, are one the inverse of the other.
In particular we obtain an isomorphism $\Endsh^{G}\WW(\alA)\simeq\WW(\duale{\odi S[G]})$,
so that $\Endsh^{G}\WW(\alA)$ and its open subscheme $\Autsh^{G}\WW(\alA)$
are smooth and finitely presented over $S$.\end{proof}
\begin{rem}
$\Autsh^{G}\WW(\alA)$ acts on $X_{G}$ in the following way. Given
a scheme $T$ over $S$, a $G$-equivariant automorphism $f\colon\alA_{T}\arr\alA_{T}$
and $(m,e)\in X_{G}(T)$ we can set $f(m,e)$ for the unique structure
of sheaf of algebras on $\alA_{T}$ such that $f\colon(\alA_{T},m,e)\arr(\alA_{T},f(m,e))$
is an isomorphism of $\odi T$-algebras.\end{rem}
\begin{prop}
\label{pro:GCov is a quotient stack}The natural map $X_{G}\arrdi{\pi}\GCov$
is an $\Autsh^{G}\WW(\alA)$-torsor, i.e. 
\[
\GCov\simeq[X_{G}/\Autsh^{G}\WW(\alA)]
\]
\end{prop}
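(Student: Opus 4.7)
The plan is to exhibit the quotient presentation directly: construct the natural functor $\pi\colon X_G\to\GCov$, observe that it is $\Autsh^G\WW(\alA)$-invariant, and then construct a quasi-inverse to the induced map $[X_G/\Autsh^G\WW(\alA)]\to\GCov$ by attaching to every $G$-cover a canonical torsor of trivializations. Concretely, given $(m,e)\in X_G(T)$, the sheaf of algebras $(\alA_T,m,e)$ comes with the $G$-comodule structure $\mu$ built into the definition of $X_G$, so $\pi(m,e):=\Spec(\alA_T,m,e)\to T$ is tautologically a $G$-cover: the underlying $\odi T[G_T]$-comodule is $\alA_T$, hence even globally (not just fppf-locally) isomorphic to the regular representation. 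By the very way the action of $\Autsh^G\WW(\alA)$ on $X_G$ was defined in the preceding remark, any section $f$ gives an $\odi T$-algebra isomorphism $(\alA_T,m,e)\to(\alA_T,f\cdot(m,e))$; thus $\pi$ factors through the quotient stack.

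Next I would write down a quasi-inverse $\GCov\to[X_G/\Autsh^G\WW(\alA)]$. For $\chi=(X=\Spec\alB\to T)\in\GCov(T)$ define the sheaf on $(\Sch/T)_{\mathrm{fppf}}$
\[
\shP_{\chi}(U)=\{\sigma\colon\alB_{|U}\arrdi\sim\alA_U \text{ isomorphism of }\odi U[G_U]\text{-comodules}\}.
\]
By the very definition of a $G$-cover, $\shP_\chi$ is fppf-locally non-empty. The group $\Autsh^G\WW(\alA)$ acts on $\shP_\chi$ by post-composition, and this action is free and transitive on sections, since any two comodule trivializations differ by a unique $G$-equivariant $\odi U$-module automorphism of $\alA_U$. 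Hence $\shP_\chi$ is an $\Autsh^G\WW(\alA)$-torsor over $T$. There is a canonical equivariant morphism $\shP_\chi\to X_G$ sending a local trivialization $\sigma$ to the pair $(m_\sigma,e_\sigma)$ obtained by transporting the multiplication and unit of $\alB$ along $\sigma$; the equivariance is exactly the compatibility encoded in the definition of the $\Autsh^G\WW(\alA)$-action on $X_G$. This gives a morphism of stacks $\GCov\to[X_G/\Autsh^G\WW(\alA)]$.

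Finally I would verify that the two functors are mutually quasi-inverse. One direction is immediate: for $(m,e)\in X_G(T)$ the associated cover is $\alB=(\alA_T,m,e)$, for which the identity is a global section of $\shP_\chi$, so $\shP_\chi$ is canonically the trivial torsor $\Autsh^G\WW(\alA)_T$ and the map to $X_G$ picks out $(m,e)$. For the other direction, starting with a torsor $P$ and an equivariant map $P\to X_G$, the descent datum defined by the universal algebra structure $(\alA_P,m_P,e_P)$ along the fppf cover $P\to T$ recovers a $G$-cover $\alB$ with $\shP_\chi\simeq P$ by construction; both isomorphisms are natural in $T$. The only non-formal point, and what I would identify as the main (mild) obstacle, is showing that $\shP_\chi$ is really a torsor: freeness is the assertion that the only $G$-equivariant $\odi T$-module self-isomorphism of $\alA_T$ fixing the algebra structure and coming from another trivialization is the identity, which follows because $\Autsh^G\WW(\alA)$ acts by changing trivializations rather than fixing them; local non-emptiness and transitivity are exactly the content of Definition \ref{def:Gcovers}.
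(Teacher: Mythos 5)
Your proposal is correct and is essentially the paper's argument in a different packaging: the sheaf of comodule trivializations $\shP_{\chi}$ is canonically the fiber product $T\times_{\GCov}X_{G}$, and your verification that $\Autsh^{G}\WW(\alA)$ acts on it simply transitively (fppf-local non-emptiness coming straight from Definition \ref{def:Gcovers}) is exactly the paper's identification $\Autsh^{G}\WW(\alA_U)\simeq P$, $h\mapsto h(m,e)$, after its reduction to covers pulled back from $X_G$. The only blemish is your closing remark on freeness, which overcomplicates a triviality: $f\circ\sigma=\sigma$ forces $f=\id$ because $\sigma$ is invertible, with no reference to the algebra structure needed.
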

\begin{proof}
Consider a cartesian diagram   \[   \begin{tikzpicture}[xscale=1.8,yscale=-1.2]     \node (A0_0) at (0, 0) {$P$};     \node (A0_1) at (1, 0) {$X_G$};     \node (A1_0) at (0, 1) {$U$};     \node (A1_1) at (1, 1) {$\GCov$};     \path (A0_0) edge [->] node [auto] {$\scriptstyle{}$} (A0_1);     \path (A1_0) edge [->] node [auto,swap] {$\scriptstyle{f}$} (A1_1);     \path (A0_1) edge [->] node [auto] {$\scriptstyle{\pi}$} (A1_1);     \path (A0_0) edge [->] node [auto] {$\scriptstyle{}$} (A1_0);   \end{tikzpicture}   \] 
where $U$ is a scheme and $f\colon Y\arr U$ is a $G$-cover. We
want to prove that $P$ is an $\Autsh^{G}\WW(\alA)$ torsor over $U$
and that the map $P\arr X_{G}$ is equivariant. Since $\pi$ is an
fppf epimorphism, we can assume that $f$ comes from $X_{G}$, i.e.
$f_{*}\odi Y=\alA_{U}$ with multiplication $m$ and neutral element
$e$. It is now easy to prove that   \[   \begin{tikzpicture}[xscale=2.1,yscale=-0.5]     \node (A0_0) at (0, 0) {$\Autsh^G \WW(\alA_U)$};     \node (A0_1) at (1, 0) {$P$};     \node (A1_0) at (0, 1) {$h$};     \node (A1_1) at (1, 1) {$h(m,e)$};     \path (A0_0) edge [->]node [auto] {$\scriptstyle{\simeq}$} (A0_1);     \path (A1_0) edge [|->,gray]node [auto] {$\scriptstyle{}$} (A1_1);   \end{tikzpicture}   \] 
is a bijection and that all the other claims hold.
\end{proof}
Using above propositions we can conclude that:
\begin{thm}
\label{cor:GCov is algebraic}The stack $\GCov$ is algebraic and
finitely presented over $S$.
\end{thm}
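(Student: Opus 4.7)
The plan is to deduce this statement directly from the three preceding propositions, which together furnish a presentation of $\GCov$ as a quotient stack by a smooth group scheme. By Proposition \ref{pro:GCov is a quotient stack} we have an equivalence $\GCov \simeq [X_G/\Autsh^G\WW(\alA)]$, so it suffices to check that this quotient stack is algebraic and of finite presentation over $S$. The two properties will come essentially for free from what has already been proved.

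First I would collect the relevant inputs already in hand: $X_G$ is an affine scheme of finite presentation over $S$, and $H := \Autsh^G\WW(\alA)$ is a smooth $S$-group scheme of finite presentation acting on $X_G$. These are precisely the conclusions of the two propositions immediately preceding Proposition \ref{pro:GCov is a quotient stack}. Thus we are reduced to a standard fact: if a smooth, finitely presented group scheme $H/S$ acts on an affine, finitely presented $S$-scheme $X$, then $[X/H]$ is algebraic and of finite presentation over $S$.

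Next I would unpack why this standard fact applies. Proposition \ref{pro:GCov is a quotient stack} tells us that the natural map $X_G \to [X_G/H]$ is an $H$-torsor, hence representable by affine morphisms, faithfully flat and smooth (because $H$ is smooth and fppf-locally the map is the projection $H \times X_G \to X_G$). In particular this map is a smooth atlas, exhibiting the quotient as an Artin stack. The diagonal of $[X_G/H]$ is representable by the affine, finitely presented morphism $H \times_S X_G \to X_G \times_S X_G$, $(h,x) \mapsto (x,hx)$, so all the representability and quasi-compactness axioms are verified.

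For finite presentation, I would note that $X_G$ is of finite presentation over $S$ by hypothesis and provides a smooth cover of $[X_G/H]$, while the diagonal is of finite presentation as just observed; this is the definition of finite presentation for an algebraic stack. There is no real obstacle here: the only thing to confirm is that the quotient-stack formalism behaves as expected in the fppf setting, which is standard (cf.\ Laumon--Moret-Bailly). The whole argument is thus a short formal consequence of the three preceding propositions.
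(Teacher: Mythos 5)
Your proof is correct and follows exactly the route the paper intends: the theorem is stated there as an immediate consequence of the three preceding propositions ($X_G$ affine and finitely presented, $\Autsh^G\WW(\alA)$ smooth and finitely presented, and $\GCov\simeq[X_G/\Autsh^G\WW(\alA)]$), which is precisely the argument you spell out.
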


\section{\label{sec:stack Xphi}The stack $\stX_{\phi}$.}

In the following sections we will study the stack $\GCov$ when $G=\Di M$,
the diagonalizable group of a finite abelian group $M$. The structure
of this stack and of some of its substacks is in somehow special and
in this section we will provide general constructions and properties
that will be used later. Given a monoid map $T_{+}\arrdi{\phi}\Z^{r}$,
we will associate a stack $\stX_{\phi}$ whose objects are sequences
of invertible sheaves with additional data and we will study particular
'parametrization' of these objects, defined by a map of stack $\stF_{\underline{\E}}\arrdi{\pi_{\underline{\E}}}\stX_{\phi}$,
where $\stF_{\underline{\E}}$ will be a 'nice' stack.

In this section we will consider given a commutative monoid $T_{+}$
together to a monoid map $\phi\colon T_{+}\arr\Z^{r}$. 
\begin{defn}
We define the stack $\stX_{\phi}$ over $\Z$ as follows.
\begin{itemize}
\item \emph{Objects. }An object over a scheme $S$ is a pair $(\underline{\shL},a)$
where:

\begin{itemize}
\item $\underline{\shL}=\shL_{1},\dots,\shL_{r}$ are invertible sheaves
on S;
\item $T_{+}\arrdi a\ISym\underline{\shL}$ is an additive map such that
$a(t)\in\underline{\shL}^{\phi(t)}$ for any $t\in T_{+}$.
\end{itemize}
\item \emph{Arrows. }An isomorphism $(\underline{\shL},a)\arrdi{\underline{\sigma}}(\underline{\shL}',a')$
of objects over $S$ is given by a sequence $\underline{\sigma}=\sigma_{1},\dots,\sigma_{r}$
of isomorphisms $\sigma_{i}\colon\shL_{i}\arrdi{\simeq}\shL_{i}'$
such that 
\[
\underline{\sigma}^{\phi(t)}(a(t))=a'(t)\text{ for any }t\in T_{+}
\]

\end{itemize}
\end{defn}
\begin{example}
Let $f_{1},\dots,f_{s},g_{1},\dots,g_{t}\in\Z^{r}$ and consider the
stack $\stX_{\underline{f},\underline{g}}$ of invertible sheaves
$\shL_{1},\dots,\shL_{r}$ with maps $\odi{}\arr\underline{\shL}^{f_{i}}\quad\text{and}\quad\odi{}\arrdi{\simeq}\underline{\shL}^{g_{j}}$.
If $T_{+}=\N^{s}\times\Z^{t}$ and $\phi\colon T_{+}\arr\Z^{r}$ is
the map given by the matrix $(f_{1}|\cdots|f_{s}|g_{1}|\cdots|g_{t})$
then $\stX_{\underline{f},\underline{g}}=\stX_{\phi}$.\end{example}
\begin{notation}
We set
\[
\Z[T_{+}]=\Z[x_{t}]_{t\in T_{+}}/(x_{t}x_{t'}-x_{t+t'},x_{0}-1)
\]
 and $\odi S[T_{+}]=\Z[T_{+}]\otimes_{\Z}\odi S$. The scheme $\Spec\odi S[T_{+}]$
over $S$ represents the functor that associates to any scheme $U/S$
the set of additive map $T_{+}\arr(\odi U,*)$. $\Di{\Z^{r}}$ acts
on $\Spec\Z[T_{+}]$ by the graduation $\deg t=\phi(t)$.\end{notation}
\begin{prop}
\label{pro:atlas for the stack associated to a monoid map}Set $X=\Spec\Z[T_{+}]$.
The choice $\shL_{i}=\odi X$ and   \[   \begin{tikzpicture}[xscale=1.8,yscale=-0.6]     \node (A0_0) at (0, 0) {$\underline{\shL}^{\phi(t)}$};     \node (A0_1) at (1, 0) {$\odi{X}$};     \node (A1_0) at (0, 1) {$a(t)$};     \node (A1_1) at (1, 1) {$x_t$};     \path (A0_0) edge [->] node [auto] {$\scriptstyle{\simeq}$} (A0_1);     \path (A1_0) edge [<->,gray] node [auto] {$\scriptstyle{}$} (A1_1);   \end{tikzpicture}   \] induces
a smooth epimorphism $X\arr\stX_{\phi}$ such that $\stX_{\phi}\simeq[X/\Di{\Z^{r}}]$.
In particular $\stX_{\phi}$ is an algebraic stack of finite type
over $\Z$.\end{prop}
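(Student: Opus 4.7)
The plan is to produce the atlas map $X \arr \stX_\phi$ explicitly, check it is a $\Di{\Z^r}$-torsor, and conclude using standard properties of torsors. First I would verify the map is well-defined: by the universal property of $\Spec\Z[T_+]$, a $U$-point is exactly an additive map $T_+ \arr (\odi U,\cdot)$. Taking $\shL_i = \odi X$ trivial and using the canonical identification $\underline{\shL}^{\phi(t)} \simeq \odi X$, the tautological map $a(t) = x_t$ produces an object of $\stX_\phi(X)$, giving a $1$-morphism $X \arr \stX_\phi$. Next I would describe the $\Di{\Z^r}$-action: a $U$-point of $\Di{\Z^r}$ is a character $\chi\colon \Z^r \arr \Gm(U)$, and it acts on an additive map $a$ by $(\chi\cdot a)(t) = \chi(\phi(t))\, a(t)$, which is precisely the grading $\deg t = \phi(t)$ on $\Z[T_+]$ prescribed in the notation just before the proposition.

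The core step is to show that $X \arr \stX_\phi$ is a $\Di{\Z^r}$-torsor. For essential surjectivity fppf-locally, given $(\underline{\shL},a) \in \stX_\phi(S)$, pass to a Zariski cover of $S$ on which every $\shL_i$ trivializes; once trivializations are chosen, the data of $a$ becomes an additive map $T_+ \arr \odi S$, i.e.\ an $S$-point of $X$ whose image in $\stX_\phi$ is isomorphic to $(\underline{\shL},a)$. For the automorphism identification, given two lifts $a, a' \in X(S)$ of the same object, an isomorphism in $\stX_\phi$ between them is by definition a tuple $\underline{\sigma} = (\sigma_i) \in \Gm(S)^r = \Di{\Z^r}(S)$ satisfying $\underline{\sigma}^{\phi(t)} a(t) = a'(t)$, which is exactly the condition $a' = \underline{\sigma} \cdot a$. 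This gives a canonical isomorphism
\[
X \times_{\stX_\phi} X \simeq X \times \Di{\Z^r},\qquad (a,a',\underline{\sigma}) \longmapsto (a,\underline{\sigma}),
\]
so $X \arr \stX_\phi$ is indeed a $\Di{\Z^r}$-torsor and $\stX_\phi \simeq [X/\Di{\Z^r}]$.

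To conclude algebraicity and finite type, I note that $\Di{\Z^r} \simeq \Gm^r$ is smooth and affine of finite type over $\Z$, so the torsor $X \arr \stX_\phi$ is a smooth atlas and the diagonal of $\stX_\phi$ is representable, affine, and of finite type. Since $T_+$ is finitely generated (as assumed for the stacks $\stX_\phi$ throughout this section), $\Z[T_+]$ is a finitely generated $\Z$-algebra, so $X$ and hence $\stX_\phi$ are of finite type over $\Z$.

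The main technical point I expect to require care is the identification of the automorphism scheme of a lift with $\Di{\Z^r}$: one must check that every arrow in $\stX_\phi$ between two objects with trivialized line bundles is induced by a character of $\Z^r$, with no extraneous isomorphisms coming from the compatibility condition $\underline{\sigma}^{\phi(t)} a(t) = a'(t)$. This reduces to the observation that a sequence of scalars $(\sigma_i) \in \Gm(S)^r$ is allowed to be arbitrary precisely because $\phi$ specifies in which degree each $a(t)$ transforms, which is exactly how the grading defines the torus action on $\Z[T_+]$.
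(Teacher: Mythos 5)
Your proof is correct, and it takes the complementary route to the paper's. You exhibit the tautological map $X\arr\stX_{\phi}$ and verify directly that it is a $\Di{\Z^{r}}$-torsor: essential surjectivity after Zariski-locally trivializing the $\shL_{i}$, and the identification $X\times_{\stX_{\phi}}X\simeq X\times\Di{\Z^{r}}$, which holds because an arrow between two objects with trivialized line bundles is by definition a tuple $\underline{\sigma}\in(\odi S^{*})^{r}=\Di{\Z^{r}}(S)$ with $\underline{\sigma}^{\phi(t)}a(t)=a'(t)$, i.e.\ exactly the condition $a'=\underline{\sigma}\cdot a$ for the grading action. The paper argues in the opposite direction: it describes the objects of $[X/\Di{\Z^{r}}](U)$ intrinsically --- a $\Di{\Z^{r}}$-torsor over $U$ is the same as $r$ invertible sheaves with total space $\Spec\ISym\underline{\shL}$, and a $\Di{\Z^{r}}$-equivariant map $\Spec\ISym\underline{\shL}\arr\Spec\Z[T_{+}]$ is the same as an additive map $T_{+}\arr\ISym\underline{\shL}$ landing in the degrees prescribed by $\phi$ --- which gives the equivalence $[X/\Di{\Z^{r}}]\simeq\stX_{\phi}$ in one stroke, after which one only checks that the composite $X\arr[X/\Di{\Z^{r}}]\arr\stX_{\phi}$ is the stated map. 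The content is the same; your version spells out the descent data and the automorphism computation that the paper leaves implicit, while the paper's packaging avoids choosing trivializations and makes the identification manifestly functorial in $U$. One small remark: the finite generation of $T_{+}$, which you rightly flag as needed for the finite-type claim, is only imposed by the paper a few paragraphs later (in the notation preceding the discussion of $\stZ_{\phi}$), so your parenthetical is the correct reading of the statement.
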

\begin{proof}
It's enough to note that an object of $[X/\Di{\Z^{r}}](U)$ is given
by invertible sheaves $\shL_{1},\dots,\shL_{r}$ with a $\Di{\Z^{r}}$-equivariant
map $\Spec\ISym\underline{\shL}\arr\Spec\Z[T_{+}]$ which exactly
corresponds to an additive map $T_{+}\arr\ISym\underline{\shL}$ as
in the definition of $\stX_{\phi}$. It's easy to check that the map
$X\arr[X/\Di{\Z^{r}}]\arr\stX_{\phi}$ is the same defined in the
statement.\end{proof}
\begin{rem}
\label{rem:description of isomorphism for local objects of X phi}Given
a map $U\arrdi aX=\Spec\Z[T_{+}]$, i.e. a monoid map $T_{+}\arrdi a\odi U$,
the induced object $U\arrdi aX\arr\stX_{\phi}$ is the pair $(\underline{\shL},\tilde{a})$
where $\shL_{i}=\odi U$ and for any $t\in T_{+}$   \[   \begin{tikzpicture}[xscale=1.8,yscale=-0.5]     \node (A0_0) at (0, 0) {$\odi{U}$};     \node (A0_1) at (1, 0) {$\underline{\shL}^{\phi(t)}$};     \node (A1_0) at (0, 1) {$a(t)$};     \node (A1_1) at (1, 1) {$\tilde{a}(t)$};     \path (A0_0) edge [->]node [auto] {$\scriptstyle{\simeq}$} (A0_1);     \path (A1_0) edge [|->,gray]node [auto] {$\scriptstyle{}$} (A1_1);   \end{tikzpicture}   \]
We will denote by $a$ also the object $(\underline{\shL},\tilde{a})\in\stX_{\phi}(U)$.

Given two elements $a,b\colon T_{+}\arr\odi U\in\stX_{\phi}(U)$ we
have
\[
\Iso_{U}(a,b)=\{\sigma_{1},\dots,\sigma_{r}\in\odi U^{*}\;|\;\underline{\sigma}^{\phi(t)}a(t)=b(t)\;\forall t\in T_{+}\}
\]
\end{rem}
\begin{lem}
\label{lem:morphisms of stack Xphi} Consider a commutative diagram
  \[   \begin{tikzpicture}[xscale=1.7,yscale=-1.2]     \node (A0_0) at (0, 0) {$T_+$};     \node (A0_1) at (1, 0) {$T_+'$};     \node (A1_0) at (0, 1) {$\Z^r$};     \node (A1_1) at (1, 1) {$\Z^s$};     \path (A0_0) edge [->]node [auto] {$\scriptstyle{h}$} (A0_1);     \path (A0_0) edge [->]node [auto,swap] {$\scriptstyle{\phi}$} (A1_0);     \path (A0_1) edge [->]node [auto] {$\scriptstyle{\psi}$} (A1_1);     \path (A1_0) edge [->]node [auto] {$\scriptstyle{g}$} (A1_1);   \end{tikzpicture}   \] 
where $T{}_{+},T{}_{+}'$ are commutative  monoids and $\phi\comma\psi\comma h\comma g$
are additive maps. Then we have a $2$-commutative diagram \begin{align}\label{diag:commutativity fro stack X}
\begin{tikzpicture}[xscale=3.7,yscale=-0.7]     \node (A0_0) at (0, 0) {$\Spec \Z[T_+']$};     \node (A0_1) at (1, 0) {$\Spec \Z[T_+]$};     \node (A2_0) at (0, 2) {$\stX_\psi$};     \node (A2_1) at (1, 2) {$\stX_\phi$};     \node (A3_0) at (0, 3) {$(\underline{\shL},T_+'\arrdi{a}\ISym \underline{\shL})$};     \node (A3_1) at (1, 3) {$(\underline{\shM},T_+ \arrdi{b}\ISym \underline{\shM})$};     \path (A0_0) edge [->] node [auto] {$\scriptstyle{h^*}$} (A0_1);     \path (A0_1) edge [->] node [auto] {$\scriptstyle{}$} (A2_1);     \path (A0_0) edge [->] node [auto] {$\scriptstyle{}$} (A2_0);     \path (A2_0) edge [->] node [auto] {$\scriptstyle{\Lambda}$} (A2_1);     \path (A3_0) edge [|->,gray] node [auto] {$\scriptstyle{}$} (A3_1);   \end{tikzpicture}   
\end{align} where, for $i=1,\dots,r$, $\shM_{i}=\underline{\shL}^{g(e_{i})}$
and $b$ is the unique map such that \begin{center}
  \[   \begin{tikzpicture}[xscale=1.2,yscale=-1.2]     
\node (A0_0) at (0, 0) {$T_+$};     
\node (A0_1) at (1, 0) {};     
\node (A0_2) at (2, 0) {$\ISym \underline{\shM}$};     
\node (A0_3) at (3, 0) {$\underline{\shM}^v$};     
\node (A0_4) at (4, 0) {$\underline{\shL}^{g(v)}$};     
\node (A1_0) at (0, 1) {$T_+'$};     
\node (A1_2) at (2, 1) {$\ISym \underline{\shL}$};     
\node (A1_3) at (3, 1) {$\underline{\shL}^{g(v)}$};    
\node (simeq) at (3.43, 0) {$\simeq$}; 
\path (A1_0) edge [->] node [auto,swap] {$\scriptstyle{a}$} (A1_2);     
\path (A0_3) edge [->] node [auto] {$\scriptstyle{}$} (A1_3);     
\path (A0_2) edge [->] node [auto] {$\scriptstyle{}$} (A1_2);     
\path (A0_4) edge [->] node [auto] {$\scriptstyle{\id}$} (A1_3);     
\path (A0_0) edge [->] node [auto] {$\scriptstyle{b}$} (A0_2);     
\path (A0_0) edge [->] node [auto,swap] {$\scriptstyle{h}$} (A1_0);   
\end{tikzpicture}   \] 
\par\end{center}\end{lem}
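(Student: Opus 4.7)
My plan is to construct the functor $\Lambda$ explicitly, check it is well-defined, and then verify that the square 2-commutes by chasing definitions through the atlas $\Spec\Z[T_+']\to\stX_\psi$.

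First I would \emph{define} $\Lambda$ on objects: given $(\underline{\shL},a)\in\stX_\psi(U)$, set $\shM_i=\underline{\shL}^{g(e_i)}$. For any $v\in\Z^r$ there is a canonical isomorphism $\underline{\shM}^v=\bigotimes_i(\underline{\shL}^{g(e_i)})^{\otimes v_i}\simeq \underline{\shL}^{g(v)}$ coming from the tensor structure. For $t\in T_+$, the commutativity $g\circ\phi=\psi\circ h$ gives $g(\phi(t))=\psi(h(t))$, so $a(h(t))\in\underline{\shL}^{\psi(h(t))}=\underline{\shL}^{g(\phi(t))}\simeq\underline{\shM}^{\phi(t)}$; I define $b(t)$ to be the image of $a(h(t))$ under this canonical isomorphism. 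Additivity of $b$ follows from additivity of $h$ and $a$ together with the obvious compatibility of the canonical isomorphisms $\underline{\shM}^{v+w}\simeq\underline{\shM}^v\otimes\underline{\shM}^w$ with $\underline{\shL}^{g(v+w)}\simeq\underline{\shL}^{g(v)}\otimes\underline{\shL}^{g(w)}$.

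Next, on \emph{arrows}: an isomorphism $\underline{\sigma}\colon(\underline{\shL},a)\arr(\underline{\shL}',a')$ induces $\tau_i=\underline{\sigma}^{g(e_i)}\colon\shM_i\arr\shM'_i$, and for any $v\in\Z^r$ the induced map $\underline{\tau}^v$ corresponds, through the canonical isomorphisms above, to $\underline{\sigma}^{g(v)}$. Applying this to $v=\phi(t)$ and using $\underline{\sigma}^{\psi(h(t))}a(h(t))=a'(h(t))$ (which is just the compatibility of $\underline{\sigma}$ with $a,a'$ evaluated at $h(t)$) yields $\underline{\tau}^{\phi(t)}b(t)=b'(t)$, so $\underline{\tau}$ is indeed a morphism in $\stX_\phi$. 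Functoriality is immediate from the construction.

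Finally I verify the \emph{2-commutativity} of \eqref{diag:commutativity fro stack X} by pulling back along the atlas. A map $U\arr\Spec\Z[T_+']$ is an additive $a\colon T_+'\arr\odi U$; by \ref{rem:description of isomorphism for local objects of X phi} the induced object in $\stX_\psi(U)$ is $(\shL_i=\odi U,\;\tilde a)$ with $\tilde a(t')$ equal to $a(t')\in\odi U$ under the canonical trivialisation $\underline{\shL}^{\psi(t')}\simeq\odi U$. Applying $\Lambda$ yields $\shM_i=\odi U$ (canonically, since $\underline{\shL}^{g(e_i)}\simeq\odi U$) and $b(t)$ equals $\tilde a(h(t))=a(h(t))\in\odi U$. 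Going around the other way, $h^*$ sends $a$ to $a\circ h\colon T_+\arr\odi U$, whose image in $\stX_\phi(U)$ is again $\shM_i=\odi U$ with the map $t\mapsto (a\circ h)(t)$. The two resulting objects of $\stX_\phi(U)$ coincide; naturality of this identification in $U$ gives the required 2-isomorphism of functors.

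The only real subtlety is bookkeeping of the canonical isomorphisms $\underline{\shM}^v\simeq\underline{\shL}^{g(v)}$ and their compatibility with tensor products and with the trivialisations used in \ref{rem:description of isomorphism for local objects of X phi}; everything else is a direct unwinding of definitions. I would therefore spend most of the write-up fixing these canonical isomorphisms explicitly once, then invoke them throughout.
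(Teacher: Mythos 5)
Your proposal is correct and follows essentially the same route as the paper: define $\Lambda$ via the canonical isomorphisms $\underline{\shM}^{v}\simeq\underline{\shL}^{g(v)}$, then verify the $2$-commutativity by unwinding both composites on objects coming from the atlas $\Spec\Z[T_{+}']$ (the paper does this on the universal object with explicit coordinate rings, which is equivalent to your test on arbitrary $U$-points). The bookkeeping of canonical isomorphisms you flag is exactly the content the paper dismisses as "an easy computation".
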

\begin{proof}
An easy computation shows that there is a canonical isomorphism $\shM^{v}\simeq\shL^{g(v)}$
and so $b(t)$ corresponds under this isomorphism to $a(h(t))\in\underline{\shL}^{\psi(h(t))}=\underline{\shL}^{g(\phi(t))}\simeq\underline{\shM}^{\phi(t)}$.
So the functor $\Lambda$ is well defined and we have only to check
the commutativity of the second diagram in the sentence. The map $\Spec\Z[T{}_{+}']\arr\Spec\Z[T{}_{+}]\arr\stX_{\phi}$
is given by trivial invertible sheaves and the additive map   \[   \begin{tikzpicture}[xscale=3.6,yscale=-0.7]     \node (A0_0) at (0.3, 0) {$T_+$};     \node (A0_1) at (1, 0) {$\Z[T_+][x_1,\dots,x_r]_{\prod_{i}x_{i}}$};     \node (A0_2) at (2, 0) {$\Z[T_+'][x_1,\dots,x_r]_{\prod_{i}x_{i}}$};     \node (A1_0) at (0.3, 1) {$t$};     \node (A1_1) at (1, 1) {$x_t x^{\phi(t)}$};     \node (A1_2) at (2, 1) {$x_{h(t)} x^{\phi(t)}$};     \path (A0_0) edge [->] node [auto] {$\scriptstyle{}$} (A0_1);     \path (A1_0) edge [|->,gray] node [auto] {$\scriptstyle{}$} (A1_1);     \path (A0_1) edge [->] node [auto] {$\scriptstyle{}$} (A0_2);     \path (A1_1) edge [|->,gray] node [auto] {$\scriptstyle{}$} (A1_2);   \end{tikzpicture}   \] 
Instead the map $\Spec\Z[T{}_{+}']\arr\stX_{\psi}\arr\stX_{\phi}$
is given by trivial invertible sheaves and the map $b$ that makes
the following diagram commutative   \[   \begin{tikzpicture}[xscale=2.0,yscale=-0.7]     \node (A0_0) at (0, 0) {$T_+$};     \node (A0_2) at (2, 0) {$\Z[T_+'][x_1,\dots,x_r]_{\prod_{i}x_{i}}$};     \node (A0_3) at (3, 0) {$x^v$};     \node (A2_0) at (0, 2) {$T_+'$};     \node (A2_2) at (2, 2) {$\Z[T_+'][y_1,\dots,y_s]_{\prod_{i}y_{i}}$};     \node (A2_3) at (3, 2) {$y^{g(v)}$};     \node (A3_0) at (0, 3) {$t$};     \node (A3_2) at (2, 3) {$x_t y^{\psi(t)}$};     \path (A0_3) edge [|->,gray] node [auto] {$\scriptstyle{}$} (A2_3);     \path (A0_0) edge [->] node [auto] {$\scriptstyle{b}$} (A0_2);     \path (A0_2) edge [->] node [auto] {$\scriptstyle{}$} (A2_2);     \path (A2_0) edge [->] node [auto] {$\scriptstyle{a}$} (A2_2);     \path (A0_0) edge [->] node [auto,swap] {$\scriptstyle{h}$} (A2_0);     \path (A3_0) edge [|->,gray] node [auto] {$\scriptstyle{}$} (A3_2);   \end{tikzpicture}   \] 
Since $x_{h(t)}x^{\phi(t)}$ is sent to $x_{h(t)}y^{g(\phi(t))}=x_{h(t)}y^{\psi(h(t))}=a(h(t))$
we find again $b(t)=x_{h(t)}x^{\phi(t)}$.\end{proof}
\begin{rem}
\label{rem: description of functors of Xphi on local objects}The
functor $\stX_{\psi}\arr\stX_{\phi}$ sends an element $a\colon T_{+}'\arr\odi U\in\stX_{\psi}(U)$
to the element $a\circ h\in\stX_{\phi}(U)$. Moreover, taking into
account the description given in \ref{rem:description of isomorphism for local objects of X phi},
if $a,b\colon T_{+}'\arr\odi U\in\stX_{\psi}(U)$ we have   \[   \begin{tikzpicture}[xscale=3.0,yscale=-0.6]     \node (A0_0) at (0, 0) {$\Iso_U(a,b)$};     \node (A0_1) at (1, 0) {$\Iso_U(a\circ h,b\circ h)$};     \node (A1_0) at (0, 1) {$\underline\sigma$};     \node (A1_1) at (1, 1) {$\underline{\sigma}^{g(e_1)},\dots,\underline{\sigma}^{g(e_r)}$};     \path (A0_0) edge [->] node [auto] {$\scriptstyle{}$} (A0_1);     \path (A1_0) edge [|->,gray] node [auto] {$\scriptstyle{}$} (A1_1);   \end{tikzpicture}   \]

\end{rem}

\subsection{The main irreducible component $\stZ_{\phi}$ of $\stX_{\phi}$.}
\begin{notation}
\label{not:notation for a monoid}A monoid will be called integral
if it satisfies the cancellative rule, i.e. 
\[
\forall a,b,c\ a+b=a+c\then b=c
\]
We will call $T$ the associated group and $T_{+}^{int}=\Imm(T_{+}\arr T)$
the associated integral monoid of $T_{+}$. This means that any monoid
map $T_{+}\arr S_{+}$, where $S_{+}$ is a group (integral monoid),
factors uniquely through $T$$(T_{+}^{int})$.

From now on $T_{+}$ will be a finitely generated monoid whose associated
group is a free $\Z$-module of finite rank. We will also call $\phi$
the induced map $T\arr\Z^{r}$, but with the convention that the stack
$\stX_{\phi}$ will always refer to the map $T_{+}\arr\Z^{r}$.\end{notation}
\begin{rem}
\label{lem:the domain monoid and group monoid associated to T +: ring}If
$D$ is a domain, then $\Spec D[T]$ is an open subscheme of $\Spec D[T_{+}]$,
while $\Spec D[T_{+}^{int}]$ is an irreducible component of it. In
particular we have\end{rem}
\begin{prop}
\label{cor:the domain monoid and group monoid associated to T +: stack}Let
$\hat{\phi}\colon T\arr\Z^{r}$ be the extension of $\phi$ and set
$\phi^{int}=\hat{\phi}_{|T_{+}^{int}}$. Then $\stB_{\phi}=\stX_{\hat{\phi}}\arr\stX_{\phi}$
is an open immersion, while $\stZ_{\phi}=\stX_{\phi^{int}}\arr\stX_{\phi}$
is a closed one. Moreover $\stZ_{\phi}$ is the reduced closed stack
associated to the closure of $\stB_{\phi}$, it is an irreducible
component of $\stX_{\phi}$ and 
\[
\stB_{\phi}\simeq[\Spec\Z[T]/\Di{\Z^{r}}]\text{ and }\stZ_{\phi}\simeq[\Spec\Z[T_{+}^{int}]/\Di{\Z^{r}}]
\]
\end{prop}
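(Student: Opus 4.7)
The plan is to deduce the proposition from the quotient-stack presentation $\stX_\phi \simeq [\Spec\Z[T_+]/\Di{\Z^r}]$ of Proposition~\ref{pro:atlas for the stack associated to a monoid map}, combined with the scheme-theoretic facts on monoid rings recorded in Remark~\ref{lem:the domain monoid and group monoid associated to T +: ring}.

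First I would apply Lemma~\ref{lem:morphisms of stack Xphi} with $g = \id_{\Z^r}$ to the two commutative squares
\[
\begin{array}{ccc}
T_+ & \longrightarrow & T \\
\phi\;\downarrow & & \downarrow\;\hat\phi \\
\Z^r & = & \Z^r
\end{array}
\qquad
\begin{array}{ccc}
T_+ & \longrightarrow & T_+^{int} \\
\phi\;\downarrow & & \downarrow\;\phi^{int} \\
\Z^r & = & \Z^r
\end{array}
\]
coming from the canonical monoid maps $T_+ \to T$ and $T_+ \to T_+^{int}$. This produces the morphisms $\stB_\phi \to \stX_\phi$ and $\stZ_\phi \to \stX_\phi$; by the $2$-commutative squares in the lemma, both morphisms are obtained, through the atlas $\Spec\Z[T_+] \to \stX_\phi$, by descending the $\Di{\Z^r}$-equivariant scheme maps $\Spec\Z[T] \to \Spec\Z[T_+]$ and $\Spec\Z[T_+^{int}] \to \Spec\Z[T_+]$. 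The quotient presentations $\stB_\phi \simeq [\Spec\Z[T]/\Di{\Z^r}]$ and $\stZ_\phi \simeq [\Spec\Z[T_+^{int}]/\Di{\Z^r}]$ are then obtained by applying Proposition~\ref{pro:atlas for the stack associated to a monoid map} directly to $\hat\phi$ and $\phi^{int}$.

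The open (resp.\ closed) immersion assertions then follow from Remark~\ref{lem:the domain monoid and group monoid associated to T +: ring} applied with $D = \Z$: the morphism $\Spec\Z[T] \hookrightarrow \Spec\Z[T_+]$ is an open immersion, while $\Spec\Z[T_+^{int}] \hookrightarrow \Spec\Z[T_+]$ is a closed immersion, and being an open (resp.\ closed) immersion descends along the $\Di{\Z^r}$-torsor atlas to the stack quotient. For the remaining assertions, the same remark identifies $\Spec\Z[T_+^{int}]$ as both the reduced closure of $\Spec\Z[T] \subseteq \Spec\Z[T_+]$ and an irreducible component of it; these two properties are $\Di{\Z^r}$-invariant and therefore transfer to the stack quotient, giving that $\stZ_\phi$ is the reduced closure of $\stB_\phi$ and an irreducible component of $\stX_\phi$.

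The one place where some care is needed is the identification, via Lemma~\ref{lem:morphisms of stack Xphi}, of the stack morphism $\stB_\phi \to \stX_\phi$ (and analogously for $\stZ_\phi$) with the descent of the corresponding $\Di{\Z^r}$-equivariant scheme map to stack quotients; this is essentially what the $2$-commutative square of the lemma encodes, together with the fact that $g = \id$ means the $\Di{\Z^r}$-actions on source and target agree (so there are no twisting issues in the descent). Once this identification is in place, every remaining step is a routine transfer of scheme-theoretic information through a smooth torsor.
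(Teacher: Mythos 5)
Your proof is correct and follows essentially the same route as the paper, which leaves this proposition's proof implicit: it is deduced from Remark~\ref{lem:the domain monoid and group monoid associated to T +: ring} together with the presentation $\stX_{\phi}\simeq[\Spec\Z[T_{+}]/\Di{\Z^{r}}]$ of Proposition~\ref{pro:atlas for the stack associated to a monoid map}, exactly as you do. Your explicit check, via Lemma~\ref{lem:morphisms of stack Xphi} with $g=\id$, that the stack maps are the descents of the equivariant scheme maps is the right point to be careful about and is handled correctly.
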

\begin{defn}
With notation above we will call respectively $\stB_{\phi}$ and $\stZ_{\phi}$
the \emph{principal open substack} and the \emph{main irreducible
component} of $\stX_{\phi}$.\end{defn}
\begin{notation}
We set
\[
\duale T_{+}=\Hom(T_{+},\N)=\{\E\in T^{*}\:|\:\E(T_{+})\subseteq\N\}
\]
and we will call its elements the \emph{integral rays }for $T_{+}$,
or simply rays. Note that $\duale T_{+}=\duale{T_{+}^{int}}$. Given
$\underline{\E}=\E^{1},\dots,\E^{s}\in\duale T_{+}$ we will denote
by $\underline{\E}$ also the induced map $T\arr\Z^{s}$. Moreover
we set
\[
\Supp\underline{\E}=\{v\in T_{+}\;|\;\exists i\;\E^{i}(v)>0\}
\]
\end{notation}
\begin{defn}
Given a sequence $\underline{\E}=\E^{1},\dots,\E^{s}\in\duale T_{+}$
set   \[   \begin{tikzpicture}[xscale=3.3,yscale=-0.6]     \node (A0_0) at (0, 0) {$\N^s \oplus T$};     \node (A0_1) at (1, 0) {$\Z^s \oplus \Z^r$};     \node (A1_0) at (0, 1) {$e_i$};     \node (A1_1) at (1, 1) {$e_i$};     \node (A2_0) at (0, 2) {$t$};     \node (A2_1) at (1, 2) {$\displaystyle(\underline\E(t),- \phi(t))$};     \path (A0_0) edge [->] node [auto] {$\scriptstyle{\sigma_{\underline \E}}$} (A0_1);     \path (A1_0) edge [|->,gray] node [auto] {$\scriptstyle{}$} (A1_1);     \path (A2_0) edge [|->,gray] node [auto] {$\scriptstyle{}$} (A2_1);   \end{tikzpicture}   \] where
$e_{1},\dots,e_{s}$ is the canonical basis of $\Z^{s}$. We will
call $\stF_{\underline{\E}}=\stX_{\sigma_{\underline{\E}}}$.\end{defn}
\begin{rem}
\label{rem:description of objects of FE}An object of $\stF_{\underline{\E}}$
over a scheme $U$ is given by a sequence $(\underline{\shL},\underline{\shM},\underline{z},\lambda)$
where:
\begin{itemize}
\item $\underline{\shL}=\shL_{1},\dots,\shL_{r}$ and $\underline{\shM}=(\shM_{\E})_{\E\in\underline{\E}}=\shM_{1},\dots,\shM_{s}$
are invertible sheaves on $U$;
\item $\underline{z}=(z_{\E})_{\E\in\underline{\E}}=z_{1},\dots,z_{s}$
are sections $z_{i}\in\shM_{i}$;
\item for any $t\in T$, $\lambda(t)=\lambda_{t}$ is an isomorphism $\underline{\shL}^{\phi(t)}\arrdi{\simeq}\underline{\shM}^{\underline{\E}(t)}$
additive in $t$.
\end{itemize}
An isomorphism $(\underline{\shL},\underline{\shM},\underline{z},\lambda)\arr(\underline{\shL}',\underline{\shM}',\underline{z}',\lambda')$
is a pair $(\underline{\omega},\underline{\tau})$ where $\underline{\omega}=\omega_{1},\dots,\omega_{r}\comma\underline{\tau}=\tau_{1},\dots,\tau_{s}$
are sequences of isomorphisms $\shL_{i}\arrdi{\omega_{i}}\shL_{i}'\comma\shM_{j}\arrdi{\tau_{j}}\shM_{j}'$
such that $\tau_{j}(z_{j})=z_{j}'$ and for any $t\in T$ we have
a commutative diagram   \[   \begin{tikzpicture}[xscale=1.9,yscale=-1.2]     \node (A0_0) at (0, 0) {$\underline{\shL}^{\phi(t)}$};     \node (A0_1) at (1, 0) {$\underline{\shM}^{\E(t)}$};     \node (A1_0) at (0, 1) {$\underline{\shL}'^{\phi(t)}$};     \node (A1_1) at (1, 1) {$\underline{\shM}'^{\E(t)}$};     \path (A0_0) edge [->]node [auto] {$\scriptstyle{\lambda_t}$} (A0_1);     \path (A1_0) edge [->]node [auto] {$\scriptstyle{\lambda_t'}$} (A1_1);     \path (A0_1) edge [->]node [auto] {$\scriptstyle{\underline{\tau}^{\phi(t)}}$} (A1_1);     \path (A0_0) edge [->]node [auto,swap] {$\scriptstyle{\underline{\omega}^{\phi(t)}}$} (A1_0);   \end{tikzpicture}   \] An
object over $U$ coming from the atlas $\Spec\Z[\N^{s}\oplus T]$
is a pair $(\underline{z},\lambda)$ where $\underline{z}=z_{1},\dots,z_{s}\in\odi U$
and $\lambda\colon T\arr\odi U^{*}$ is a group homomorphism. Given
$(\underline{z},\lambda),(\underline{z}',\lambda')\in\stF_{\underline{\E}}(U)$
we have 
\[
\Iso_{U}((\underline{z},\lambda),(\underline{z}',\lambda'))=\{(\underline{\omega},\underline{\tau})\in(\odi U^{*})^{r}\times(\odi U^{*})^{s}\st\tau_{i}z_{i}=z_{i}'\comma\underline{\tau}^{\underline{\E}(t)}\lambda(t)=\underline{\omega}^{\phi(t)}\lambda'(t)\}
\]
\end{rem}
\begin{defn}
Given a sequence $\underline{\E}=\E^{1},\dots,\E^{s}$ of elements
of $\duale T_{+}$ we define the map
\[
\pi_{\underline{\E}}\colon\stF_{\underline{\E}}\arr\stX_{\phi}
\]
 induced by the commutative diagram   \[   \begin{tikzpicture}[xscale=1.6,yscale=-1.5]     \node (A0_0) at (0, 0) {$\scriptstyle t$};     \node (A0_1) at (1, 0) {$T_+$};     \node (A0_3) at (3, 0) {$\Z^r$};     \node (A1_0) at (0, 1) {$\scriptstyle(\underline \E(t),-t)$};     \node (A1_1) at (1, 1) {$\N^s \oplus T$};     \node (A1_3) at (3, 1) {$\Z^s \oplus \Z^r$};     \path (A0_1) edge [->] node [auto] {$\scriptstyle{}$} (A1_1);     \path (A0_0) edge [|->,gray] node [auto] {$\scriptstyle{}$} (A1_0);     \path (A0_3) edge [right hook->] node [auto] {$\scriptstyle{}$} (A1_3);     \path (A0_1) edge [->] node [auto] {$\scriptstyle{\phi}$} (A0_3);     \path (A1_1) edge [->] node [auto,swap] {$\scriptstyle{\sigma_{\underline \E}}$} (A1_3);   \end{tikzpicture}   \] \end{defn}
\begin{rem}
\label{rem:description of piE}We can describe the functor $\pi_{\underline{\E}}$
explicitly. So suppose to have an object $\chi=(\underline{\shL},\underline{\shM},\underline{z},\lambda)\in\stF_{\underline{\E}}(U)$.
We have $\pi_{\underline{\E}}(\chi)=(\underline{\shL},a)\in\stX_{\phi}(U)$
where $a$ is given, for any $t\in T_{+}$, by   \[   \begin{tikzpicture}[xscale=2.0,yscale=-0.6]     \node (A0_0) at (0, 0) {$\underline{\shL}^{\phi(t)}$};     \node (A0_1) at (1, 0) {$\underline{\shM}^{\E(t)}$};     \node (A1_0) at (0, 1) {$a(t)$};     \node (A1_1) at (1, 1) {$\underline{z}^{\E(t)}$};     \path (A0_0) edge [->]node [auto] {$\scriptstyle{\lambda_t}$} (A0_1);     \path (A1_0) edge [|->,gray]node [auto] {$\scriptstyle{}$} (A1_1);   \end{tikzpicture}   \] 
Instead, if $(\underline{\omega},\underline{\tau})$ is an isomorphism
in $\stF_{\underline{\E}}$, then $\pi_{\underline{\E}}(\underline{\omega},\underline{\tau})=\underline{\omega}$.

If $(\underline{z},\lambda)\in\stF_{\underline{\E}}(U)$ then $a=\pi_{\underline{\E}}(\underline{z},\lambda)\in\stX_{\phi}(U)$
is given by   \[   \begin{tikzpicture}[xscale=3.2,yscale=-0.6]     \node (A0_0) at (0, 0) {$T_+$};     \node (A0_1) at (1, 0) {$\odi{U}$};     \node (A1_0) at (0, 1) {$t$};     \node (A1_1) at (1, 1) {$ {\underline z}^{\underline \E(t)}/\lambda_t= z_1^{\E^1(t)} \cdots z_s^{\E^s(t)}/\lambda_t$};     \path (A0_0) edge [->] node [auto] {$\scriptstyle{}$} (A0_1);     \path (A1_0) edge [|->,gray] node [auto] {$\scriptstyle{}$} (A1_1);   \end{tikzpicture}   \] 
\end{rem}

\begin{rem}
\label{rem:Fdelta is an open substack of Fepsilon se delta sottosequenza di epsilon}If
$\underline{\E}=(\E^{i})_{i\in I}$ is a sequence of elements of $\duale T_{+}$,
$J\subseteq I$ and we set $\underline{\delta}=(\E^{j})_{j\in J}$
we can define a map over $\stX_{\phi}$ as   \[   \begin{tikzpicture}[xscale=2.8,yscale=-0.3]     \node (A0_0) at (0, 0) {$\stF_{\underline{\delta}}$};     \node (A0_1) at (1, 0) {$\stF_{\underline{\E}}$};     
\node (A1_3) at (2.6, 1) {$\shM_{i}'=\left\{ \begin{array}{cc} \shM_{i} & i\in J\\ \odi{} & i\notin J\end{array}\right. z_{i}'=\left\{ \begin{array}{cc} z_{i} & i\in J\\ 1 & i\notin J\end{array}\right.$};     \node (A2_0) at (0, 2) {$(\underline{\shL},\underline{\shM},\underline{z},\lambda)$};     \node (A2_1) at (1, 2) {$(\underline{\shL},\underline{\shM}',\underline{z}',\lambda)$};     \path (A0_0) edge [->]node [auto] {$\scriptstyle{\rho}$} (A0_1);     \path (A2_0) edge [|->,gray]node [auto] {$\scriptstyle{}$} (A2_1);   \end{tikzpicture}   \]  $\rho$ comes from the monoid map $T\oplus\N^{I}\arr T\oplus\N^{J}$
induced by the projection. $\rho$ is an open immersion, whose image
is the open substack of $\stF_{\underline{\E}}$ of objects $(\underline{\shL},\underline{\shM},\underline{z},\lambda)$
such that $z_{i}$ generates $\shM_{i}$ for any $i\notin J$. We
will often consider $\stF_{\underline{\delta}}$ as an open substack
of $\stF_{\underline{\E}}$.\end{rem}
\begin{defn}
\label{def:T+epsilon sottolineato}Given a sequence $\underline{\E}=\E^{1},\dots,\E^{s}$
of elements of $\duale T_{+}$ we define
\[
T_{+}^{\underline{\E}}=T_{+}^{\E^{1},\dots,\E^{s}}=\{v\in T\:|\:\forall i\:\E^{i}(v)\geq0\}
\]
 We also consider the case $s=0$, so that $T_{+}^{\underline{\E}}=T$.
If we denote by $\hat{\phi}\colon T_{+}^{\underline{\E}}\arr\Z^{r}$
the extension of $\phi$, we also define $\stX_{\phi}^{\underline{\E}}=\stZ_{\phi}^{\underline{\E}}=\stX_{\hat{\phi}}$.\end{defn}
\begin{rem}
\label{rem:change of monoid for stack X} Assume to have a monoid
map $T_{+}\arr T_{+}'$ inducing an isomorphism on the associated
groups. If $\underline{\E}=\E^{1},\dots,\E^{s}\in\duale{T'}_{+}\subseteq\duale T_{+}$,
then we have a $2$-commutative diagram   \[   \begin{tikzpicture}[xscale=1.8,yscale=-1.3]     \node (A0_0) at (0, 0) {$\stF_{\underline \E}'$};     \node (A0_1) at (1, 0) {$\stF_{\underline \E}$};     \node (A1_0) at (0, 1) {$\stX_{\phi'}$};     \node (A1_1) at (1, 1) {$\stX_\phi$};     \path (A0_0) edge [->] node [auto] {$\scriptstyle{\simeq}$} (A0_1);     \path (A0_0) edge [->] node [auto,swap] {$\scriptstyle{\pi_{\underline \E}'}$} (A1_0);     \path (A0_1) edge [->] node [auto] {$\scriptstyle{\pi_{\underline \E}}$} (A1_1);     \path (A1_0) edge [->] node [auto] {$\scriptstyle{}$} (A1_1);   \end{tikzpicture}   \] 
where $\stF_{\underline{\E}}'$ is the stack obtained from $T_{+}'$
with respect to $\underline{\E}$.\end{rem}
\begin{prop}
The map $\pi_{\underline{\E}}\colon\stF_{\underline{\E}}\arr\stX_{\phi}$
has a natural factorization
\[
\stF_{\underline{\E}}\arr\stX_{\phi}^{\underline{\E}}\arr\stZ_{\phi}\arr\stX_{\phi}
\]
\end{prop}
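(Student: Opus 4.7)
The plan is to exhibit each of the three arrows via Lemma~\ref{lem:morphisms of stack Xphi}, and then to verify that the composition coincides with $\pi_{\underline{\E}}$ by tracking the underlying monoid maps.

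First, the rightmost arrow $\stZ_{\phi}\arr\stX_{\phi}$ is the closed immersion already produced in Proposition~\ref{cor:the domain monoid and group monoid associated to T +: stack} from the canonical surjection $T_{+}\twoheadrightarrow T_{+}^{int}$. For the middle arrow, observe that since every ray $\E^{i}\in\duale T_{+}$ sends $T_{+}$ into $\N$, it factors through $T_{+}^{int}$; hence $T_{+}^{int}\subseteq T_{+}^{\underline{\E}}$ inside $T$. Feeding the inclusion $T_{+}^{int}\hookrightarrow T_{+}^{\underline{\E}}$ (together with the identity on $\Z^{r}$) into Lemma~\ref{lem:morphisms of stack Xphi} yields the arrow $\stX_{\phi}^{\underline{\E}}\arr\stZ_{\phi}$. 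For the leftmost arrow, I would define the monoid map
\[
\alpha\colon T_{+}^{\underline{\E}}\arr\N^{s}\oplus T,\quad v\mapsto(\underline{\E}(v),-v),
\]
which is well-defined precisely because $\E^{i}(v)\geq 0$ for every $v\in T_{+}^{\underline{\E}}$. Paired with the inclusion $\Z^{r}\hookrightarrow\Z^{s}\oplus\Z^{r}$ into the second factor, this forms a commutative square, since a direct computation gives $\sigma_{\underline{\E}}(\underline{\E}(v),-v)=(\underline{\E}(v),0)+(-\underline{\E}(v),\phi(v))=(0,\phi(v))$. Lemma~\ref{lem:morphisms of stack Xphi} then produces the desired $\stF_{\underline{\E}}\arr\stX_{\phi}^{\underline{\E}}$.

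To verify that the composition agrees with $\pi_{\underline{\E}}$, I would invoke the functoriality built into Lemma~\ref{lem:morphisms of stack Xphi}: composing the three monoid maps
\[
T_{+}\arr T_{+}^{int}\hookrightarrow T_{+}^{\underline{\E}}\arrdi{\alpha}\N^{s}\oplus T
\]
produces exactly $t\mapsto(\underline{\E}(t),-t)$, which is the monoid map defining $\pi_{\underline{\E}}$; likewise, the vertical arrows on the $\Z^{r}$ side compose to the inclusion $\Z^{r}\hookrightarrow\Z^{s}\oplus\Z^{r}$ into the second factor, matching the bottom row of the square used to define $\pi_{\underline{\E}}$.

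There is no real obstacle beyond bookkeeping. The only subtle points are that in Lemma~\ref{lem:morphisms of stack Xphi} the stack map goes opposite to the underlying monoid map (so one must be careful about the direction of each arrow), and that the single nontrivial containment $T_{+}^{int}\subseteq T_{+}^{\underline{\E}}$ is just the translation of the defining condition $\duale T_{+}=\Hom(T_{+},\N)$ for rays.
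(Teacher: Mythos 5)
Your proof is correct and takes essentially the same route as the paper's: the paper simply cites Remark \ref{rem:change of monoid for stack X} applied to the chain $T_{+}\arr T_{+}^{int}\arr T_{+}^{\underline{\E}}$, which packages exactly the monoid map $\alpha\colon v\mapsto(\underline{\E}(v),-v)$ and the compatibility checks you carry out by hand. The only content you make explicit that the paper leaves implicit is the functoriality of the construction in Lemma \ref{lem:morphisms of stack Xphi} under composition of squares, which is indeed routine.
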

\begin{proof}
The factorization follows from \ref{rem:change of monoid for stack X}
taking monoid maps $T_{+}\arr T_{+}^{int}\arr T_{+}^{\underline{\E}}$.\end{proof}
\begin{rem}
This shows that $\pi_{\underline{\E}}$ has image in $\stZ_{\phi}$.We
will call with the same symbol $\pi_{\underline{\E}}$ the factorization
$\stF_{\underline{\E}}\arr\stZ_{\phi}$.
\end{rem}
We want now to show how the rays of $T_{+}$ can be used to describe
the objects of $\stZ_{\phi}$ over a field. Here is the result.
\begin{thm}
\label{pro:characterization of points of Zphi}Let $k$ be a field
and $T_{+}\arrdi ak\in\stX_{\phi}(k)$. Then $a\in\stZ_{\phi}(k)$
if and only if there exists $\lambda:T\arr\overline{k}^{*}$ and $\E\in\duale T_{+}$
such that 
\[
a(t)=\lambda_{t}0^{\E(t)}
\]
In particular if $\underline{\E}=\E^{1},\dots,\E^{r}$ generate $\duale T_{+}\otimes\Q$
then $\pi_{\underline{\E}}\colon\stF_{\underline{\E}}(\overline{k})\arr\stZ_{\phi}(\overline{k})$
is essentially surjective and so $\pi_{\underline{\E}}\colon|\stF_{\underline{\E}}|\arr|\stZ_{\phi}|$
is surjective. Finally, if the map $\phi\colon T\arr\Z^{r}$ is injective,
we have a one to one correspondence   \[   \begin{tikzpicture}[xscale=3.2,yscale=-0.5]     \node (A0_0) at (0, 0) {$\stZ_\phi(\overline k)/\simeq$};     \node (A0_1) at (1, 0) {$\{\Supp \E \text{ for } \E\in\duale{T}_+\}$};     \node (A1_0) at (0, 1) {$a$};     \node (A1_1) at (1, 1) {$\{a=0\}$};     \path (A0_0) edge [->] node [auto] {$\scriptstyle{\gamma}$} (A0_1);     \path (A1_0) edge [|->,gray] node [auto] {$\scriptstyle{}$} (A1_1);   \end{tikzpicture}   \] 
In particular $|\stZ_{\phi}|=\stZ_{\phi}(\overline{\Q})/\simeq\bigsqcup(\bigsqcup_{\textup{primes }p}\stZ_{\phi}(\overline{\F_{p}})/\simeq)$.
\end{thm}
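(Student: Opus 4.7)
\emph{Approach.} The plan rests on the identification $\stZ_{\phi}\simeq[\Spec\Z[T_{+}^{int}]/\Di{\Z^{r}}]$ from Proposition \ref{cor:the domain monoid and group monoid associated to T +: stack}, so that $a\in\stZ_{\phi}(k)$ is equivalent to asking that the monoid map $a\colon T_{+}\arr(k,\cdot)$ factor through the cancellative quotient $T_{+}^{int}\subseteq T$. The ``if'' direction of the first equivalence is then immediate: a map of the form $a(t)=\lambda_{t}0^{\E(t)}$ with $\lambda$ defined on $T$ and $\E\in\duale T_{+}$ already factors through $T$. For the converse, fix $a\in\stZ_{\phi}(k)$ and set $S=\{t\in T_{+}^{int}\st a(t)\neq0\}$. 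Since $k$ is a domain and $a$ is multiplicative, $S$ is a submonoid of $T_{+}^{int}$ satisfying the face property $s+t\in S\Rightarrow s,t\in S$. Because the associated group $T$ of $T_{+}^{int}$ is free of finite rank, the cone $\sigma=\R_{\geq0}T_{+}^{int}\subseteq T\otimes\R$ is rational polyhedral, and the face--hyperplane correspondence for such cones produces an integral linear form $\E\in T^{*}$ with $\E\geq0$ on $\sigma$ (so $\E\in\duale T_{+}$) and $\{t\in T_{+}^{int}\st\E(t)=0\}=S$; pulled back along $T_{+}\twoheadrightarrow T_{+}^{int}$ this gives $\Supp\E=\{t\in T_{+}\st a(t)=0\}$. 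Next, $a|_{S}\colon S\arr k^{*}$ extends uniquely to a group homomorphism on the subgroup $\langle S\rangle\subseteq T$, and since $\overline{k}^{*}$ is a divisible abelian group (hence $\Z$-injective) it extends further to $\lambda\colon T\arr\overline{k}^{*}$. By construction $a(t)=\lambda_{t}\cdot0^{\E(t)}$ for every $t\in T_{+}$.

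\emph{Essential and topological surjectivity.} Assume $\underline{\E}=\E^{1},\dots,\E^{r}$ spans $\duale T_{+}\otimes\Q$. Given $a\in\stZ_{\phi}(\overline{k})$, write $a=\mu\cdot0^{\hat{\E}}$ by the previous step, then clear denominators to obtain $N\hat{\E}=\sum_{i}c_{i}\E^{i}$ with $N\geq1$ and $c_{i}\in\N$. Set $J=\{i\st c_{i}>0\}$; using $\E^{i}(T_{+})\subseteq\N$ a short check yields $\bigcup_{i\in J}\Supp\E^{i}=\Supp\hat{\E}$. Define $(\underline{z},\lambda)\in\stF_{\underline{\E}}(\overline{k})$ by $z_{i}=0$ for $i\in J$, $z_{i}=1$ otherwise, and $\lambda=\mu^{-1}$. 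The formula of Remark \ref{rem:description of piE} then gives $\pi_{\underline{\E}}(\underline{z},\lambda)=a$: for $t\in\Supp\hat{\E}$ some $i\in J$ has $\E^{i}(t)>0$ so both sides vanish, while for $t\notin\Supp\hat{\E}$ every $i\in J$ satisfies $\E^{i}(t)=0$, so the equation collapses to the tautology $\mu_{t}=\lambda_{t}^{-1}$. Topological surjectivity $|\stF_{\underline{\E}}|\arr|\stZ_{\phi}|$ is then formal: any topological point of $\stZ_{\phi}$ is represented by a point with values in some algebraically closed field, to which essential surjectivity applies.

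\emph{Bijection when $\phi$ is injective and main obstacle.} Well-definedness and surjectivity of $\gamma$ follow from the first equivalence (each $\E\in\duale T_{+}$ is realised by $a(t)=0^{\E(t)}$). For injectivity, suppose $a_{1}(t)=\mu_{t}0^{\E_{1}(t)}$ and $a_{2}(t)=\nu_{t}0^{\E_{2}(t)}$ share the same zero locus, with face complement $F$. Then $\nu/\mu\colon F\arr\overline{k}^{*}$ extends to a character $\chi$ of $\langle F\rangle\subseteq T$; injectivity of $\phi$ identifies this with a character of $\phi(\langle F\rangle)\subseteq\Z^{r}$, and divisibility of $\overline{k}^{*}$ extends it to $\underline{\sigma}\in(\overline{k}^{*})^{r}=\Di{\Z^{r}}(\overline{k})$. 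By Remark \ref{rem:description of isomorphism for local objects of X phi}, $\underline{\sigma}$ realises the isomorphism $a_{1}\simeq a_{2}$ (the agreement being trivial on the common zero locus, where both vanish). The final decomposition of $|\stZ_{\phi}|$ then follows because $\stZ_{\phi}$ is of finite type over $\Z$, so every topological point lies over some $\Spec\Q$ or $\Spec\F_{p}$. The main technical ingredient throughout is the face--hyperplane correspondence for the possibly non-saturated monoid $T_{+}^{int}$: every monoid-theoretic face of $T_{+}^{int}$ arises from an integral supporting hyperplane. This is standard for finitely generated submonoids of free abelian groups, but requires some care here since $T_{+}^{int}$ need not equal its saturation in $T$.
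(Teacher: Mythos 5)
Your proof is correct and follows essentially the same route as the paper's: reduce to $T_{+}^{int}$ and an algebraically closed field, recognize $\{a\neq 0\}$ as a face of the fine monoid and realize it as $\Ker\E$ for some $\E\in\duale T_{+}$, then extend the character using divisibility of $\overline{k}^{*}$. The only differences are cosmetic --- you invoke the face--supporting-hyperplane correspondence where the paper passes through the homogeneous prime $p^{om}=p_{\E}$ (the same fact, cited from Ogus), and you spell out the essential-surjectivity step that the paper leaves implicit.
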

Before proving this Theorem we need some preliminary results, that
we will be useful also later.
\begin{defn}
If $T_{+}$ is integral, $\E\in\duale T_{+}$ and $k$ is a field
we define x
\[
p_{\E}=\bigoplus_{v\in T_{+},\E(v)>0}kx_{v}\subseteq k[T_{+}]
\]
If $p\in\Spec k[T_{+}]$ we set $p^{om}=\bigoplus_{x_{v}\in p}kx_{v}$.\end{defn}
\begin{lem}
\label{lem:properties of pE}Let $k$ be a field and assume that $T_{+}$
is integral. Then:
\begin{enumerate}
\item if $\E\in\duale T_{+}$, $p_{\E}$ is prime and $k[\{v\in T_{+}\st\E(v)=0\}]\arr k[T_{+}]\arr k[T_{+}]/p_{\E}$
is an isomorphism.
\item If $p\in\Spec k[T_{+}]$ then $p^{om}=p_{\E}$ for some $\E\in\duale T_{+}$.
\end{enumerate}
\end{lem}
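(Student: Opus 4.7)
For part (1), I would first verify that $p_\E$ is an ideal by noting that $\E(v+w) = \E(v) + \E(w) \geq \E(v) > 0$ whenever $\E(v) > 0$ and $w \in T_+$, so $x_v x_w = x_{v+w} \in p_\E$. Setting $S_\E = \{v \in T_+ \st \E(v) = 0\}$, which is clearly a submonoid of $T_+$, the natural map $k[S_\E] \arr k[T_+]/p_\E$ is surjective by the monomial basis and injective since $p_\E$ and $k[S_\E]$ have disjoint monomial supports. Because $S_\E$ is a submonoid of the integral monoid $T_+$ whose associated group $T$ is free of finite rank, $S_\E$ embeds into a free abelian group, so $k[S_\E]$ embeds into a Laurent polynomial ring over $k$, which is a domain. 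Hence $p_\E$ is prime and the displayed map of the statement is an isomorphism.

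For part (2), the idea is to build $\E$ from the ``non-vanishing'' submonoid of $p$. Set $S = \{v \in T_+ \st x_v \notin p\}$; primality of $p$ immediately gives $0 \in S$ and closure under addition, so $S$ is a submonoid of $T_+$. Let $G = \langle S \rangle \subseteq T$. I would then establish two key consequences of primality: first, $G \cap T_+ = S$, because any $v \in T_+$ of the form $s - t$ with $s, t$ elements of the monoid generated by $S$ satisfies $x_v x_t = x_s \notin p$, forcing $x_v \notin p$; second, if $v \in T_+$ and $nv \in G$ for some $n \geq 1$, then $nv \in G \cap T_+ = S$, so $x_v^n \notin p$ and hence $x_v \notin p$, i.e., $v \in S$. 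In particular, no element of $T_+ \setminus S$ has torsion image in $T/G$.

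Next I compose with the projection onto the torsion-free quotient to obtain a monoid homomorphism $T_+ \arr (T/G)_{\textup{free}} \simeq \Z^d$, and let $\tilde T_+$ denote its image. The observations above imply that $T_+ \setminus S$ maps into $\tilde T_+ \setminus \{0\}$, and that $\tilde T_+$ is pointed: if $v_1, v_2 \in T_+$ have images summing to zero in $(T/G)_{\textup{free}}$, then $n(v_1 + v_2) \in G \cap T_+ = S$ for some $n \geq 1$, so by primality $v_1, v_2 \in S$ and their images are already zero. Standard cone theory now yields an integral linear functional $\bar\E \colon \Z^d \arr \Z$ that is nonnegative on $\tilde T_+$ and strictly positive on $\tilde T_+ \setminus \{0\}$ (the dual of a finitely generated strictly convex rational cone is full-dimensional, and its relative interior meets the integer lattice). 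Pulling back to $T$ produces $\E \in \duale T_+$ with $\{v \in T_+ \st \E(v) = 0\} = S$, so $p_\E = p^{om}$.

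The main technical point is making sure that the passage to the torsion-free quotient does not collapse any element of $T_+ \setminus S$ to zero, which would force $\bar\E$ (and hence $\E$) to vanish somewhere outside $S$; the second consequence of primality above is precisely what is needed to exclude this.
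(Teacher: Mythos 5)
Your proof is correct. It does, however, follow a genuinely different route from the paper's: the paper disposes of part (1) with ``it's obvious'' and proves part (2) purely by citation, first invoking Kreuzer--Robbiano to see that $p^{om}$ is itself prime and then Ogus's correspondence between prime monomial ideals and faces of an integral monoid to write $p^{om}=p_{\E}$. You instead give a self-contained argument that never needs the intermediate primality of $p^{om}$: you work directly with the face $S=\{v\in T_{+}\st x_{v}\notin p\}$, use primality of $p$ three times (to see $S$ is a submonoid, that $\langle S\rangle\cap T_{+}=S$, and that $S$ is saturated in the sense that $nv\in\langle S\rangle$ forces $v\in S$), and then manufacture $\E$ as the pullback of a strictly positive integral functional on the sharp image of $T_{+}$ in the torsion-free quotient of $T/\langle S\rangle$. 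All the steps check out: the saturation property is exactly what guarantees that $T_{+}-S$ survives the passage to the torsion-free quotient and that the image monoid is sharp, and sharpness of a finitely generated submonoid of $\Z^{d}$ does imply strict convexity of the rational cone it spans (scale any pair $x,-x$ of rational points of the cone into the monoid and apply sharpness), so the appeal to the existence of an interior integral point of the dual cone is legitimate; finite generation of $T_{+}$, which this uses, is part of the paper's standing hypotheses. What your approach buys is a proof readable without the two external references and a slightly cleaner logical structure (only the primality of $p$ is ever used); what the paper's approach buys is brevity and the explicit intermediate fact that $p^{om}$ is prime, which your argument recovers only a posteriori via part (1).
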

\begin{proof}
$(1)$ It's obvious.

$(2)$ $p^{om}$ is a prime thanks to \cite[Proposition  1.7.12]{Kreuzer2005}
and therefore $p^{om}=p_{\E}$ for some $\E\in\duale T_{+}$ thanks
to \cite[Corollary 2.2.4]{Ogus2006}.\end{proof}
\begin{rem}
\label{rem:differ by torsor implies iso}If $k$ is an algebraically
closed field, $\phi\colon T\arr\Z^{r}$ is injective and $a,b\in\stX_{\phi}(k)$
differ by a torsor, i.e. there exists $\lambda\colon T_{+}\arr k^{*}$
such that $a=\lambda b$, then $a\simeq b$ in $\stZ_{\phi}(k)$.
Indeed $\lambda$ extends to a map $T\arr k^{*}$ and, since $k$
is algebraically closed, it extends again to a map $\lambda\colon\Z^{r}\arr k^{*}$.\end{rem}
\begin{proof}
(of Theorem \ref{pro:characterization of points of Zphi}) We can
assume that $k$ is algebraically closed and that $T_{+}$ is integral,
since if $a$ has a writing as in the statement then clearly $a\in\stZ_{\phi}(k)$.
Consider $p=\Ker a$. Thanks to \ref{lem:properties of pE}, we can
write $p^{om}=p_{\E}$ for some $\E\in\duale T_{+}$. Set $T_{+}'=\{v\in T_{+}\st\E(v)=0\}$
and $T'=<T'>_{\Z}$ $ $ . Since $a\colon T_{+}'\arr k^{*}$, there
exists an extension $\lambda\colon T'\arr k^{*}$. On the other hand,
since $k$ is algebraically closed, the inclusion $T'\arr T$ yields
a surjection
\[
\Hom(T,k^{*})\arr\Hom(T',k^{*})
\]
and so we can extend again to an element $\lambda\colon T\arr k^{*}$.
Since one has $\Supp\E=\{a=0\}$ by construction, it is easy to check
that $a(t)=\lambda_{t}0^{\E(t)}$ for any $t\in T_{+}$.

Now consider the last part of the statement and so assume $\phi\colon T\arr\Z^{r}$
injective. The map $\gamma$ is well defined thanks to above and surjective
since, given $\E\in\duale T_{+}$, one can always define $a(t)=0^{\E(t)}$.
For the injectivity, let $a,b\in\stZ_{\phi}(k)$ such that $\{a=0\}=\{b=0\}$.
We can write $a(t)=\lambda_{t}0^{\E(t)}\comma b(t)=\mu_{t}0^{\E(t)}$,
where $\lambda,\mu\colon T\arr k^{*}$, so that $a,b$ differ by a
torsor and are therefore isomorphic thanks to \ref{rem:differ by torsor implies iso}.
Finally, since any point of $|\stZ_{\phi}|$ comes from $\Z$, we
also have the last equality.
\end{proof}
In some cases the description of the objects of $\stF_{\underline{\E}}$
can be simplified, regardless of $\underline{\E}$, in the sense that
there exist a stack of reduced data $\stF_{\underline{\E}}^{\textup{red}}$,
whose objects can be described by less data, and an isomorphism $\stF_{\underline{\E}}\simeq\stF_{\underline{\E}}^{\textup{red}}$.
This kind of simplification could be very useful when we have to deal
with an explicit map of monoid $\phi\colon T_{+}\arr\Z^{r}$, as we
will see in \ref{pro:stack of reduced data for M-covers}. The idea
is that in order to define an object $(\underline{\shL},\underline{\shM},\underline{z},\lambda)\in\stF_{\underline{\E}}$
we don't really need all the invertible sheaves $\shL_{1},\dots,\shL_{r}$,
because they are uniquely determined by a subset of them and the other
data.
\begin{defn}
\label{def:stack of reduced data}Assume $T\arrdi{\phi}\Z^{r}$ injective.
Let $V\subseteq\Z^{r}$ be a submodule with a given basis $v_{1},\dots,v_{q}$
and $\sigma\colon\Z^{r}\arr V$ be a map such that $(\id-\sigma)\Z^{r}\subseteq T$
(or equivalently $ $$\pi=\pi\circ\sigma$ where $\pi$ is the projection
$\Z^{r}\arr\Coker\phi$). Define $W=<(\id-\sigma)V,\sigma T>$. Given
$\underline{\E}=\E^{1},\dots\E^{l}\in\duale T_{+}$ consider the map
  \[   \begin{tikzpicture}[xscale=2.8,yscale=-0.5]     \node (A0_0) at (0, 0) {$W\oplus \N^s$};     \node (A0_1) at (1, 0) {$\Z^q \oplus \Z^s$};     \node (A1_0) at (0, 1) {$(w,z)$};     \node (A1_1) at (1, 1) {$(-w,\underline{\E}(w)+z)$};     \path (A0_0) edge [->]node [auto] {$\scriptstyle{\psi_{\underline \E,\sigma}}$} (A0_1);     \path (A1_0) edge [|->,gray]node [auto] {$\scriptstyle{}$} (A1_1);   \end{tikzpicture}   \] We
define $\stF_{\underline{\E}}^{\textup{red},\sigma}=\stX_{\psi_{\underline{\E},\sigma}}$
and we call it the stack of reduced data of $\underline{\E}$.\end{defn}
\begin{lem}
\label{lem:for the stack of reduced data}Consider a submodule $U\subseteq\Z^{p}$,
a map $\underline{\E}\colon U\arr\Z^{l}$ and $\tau\colon\Z^{p}\arr\Z^{p}$
such that $(\id-\tau)\Z^{p}\subseteq U$. Consider the commutative
diagram   \[   \begin{tikzpicture}[xscale=1.9,yscale=-1.4]     
\node (A0_0) at (0, 0) {$(u,z)$};     
\node (A0_1) at (1, 0) {$U\oplus \N^l$};     
\node (A0_3) at (2.5, 0) {$U\oplus \N^l$};     
\node (A1_0) at (0, 1) {$(-u,\underline{\E}(u)+z)$};     
\node (A1_1) at (1, 1) {$\Z^p \oplus \Z^l$};     
\node (A1_3) at (2.5, 1) {$\Z^p \oplus \Z^l$};     
\node (A2_1) at (1, 1.4) {$(u,z)$};    
\node (A2_3) at (2.5, 1.4) {$(\tau u,\underline{\E}(u-\tau u)+z)$};     

\path (A0_1) edge [->]node [auto] {$\scriptstyle{\tau\oplus \id}$} (A0_3);     \path (A0_3) edge [->]node [auto] {$\scriptstyle{\psi}$} (A1_3);     \path (A1_1) edge [->]node [auto] {$\scriptstyle{}$} (A1_3);     \path (A2_1) edge [|->,gray]node [auto] {$\scriptstyle{}$} (A2_3);     \path (A0_0) edge [|->,gray]node [auto] {$\scriptstyle{}$} (A1_0);     \path (A0_1) edge [->]node [auto,swap] {$\scriptstyle{\psi}$} (A1_1);   \end{tikzpicture}   \] Then the induced map $\varphi\colon\stX_{\psi}\arr\stX_{\psi}$ is
isomorphic to $\id_{\stX_{\psi}}$.\end{lem}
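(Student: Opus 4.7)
The plan is to exhibit an explicit natural isomorphism $\varphi\Rightarrow\id_{\stX_\psi}$ by working on the atlas provided by Proposition~\ref{pro:atlas for the stack associated to a monoid map}: $\stX_\psi\simeq[X/\Di{\Z^{p+l}}]$ with $X=\Spec\Z[U\oplus\N^l]$, and the torus acting via the grading induced by $\psi$. Both $\id$ and $\varphi$ arise from morphisms of atlases; by Lemma~\ref{lem:morphisms of stack Xphi} and Remark~\ref{rem: description of functors of Xphi on local objects}, on an atlas object $a\colon U\oplus\N^l\to(\odi S,\cdot)$ they act respectively as $\id(a)=a$ and $\varphi(a)=a\circ(\tau\oplus\id)$, so that $\varphi(a)(u,z)=a(\tau u,z)$.

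By Remark~\ref{rem:description of isomorphism for local objects of X phi}, an isomorphism $\varphi(a)\to a$ in $\stX_\psi(S)$ amounts to a pair $(\underline\sigma,\underline\omega)\in(\odi S^*)^p\times(\odi S^*)^l$ satisfying
\[
\underline\sigma^{-u}\underline\omega^{\underline\E(u)+z}\,a(\tau u,z)=a(u,z)\qquad\forall\,(u,z)\in U\oplus\N^l.
\]
Setting $u=0$ forces $\underline\omega=1$. The hypothesis $(\id-\tau)\Z^p\subseteq U$ places $\tau e_i-e_i$ inside $U$ for every $i$, and because $U$ is a group $a|_U$ lands in $\odi S^*$; thus $\sigma_i:=a(\tau e_i-e_i)\in\odi S^*$ is a valid candidate. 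Additivity of $a$ on $U$ yields $\underline\sigma^{-u}=a(u-\tau u)$ for every $u\in U$, and additivity on $U\oplus\N^l$ then gives $a(u-\tau u)\cdot a(\tau u,z)=a\bigl((u-\tau u,0)+(\tau u,z)\bigr)=a(u,z)$, verifying the required identity.

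It remains to promote this atlas-level data to a $2$-isomorphism of functors $\stX_\psi\to\stX_\psi$. The assignment $a\mapsto(\underline\sigma,\underline\omega)=\bigl((a(\tau e_i-e_i))_i,1\bigr)$ is manifestly natural in $a$ and in $S$, so it defines a natural transformation between the two atlas-level functors; since $\stX_\psi\simeq[X/\Di{\Z^{p+l}}]$ and both $\varphi$ and $\id$ come from equivariant morphisms of the atlas, standard descent for morphisms into quotient stacks yields the desired $2$-isomorphism $\varphi\simeq\id_{\stX_\psi}$. The main potential obstacle is simply bookkeeping with the sign conventions in $\psi(u,z)=(-u,\underline\E(u)+z)$; once the ansatz $\sigma_i=a(\tau e_i-e_i)$ is in hand, the remaining verification is a short additivity calculation, and I do not anticipate any deeper difficulty.
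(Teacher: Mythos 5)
Your construction is the same as the paper's (the paper builds the $2$-isomorphism in the direction $\id_{\stX_{\psi}}\Rightarrow\varphi$ with $\eta_i=\lambda(e_i-\tau e_i)$; you build the inverse with $\sigma_i=a(\tau e_i-e_i)$), and the verification that $(\underline{\sigma},\underline{1})$ is an isomorphism $\varphi(a)\to a$ is right: $a|_U$ takes values in $\odi S^{*}$ because $U$ is a group, and additivity does the rest.

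Two points, though. First, ``setting $u=0$ forces $\underline{\omega}=1$'' is not a valid deduction: at $u=0$ the condition reads $\underline{\omega}^{z}a(0,z)=a(0,z)$, and $a(0,z)$ need not be a unit, so $\underline{\omega}$ is not determined --- you are simply \emph{choosing} $\underline{\omega}=1$, which is fine. Second, and more seriously, ``manifestly natural in $a$'' dismisses the only substantive verification in the lemma. Naturality with respect to restriction along morphisms of schemes is indeed automatic from the formula for $\sigma_i$, but naturality with respect to isomorphisms $(\underline{\sigma},\underline{\mu})\colon a\to a'$ between atlas objects over a fixed base is not: one must compute $\varphi(\underline{\sigma},\underline{\mu})$ (by \ref{rem: description of functors of Xphi on local objects} it is $(\tilde{\underline{\sigma}},\underline{\mu})$ with $\tilde{\sigma}_i=\underline{\sigma}^{\tau e_i}\underline{\mu}^{\underline{\E}(e_i-\tau e_i)}$) and check that the resulting square commutes. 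This reduces to the identity $a'(\tau e_i-e_i)=\underline{\sigma}^{e_i-\tau e_i}\underline{\mu}^{-\underline{\E}(e_i-\tau e_i)}a(\tau e_i-e_i)$, which is exactly the isomorphism condition for $(\underline{\sigma},\underline{\mu})$ evaluated at $(\tau e_i-e_i,0)\in U\oplus\N^{l}$. It goes through, but it is precisely the computation occupying the second half of the paper's proof, not bookkeeping; with that check supplied, descent from the atlas gives the $2$-isomorphism as you say.
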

\begin{proof}
Let $x_{1},\dots,x_{p}$ be a $\Z$-basis of $\Z^{p}$ with $a_{1},\dots,a_{k}\in\N$
such that $a_{1}x_{1},\dots,a_{k}x_{k}$ is a $\Z$-basis of $U$.
We want to define a natural isomorphism $\id_{\stX_{\psi}}\arrdi{\omega}\varphi$.
First note that it is enough to define it on the objects of $\stX_{\psi}$
coming from the atlas $\Spec\Z[U\oplus\N^{l}]$, prove the naturality
between such objects on a fixed scheme $T$ and for the restrictions.
An object coming from the atlas is of the form $(\lambda,\underline{z})$
where $\lambda\colon U\arr\odi T^{*}$ is an additive map and $\underline{z}=z_{1},\dots,z_{l}\in\odi T$.
Moreover $\varphi(\lambda,\underline{z})=(\tilde{\lambda},\underline{z})$
where $\tilde{\lambda}=\lambda\circ\tau$. Let $\underline{\eta}\in\Di{\Z^{p}}(T)$
the only elements such that $\underline{\eta}^{x_{i}}=\lambda(x_{i}-\tau x_{i})$
for $i=1,\dots,p$. These objects are well defined since $(\id-\tau)\Z^{p}\subseteq U$.
We claim that $\omega_{T,(\lambda,\underline{z})}=(\underline{\eta},\underline{1})$
is an isomorphism $(\lambda,\underline{z})\arr\varphi(\lambda,\underline{z})$
and define a natural transformation. It is an isomorphism since $1z_{j}=z_{j}$
and the condition
\[
\underline{\eta}^{-u}\underline{1}^{\underline{\E}(u)}\lambda(u)=\lambda(\tau u)\ \forall u\in U
\]
holds by construction checking it on the basis $a_{1}x_{1},\dots,a_{k}x_{k}$
of $U$ (see \ref{rem:description of isomorphism for local objects of X phi}).
It's also easy to check that this isomorphisms commute with the change
of basis. So it remains to prove that, if $(\underline{\sigma},\underline{\mu})$
is an isomorphism $(\lambda,\underline{z})\arr(\lambda',\underline{z}')$
then we have a commutative diagram   \[   \begin{tikzpicture}[xscale=2.8,yscale=-1.4]     \node (A0_0) at (0, 0) {$(\lambda,\underline z)$};     \node (A0_1) at (1, 0) {$(\lambda',\underline z')$};     \node (A1_0) at (0, 1) {$\varphi(\lambda,\underline z)$};     \node (A1_1) at (1, 1) {$\varphi(\lambda',\underline z')$};     \path (A0_0) edge [->]node [auto] {$\scriptstyle{(\underline \sigma,\underline \mu)}$} (A0_1);     \path (A0_1) edge [->]node [auto] {$\scriptstyle{\omega_{T,(\lambda',\underline z')}}$} (A1_1);     \path (A1_0) edge [->]node [auto] {$\scriptstyle{\varphi(\underline \sigma,\underline \mu)}$} (A1_1);     \path (A0_0) edge [->]node [auto,swap] {$\scriptstyle{\omega_{T,(\lambda,\underline z)}}$} (A1_0);   \end{tikzpicture}   \] 
We have $\varphi(\underline{\sigma},\underline{\mu})=(\tilde{\underline{\sigma}},\tilde{\underline{\mu}})$
with $\tilde{\underline{\mu}}=\underline{\mu}$ and $\tilde{\underline{\sigma}}^{x_{i}}=\underline{\sigma}^{\tau x_{i}}\underline{\mu}^{\underline{\E}(x_{i}-\tau x_{i})}$
(see \ref{rem: description of functors of Xphi on local objects}).
So it is easy to check that the commutativity in the second member
holds. For the first, the condition is $\tilde{\underline{\sigma}}\underline{\eta}=\underline{\eta}'\underline{\sigma}$,
which is equivalent to
\[
(\tilde{\underline{\sigma}}\underline{\eta})^{x_{i}}=\underline{\sigma}^{\tau x_{i}}\underline{\mu}^{\underline{\E}(x_{i}-\tau x_{i})}\lambda(x_{i}-\tau x_{i})=(\underline{\eta}'\underline{\sigma})^{x_{i}}=\lambda'(x_{i}-\tau x_{i})\underline{\sigma}^{x_{i}}
\]
 and to $\underline{\sigma}^{-(x_{i}-\tau x_{i})}\underline{\mu}^{\underline{\E}(x_{i}-\tau x_{i})}\lambda(x_{i}-\tau x_{i})=\lambda'(x_{i}-\tau x_{i})$
for any $i$. But, since $(\underline{\sigma},\underline{\mu})$ is
an isomorphism $(\lambda,\underline{z})\arr(\lambda',\underline{z}')$,
the condition 
\[
\underline{\sigma}^{-u}\underline{\mu}^{\underline{\E}(u)}\lambda(u)=\lambda'(u)\ \forall u\in U
\]
has to be satisfied.\end{proof}
\begin{prop}
\label{pro:isomorphism with the stack of reduced data}Assume $T\arrdi{\phi}\Z^{r}$
injective and let $\underline{\E}=\E^{1},\dots\E^{r}\in\duale T_{+}$
and $\sigma\comma V\comma v_{1},\dots,v_{q}$ be as in \ref{def:stack of reduced data}.
Then we have functors   \[   \begin{tikzpicture}[xscale=5.6,yscale=-0.5]     \node (A0_0) at (0, 0) {$((\underline \shN^{\sigma e_i}\otimes \underline \shM^{\underline \E(e_i-\sigma e_i)})_{i=1,\dots,r},\underline \shM, \underline z,\tilde \lambda)$};     \node (A0_1) at (1, 0) {$(\underline \shN,\underline \shM, \underline z,\lambda)$};     \node (A1_0) at (0, 1) {$\stF_{\underline \E}$};     \node (A1_1) at (1, 1) {$\stF_{\underline \E}^{red,\sigma}$};     \node (A2_0) at (0, 2) {$(\underline \shL,\underline \shM, \underline z,\lambda)$};     \node (A2_1) at (1, 2) {$((\underline \shL^{v_i})_{i=1,\dots,q},\underline \shM, \underline z, \lambda_{|W})$};     \path (A1_1) edge [->]node [auto] {$\scriptstyle{}$} (A1_0);     \path (A1_0) edge [->]node [auto] {$\scriptstyle{}$} (A1_1);     \path (A2_0) edge [|->,gray]node [auto] {$\scriptstyle{}$} (A2_1);     \path (A0_1) edge [|->,gray]node [auto] {$\scriptstyle{}$} (A0_0);   \end{tikzpicture}   \] 
for appropriate choices of $\tilde{\lambda}$ that are inverses of
each other.\end{prop}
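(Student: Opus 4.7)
The plan is to verify that the two explicit functors are well-defined, then to identify each composition with a self-functor of the form treated by Lemma~\ref{lem:for the stack of reduced data}, which guarantees that they are naturally isomorphic to the respective identities.

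First, for the functor $F\colon\stF_{\underline{\E}}\arr\stF_{\underline{\E}}^{\textup{red},\sigma}$ sending $(\underline{\shL},\underline{\shM},\underline{z},\lambda)$ to $((\underline{\shL}^{v_j})_j,\underline{\shM},\underline{z},\lambda_{|W})$, the key observation is that $W\subseteq V\cap T$. Indeed $(\id-\sigma)V\subseteq(\id-\sigma)\Z^r\subseteq T$ and $\sigma T\subseteq V$; moreover $\sigma t=t-(\id-\sigma)t\in T$ for every $t\in T$, so $\sigma T\subseteq T$. Thus setting $\shN_j=\underline{\shL}^{v_j}$ gives $\underline{\shN}^w=\underline{\shL}^w$ canonically for $w\in W\subseteq V$, and the isomorphisms $\lambda_w$ (well defined for $w\in W\subseteq T$) provide the required data on the reduced side. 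For the inverse functor $G$ sending $(\underline{\shN},\underline{\shM},\underline{z},\lambda)$ to $(\underline{\shL},\underline{\shM},\underline{z},\tilde\lambda)$ with $\shL_i=\underline{\shN}^{\sigma e_i}\otimes\underline{\shM}^{\underline{\E}(e_i-\sigma e_i)}$, additivity of $\sigma$ and $\underline{\E}$ yields a canonical identification $\underline{\shL}^t\simeq\underline{\shN}^{\sigma t}\otimes\underline{\shM}^{\underline{\E}(t-\sigma t)}$ for every $t\in T$, and I define $\tilde\lambda_t$ as the composition $\underline{\shN}^{\sigma t}\otimes\underline{\shM}^{\underline{\E}(t-\sigma t)}\arrdi{\lambda_{\sigma t}\otimes\id}\underline{\shM}^{\underline{\E}(\sigma t)}\otimes\underline{\shM}^{\underline{\E}(t-\sigma t)}=\underline{\shM}^{\underline{\E}(t)}$, using $\sigma t\in\sigma T\subseteq W$; additivity of $\tilde\lambda$ follows from that of $\sigma$, $\underline{\E}$ and $\lambda$.

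To see $G\circ F\simeq\id_{\stF_{\underline{\E}}}$, I identify $\stF_{\underline{\E}}=\stX_{\sigma_{\underline{\E}}}$ with the template of Lemma~\ref{lem:for the stack of reduced data}, taking $U=T$, $p=r$, $l=s$ (up to the harmless reordering of factors $\Z^r$ and $\Z^s$), and $\tau=\sigma$; the hypothesis $(\id-\sigma)\Z^r\subseteq T$ is exactly the one made on $\sigma$. A direct computation shows that $G\circ F$ sends $(\underline{\shL},\dots)$ to $((\underline{\shL}^{\sigma e_i}\otimes\underline{\shM}^{\underline{\E}(e_i-\sigma e_i)})_i,\dots)$, which is precisely the endofunctor of Lemma~\ref{lem:for the stack of reduced data} for this choice of $\tau$; the lemma then furnishes the required natural isomorphism to the identity.

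For $F\circ G\simeq\id_{\stF_{\underline{\E}}^{\textup{red},\sigma}}$, the same strategy applies to $\stF_{\underline{\E}}^{\textup{red},\sigma}=\stX_{\psi_{\underline{\E},\sigma}}$, this time with $U=W$, $p=q$, $l=s$, and $\tau'\colon\Z^q\arr\Z^q$ given, under the identification $V\simeq\Z^q$ via the basis $v_1,\dots,v_q$, by $v_j\mapsto\sigma v_j$. The condition $(\id-\tau')\Z^q\subseteq W$ holds since $(\id-\tau')v_j=(\id-\sigma)v_j\in(\id-\sigma)V\subseteq W$. The composition $F\circ G$ then outputs the sheaves $\underline{\shL}^{v_j}=\underline{\shN}^{\sigma v_j}\otimes\underline{\shM}^{\underline{\E}(v_j-\sigma v_j)}$, which again is exactly the template form of the lemma, and the conclusion follows. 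The main obstacle is the book-keeping of the invertible sheaves and of the $\lambda$-components on both sides so as to match the two compositions precisely with the endofunctor treated by Lemma~\ref{lem:for the stack of reduced data}; once this matching is in place the lemma delivers the required natural isomorphisms at once.
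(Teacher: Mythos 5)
Your proof is correct and follows essentially the same route as the paper: both define the two functors via the monoid/lattice diagrams underlying $\stF_{\underline{\E}}=\stX_{\sigma_{\underline{\E}}}$ and $\stF_{\underline{\E}}^{\textup{red},\sigma}=\stX_{\psi_{\underline{\E},\sigma}}$, and both reduce the two composites to the endofunctor of Lemma~\ref{lem:for the stack of reduced data} (with $U=T$, $\tau=\sigma$ in one direction and $U=W$, $\tau=\sigma_{|V}$ in the other). Your explicit verification that $W\subseteq V\cap T$ and your formula for $\tilde\lambda$ are exactly the "appropriate choices" the paper leaves implicit.
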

\begin{proof}
Consider the commutative diagrams   \[   \begin{tikzpicture}[xscale=3.0,yscale=-1.4]     \node (A1_0) at (0, 1) {$W\oplus \N^s$};     \node (A1_1) at (1, 1) {$T\oplus \N^s$};     \node (A1_2) at (2, 1) {$T\oplus \N^s$};     \node (A1_3) at (3, 1) {$W\oplus \N^s$};     \node (A2_0) at (0, 2) {$\Z^q\oplus \Z^s$};     \node (A2_1) at (1, 2) {$\Z^r\oplus \Z^s$};     \node (A2_2) at (2, 2) {$\Z^r\oplus \Z^s$};     \node (A2_3) at (3, 2) {$\Z^q\oplus \Z^s$};     
\node (A3_2) at (2, 2.4) {$(x,y)$};     
\node (A3_3) at (3, 2.4) {$(\sigma x,\underline \E(x-\sigma x)+y)$};     
\path (A1_0) edge [right hook->]node [auto] {$\scriptstyle{}$} (A1_1);     \path (A1_3) edge [->]node [auto] {$\scriptstyle{\psi}$} (A2_3);     \path (A2_2) edge [->]node [auto] {$\scriptstyle{}$} (A2_3);     \path (A1_0) edge [->]node [auto,swap] {$\scriptstyle{\psi}$} (A2_0);     \path (A1_1) edge [->]node [auto] {$\scriptstyle{\phi_{\underline \E}}$} (A2_1);     \path (A1_2) edge [->]node [auto] {$\scriptstyle{\sigma\oplus \id}$} (A1_3);     \path (A2_0) edge [right hook->]node [auto] {$\scriptstyle{}$} (A2_1);     \path (A3_2) edge [|->,gray]node [auto] {$\scriptstyle{}$} (A3_3);     \path (A1_2) edge [->]node [auto,swap] {$\scriptstyle{\phi_{\underline \E}}$} (A2_2);   \end{tikzpicture}   \] They induce functors $\Lambda\colon\stF_{\underline{\E}}\arr\stF_{\underline{\E}}^{\textup{red},\sigma}$
and $\Delta\colon\stF_{\underline{\E}}^{\textup{red},\sigma}\arr\stF_{\underline{\E}}$
respectively, that behave as the functors of the statement thanks
to description given in \ref{lem:morphisms of stack Xphi}. Finally,
applying \ref{lem:for the stack of reduced data}, we obtain that
$\Lambda\circ\Delta\simeq\id$ and $\Delta\circ\Lambda\simeq\id$.
\end{proof}

\subsection{Integral extremal rays and smooth sequences.}

We continue to use notation from \ref{not:notation for a monoid}.
We have seen that given a collection $\underline{\E}=\E^{1},\dots,\E^{r}\in\duale T_{+}$
we can associate to it a stack $\stF_{\underline{\E}}$ and a 'parametrization'
map $\stF_{\underline{\E}}\arr\stX_{\phi}$. The stack $\stF_{\underline{\E}}$
could be 'too big' if we don't make an appropriate choice of the collection
$\underline{\E}$. This happens for example if the rays in $\underline{\E}$
are not distinct or, more generally, if a ray in $\underline{\E}$
belongs to the submonoid generated by the other rays in $\underline{\E}$.
Thus we want to restrict our attention to a special class of integral
rays, called extremal and to special sequences of them.
\begin{defn}
An integral \emph{extremal} ray for $T_{+}$ is an element $\E\in\duale T_{+}$
such that
\begin{itemize}
\item $\E$ is minimal for the condition $\Supp\E\neq\emptyset$;
\item $\E$ is normalized, i.e. $\E\colon T\arr\Z$ is surjective.
\end{itemize}
\end{defn}
\begin{lem}
Assume that $T_{+}$ is an integral monoid and let $v_{1},\dots,v_{l}$
be a system of generators of $T_{+}$. Then the integral extremal
rays are the normalized $\E\in\duale T_{+}-\{0\}$ such that $\Ker\E$
contains $\rk T-1$ $\Q$-independent vectors among the $v_{1},\dots,v_{l}$.
In particular they are finitely many and they generate $\Q_{+}\duale T_{+}$.\end{lem}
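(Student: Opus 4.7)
The plan is to reduce the statement to the standard duality between a finitely generated rational cone and its dual. I will set $P = \sum_{i=1}^{l} \Q_{\geq 0} v_i \subseteq T \otimes \Q$ and $C = \duale T_+ \otimes_{\N} \Q \subseteq T^* \otimes \Q$. Since the $v_i$ generate $T_+$, one has
\[
C = \{\E \in T^* \otimes \Q \mid \E(v_i) \geq 0,\ i = 1, \ldots, l\},
\]
so $C$ is precisely the dual cone of $P$; and since $T$ is the group generated by $T_+$, the $v_i$ span $T \otimes \Q$, hence $P$ is full dimensional and $C$ is strongly convex. These two observations reduce everything to classical convex geometry: $C$ is a strongly convex rational polyhedral cone of dimension $\rk T$.

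The key step is to show that the condition ``$\E$ is minimal with $\Supp \E \neq \emptyset$'' is equivalent to $\Ker \E \cap P$ being a facet of $P$. For any $\E \in \duale T_+$ one has $\Ker \E \cap T_+ = T_+ \setminus \Supp \E$, so for $\E' \in \duale T_+$ with $\Supp \E' \neq \emptyset$ the strict containment $\Supp \E' \subsetneq \Supp \E$ says exactly that $\Ker \E' \cap P$ strictly contains the proper face $\Ker \E \cap P$. Hence minimality of $\E$ translates to $G := \Ker \E \cap P$ being a maximal proper face of $P$, i.e.\ a facet. Facets of the finitely generated cone $P$ are spanned by subsets of $\{v_1, \ldots, v_l\}$ of rank $\rk T - 1$, which matches the condition on $\Ker \E$ in the lemma.

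Conversely, once $\rk T - 1$ $\Q$-independent $v_i$ lie in $\Ker \E$, the hyperplane $\Ker \E \subseteq T \otimes \Q$ is determined, so $\E$ is determined up to a nonzero scalar; the sign is forced by $\E \in \duale T_+$ together with $\Supp \E \neq \emptyset$, and the normalization condition $\E \colon T \twoheadrightarrow \Z$ picks out the unique primitive positive generator on that ray. I will also need to verify that such an $\E$ is actually minimal: if $\E' \in \duale T_+$ satisfies $\Supp \E' \subseteq \Supp \E$, then $\E'$ vanishes on the same $\rk T - 1$ independent $v_i$, so it is a scalar multiple of $\E$, and the scalar is a nonnegative integer because $\E$ is primitive and $\E'(T_+) \subseteq \N$. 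This gives the claimed bijection between integral extremal rays of $T_+$ and facets of $P$.

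The final two assertions are then immediate: finiteness follows from the finiteness of $(\rk T - 1)$-subsets of $\{v_1, \ldots, v_l\}$, and the statement that the extremal rays generate $\Q_+ \duale T_+$ is Minkowski--Weyl applied to the strongly convex rational polyhedral cone $C$. I do not anticipate a real obstacle; the only delicate point is the primal--dual face correspondence, in particular the fact that every facet of $P$ is spanned by a subset of the chosen generators, which is a well-known property of finitely generated cones.
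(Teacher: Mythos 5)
Your proof is correct and takes essentially the same route as the paper: both reduce the lemma to the classical duality between the rational cone $P$ generated by $v_{1},\dots,v_{l}$ and its dual $\Q_{+}\duale T_{+}$, identifying the integral extremal rays with the primitive facet normals of $P$. The only organizational difference is that the paper first invokes Fulton's generation statement $\Q_{+}\Omega=\Q_{+}\duale T_{+}$ and deduces both inclusions from the resulting proportionality argument on supports, whereas you establish the facet characterization directly via the primal--dual face correspondence and then obtain the generation statement as a consequence; the convex-geometric input is the same.
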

\begin{proof}
Denote by $\Omega\subseteq\duale T_{+}$ the set of elements defined
in the statement. From \cite[Section 1.2, (9)]{Fulton1993} it follows
that $\Q_{+}\Omega=\Q_{+}\duale T_{+}$. If $\E\in\Omega$ then it
is an integral extremal ray. Indeed 
\[
\emptyset\neq\Supp\E'\subseteq\Supp\E\then\exists\lambda\in\Q_{+}\text{ s.t. }\E'=\lambda\E\then\Supp\E'=\Supp\E
\]

Conversely let $\E$ be an integral extremal ray and consider a writing
\[
\E=\sum_{\delta\in\Omega}\lambda_{\delta}\delta\qquad\text{with }\lambda_{\delta}\in\Q_{\geq0}
\]
 There must exists $\delta$ such that $\lambda_{\delta}\neq0$. So
\[
\Supp\delta\subseteq\Supp\E\then\Supp\delta=\Supp\E\then\exists\mu\in\Q_{+}\text{ s.t. }\E=\mu\delta\then\E=\delta
\]
\end{proof}
\begin{cor}
For an integral extremal ray $\E$ and $\E'\in\duale T_{+}$ we have
\[
\Supp\E'=\Supp\E\iff\exists\lambda\in\Q_{+}\text{ s.t. }\E'=\lambda\E\iff\exists\lambda\in\N_{+}\text{ s.t. }\E'=\lambda\E
\]
\end{cor}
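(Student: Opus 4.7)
My plan is to treat the equivalence as a chain of three conditions and observe that only one direction requires work. The implications $\exists\lambda\in\N_{+}\colon\E'=\lambda\E \Rightarrow \exists\lambda\in\Q_{+}\colon\E'=\lambda\E \Rightarrow \Supp\E'=\Supp\E$ are immediate, since multiplication by a positive rational preserves strict positivity on $T_{+}$: for $v\in T_{+}$, $\E(v)>0$ iff $\lambda\E(v)>0$ when $\lambda>0$.

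The real content is therefore the implication $\Supp\E'=\Supp\E \Rightarrow \E'=\lambda\E$ for some $\lambda\in\N_{+}$. I would first invoke the preceding lemma: fixing a system of generators $v_{1},\dots,v_{l}$ of $T_{+}$, extremality of $\E$ means that $\Ker\E$ contains $\rk T-1$ of the $v_{i}$'s that are $\Q$-linearly independent, and these must be precisely the $v_{i}$ with $\E(v_{i})=0$, i.e. those $v_{i}\notin\Supp\E$. The hypothesis $\Supp\E'=\Supp\E$ then forces the same $v_{i}$'s to lie in $\Ker\E'$.

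Extending scalars to $\Q$, both $\E$ and $\E'$ restrict to nonzero linear forms on $T\otimes\Q$ (nonzero because their supports are nonempty), so $\Ker(\E\otimes\Q)$ and $\Ker(\E'\otimes\Q)$ are hyperplanes in $T\otimes\Q$. Since both contain the same $(\rk T-1)$-dimensional subspace, a dimension count gives $\Ker(\E\otimes\Q)=\Ker(\E'\otimes\Q)$, whence $\E'=\lambda\E$ for a unique $\lambda\in\Q^{*}$. Positivity of $\lambda$ is immediate: for any $v\in\Supp\E=\Supp\E'$ one has $\lambda=\E'(v)/\E(v)>0$.

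Finally I would upgrade $\lambda$ to an integer using the normalization of $\E$. Since $\E\colon T\arr\Z$ is surjective, there exists $t\in T$ with $\E(t)=1$, and then $\lambda=\E'(t)$ lies in $\Z$ because $\E'\in\duale T_{+}\subseteq\Hom(T,\Z)$. Combined with $\lambda>0$ this gives $\lambda\in\N_{+}$, finishing the proof. I do not foresee a substantive obstacle here: the preceding lemma supplies the essential geometric fact about $\Ker\E$, and the rest is linear algebra together with the normalization convention.
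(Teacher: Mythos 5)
Your proof is correct and follows essentially the same route as the paper: the corollary is meant to be read off from the preceding lemma, whose proof already contains the key implication $\emptyset\neq\Supp\E'\subseteq\Supp\E\then\E'=\lambda\E$ with $\lambda\in\Q_{+}$ via the fact that $\Ker\E$ contains $\rk T-1$ independent generators. Your explicit hyperplane argument and the upgrade to $\lambda\in\N_{+}$ via normalization of $\E$ are exactly the intended details.
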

\begin{defn}
An element $v\in T_{+}$ is said \emph{indecomposable} if whenever
$v=v'+v''$ with $v',v''\in T_{+}$ it follows that $v'=0$ or $v''=0$.\end{defn}
\begin{prop}
$\duale T_{+}$ has a unique minimal system of generators composed
by the indecomposable elements. Moreover any integral extremal ray
is indecomposable.\end{prop}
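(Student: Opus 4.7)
The plan is first to verify that $\duale T_+$ is a cancellative commutative monoid with no nontrivial units, then to deduce (1) by the standard structure theory of such monoids, and finally to deduce (2) directly from the corollary preceding the statement. For the preliminary step, if $\E, -\E \in \duale T_+$ then $\E \geq 0$ and $\E \leq 0$ on $T_+$, hence $\E = 0$ on $T_+$, and therefore on $T$ because $T_+$ generates $T$ as a group. Thus $\duale T_+$ has no nontrivial units; equivalently, the rational cone $\duale T_+ \otimes \Q$ is strictly pointed, so there is a $\Z$-linear functional $\ell\colon T^* \arr \Z$ with $\ell(\E) > 0$ for every nonzero $\E \in \duale T_+$ (take, for instance, an element in the interior of the cone dual to $\duale T_+ \otimes \Q$ in $T \otimes \Q$, scaled to be integral on $T^*$).

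For the first assertion, I would argue by strong induction on $\ell(\E)$ that every $\E \in \duale T_+$ is a finite sum of indecomposable elements. If $\E = 0$ or $\E$ is indecomposable there is nothing to do; otherwise $\E = \E_1 + \E_2$ with $\E_1,\E_2 \in \duale T_+\setminus\{0\}$, so $\ell(\E_i) < \ell(\E)$ and the induction applies. For minimality and uniqueness, suppose $S \subseteq \duale T_+$ is any generating family and $\E$ is a nonzero indecomposable; writing $\E = \sum n_i s_i$ with $n_i \in \N$ and $s_i \in S$, indecomposability forces the sum to have a single nonzero term $\E = n_i s_i$, and moreover $n_i = 1$ (otherwise $\E = s_i + (n_i-1)s_i$ would be a nontrivial decomposition). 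Hence $\E \in S$, so the nonzero indecomposables lie in every generating family and therefore form the unique minimal one.

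For the second assertion, let $\E$ be an integral extremal ray and suppose for contradiction that $\E = \E_1 + \E_2$ with $\E_1,\E_2 \in \duale T_+\setminus\{0\}$. Each $\E_i$ has nonempty support (since a nonzero element of $\duale T_+$ must be strictly positive on some $v \in T_+$, as $T = T_+ - T_+$), and $\Supp\E_i \subseteq \Supp\E$. By the corollary preceding the statement, $\E_i = \lambda_i \E$ with $\lambda_i \in \N_+$, so $\E = (\lambda_1+\lambda_2)\E$. Since $\E\colon T \arr \Z$ is normalized, hence surjective, there exists $t \in T$ with $\E(t)=1$, yielding $\lambda_1+\lambda_2 = 1$, contradicting $\lambda_1,\lambda_2 \geq 1$. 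The only step requiring care is the construction of $\ell$ in the first paragraph, and this is where the hypothesis that $T_+$ generates $T$ is genuinely used; everything else is a formal manipulation of the order $\E' \leq \E \iff \E - \E' \in \duale T_+$.
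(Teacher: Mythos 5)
Your proof is correct. For the second assertion you follow essentially the same route as the paper: decompose $\E=\E_1+\E_2$, observe $\Supp\E_i\subseteq\Supp\E$, invoke extremality and the preceding corollary to get $\E_i=\lambda_i\E$, and derive a contradiction from $\lambda_1+\lambda_2=1$. (The paper phrases this without contradiction, allowing $\lambda_i\in\N$ including $0$ and concluding directly that one summand vanishes; also note that to apply the corollary literally you need $\Supp\E_i=\Supp\E$, which follows from the minimality clause in the definition of extremal ray once $\Supp\E_i\neq\emptyset$ --- worth one explicit word.) The real divergence is in the first assertion: the paper disposes of it by citing Ogus, Proposition 2.1.2, on the grounds that $\duale T_+$ is sharp, whereas you reprove that result from scratch. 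Your argument --- sharpness of $\duale T_+$, existence of a $\Z$-linear functional $\ell$ strictly positive on $\duale T_+\setminus\{0\}$, strong induction on $\ell(\E)$ for generation by indecomposables, and the observation that every generating set must contain every nonzero indecomposable --- is sound and has the virtue of being self-contained. The only place you work harder than necessary is the construction of $\ell$ via the interior of the dual cone: since $T_+$ is finitely generated, say by $v_1,\dots,v_l$, you can simply take $\ell(\E)=\sum_i\E(v_i)$, which is manifestly integral, nonnegative on $\duale T_+$, and vanishes only when $\E$ kills all of $T_+$ and hence all of $T$; this sidesteps the convex-geometry input entirely while using the same hypothesis that $T_+$ generates $T$.
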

\begin{proof}
The first claim of the statement follows from \cite[Proposition 2.1.2]{Ogus2006}
since $\duale T_{+}$ is sharp, i.e. doesn't contain invertible elements.
For the second consider an extremal ray $\E$ and assume $\E=\E'+\E''$.
We have
\[
\Supp\E',\Supp\E''\subseteq\Supp\E\then\E'=\lambda\E,\E''=\mu\E\text{ with }\lambda,\mu\in\N
\]
 and so $\E=(\lambda+\mu)\E\then\lambda+\mu=1\then\lambda=0\text{ or }\mu=0\then\E'=0\text{ or }\E''=0$.\end{proof}
\begin{defn}
\label{def:definition of smooth sequence and smooth elements}A \emph{smooth
sequence} for $T_{+}$ is a sequence $\underline{\E}=\E^{1},\dots,\E^{s}\in\duale T_{+}$
for which there exist elements $v_{1},\dots,v_{s}$ in the associated
integral monoid $T_{+}^{int}$ of $T_{+}$ such that 
\[
T_{+}^{int}\cap\Ker\underline{\E}\:\text{ generates }\Ker\underline{\E}\qquad\text{and}\qquad\E^{i}(v_{j})=\delta_{i,j}
\]

We will also say that a ray $\E\in\duale T_{+}-\{0\}$ is \emph{smooth}
if there exists a smooth sequence as above such that $\E\in<\E^{1},\dots,\E^{s}>_{\N}$
or, equivalently, such that $\Supp\E\subseteq\Supp\underline{\E}$.\end{defn}
\begin{rem}
\label{rem:the equivalently in definition of smooth sequence}If $T_{+}$
is integral and $\Omega$ is a system of generators of it, one can
always assume that $v_{i}\in\Omega.$ Moreover we also have that $\Omega\cap\Ker\underline{\E}$
generates $\Ker\underline{\E}$. 

Finally the {}``equivalently'' in definition \ref{def:definition of smooth sequence and smooth elements}
follows from the fact that, since $\Ker\underline{\E}$ is generated
by elements in $T_{+}^{int}$, then the inclusion of the supports
implies that $\E_{|\Ker\underline{\E}}=0$ and therefore $\E=\sum_{i}\E(v_{i})\E^{i}$.\end{rem}
\begin{lem}
\label{lem:decomposition of T+E and open smooth subscheme}Let $\underline{\E}=\E^{1},\dots,\E^{r}$
be a smooth sequence. Then
\[
T_{+}^{\underline{\E}}=\Ker\underline{\E}\oplus<v_{1},\dots,v_{r}>_{\N}\subseteq T\text{ where }v_{1}\dots,v_{r}\in T_{+}^{int}\comma\E^{i}(v_{j})=\delta_{i,j}
\]
Moreover, if $z_{1},\dots,z_{s}\in T_{+}^{int}$ generate $T_{+}^{int}$,
then $\Z[T_{+}^{\underline{\E}}]=\Z[T_{+}^{int}]_{\prod_{\underline{\E}(z_{i})=0}x_{z_{i}}}$
so that $\Spec\Z[T_{+}^{\underline{\E}}]$ $(\stX_{\phi}^{\underline{\E}})$
is a smooth open subscheme (substack) of $\Spec\Z[T_{+}^{int}]$ $(\stZ_{\phi})$.\end{lem}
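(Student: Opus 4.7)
My plan is to break the argument into three steps that build on each other: first the monoid direct-sum decomposition, then the identification of $\Z[T_+^{\underline{\E}}]$ with the claimed localization, and finally the smoothness and open-immersion conclusions at both the scheme and the stack level.

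First I would establish the decomposition $T_+^{\underline{\E}} = \Ker\underline{\E}\oplus\langle v_1,\dots,v_r\rangle_\N$. The inclusion $\supseteq$ is essentially automatic: $\Ker\underline{\E}\subseteq T_+^{\underline{\E}}$ by definition, and each $v_j\in T_+^{int}$ satisfies $\E^i(v_j)=\delta_{ij}\ge 0$, so any $\N$-combination of the $v_j$ lies in $T_+^{\underline{\E}}$. For the reverse inclusion, given $v\in T_+^{\underline{\E}}$ I set $a_j=\E^j(v)\in\N$ and $w=v-\sum_j a_j v_j$; applying $\E^i$ yields $\E^i(w)=0$, so $v=w+\sum a_j v_j$ with $w\in\Ker\underline{\E}$. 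Directness is immediate: if $\sum a_j v_j=w\in\Ker\underline{\E}$, evaluating $\E^i$ forces $a_i=0$.

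Next I would identify $\Z[T_+^{\underline{\E}}]$ with the localization. By Remark \ref{rem:the equivalently in definition of smooth sequence} I may assume $\{v_1,\dots,v_r\}\subseteq\{z_1,\dots,z_s\}$ and that the $z_i$ lying in $\Ker\underline{\E}$ generate $\Ker\underline{\E}$ as a group. The containment $R := \Z[T_+^{int}][x_{z_i}^{-1} : \underline{\E}(z_i)=0]\subseteq\Z[T_+^{\underline{\E}}]$ is clear because, for $z_i\in\Ker\underline{\E}$, the element $-z_i$ lies in $\Ker\underline{\E}\subseteq T_+^{\underline{\E}}$, so $x_{z_i}^{-1}=x_{-z_i}$ already belongs to $\Z[T_+^{\underline{\E}}]$. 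For the reverse inclusion, any $v\in T_+^{\underline{\E}}$ decomposes as $w+\sum a_j v_j$ by Step~1, giving $x_v=x_w\prod x_{v_j}^{a_j}$; here $\prod x_{v_j}^{a_j}\in\Z[T_+^{int}]$, and $x_w$ is a Laurent monomial in the $x_{z_i}$ with $z_i\in\Ker\underline{\E}$, hence lies in $R$.

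Finally, for smoothness and the open-immersion claims, Step~1 furnishes a monoid isomorphism $T_+^{\underline{\E}}\cong\Ker\underline{\E}\oplus\N^r$, whence
\[
\Z[T_+^{\underline{\E}}]\cong\Z[\Ker\underline{\E}][x_1,\dots,x_r],
\]
a polynomial ring over a Laurent polynomial ring (noting that $\Ker\underline{\E}$ is free abelian of finite rank, as a subgroup of the free group $T$); this is manifestly smooth over $\Z$. The openness of $\Spec\Z[T_+^{\underline{\E}}]\hookrightarrow\Spec\Z[T_+^{int}]$ is exactly Step~2. To pass to stacks I would invoke Proposition \ref{pro:atlas for the stack associated to a monoid map}: both $\stZ_\phi$ and $\stX_\phi^{\underline{\E}}$ are quotients by the $\Di{\Z^r}$-action induced by the grading $\phi$, and the open immersion of atlases is equivariant, yielding the open immersion $\stX_\phi^{\underline{\E}}\hookrightarrow\stZ_\phi$ with smooth source.

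The main obstacle I anticipate lies in Step~2: one must use in an essential way the hypothesis built into the definition of \emph{smooth sequence}, namely that $\Ker\underline{\E}$ is generated \emph{as a group} by elements of $T_+^{int}$. Without it, inverting the $x_{z_i}$ with $z_i\in\Ker\underline{\E}$ produces only a proper subring of $\Z[T_+^{\underline{\E}}]$, and the open-immersion and smoothness statements would both fail. Steps~1 and 3 are, by contrast, essentially formal once Step~2 is in hand.
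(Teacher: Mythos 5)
Your proposal is correct and follows essentially the same route as the paper: the same direct-sum decomposition of $T_{+}^{\underline{\E}}$ via $v\mapsto (v-\sum_i\E^i(v)v_i,\sum_i\E^i(v)v_i)$, smoothness read off from $\Z[T_{+}^{\underline{\E}}]\simeq\Z[\Ker\underline{\E}][x_1,\dots,x_r]$, and the localization identity reduced to inverting the generators lying in $\Ker\underline{\E}$ (the paper phrases this as $-(\Ker\underline{\E}\cap T_{+}^{int})\subseteq\langle T_{+}^{int},-z_i\text{ for }i\in I\rangle$, you phrase it via Remark \ref{rem:the equivalently in definition of smooth sequence}; the content is identical). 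No gaps.
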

\begin{proof}
We have $T=\Ker\underline{\E}\oplus<v_{1},\dots,v_{r}>_{\Z}$ and
clearly $\Ker\underline{\E}\oplus<v_{1},\dots,v_{q}>_{\N}\subseteq T_{+}^{\underline{\E}}$.
Conversely if $v\in T_{+}^{\underline{\E}}$ we can write 
\[
v=z+\sum_{i}\E^{i}(v)v_{i}\text{ with }z\in\Ker\underline{\E}\then v\in\Ker\underline{\E}\oplus<v_{1},\dots,v_{q}>_{\N}
\]
In particular $\Spec\Z[T_{+}^{\underline{\E}}]\simeq\A_{\Z}^{r}\times D_{\Z}(\Ker\underline{\E})$
and so both $\Spec\Z[T_{+}^{\underline{\E}}]$ and $\stX_{\phi}^{\underline{\E}}$
are smooth. Now let 
\[
I=\{i\st\underline{\E}(z_{i})=0\}\text{ and }S_{+}=<T_{+}^{int},-z_{i}\text{ for }i\in I>\subseteq T
\]
We need to prove that $S_{+}=T_{+}^{\underline{\E}}$. Clearly we
have the inclusion $\subseteq$. For the other one, it is enough to
prove that $-\Ker\underline{\E}\cap T_{+}^{int}\subseteq S_{+}$.
But if $v\in\Ker\underline{\E}\cap T_{+}^{int}$ then
\[
v=\sum_{j=1}^{s}a_{j}z_{j}=\sum_{j\in I}a_{j}z_{j}\then-v\in S_{+}
\]
\end{proof}
\begin{rem}
\label{rem:subsequences of smooth sequences are smooth too}Any subsequence
of a smooth sequence is smooth too. Indeed let $\underline{\delta}=\E^{1},\dots,\E^{s}$
a subsequence of a smooth sequence $\underline{\E}=\E^{1},\dots,\E^{r}$,
with $r>s$. We have to prove that $<\Ker\underline{\delta}\cap T_{+}^{int}>_{\Z}=\Ker\underline{\delta}$.
Take $v\in\Ker\underline{\delta}$. So
\[
v-\sum_{j=s+1}^{r}\E^{j}(v)v_{j}\in\Ker\underline{\E}=<\Ker\underline{\E}\cap T_{+}^{int}>_{\Z}\subseteq<\Ker\underline{\delta}\cap T_{+}^{int}>_{\Z}\then v\in<\Ker\underline{\delta}\cap T_{+}^{int}>_{\Z}
\]
\end{rem}
\begin{prop}
\label{lem:equivalent condition for a smooth integral extremal ray}Let
$\E\in\duale T_{+}$. Then $\E$ is a smooth integral extremal ray
if and only if $\E$ is a smooth sequence of one element, i.e. $\Ker\E\cap T_{+}^{int}$
generates $\Ker\E$ and there exists $v\in T_{+}$ such that $\E(v)=1$. 

In particular any element of a smooth sequence is a smooth integral
extremal ray.\end{prop}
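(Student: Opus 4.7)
The statement is a biconditional plus a corollary, so the plan is to handle each direction separately and then derive the last sentence for free from the equivalence. In both directions the real content is a structural statement about how extremality interacts with a smooth sequence, and the normalization hypothesis translates transparently between the two formulations.

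For the reverse direction, assume $\Ker\E\cap T_+^{int}$ generates $\Ker\E$ and some $v\in T_+$ satisfies $\E(v)=1$. The definitions make $(\E)$ literally a smooth sequence of one element (the $v_1=v$ of \ref{def:definition of smooth sequence and smooth elements} does the job), so $\E$ is smooth; and $\E$ is normalized because $\E(v)=1$. It remains to prove minimality of $\Supp\E$. Given any $\E'\in\duale T_+$ with $\emptyset\neq\Supp\E'\subseteq\Supp\E$, note that for every $u\in T_+$ with $\E(u)=0$ we have $u\notin\Supp\E\supseteq\Supp\E'$, hence $\E'(u)=0$. Since $\E'$ extends uniquely to the group $T$, it vanishes on $T_+^{int}\cap\Ker\E$ and thus, by the generating hypothesis, on all of $\Ker\E$. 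Normalization of $\E$ identifies $T/\Ker\E$ with $\Z$, so $\E'=n\E$ for some $n\in\Z$, and the nonemptiness of $\Supp\E'$ forces $n>0$, giving $\Supp\E'=\Supp\E$ as required.

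For the forward direction, assume $\E$ is a smooth integral extremal ray, witnessed by a smooth sequence $\underline\E=\E^1,\dots,\E^s$ with $\E\in\langle\E^1,\dots,\E^s\rangle_\N$. Writing $\E=\sum_i a_i\E^i$ and decomposing supports, the minimality of $\Supp\E$ forces $\Supp\E^i=\Supp\E$ for every $i$ with $a_i>0$. Each such $\E^i$ is normalized (since $\E^i(v_i)=1$ with $v_i\in T_+^{int}$), so the corollary relating supports of extremal rays, together with normalization of both $\E$ and $\E^i$, forces $\E=\E^i$ for a single index $i$. Hence $\E$ actually appears in the smooth sequence $\underline\E$, and by \ref{rem:subsequences of smooth sequences are smooth too} the one-element subsequence $(\E)$ is itself a smooth sequence; this is precisely the desired pair of conditions (with $v$ taken to be a preimage in $T_+$ of $v_i\in T_+^{int}$, which exists by definition of $T_+^{int}$).

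The ``in particular'' statement follows immediately: if $\E^j$ belongs to a smooth sequence $\underline\E$, then \ref{rem:subsequences of smooth sequences are smooth too} shows that the one-element sequence $(\E^j)$ is smooth, so by the reverse direction just proved, $\E^j$ is a smooth integral extremal ray. The only step that requires a little care is the reduction $\E=\E^i$ in the forward direction; I expect this to be the main (but still mild) obstacle, and the key ingredients are the corollary identifying proportional extremal rays by support together with the fact that every $\E^i$ in a smooth sequence is automatically normalized.
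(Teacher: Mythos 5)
Your proof is correct and follows essentially the same route as the paper: the converse direction verifies smoothness, normalization and minimality of the support exactly as the paper does, and the forward direction reduces $\E$ to a single element $\E^i$ of the witnessing smooth sequence and then invokes the closure of smooth sequences under passing to subsequences. The only difference is that for the step $\E=\E^{i}$ the paper appeals directly to the indecomposability of integral extremal rays, whereas you rederive it from the support-proportionality corollary together with normalization of $\E$ and of the $\E^{i}$; both are valid one-line appeals to lemmas already established.
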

\begin{proof}
We can assume $T_{+}$ integral. If $\E$ is smooth and extremal,
then there exists a smooth sequence $\E^{1},\dots,\E^{q}$ such that
$\E\in<\E^{1},\dots,\E^{q}>_{\N}$. Since $\E$ is indecomposable,
it follows that $\E=\E^{i}$ for some $i$. Conversely assume that
$\E$ is a smooth sequence. So it is smooth by definition and it is
normalized since $\E(v)=1$ for some $v$. Finally an inclusion $\Supp\delta\subseteq\Supp\E$
for $\delta\in\duale T_{+}$ means that $\delta\in<\E>_{\N}$, as
remarked in \ref{rem:the equivalently in definition of smooth sequence},
and so $\Supp\delta=\emptyset$ or $\Supp\delta=\Supp\E$.
\end{proof}
We conclude with a lemma that will be useful later.
\begin{lem}
\label{lem:comparison smooth sequences for different monoids}Let
$T_{+}\comma T_{+}'$ be integral monoids and $h\colon T\arr T'$
be an homomorphism such that $h(T_{+})=T_{+}'$ and $\Ker h=<\Ker h\cap T_{+}>$.
If $\underline{\E}=\E^{1},\dots\E^{r}\in\duale{T_{+}'}$ then
\[
\underline{\E}\text{ smooth sequence for }T_{+}'\iff\underline{\E}\circ h\text{ smooth sequence for }T_{+}
\]
\end{lem}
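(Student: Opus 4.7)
The plan is to prove both implications directly by lifting elements along $h$ and its restriction $h_{|T_+}\colon T_+\twoheadrightarrow T_+'$, using the hypothesis on $\Ker h$ to control the ambiguity in lifting. First notice that since $h(T_+)=T_+'$ and $T_+'$ generates $T'$, the map $h\colon T\to T'$ is surjective, and clearly $\underline{\E}\circ h\in\duale{T_+}$. Set $\underline{\delta}=\underline{\E}\circ h$; then $\Ker\underline{\delta}=h^{-1}(\Ker\underline{\E})$ and $\Ker h\subseteq\Ker\underline{\delta}$.

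For $(\Rightarrow)$, assume $\underline{\E}$ is a smooth sequence for $T_+'$ and pick $v_1',\dots,v_r'\in T_+'$ with $\E^i(v_j')=\delta_{ij}$. Using $h(T_+)=T_+'$, lift each $v_j'$ to some $v_j\in T_+$; then $\delta^i(v_j)=\delta_{ij}$. To check that $T_+\cap\Ker\underline{\delta}$ generates $\Ker\underline{\delta}$, take $w\in\Ker\underline{\delta}$, so $h(w)\in\Ker\underline{\E}$. By smoothness on $T_+'$, write $h(w)=\sum_i a_i u_i'$ with $u_i'\in T_+'\cap\Ker\underline{\E}$ and $a_i\in\Z$, and lift each $u_i'$ to $\tilde u_i\in T_+$, which then lies in $T_+\cap\Ker\underline{\delta}$. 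Then $w-\sum a_i\tilde u_i\in\Ker h=\langle\Ker h\cap T_+\rangle$, and since $\Ker h\cap T_+\subseteq T_+\cap\Ker\underline{\delta}$, we conclude $w\in\langle T_+\cap\Ker\underline{\delta}\rangle$.

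For $(\Leftarrow)$, assume $\underline{\delta}$ is a smooth sequence for $T_+$ with witnesses $v_1,\dots,v_r\in T_+$. Then $v_j':=h(v_j)\in T_+'$ satisfies $\E^i(v_j')=\delta^i(v_j)=\delta_{ij}$. Given any $w'\in\Ker\underline{\E}$, lift it through the surjection $h\colon T\twoheadrightarrow T'$ to $w\in\Ker\underline{\delta}$; by smoothness of $\underline{\delta}$ on $T_+$, write $w=\sum_i a_i u_i$ with $u_i\in T_+\cap\Ker\underline{\delta}$, and apply $h$ to get $w'=\sum_i a_i h(u_i)$ with $h(u_i)\in T_+'\cap\Ker\underline{\E}$.

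There is no real obstacle: the argument is a short diagram-chase, and the only subtlety is recognizing where each of the two hypotheses is used, namely $h(T_+)=T_+'$ for lifting elements (in both directions) and $\Ker h=\langle\Ker h\cap T_+\rangle$ for closing up the kernel computation in the forward direction. Both are invoked exactly once and could not be weakened without losing the equivalence.
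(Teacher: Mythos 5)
Your proof is correct and follows essentially the same route as the paper: witnesses $v_j$ are transported back and forth using $h(T_+)=T_+'$, and the kernel condition is compared by lifting generators along $h$ and absorbing the ambiguity via $\Ker h=\langle\Ker h\cap T_+\rangle\subseteq\langle\Ker(\underline{\E}\circ h)\cap T_+\rangle$. The only difference is presentational: the paper packages the kernel comparison as injectivity of the (automatically surjective) map $\Ker(\underline{\E}\circ h)/\langle\Ker(\underline{\E}\circ h)\cap T_+\rangle\to\Ker\underline{\E}/\langle\Ker\underline{\E}\cap T_+'\rangle$, whereas you unpack the two implications by hand.
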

\begin{proof}
Clearly there exist $v_{i}\in T_{+}'$ such that $\E^{i}(v_{j})=\delta_{i,j}$
if and only if there exist $w_{i}\in T_{+}$ such that $\E^{i}\circ h(w_{j})=\delta_{i,j}$.
On the other hand we have a surjective morphism
\[
\Ker\underline{\E}\circ h/<\Ker\underline{\E}\circ h\cap T_{+}>_{\Z}\arr\Ker\underline{\E}/<\Ker\underline{\E}\cap T_{+}'>_{\Z}
\]
In order to conclude it is enough to prove that this map is injective.
So let $v\in T$ such that 
\[
h(v)=\sum_{j}a_{j}z_{j}\text{ with }a_{j}\in\Z\comma z_{j}\in T_{+}'\comma\underline{\E}(z_{j})=0
\]
Since $h(T_{+})=T_{+}'$, there exist $y_{j}\in T_{+}$ such that
$h(y_{j})=z_{j}$. In particular $y=\sum_{j}a_{j}y_{j}\in<\Ker\underline{\E}\circ h\cap T_{+}>_{\Z}$
and 
\[
v-y\in\Ker h=<\Ker h\cap T_{+}>\subseteq<\Ker\underline{\E}\circ h\cap T_{+}>
\]

\end{proof}

\subsection{The smooth locus $\stZ_{\phi}^{\textup{sm}}$ of the main component
$\stZ_{\phi}$.}
\begin{lem}
\label{lem:fundamental lemma for the smooth locus of X phi} Let $\underline{\E}=\E^{1},\dots,\E^{q}$
be a smooth sequence and $\chi$ be a finite sequence of elements
of $\duale T_{+}$. Assume that all the elements of $\chi$ are distinct,
each $\E^{i}$ is an element of $\chi$ and that for any $\delta$
in $\chi$ we have
\[
\delta\in<\E^{1},\dots,\E^{q}>_{\N}\then\exists i\;\delta=\E^{i}
\]
 As usual denote by $\pi_{\chi}$ the map $\stF_{\chi}\arr\stX_{\phi}$.
Then we have an equivalence
\[
\stF_{\underline{\E}}=\pi_{\chi}^{-1}(\stX_{\phi}^{\underline{\E}})\arrdi{\simeq}\stX_{\phi}^{\underline{\E}}
\]
\end{lem}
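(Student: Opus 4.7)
The strategy is to split the claimed equivalence into two independent assertions: (a) the preimage $\pi_\chi^{-1}(\stX_\phi^{\underline{\E}})$ coincides, as an open substack of $\stF_\chi$, with the subsequence $\stF_{\underline{\E}}\subseteq\stF_\chi$ (via Remark \ref{rem:Fdelta is an open substack of Fepsilon se delta sottosequenza di epsilon}); and (b) the restriction $\pi_{\underline{\E}}\colon\stF_{\underline{\E}}\to\stX_\phi^{\underline{\E}}$ is an equivalence of stacks. The smooth sequence hypothesis is decisive in both steps, because it yields the splitting $T=\Ker\underline{\E}\oplus\langle v_1,\dots,v_q\rangle_\Z$ and $T_+^{\underline{\E}}=\Ker\underline{\E}\oplus\langle v_1,\dots,v_q\rangle_\N$ from Lemma \ref{lem:decomposition of T+E and open smooth subscheme}.

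For (a) I work on the atlas. An object of $\stF_\chi$ coming from the atlas is a pair $(\underline{z},\lambda)$ and the induced object of $\stX_\phi$ is $t\mapsto\underline{z}^{\chi(t)}/\lambda_t$ (Remark \ref{rem:description of piE}). By Lemma \ref{lem:decomposition of T+E and open smooth subscheme}, $\stX_\phi^{\underline{\E}}$ is the open substack obtained by inverting a system of generators $z_j\in T_+^{int}$ with $\underline{\E}(z_j)=0$, so landing in $\stX_\phi^{\underline{\E}}$ is equivalent to requiring that $\underline{z}^{\chi(w)}$ be invertible for every $w\in T_+^{int}\cap\Ker\underline{\E}$ (the factor $\lambda_w$ is already invertible). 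I then claim that for $\delta\in\chi$ one has $\delta|_{\Ker\underline{\E}}=0$ if and only if $\delta\in\langle\E^1,\dots,\E^q\rangle_\N$: the nontrivial direction uses that $T/\Ker\underline{\E}\cong\Z^q$ via the $\E^i$ (since $\underline{\E}$ is smooth), so $\delta=\sum c_i\E^i$ for some $c_i\in\Z$, and evaluating on lifts in $T_+$ of the $v_j\in T_+^{int}$ forces $c_j=\delta(v_j)\geq 0$. By the hypothesis on $\chi$ we deduce that for $\delta\in\chi\setminus\{\E^1,\dots,\E^q\}$, $\delta$ is strictly positive on some $w\in T_+^{int}\cap\Ker\underline{\E}$ (because $\delta\geq 0$ there and $T_+^{int}\cap\Ker\underline{\E}$ generates $\Ker\underline{\E}$), whereas on the $\E^i$ themselves this fails. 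Consequently $\pi_\chi^{-1}(\stX_\phi^{\underline{\E}})$ is exactly the open locus where $z_\delta$ is invertible for each $\delta\in\chi\setminus\underline{\E}$, which by Remark \ref{rem:Fdelta is an open substack of Fepsilon se delta sottosequenza di epsilon} is the open substack $\stF_{\underline{\E}}\hookrightarrow\stF_\chi$.

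For (b) I construct an explicit inverse. Given $(\underline{\shL},\hat a)\in\stX_\phi^{\underline{\E}}(U)$, I set $\shM_i=\underline{\shL}^{\phi(v_i)}$ and $z_i=\hat a(v_i)\in\shM_i$. Because $\Ker\underline{\E}$ is a subgroup of $T_+^{\underline{\E}}$ and $\hat a$ is additive, $\hat a(w)\colon\odi{U}\to\underline{\shL}^{\phi(w)}$ is invertible for every $w\in\Ker\underline{\E}$. Using the splitting $t=(t-\sum_i\underline{\E}^i(t)v_i)+\sum_i\underline{\E}^i(t)v_i$, I define the comparison isomorphism $\lambda_t\colon\underline{\shL}^{\phi(t)}\xrightarrow{\simeq}\underline{\shM}^{\underline{\E}(t)}$ by tensoring the inverse of $\hat a(t-\sum_i\underline{\E}^i(t)v_i)$ with the canonical identification $\underline{\shL}^{\sum_i\underline{\E}^i(t)\phi(v_i)}=\underline{\shM}^{\underline{\E}(t)}$. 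Checking additivity in $t$ and functoriality in $U$ is formal. By Remark \ref{rem:description of piE}, sending $(\underline{\shL},\hat a)$ to $(\underline{\shL},\underline{\shM},\underline{z},\lambda)$ gives a quasi-inverse of $\pi_{\underline{\E}}|_{\stF_{\underline{\E}}}$.

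The main obstacle is the bookkeeping at the end of (a): proving that the hypothesis on $\chi$ together with the smoothness of $\underline{\E}$ translates precisely into the statement that $\{\E^1,\dots,\E^q\}=\chi\cap\langle\E^1,\dots,\E^q\rangle_\N$ corresponds to the vanishing loci on $\Ker\underline{\E}$. Step (b), once the splitting from Lemma \ref{lem:decomposition of T+E and open smooth subscheme} is in hand, reduces to a formal check that the two descent data match up and should present no real difficulty.
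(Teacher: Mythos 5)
Your proof is correct, and it follows the same two-step skeleton as the paper's: first identify $\pi_{\chi}^{-1}(\stX_{\phi}^{\underline{\E}})$ with the open substack $\stF_{\underline{\E}}\subseteq\stF_{\chi}$, then invert $\pi_{\underline{\E}}$ over $\stX_{\phi}^{\underline{\E}}$ using the splitting $T_{+}^{\underline{\E}}=\Ker\underline{\E}\oplus\langle v_{1},\dots,v_{q}\rangle_{\N}$. The implementations differ in both steps, though. For the first step the paper reduces to geometric points and uses the characterization $a=\lambda0^{\eta}$ of points of $\stZ_{\phi}$ to show that $z_{\eta^{j}}=0$ forces $\Supp\eta^{j}\subseteq\Supp\underline{\E}$, hence $\eta^{j}=\E^{i}$; you instead compare the two open loci directly on the atlas, via the equivalence $\delta_{|\Ker\underline{\E}}=0\iff\delta\in\langle\E^{1},\dots,\E^{q}\rangle_{\N}$ (which is exactly \ref{rem:the equivalently in definition of smooth sequence}) together with the observation that a nonzero $\delta\geq0$ on a generating subset of $\Ker\underline{\E}$ must be strictly positive somewhere on it. Your version has the small advantage of proving the equality of open substacks in one pass and over an arbitrary base, whereas the paper gets the reverse inclusion only a posteriori from the isomorphism in the second step. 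For the second step the paper builds the section $s\colon\stX_{\phi}\arr\stF_{\underline{\E}}$ out of an explicit commutative square of monoid maps and then verifies $s\circ\pi_{\underline{\E}}\simeq\id$ by exhibiting a natural transformation on the atlas; your quasi-inverse is the same map written directly on sheaf data ($\shM_{i}=\underline{\shL}^{\phi(v_{i})}$, $z_{i}=\hat{a}(v_{i})$, $\lambda_{t}$ built from the invertibility of $\hat{a}$ on the group $\Ker\underline{\E}$), which makes the verification of the two composites genuinely routine (the nontrivial one is the natural isomorphism $(\id_{\underline{\shL}},(\lambda_{v_{i}})_{i})$, using $\hat{a}(w)^{-1}=\lambda_{w}$ for $w\in\Ker\underline{\E}$). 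Both routes rest on the same two inputs, \ref{lem:decomposition of T+E and open smooth subscheme} and the dual-basis property $\E^{i}(v_{j})=\delta_{i,j}$, so the difference is one of packaging rather than substance.
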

\begin{proof}
Set $\chi=\E^{1},\dots,\E^{q},\eta^{1},\dots,\eta^{l}=\underline{\E},\underline{\eta}$.
We first prove that $\pi_{\chi}^{-1}(\stX_{\phi}^{\underline{\E}})\subseteq\stF_{\underline{\E}}$.
Since they are open substacks, we can check this over an algebraically
closed field $k$. Let $(\underline{z},\lambda)\in\pi_{\chi}^{-1}(\stX_{\phi}^{\underline{\E}}$)
so that $a=\pi_{\chi}(\underline{z},\lambda)=\underline{z}^{\underline{\E}}/\lambda\colon T_{+}\arr k$
by \ref{rem:description of piE}. We have to prove that $z_{\eta_{j}}\neq0$.
Assume by contradiction that $z_{\eta_{j}}=0$. Since we can write
$a=b0^{\eta_{j}}$ and since $a$ extends to $T_{+}^{\underline{\E}}$
so that $a(t)\neq0$ if $t\in T_{+}\cap\Ker\underline{\E}$, we have
that $\eta_{j}$ is $0$ on $T_{+}\cap\Ker\underline{\E}$. In particular
\[
\Supp\eta^{j}\subseteq\Supp\underline{\E}\then\eta^{j}\in<\E^{1},\dots,\E^{q}>_{\N}\then\exists i\;\eta^{j}=\E^{i}
\]

Thanks to \ref{rem:Fdelta is an open substack of Fepsilon se delta sottosequenza di epsilon},
we can reduce to prove that if $\underline{\E}$ is a smooth sequence
such that $T_{+}=T_{+}^{\underline{\E}}$ then $\pi_{\underline{\E}}$
is an isomorphism. By \ref{lem:decomposition of T+E and open smooth subscheme}
we can write $T_{+}=W\oplus\N^{q}$, where $W$ is a free $\Z$-module
such that $\underline{\E}_{|W}=0$ and, if we denote by $v_{1},\dots,v_{q}$
the canonical base of $\N^{q}$, $\E^{j}(v_{i})=\delta_{i,j}$. Consider
the diagram   \[   \begin{tikzpicture}[xscale=3.0,yscale=-0.8]     
\node (A0_0) at (0, 0) {$\N^q\oplus T$};     
\node (A0_1) at (1, 0) {$T_+$};     
\node (A1_0) at (0, 1) {$\N^q \oplus W \oplus \Z^q$};     
\node (A1_1) at (1, 1) {$W \oplus \N^q$};     
\node (A1_3) at (2.5, 1.5) {$\gamma(e_i)=v_i\comma \gamma_{|W}=-\id_W\comma \gamma(v_i)=0$};     
\node (A2_3) at (2.5, 2.5) {$\delta(e_i)=\phi(v_i)\comma\delta_{|\Z^r}=\id_{\Z^r}$};     
\node (A3_0) at (0, 3) {$\Z^q\oplus\Z^r$};     
\node (A3_1) at (1, 3) {$\Z^r$};     
\node[rotate=-90] (u) at (0, 0.5) {$=$};     
\node[rotate=-90] (uu) at (1, 0.5) {$=$};     

\path (A1_0) edge [->]node [auto,swap] {$\scriptstyle{\sigma_{\underline \E}}$} (A3_0);     \path (A1_0) edge [->]node [auto] {$\scriptstyle{\gamma}$} (A1_1);     \path (A3_0) edge [->]node [auto] {$\scriptstyle{\delta}$} (A3_1);     \path (A1_1) edge [->]node [auto] {$\scriptstyle{\phi}$} (A3_1);   \end{tikzpicture}   \]  One can check directly its commutativity. In this way we get a map
$s\colon\stX_{\phi}\arr\stF_{\underline{\E}}$. Again a direct computation
on the diagrams defining $s$ and $\pi_{\underline{\E}}$ shows that
$\pi_{\underline{\E}}\circ s\simeq\id_{\stX_{\phi}}$ and that the
diagram inducing $G=s\circ\pi_{\underline{\E}}$ is   \[   \begin{tikzpicture}[xscale=3.0,yscale=-0.8]     \node (A0_0) at (0, 0) {$\N^q \oplus W \oplus \Z^q$};     \node (A0_1) at (1, 0) {$\N^q \oplus W \oplus \Z^q$};     \node (A0_3) at (2.5, 0.5) {$\alpha(e_i)=e_i-v_i,\alpha_{|W}=\id_{W}\comma \alpha_{|\Z^q}=0$};     \node (A1_3) at (2.5, 1.5) {$\beta(e_i)=\phi(v_i)\comma\beta_{|\Z^r}=\id_{\Z^r}$};     \node (A2_0) at (0, 2) {$\Z^q\oplus\Z^r$};     \node (A2_1) at (1, 2) {$\Z^q\oplus\Z^r$};     \path (A0_0) edge [->]node [auto] {$\scriptstyle{\alpha}$} (A0_1);     \path (A0_1) edge [->]node [auto] {$\scriptstyle{\sigma_{\underline \E}}$} (A2_1);     \path (A0_0) edge [->]node [auto,swap] {$\scriptstyle{\sigma_{\underline \E}}$} (A2_0);     \path (A2_0) edge [->]node [auto] {$\scriptstyle{\beta}$} (A2_1);   \end{tikzpicture}   \] We
will prove that $G\simeq\id_{\stF_{\underline{\E}}}$. An object of
$\stF_{\underline{\E}}(A)$, where $A$ is a ring, coming from the
atlas is given by $a=(\underline{z},\lambda,\underline{\mu})\colon\N^{q}\oplus W\oplus\Z^{q}\arr A$
where $\underline{z}=(a(e_{i}))_{i}=z_{1},\dots,z_{q}\in A$, $\lambda=a_{|W}\colon W\arr A^{*}$
is an homomorphism and $\underline{\mu}=(\mu(v_{i}))_{i}=\mu_{1}\dots,\mu_{q}\in A^{*}$.
Moreover $Ga=a\circ\alpha$ is $((z_{i}/\mu_{i})_{i},\lambda,\underline{1})$.
It is now easy to check that $(\underline{\mu},1)\colon Ga\arr a$
is an isomorphism and that this map defines an isomorphism $G\arr\id_{\stF_{\underline{\E}}}$.\end{proof}
\begin{cor}
\label{thm:toric open substack of Z phi via smooth sequence}If $\underline{\E}$
is a smooth sequence then $\pi_{\underline{\E}}\colon\stF_{\underline{\E}}\arr\stZ_{\phi}$
is an open immersion with image $\stX_{\phi}^{\underline{\E}}$.
\end{cor}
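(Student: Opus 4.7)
The strategy is simply to apply the just-proved Lemma~\ref{lem:fundamental lemma for the smooth locus of X phi} with the choice $\chi := \underline{\E}$ itself. I first verify the three hypotheses of that lemma with this choice. Distinctness of the $\E^i$ is forced by the existence of witnesses $v_1,\dots,v_q \in T_+^{int}$ with $\E^i(v_j) = \delta_{i,j}$: if $\E^i$ equalled $\E^j$ for some $i \neq j$, then evaluating both sides at $v_i$ would yield $1 = 0$. The second condition, that every $\E^i$ appears in $\chi$, is trivial. The third condition, that any $\delta \in \chi$ lying in $\langle \E^1,\dots,\E^q\rangle_\N$ must itself equal some $\E^i$, is vacuous, since every element of $\chi = \underline{\E}$ is by construction one of the $\E^i$.

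Once the hypotheses are in place, Lemma~\ref{lem:fundamental lemma for the smooth locus of X phi} directly supplies an equivalence
\[
\stF_{\underline{\E}} \;=\; \pi_{\underline{\E}}^{-1}(\stX_\phi^{\underline{\E}}) \;\arrdi{\simeq}\; \stX_\phi^{\underline{\E}}.
\]
In particular $\pi_{\underline{\E}}$ factors through $\stX_\phi^{\underline{\E}}$ and this factorization is an isomorphism of stacks. On the other hand, Lemma~\ref{lem:decomposition of T+E and open smooth subscheme} identifies $\stX_\phi^{\underline{\E}}$ as an open substack of $\stZ_\phi$ (concretely, as the localization of $\Spec \Z[T_+^{int}]$ at the monomials indexed by generators lying in $\Ker \underline{\E}$). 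Composing the isomorphism $\stF_{\underline{\E}} \simeq \stX_\phi^{\underline{\E}}$ with this open immersion exhibits $\pi_{\underline{\E}} \colon \stF_{\underline{\E}} \arr \stZ_\phi$ as an open immersion whose image is exactly $\stX_\phi^{\underline{\E}}$, as required.

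Honestly, no genuine obstacle remains at this stage: all the real content has been packaged into the two preceding lemmas. Lemma~\ref{lem:decomposition of T+E and open smooth subscheme} provides the splitting $T_+^{\underline{\E}} = \Ker \underline{\E} \oplus \langle v_1,\dots,v_q \rangle_\N$, which is exactly what makes $\stX_\phi^{\underline{\E}}$ both smooth and open in $\stZ_\phi$; and Lemma~\ref{lem:fundamental lemma for the smooth locus of X phi} uses that same splitting to construct explicit inverse functors between $\stF_{\underline{\E}}$ and $\stX_\phi^{\underline{\E}}$. The corollary is essentially a clean restatement of these two results, once one notes that the ``extraneous rays'' clause in the hypothesis of Lemma~\ref{lem:fundamental lemma for the smooth locus of X phi} is empty for $\chi = \underline{\E}$.
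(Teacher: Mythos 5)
Your proof is correct and is exactly the paper's intended derivation: the corollary is stated without separate proof precisely because it is the case $\chi=\underline{\E}$ of Lemma~\ref{lem:fundamental lemma for the smooth locus of X phi}, combined with the openness of $\stX_{\phi}^{\underline{\E}}$ in $\stZ_{\phi}$ from Lemma~\ref{lem:decomposition of T+E and open smooth subscheme}. Your verification of the distinctness hypothesis via $\E^{i}(v_{j})=\delta_{i,j}$ is a worthwhile detail the paper leaves implicit (the only nitpick being that the third hypothesis is trivially satisfied rather than vacuous, since its conclusion holds for every $\delta\in\chi$).
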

It turns out that if $\underline{\E}$ is a smooth sequence, then
$\stX_{\phi}^{\underline{\E}}$ has a more explicit description:
\begin{prop}
\label{pro:points of XphiE}Let $\underline{\E}=\E^{1},\dots,\E^{r}$
be a smooth sequence, $k$ be a field and $a\in\stX_{\phi}(k)$
\[
a\in\stX_{\phi}^{\underline{\E}}(k)\iff\exists\E\in<\E^{1},\dots,\E^{r}>_{\N}\comma\lambda\colon T\arr\overline{k}^{*}\text{ s.t. }a=\lambda0^{\E}
\]
Moreover if $\lambda0^{\E}\in\stX_{\phi}^{\underline{\E}}(k)$, for
some $\E\in\duale T_{+}$, $\lambda\colon T\arr\overline{k}^{*}$,
then $\E\in<\E^{1},\dots,\E^{r}>_{\N}$.\end{prop}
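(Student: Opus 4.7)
The plan is to reduce everything to Theorem \ref{pro:characterization of points of Zphi} combined with the explicit description of $T_+^{\underline{\E}}$ supplied by Lemma \ref{lem:decomposition of T+E and open smooth subscheme}. Since $\stX_\phi^{\underline{\E}}$ is an open substack of $\stZ_\phi$ (by that lemma), any $a\in\stX_\phi^{\underline{\E}}(k)$ already lies in $\stZ_\phi(k)$, so by Theorem \ref{pro:characterization of points of Zphi} it admits a presentation $a=\lambda\cdot 0^{\E}$ with $\E\in\duale T_+$ and $\lambda\colon T\arr\overline k^*$. Consequently the $(\Rightarrow)$ direction of the iff follows formally from the \emph{moreover} clause, so the content of the proposition is: (a) characterise when a point of the form $\lambda\,0^\E$ lies in the open substack, and (b) construct the extension in the converse direction.

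For (a), I would use that by Lemma \ref{lem:decomposition of T+E and open smooth subscheme}, $\Spec\Z[T_+^{\underline{\E}}]$ is the open subscheme of $\Spec\Z[T_+^{int}]$ obtained by inverting $x_{z_i}$ for $z_1,\dots,z_s$ a system of generators of $T_+^{int}$ with $\underline{\E}(z_i)=0$; this open condition is $\Di{\Z^r}$-invariant, so it descends to the condition $a\in\stX_\phi^{\underline{\E}}(\overline k)$. Thus $a=\lambda\,0^\E$ lies there precisely when $\lambda_{z_i}0^{\E(z_i)}\neq 0$ for each such $z_i$, i.e. $\E(z_i)=0$. Since $\underline{\E}$ is a smooth sequence, the $z_i\in T_+^{int}\cap\Ker\underline{\E}$ already generate $\Ker\underline{\E}$ as a group, and so the condition is equivalent to $\E_{|\Ker\underline{\E}}=0$. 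Now use the splitting $T=\Ker\underline{\E}\oplus\bigoplus_{j=1}^r\Z v_j$ (again from Lemma \ref{lem:decomposition of T+E and open smooth subscheme}) with $\E^i(v_j)=\delta_{ij}$: one gets $\E=\sum_j\E(v_j)\E^j$, and since $v_j\in T_+^{int}$ and $\E\in\duale T_+=\duale{T_+^{int}}$, each $\E(v_j)$ is a non-negative integer. This proves both the \emph{moreover} statement and the $(\Rightarrow)$ direction.

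For (b), given $\E=\sum_j a_j\E^j$ with $a_j\in\N$ and $\lambda\colon T\arr\overline k^*$, I define $\tilde a\colon T_+^{\underline{\E}}\arr\overline k$ by the same formula $\tilde a(t)=\lambda_t\,0^{\E(t)}$. Well-definedness is immediate: for any $t\in T_+^{\underline{\E}}$ one has $\E^j(t)\geq 0$ by definition of $T_+^{\underline{\E}}$, hence $\E(t)=\sum_j a_j\E^j(t)\geq 0$, so $0^{\E(t)}$ makes sense; additivity in $t$ is clear and $\tilde a$ obviously restricts to $a$ on $T_+$. Therefore $a$ lies in the image of $\stX_\phi^{\underline{\E}}(\overline k)\arr\stX_\phi(\overline k)$, proving the $(\Leftarrow)$ direction.

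The argument is essentially bookkeeping on top of Theorem \ref{pro:characterization of points of Zphi} and Lemma \ref{lem:decomposition of T+E and open smooth subscheme}; the only mildly delicate step is justifying that the condition of lying in $\stX_\phi^{\underline{\E}}$ is well-defined on isomorphism classes and may be checked on a representative in $\Spec\Z[T_+^{int}]$, which however is automatic since $\stX_\phi^{\underline{\E}}\hookrightarrow\stZ_\phi$ is an open immersion of quotient stacks by the same torus $\Di{\Z^r}$. I do not foresee a genuine obstacle beyond keeping track of the identifications $T_+\to T_+^{int}\subseteq T_+^{\underline{\E}}\subseteq T$.
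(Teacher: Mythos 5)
Your proof is correct and follows essentially the same route as the paper: both directions rest on Theorem \ref{pro:characterization of points of Zphi} together with the decomposition $T_{+}^{\underline{\E}}=\Ker\underline{\E}\oplus\langle v_{1},\dots,v_{r}\rangle_{\N}$ from Lemma \ref{lem:decomposition of T+E and open smooth subscheme}. The only (harmless) difference is organizational: the paper applies the characterization theorem directly to the monoid $T_{+}^{\underline{\E}}$ and reads off $\duale{T_{+}^{\underline{\E}}}=\langle\E^{1},\dots,\E^{r}\rangle_{\N}$, whereas you apply it to $T_{+}$ and then deduce $\E_{|\Ker\underline{\E}}=0$ from the explicit localization, which is exactly the argument of Remark \ref{rem:the equivalently in definition of smooth sequence}.
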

\begin{proof}
We can assume $k$ algebraically closed and $T_{+}$ integral. In
this case $a\in\stX_{\phi}^{\underline{\E}}(k)$ if and only if $a\colon T_{+}\arr k$
extends to a map $\Ker\underline{\E}\oplus\N^{r}=T_{+}^{\underline{\E}}\arr k$.
So $\Leftarrow$ holds. Conversely, from \ref{pro:characterization of points of Zphi},
we can write $a=\lambda0^{\E}$ where $\lambda\colon T\arr k^{*}$
and $\E\in\duale{(T_{+}^{\underline{\E}})}$. From \ref{lem:decomposition of T+E and open smooth subscheme}
we see that $\duale{T_{+}^{\underline{\E}}}=<\E^{1},\dots,\E^{r}>_{\N}$.
Finally, if $\lambda0^{\E}\in\stX_{\phi}^{\underline{\E}}$ for some
$\E$, then $\Supp\E\subseteq\Supp\underline{\E}$ and we have done.\end{proof}
\begin{lem}
\label{lem:fundamental lemma for all the classification for h}Let
$\underline{\E}=(\E^{i})_{i\in I}$ be a sequence of distinct smooth
extremal rays and $\Theta$ be a collection of smooth sequences with
rays in $\underline{\E}$. Set
\[
\stF_{\underline{\E}}^{\Theta}=\left\{ (\underline{\shL},\underline{\shM},\underline{z},\delta)\in\stF_{\underline{\E}}\left|\begin{array}{c}
V(z_{i_{1}})\cap\cdots\cap V(z_{i_{s}})\neq\emptyset\\
\text{iff }\exists\underline{\delta}\in\Theta\text{ s.t. }\E^{i_{1}},\dots,\E^{i_{s}}\subseteq\underline{\delta}
\end{array}\right.\right\} 
\]
Then, taking into account the identification made in \ref{rem:Fdelta is an open substack of Fepsilon se delta sottosequenza di epsilon},
we have 
\[
\stF_{\underline{\E}}^{\Theta}=\bigcup_{\underline{\delta}\in\Theta}\stF_{\underline{\delta}}
\]
\end{lem}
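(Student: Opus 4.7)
The plan is to prove the two inclusions separately, invoking the identification of Remark~\ref{rem:Fdelta is an open substack of Fepsilon se delta sottosequenza di epsilon}: for a subsequence $\underline{\delta}$ of $\underline{\E}$ indexed by $J\subseteq I$, the open substack $\stF_{\underline{\delta}}\hookrightarrow\stF_{\underline{\E}}$ is precisely the locus on which $z_i$ generates $\shM_i$ (equivalently, $V(z_i)=\emptyset$) for every $i\notin J$.

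For the inclusion $\bigcup_{\underline{\delta}\in\Theta}\stF_{\underline{\delta}}\subseteq\stF_{\underline{\E}}^{\Theta}$, first I would take an object of $\stF_{\underline{\delta}}$ with $\underline{\delta}\in\Theta$. By the identification above, $V(z_i)=\emptyset$ for every $i$ outside $J=J_{\underline{\delta}}$. Hence any non-empty intersection $V(z_{i_1})\cap\cdots\cap V(z_{i_s})$ forces all $i_j\in J$, which says exactly that the rays $\E^{i_1},\dots,\E^{i_s}$ are contained in $\underline{\delta}$, yielding the required condition.

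For the reverse inclusion, I would take an object $(\underline{\shL},\underline{\shM},\underline{z},\lambda)\in\stF_{\underline{\E}}^{\Theta}(S)$ and cover $S$ by open subschemes on each of which the object lies in some $\stF_{\underline{\delta}}$ with $\underline{\delta}\in\Theta$. For each point $p\in S$, set $K_p:=\{i\in I\,:\,z_i(p)=0\}$. The closed subscheme $\bigcap_{i\in K_p}V(z_i)$ contains $p$ and is thus non-empty, so the defining biconditional of $\stF_{\underline{\E}}^{\Theta}$ produces $\underline{\delta}_p\in\Theta$ with $K_p\subseteq J_{\underline{\delta}_p}$. On the open neighborhood $U_p:=S\setminus\bigcup_{i\notin K_p}V(z_i)$ of $p$, every $z_i$ with $i\notin K_p$ is invertible; a fortiori this holds for every $i\notin J_{\underline{\delta}_p}$, so the restriction of the object to $U_p$ lies in $\stF_{\underline{\delta}_p}$. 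Letting $p$ vary over $S$, the $U_p$ form an open cover showing the object lies in the union.

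The main (mild) obstacle will be matching the biconditional in the definition of $\stF_{\underline{\E}}^{\Theta}$ to the geometric description of the substacks $\stF_{\underline{\delta}}$, so that both directions of the ``iff'' are accounted for consistently with the identification of Remark~\ref{rem:Fdelta is an open substack of Fepsilon se delta sottosequenza di epsilon}. Once this is settled, the proof reduces to the pointwise combinatorial bookkeeping on the sets $K_p$ and $J_{\underline{\delta}}$ carried out above.
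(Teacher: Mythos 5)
Your proof is correct, and its overall architecture (two inclusions, checked locally on the base via the identification of $\stF_{\underline{\delta}}\subseteq\stF_{\underline{\E}}$ as the locus where $z_{i}$ generates $\shM_{i}$ for $i\notin J_{\underline{\delta}}$) is the same as the paper's; your reverse inclusion is exactly the paper's reduction to points, rewritten with an explicit open cover $\{U_{p}\}$ (note this uses that $I$ is finite, which holds since there are finitely many extremal rays). The genuine divergence is in the first inclusion: the paper restricts to $\overline{k(p)}$ at a point $p$ of the intersection, writes the induced multiplication as $b\,0^{\E^{i_{1}}+\cdots+\E^{i_{s}}}$, and compares supports of rays (via \ref{pro:characterization of points of Zphi} and the smoothness of the $\E^{i}$, plus indecomposability of extremal rays) to conclude $\E^{i_{j}}\in\underline{\delta}$; you instead deduce $i_{j}\in J_{\underline{\delta}}$ directly from the fact that $V(z_{i})=\emptyset$ for $i\notin J_{\underline{\delta}}$ on the locus where the object lies in $\stF_{\underline{\delta}}$. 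Your route is shorter and bypasses the support computation entirely; the only adjustment needed is to run it locally, since an object of the union $\bigcup_{\underline{\delta}\in\Theta}\stF_{\underline{\delta}}$ need only lie in some $\stF_{\underline{\delta}}$ on each member of an open cover, not globally in a single one --- so pick the relevant $\underline{\delta}$ on a neighbourhood of the chosen point of $V(z_{i_{1}})\cap\cdots\cap V(z_{i_{s}})$. As for the ``iff'' in the definition of $\stF_{\underline{\E}}^{\Theta}$ that worried you: like the paper's own proof, you only verify the left-to-right implication, and that is the intended reading (the literal biconditional would fail already for an object with all $z_{i}$ invertible, and all later uses of the lemma invoke only the one-way condition).
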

\begin{proof}
Let $\chi=(\underline{\shL},\underline{\shM},\underline{z},\lambda)\in\bigcup_{\underline{\delta}\in\Theta}\stF_{\underline{\delta}}(T)$,
for some scheme $T$ and let $p\in V(z_{i_{1}})\cap\cdots\cap V(z_{i_{s}})$.
This means that the pullback of $\pi_{\underline{\E}}(\chi)$ to $\overline{k(p)}$
is given by $a=b0^{\E^{i_{1}}+\cdots+\E^{i_{r}}}$ for some $b\colon T_{+}\arr\overline{k(p)}$.
By definition there exists $\underline{\delta}\in\Theta$ such that
$a\in\stF_{\underline{\delta}}(\overline{k(p)})$, i.e. $a=\mu0^{\delta}$
for some $\delta\in<\underline{\delta}>_{\N}$, $\mu\colon T\arr\overline{k(p)}^{*}$.
So
\[
\Supp\E^{i_{j}}\subseteq\{a=0\}=\Supp\delta\subseteq\Supp\underline{\delta}\then\E^{i_{j}}\in<\underline{\delta}>_{\N}
\]

For the other inclusion, since all the $\stF_{\underline{\delta}}$
are open substacks of $\stF_{\underline{\E}}$, we can reduce to the
case of an algebraically closed field $k$. So let $(\underline{z},\lambda)\in\stF_{\underline{\E}}^{\Theta}(k)$
and set $J=\{i\in I\st z_{i}=0\}$. By definition of $\stF_{\underline{\E}}^{\Theta}$
there exists $\underline{\delta}\in\Theta$ such that $\underline{\eta}=(\E^{j})_{j\in J}\subseteq\underline{\delta}$
and, taking into account \ref{rem:Fdelta is an open substack of Fepsilon se delta sottosequenza di epsilon},
this means that $a\in\stF_{\underline{\eta}}(k)\subseteq\stF_{\underline{\delta}}(k)$.\end{proof}
\begin{defn}
Let $\Theta$ be a collection of smooth sequences. We define
\[
X_{\phi}^{\Theta}=\bigcup_{\underline{\delta}\in\Theta}\Spec\Z[T_{+}^{\underline{\delta}}]\subseteq\Spec\Z[T_{+}]\text{ and }\stX_{\phi}^{\Theta}=\bigcup_{\underline{\delta}\in\Theta}\stX_{\phi}^{\underline{\delta}}\subseteq\stZ_{\phi}
\]
\end{defn}
\begin{thm}
\label{pro:piE for theta isomorphism}Let $\underline{\E}=(\E^{i})_{i\in I}$
be a sequence of distinct smooth integral rays and $\Theta$ be a
collection of smooth sequences with rays in $\underline{\E}$. Then
we have an isomorphism
\[
\stF_{\underline{\E}}^{\Theta}=\pi_{\underline{\E}}^{-1}(\stX_{\phi}^{\Theta})\arrdi{\simeq}\stX_{\phi}^{\Theta}
\]
\end{thm}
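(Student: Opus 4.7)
The strategy is to reduce to the one-smooth-sequence case handled by Lemma \ref{lem:fundamental lemma for the smooth locus of X phi} and then glue over the collection $\Theta$. By Lemma \ref{lem:fundamental lemma for all the classification for h}, we already have the identification $\stF_{\underline{\E}}^{\Theta}=\bigcup_{\underline{\delta}\in\Theta}\stF_{\underline{\delta}}$ as open substacks of $\stF_{\underline{\E}}$, and by definition $\stX_{\phi}^{\Theta}=\bigcup_{\underline{\delta}\in\Theta}\stX_{\phi}^{\underline{\delta}}$. So it suffices to show that for every $\underline{\delta}\in\Theta$ the map $\pi_{\underline{\E}}$ identifies $\stF_{\underline{\delta}}$ with $\pi_{\underline{\E}}^{-1}(\stX_{\phi}^{\underline{\delta}})$ and induces an equivalence to $\stX_{\phi}^{\underline{\delta}}$; once this is established for each piece, taking unions gives both the equality $\stF_{\underline{\E}}^{\Theta}=\pi_{\underline{\E}}^{-1}(\stX_{\phi}^{\Theta})$ and the fact that the restriction of $\pi_{\underline{\E}}$ is an isomorphism (since the $\stF_{\underline{\delta}}$ and $\stX_{\phi}^{\underline{\delta}}$ form compatible open covers).

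First I would fix $\underline{\delta}\in\Theta$ and apply Lemma \ref{lem:fundamental lemma for the smooth locus of X phi} with its ``$\underline{\E}$'' taken to be our $\underline{\delta}$ and its ``$\chi$'' taken to be our $\underline{\E}$. The hypotheses of that lemma are: (i) the elements of $\chi$ are distinct, (ii) every element of $\underline{\delta}$ lies in $\chi$, and (iii) any element of $\chi$ that belongs to $\langle\underline{\delta}\rangle_{\N}$ equals some ray of $\underline{\delta}$. Conditions (i) and (ii) are immediate from the standing assumptions on $\underline{\E}$ and $\Theta$. For (iii), recall from Proposition \ref{lem:equivalent condition for a smooth integral extremal ray} that elements of a smooth sequence are smooth integral extremal rays, hence indecomposable in $\duale T_{+}$; since the rays of $\underline{\E}$ are themselves smooth integral extremal rays, any relation $\E^{i}=\sum_{j}a_{j}\delta^{j}$ with $a_{j}\in\N$ and $\E^{i}$ indecomposable forces exactly one $a_{j}$ to be $1$ and the rest $0$, so $\E^{i}$ coincides with one of the $\delta^{j}$. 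This checks (iii), and Lemma \ref{lem:fundamental lemma for the smooth locus of X phi} then yields $\stF_{\underline{\delta}}=\pi_{\underline{\E}}^{-1}(\stX_{\phi}^{\underline{\delta}})$ together with an equivalence $\stF_{\underline{\delta}}\arrdi{\simeq}\stX_{\phi}^{\underline{\delta}}$.

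To finish I would take the union over $\underline{\delta}\in\Theta$. The equality $\pi_{\underline{\E}}^{-1}(\stX_{\phi}^{\Theta})=\bigcup_{\underline{\delta}}\pi_{\underline{\E}}^{-1}(\stX_{\phi}^{\underline{\delta}})=\bigcup_{\underline{\delta}}\stF_{\underline{\delta}}=\stF_{\underline{\E}}^{\Theta}$ then follows from Lemma \ref{lem:fundamental lemma for all the classification for h}. Since on each piece $\pi_{\underline{\E}}$ is an equivalence onto the corresponding open substack $\stX_{\phi}^{\underline{\delta}}\subseteq\stX_{\phi}^{\Theta}$, and the restrictions agree on overlaps (because they are all restrictions of the single map $\pi_{\underline{\E}}$), we conclude that $\pi_{\underline{\E}}\colon\stF_{\underline{\E}}^{\Theta}\arr\stX_{\phi}^{\Theta}$ is an isomorphism.

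I expect the main subtlety to be the verification of condition (iii) in the hypothesis of Lemma \ref{lem:fundamental lemma for the smooth locus of X phi}; everything else is formal assembly. In particular, the statement tacitly relies on the ``smooth integral rays'' in $\underline{\E}$ being extremal (equivalently, indecomposable in $\duale T_{+}$), which is precisely what makes an $\N$-linear dependence on a smooth sequence force equality with one of its members, and this is the only place where the hypothesis on $\underline{\E}$ is used in a nontrivial way.
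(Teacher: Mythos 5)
Your proof is correct and follows essentially the same route as the paper: both reduce to Lemma \ref{lem:fundamental lemma for all the classification for h} for the identification $\stF_{\underline{\E}}^{\Theta}=\bigcup_{\underline{\delta}\in\Theta}\stF_{\underline{\delta}}$ and to Lemma \ref{lem:fundamental lemma for the smooth locus of X phi} applied piecewise with $\chi=\underline{\E}$, then take unions. Your explicit verification of hypothesis (iii) of that lemma via indecomposability of the rays of $\underline{\E}$ is a detail the paper leaves implicit, but it is the intended justification.
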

\begin{proof}
Taking into account \ref{lem:fundamental lemma for all the classification for h},
it is enough to note that
\[
\pi_{\underline{\E}}^{-1}(\stX_{\phi}^{\Theta})=\pi_{\underline{\E}}^{-1}(\bigcup_{\underline{\delta}\in\Theta}\stX_{\phi}^{\underline{\delta}})=\bigcup_{\underline{\delta}\in\Theta}\stF_{\underline{\E}\cap\underline{\delta}}=\bigcup_{\underline{\delta}\in\Theta}\stF_{\underline{\delta}}\arrdi{\simeq}\stX_{\phi}^{\Theta}
\]
\end{proof}
\begin{prop}
\label{pro:Xphitheta is a smooth toric stack}Let $\underline{\E}=(\E^{i})_{i\in I}$
be a sequence of distinct smooth integral rays and $\Theta$ be a
collection of smooth sequences with rays in $\underline{\E}$. Then
the set
\[
\Delta^{\Theta}=\{<\eta_{1},\dots,\eta_{r}>_{\Q_{+}}\st\exists\underline{\delta}\in\Theta\text{ s.t. }\eta_{1},\dots,\eta_{r}\subseteq\underline{\delta}\}
\]
is a toric fan in $\duale T\otimes\Q$ whose associated toric variety
over $\Z$ is $X_{\phi}^{\Theta}$. Moreover
\[
\stX_{\phi}^{\Theta}\simeq[X_{\phi}^{\Theta}/\Di{\Z^{r}}]
\]
\end{prop}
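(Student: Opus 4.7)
I would prove the proposition in three steps: verify that $\Delta^{\Theta}$ is a fan, identify its toric variety with $X_{\phi}^{\Theta}$, and deduce the quotient stack description. Since $\duale{T_+} = \duale{T_+^{int}}$ and every object in the statement depends only on $T_+^{int}$, I first reduce to the case where $T_+$ is integral, so that $T_+ = T_+^{int}$.

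For the fan axioms, fix $\underline{\delta} = \delta^1, \ldots, \delta^s \in \Theta$. The smoothness hypothesis provides $v_1, \ldots, v_s \in T_+$ with $\delta^i(v_j) = \delta_{i,j}$, hence the $\delta^i$ are $\Q$-linearly independent, $\sigma = <\underline{\delta}>_{\Q_+}$ is a strongly convex simplicial rational cone, and its faces are exactly the subcones $<\underline{\gamma}>_{\Q_+}$ for $\underline{\gamma} \subseteq \underline{\delta}$, all of which lie in $\Delta^{\Theta}$ by \ref{rem:subsequences of smooth sequences are smooth too}. The crux is the intersection axiom, which I claim takes the form
\[
<\underline{\delta}>_{\Q_+} \cap <\underline{\delta}'>_{\Q_+} = <\underline{\delta} \cap \underline{\delta}'>_{\Q_+} \qquad \forall\, \underline{\delta}, \underline{\delta}' \in \Theta.
\]
The inclusion $\supseteq$ is immediate. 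For the converse, write $w = \sum_i a_i \delta^i = \sum_j b_j \delta'^j$ with $a_i, b_j \geq 0$, and fix an index $i$ with $a_i > 0$. For every $v \in \Ker \underline{\delta}' \cap T_+$ the second expression gives $w(v) = 0$, while the first gives $w(v) = \sum_k a_k \delta^k(v)$, a sum of nonnegative integers. Thus $\delta^k(v) = 0$ for every $k$ with $a_k > 0$, in particular for $k = i$. Since $\underline{\delta}'$ is smooth, $\Ker \underline{\delta}' = <\Ker \underline{\delta}' \cap T_+>_{\Z}$, hence $\delta^i$ vanishes on $\Ker \underline{\delta}'$ and therefore $\delta^i \in <\underline{\delta}'>_{\Q}$. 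Writing $\delta^i = \sum_k c_k \delta'^k$ and pairing with the $v'_k \in T_+$ given by the smoothness of $\underline{\delta}'$ forces $c_k = \delta^i(v'_k) \in \N$, so $\delta^i \in <\underline{\delta}'>_{\N}$. By indecomposability and normalization of the smooth extremal ray $\delta^i$ (cf.\ \ref{lem:equivalent condition for a smooth integral extremal ray} and the discussion preceding it), this is only possible if $\delta^i = \delta'^{k_0}$ for some $k_0$, giving $\delta^i \in \underline{\delta} \cap \underline{\delta}'$.

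For the toric variety identification, the dual cone of $\sigma = <\underline{\delta}>_{\Q_+}$ in $T \otimes \Q$ is $\{v \st \underline{\delta}(v) \geq 0\}$, whose intersection with the lattice $T$ is $T_+^{\underline{\delta}}$. Hence the affine chart of $\Delta^{\Theta}$ at $\sigma$ is $\Spec \Z[T_+^{\underline{\delta}}]$, smooth by \ref{lem:decomposition of T+E and open smooth subscheme}. The inclusion $T_+ \hookrightarrow T_+^{\underline{\delta}}$ realizes each chart as an open subscheme of $\Spec \Z[T_+]$, and face inclusions of cones translate to further localizations among these open subschemes; the gluing therefore reproduces exactly $X_{\phi}^{\Theta} = \bigcup_{\underline{\delta} \in \Theta} \Spec \Z[T_+^{\underline{\delta}}]$ as an open subscheme of $\Spec \Z[T_+]$.

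Finally, by \ref{cor:the domain monoid and group monoid associated to T +: stack} one has $\stZ_{\phi} \simeq [\Spec \Z[T_+] / \Di{\Z^r}]$. The $\Di{\Z^r}$-action preserves each chart $\Spec \Z[T_+^{\underline{\delta}}]$ (these are homogeneous open subschemes for the grading), hence preserves $X_{\phi}^{\Theta}$, and each $\stX_{\phi}^{\underline{\delta}}$ is the open substack of $\stZ_{\phi}$ induced by the invariant open $\Spec \Z[T_+^{\underline{\delta}}]$. Taking the union over $\underline{\delta} \in \Theta$ yields the desired isomorphism $\stX_{\phi}^{\Theta} \simeq [X_{\phi}^{\Theta} / \Di{\Z^r}]$.
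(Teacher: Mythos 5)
Your proof is correct and follows the same route as the paper, which simply asserts that ``it is easy to check'' that $\Delta^{\Theta}$ is a fan with toric variety $X_{\phi}^{\Theta}$ and then passes to the equivariant opens exactly as you do. The one genuinely nontrivial point the paper leaves implicit is the intersection axiom, and your argument for it --- vanishing on $\Ker\underline{\delta}'\cap T_{+}$ forces $\delta^{i}\in<\underline{\delta}'>_{\N}$, whence $\delta^{i}\in\underline{\delta}\cap\underline{\delta}'$ by indecomposability and normalization of smooth integral extremal rays --- is precisely the right way to fill that gap.
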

\begin{proof}
We know that if $\underline{\eta}$ is a smooth sequence then $\Spec\Z[T_{+}^{\underline{\eta}}]$
is a smooth open subset of $\Spec\Z[T_{+}^{int}]$ and it is the affine
toric variety associated to the cone $<\underline{\eta}>_{\Q_{+}}$.
It is then easy to check that $\Delta^{\Theta}$ is a fan whose associated
toric variety is $X_{\phi}^{\Theta}$. Since $\Spec\Z[T_{+}^{\underline{\eta}}]$
is the equivariant open subset of $\Spec\Z[T_{+}^{int}]$ inducing
$\stX_{\phi}^{\underline{\eta}}$ in $\stZ_{\phi}$, then $X^{\Theta}$
is the equivariant open subset of $\Spec\Z[T_{+}^{int}]$ inducing
$\stX_{\phi}^{\Theta}$. In particular we obtain the last isomorphism.\end{proof}
\begin{lem}
\label{pro:smooth locus of Z[T +]} Assume $T_{+}$ integral and set
$\Theta$ for the set of all smooth sequences. Then $X_{\phi}^{\Theta}$
is the smooth locus of $\Spec\Z[T_{+}]$. In particular $\stZ_{\phi}^{\textup{sm}}=\stX_{\phi}^{\Theta}\simeq[X_{\phi}^{\Theta}/\Di{\Z^{r}}]$.\end{lem}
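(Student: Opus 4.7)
The forward inclusion $X_\phi^\Theta \subseteq (\Spec\Z[T_+])^{\textup{sm}}$ is immediate from Lemma~\ref{lem:decomposition of T+E and open smooth subscheme}: for each smooth sequence $\underline{\delta} \in \Theta$, the open subscheme $\Spec\Z[T_+^{\underline{\delta}}]$ is, via the decomposition $T_+^{\underline{\delta}} = \Ker\underline{\delta} \oplus \N^s$, isomorphic to $\A^s_\Z \times \Spec\Z[\Ker\underline{\delta}]$ and hence smooth over $\Z$.

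For the reverse inclusion, I will fix a smooth point $p \in \Spec\Z[T_+]$ with prime ideal $\mathfrak{p}$ and set $G = \{v \in T_+ \st x_v \notin \mathfrak{p}\}$. By Lemma~\ref{lem:properties of pE}(2), applied fiberwise over $\Spec\Z$ after base change to a residue field, the monomial part $\mathfrak{p}^{\textup{om}}$ equals $p_\E$ for some $\E \in \duale T_+$, so $G = \Ker\E \cap T_+$ is a face of $T_+$. If I can construct a smooth sequence $\underline{\delta} = \delta^1,\dots,\delta^s$ with $\Ker\underline{\delta} \cap T_+ = G$, then, choosing generators $z_1,\dots,z_l$ of $T_+$, Lemma~\ref{lem:decomposition of T+E and open smooth subscheme} identifies $\Spec\Z[T_+^{\underline{\delta}}]$ with $\Spec\Z[T_+]_{\prod_{z_i \in G} x_{z_i}}$; the inverted elements lie outside $\mathfrak{p}$ by the definition of $G$, so $p \in \Spec\Z[T_+^{\underline{\delta}}] \subseteq X_\phi^\Theta$.

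Producing this smooth sequence is the main obstacle, which I plan to overcome by exploiting $\Di{\Z^r}$-equivariance. Since $\Di{\Z^r}$ acts on $\Spec\Z[T_+]$ by automorphisms, the smooth locus $V$ is open and $\Di{\Z^r}$-stable. Every $\Di{\Z^r}$-stable affine open of $\Spec\Z[T_+]$ has the form $U_{G'} = \Spec\Z[T_+ + \langle -G' \rangle_\N]$ for a face $G'$ of $T_+$, and $p \in U_{G'}$ iff $G' \subseteq G$, so $U_G$ is the smallest $\Di{\Z^r}$-stable affine open containing $p$; a standard argument (every non-empty $\Di{\Z^r}$-stable closed subvariety of $U_G$ contains the unique closed orbit $O_G \ni p$) shows that any $\Di{\Z^r}$-stable open containing $p$, and in particular $V$, must contain $U_G$. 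Hence $U_G$ is smooth over $\Z$. Writing $\Z[T_+ + \langle -G \rangle_\N] \simeq \Z[\langle G \rangle_\Z] \otimes_\Z \Z[(T_+ + \langle -G \rangle_\N)/\langle G \rangle_\Z]$, smoothness of $U_G$ over $\Z$ reduces to smoothness of the pointed toric variety associated to the quotient monoid; by the combinatorial smoothness criterion for affine toric varieties (which is independent of the base ring), this monoid is isomorphic to $\N^s$ with generators $\bar v_1,\dots,\bar v_s$ forming part of a basis of $T/\langle G \rangle_\Z$. Lifting each $\bar v_j$ to $v_j \in T_+$ and taking the dual basis $\delta^j \in \Hom(T/\langle G\rangle_\Z,\Z) \subseteq \duale T$ yields a sequence in $\duale T_+$ (since $\delta^j$ is nonnegative on the generators of $T_+$ modulo $\langle G\rangle_\Z$) satisfying $\delta^i(v_j) = \delta_{ij}$, which is condition (b) of a smooth sequence, and $\Ker \underline{\delta} = \langle G \rangle_\Z$, generated by $G \subseteq T_+$, which is condition (a).

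For the final assertion, Proposition~\ref{pro:Xphitheta is a smooth toric stack} already identifies $\stX_\phi^\Theta$ with $[X_\phi^\Theta/\Di{\Z^r}]$; combining with the equality $X_\phi^\Theta = (\Spec\Z[T_+])^{\textup{sm}}$ just established, and using that the smooth atlas $\Spec\Z[T_+] \to \stZ_\phi$ pulls back $\stZ_\phi^{\textup{sm}}$ to $(\Spec\Z[T_+])^{\textup{sm}}$, one obtains $\stZ_\phi^{\textup{sm}} = \stX_\phi^\Theta \simeq [X_\phi^\Theta/\Di{\Z^r}]$.
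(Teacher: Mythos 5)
Your overall strategy for the hard inclusion is genuinely different from the paper's and is, in essence, viable. The paper also reduces to the monomial prime, but by a generization argument ($p$ smooth implies $p^{om}=p_{\E}$ smooth, since the smooth locus is open), and then does the linear algebra by hand: it computes $p_{\E}/p_{\E}^{2}$ as a free $k[W]$-module on the minimal generators $v_{1},\dots,v_{q}$ of $T_{+}'=T_{+}+W$ over $W=\langle\Ker\E\cap T_{+}\rangle_{\Z}$ with positive $\E$-value, compares $q$ with $\alt p_{\E}$, and extracts the smooth sequence as the dual basis. You instead reach the monomial prime through equivariance of the smooth locus and the closed-orbit argument, split off the torus factor, and quote the combinatorial smoothness criterion for the sharp quotient monoid. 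Your route is more conceptual; the paper's is self-contained and sidesteps the two points below.

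Two steps need repair. First, you work with $\Di{\Z^{r}}$ acting through $\phi$, but the lemma does not assume $\phi\colon T\arr\Z^{r}$ injective; when it is not, $\Di{\Z^{r}}$-stable ideals need not be monomial (in the extreme case $\phi=0$ every closed subset is stable), so ``every non-empty stable closed subvariety of $U_{G}$ contains the closed orbit'' fails, and your classification of stable affine opens is also false over $\Z$ (e.g.\ $\Spec\Z[1/q][T_{+}]$ is stable, affine, and not of the form $U_{G'}$). The fix is to use the dense torus $\Di T=\Spec\Z[T]$, which acts via the tautological $T$-grading and visibly preserves the smooth locus; note also that over $\Z$ the closed-orbit argument only rules out the singular locus meeting the fibre of $U_{G}$ through $p$, so it yields smoothness of that fibre only --- which is enough, but only after you pass to the characteristic-free combinatorial conclusion, so the intermediate assertion ``hence $U_{G}$ is smooth over $\Z$'' is premature as written. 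Second, the decomposition $\Z[T_{+}']\simeq\Z[\langle G\rangle_{\Z}]\otimes_{\Z}\Z[T_{+}'/\langle G\rangle_{\Z}]$ requires a monoid splitting, hence $T/\langle G\rangle_{\Z}$ torsion-free, which is not automatic: for $T_{+}=\langle(2,0),(0,1),(1,1)\rangle_{\N}$ and $G$ the face generated by $(2,0)$ the quotient is $\Z/2\Z\times\Z$. It does hold in your situation, because smoothness of $U_{G}$ forces $k[T_{+}']$ to be normal, hence $T_{+}'$ saturated, hence the quotient torsion-free --- but this must be said, since the splitting is exactly what you use to reduce to the sharp quotient. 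The remaining steps (lifting the $\bar v_{j}$ to $T_{+}$, nonnegativity of the dual basis on $T_{+}$, the identity $\Ker\underline{\delta}\cap T_{+}=G$, and the localization description from \ref{lem:decomposition of T+E and open smooth subscheme} placing $p$ in $\Spec\Z[T_{+}^{\underline{\delta}}]$) are correct.
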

\begin{proof}
From \ref{lem:decomposition of T+E and open smooth subscheme} we
know that $\Spec\Z[T_{+}^{\underline{\E}}]$ is smooth over $\Z$
and it is an open subset of $\Spec\Z[T_{+}]$. So we focus on the
converse. Since $\Spec\Z[T_{+}]$ is flat over $\Z$, we can replace
$\Z$ by an algebraically closed field $k$. Let $p\in\Spec k[T_{+}]$
be a smooth point. In particular $p^{om}$ is smooth too. If $p^{om}=0$
then $p\in\Spec k[T]$ and we have done. So we can assume $p^{om}=p_{\E}$
for some $0\neq\E\in\duale T_{+}$ thanks to \ref{lem:properties of pE}.
We claim that there exist a smooth sequence $\E^{1},\dots,\E^{q}$
such that $\E\in<\E^{1},\dots,\E^{q}>_{\N}$. This is enough to conclude
that $p\in\Spec k[T_{+}^{\underline{\E}}]$ . Indeed if $x_{w}\in p$
for some $w\in\Ker\underline{\E}\cap T_{+}$ then it belongs to $p^{om}=p_{\E}$
and so $\E(w)>0$, which is not our case.

So assume to have $\E\in\duale T_{+}$ such that $p_{\E}$ is a regular
point. Set $W=<\Ker\E\cap T_{+}>_{\Z}$ and $T_{+}'=T_{+}+W$. Note
that $\Spec k[T_{+}']$ is an open subset of $\Spec k[T_{+}]$ that
contains $p_{\E}$. Moreover $k[T_{+}']/p_{\E}=k[W]$. Let $v_{1},\dots,v_{q}\in T_{+}$
be elements such that
\[
T_{+}'=<v_{1},\dots,v_{q}>_{\N}+W\qquad\text{and}\qquad\E(v_{i})>0
\]
with $q$ minimal. We claim that $M=p_{\E}/p_{\E}^{2}\simeq k[W]^{q}$,
where $p_{\E}$ is thought in $k[T_{+}']$. Indeed $M$ is a $k$-vector
space over the $x_{v}$, $v\in T_{+}'$ that satisfies: $\E(v)>0$
and whenever we have $v=v'+v''$ with $v',v''\in T_{+}'$ it follows
that $\E(v')=0$ or $\E(v'')=0$. A simple computation shows that
such a $v$ must be of the form $v_{i}+W$ for some $i$. But since
we have chosen $q$ minimal we have $(v_{i}+W)\cap(v_{j}+W)=\emptyset$
if $i\neq j$. This implies that $M$ is a free $k[W]$-module with
basis $x_{v_{1}},\dots,x_{v_{q}}$. This shows that $q=\alt p_{\E}$.

Now set $V=<v_{1},\dots,v_{q}>_{\Z}$. Since $V+W=T$, $\rk V\leq q$
and 
\[
k[W]\simeq k[T_{+}']/p_{\E}\then\rk T=\dim k[T_{+}']=\alt p_{\E}+\dim k[W]=q+\rk W
\]
we obtain that $v_{1},\dots,v_{q}$ are independents. Let $\E^{1},\dots,\E^{q}$
given by $\E^{i}(v_{j})=\delta_{i,j}$ and $\E_{|W}^{i}=0$. In particular
$W=\Ker\underline{\E}$ and it is generated by elements in $T_{+}$.
Since $\E_{|W}=0$ we have 
\[
\E=\sum_{i=1}^{q}\E(v_{i})\E^{i}\qquad\E(v_{i})>0
\]
 Moreover since $T_{+}\subseteq T_{+}'$ and $\E^{i}\in\duale{T'}_{+}$
we get that $\E^{i}\in\duale T_{+}$, as required.\end{proof}
\begin{thm}
\label{thm:fundamental theorem for the smooth locus of ZM} If $\underline{\E}$
is a sequence of distinct indecomposable rays containing the smooth
integral extremal rays then $\pi_{\underline{\E}}$ induces an equivalence
\[
\left\{ (\underline{\shL},\underline{\shM},\underline{z},\delta)\in\stF_{\underline{\E}}\left|\begin{array}{c}
V(z_{i_{1}})\cap\cdots\cap V(z_{i_{s}})=\emptyset\\
\text{if }\E^{i_{1}},\dots\E^{i_{s}}\text{ is not a}\\
\text{smooth sequence}
\end{array}\right.\right\} =\pi_{\underline{\E}}^{-1}(\stZ_{\phi}^{\textup{sm}})\arrdi{\simeq}\stZ_{\phi}^{\textup{sm}}
\]
\end{thm}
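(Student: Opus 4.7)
The theorem should follow by assembling the earlier results of this subsection. Set $\Theta$ to be the collection of all smooth sequences for $T_{+}$ and let $\underline{\E}'$ denote the subsequence of $\underline{\E}$ consisting of those rays that are smooth integral extremal. My first step is to observe that, by Proposition \ref{lem:equivalent condition for a smooth integral extremal ray}, every ray appearing in any smooth sequence is a smooth integral extremal ray, so by hypothesis $\underline{\E}'$ contains every ray appearing in every $\underline{\delta}\in\Theta$. I can then apply Theorem \ref{pro:piE for theta isomorphism} to the pair $(\underline{\E}',\Theta)$, obtaining the isomorphism
\[
\pi_{\underline{\E}'}^{-1}(\stX_{\phi}^{\Theta})=\stF_{\underline{\E}'}^{\Theta}\arrdi{\simeq}\stX_{\phi}^{\Theta},
\]
while Lemma \ref{pro:smooth locus of Z[T +]} identifies $\stX_{\phi}^{\Theta}$ with $\stZ_{\phi}^{\textup{sm}}$.

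Next, I will match the substack in the statement with $\stF_{\underline{\E}'}^{\Theta}$, viewed as an open substack of $\stF_{\underline{\E}}$. For any indecomposable ray $\E^{i}\in\underline{\E}\setminus\underline{\E}'$, Proposition \ref{lem:equivalent condition for a smooth integral extremal ray} tells me that the one-element sequence $\E^{i}$ fails to be a smooth sequence. Hence the condition in the theorem applied with $s=1$ forces $V(z_{i})=\emptyset$, i.e.\ $z_{i}$ generates $\shM_{i}$. By Remark \ref{rem:Fdelta is an open substack of Fepsilon se delta sottosequenza di epsilon}, imposing these open conditions on the extra rays embeds the substack in the theorem as an open substack of $\stF_{\underline{\E}'}$, and $\pi_{\underline{\E}}$ restricts to $\pi_{\underline{\E}'}$ on this overlap. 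Remark \ref{rem:subsequences of smooth sequences are smooth too} then says that a subcollection of rays of $\underline{\E}'$ lies inside some $\underline{\delta}\in\Theta$ if and only if it is itself a smooth sequence, so the vanishing condition of the theorem coincides exactly with the defining condition of $\stF_{\underline{\E}'}^{\Theta}$ from Lemma \ref{lem:fundamental lemma for all the classification for h}.

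The main obstacle I anticipate is the careful bookkeeping needed to reconcile the biconditional in the definition of $\stF_{\underline{\E}}^{\Theta}$ with the one-directional open condition in the theorem statement. The stratification underlying the former should match the vanishing conditions of the latter, because on the atlas $X_{\phi}^{\Theta}=\bigcup_{\underline{\delta}\in\Theta}\Spec\Z[T_{+}^{\underline{\delta}}]$ precisely the prescribed vanishing intersections occur, as the torus-fixed loci of the $\Di{\Z^{r}}$-action described in Proposition \ref{pro:Xphitheta is a smooth toric stack}. Once this equivalence of open conditions is unwound and combined with the two inputs assembled above, the result drops out.
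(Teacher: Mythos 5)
Your assembly of $\stZ_{\phi}^{\textup{sm}}=\stX_{\phi}^{\Theta}$ (Lemma \ref{pro:smooth locus of Z[T +]}) with Theorem \ref{pro:piE for theta isomorphism} applied to the subsequence $\underline{\E}'$ of smooth integral extremal rays, and your matching of the vanishing conditions via Remark \ref{rem:subsequences of smooth sequences are smooth too}, is the right skeleton and is essentially the route the paper takes. But there is a genuine gap at the one step you flag as "the main obstacle" and then do not close: you never prove that
\[
\pi_{\underline{\E}}^{-1}(\stZ_{\phi}^{\textup{sm}})\subseteq\stF_{\underline{\E}'},
\]
i.e.\ that an object of $\stF_{\underline{\E}}$ whose image lies in the smooth locus necessarily has $z_{i}$ nowhere vanishing for every indecomposable ray $\E^{i}$ that is not a smooth integral extremal ray. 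Your argument with the singleton sequences only unpacks the explicit description on the left-hand side of the theorem; it says nothing about the preimage $\pi_{\underline{\E}}^{-1}(\stZ_{\phi}^{\textup{sm}})$, which is the object the theorem equates with that description. The closing appeal to the torus-fixed loci of $X_{\phi}^{\Theta}$ describes strata of the target, not which points of the source $\stF_{\underline{\E}}$ map there, so it does not supply the missing inclusion.

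This inclusion is exactly the content of (the first half of) Lemma \ref{lem:fundamental lemma for the smooth locus of X phi}, which you never invoke, and it is the only place the hypothesis that the rays of $\underline{\E}$ are \emph{indecomposable} is actually used — a hypothesis your proposal never touches, which is a sign something is missing. The argument one needs is a field-point computation: if $(\underline{z},\lambda)\in\stF_{\underline{\E}}(k)$ has $z_{i}=0$ and $a=\pi_{\underline{\E}}(\underline{z},\lambda)\in\stX_{\phi}^{\underline{\delta}}(k)$ for some smooth sequence $\underline{\delta}$, then by \ref{pro:points of XphiE} one has $a=\mu 0^{\E}$ with $\E\in<\underline{\delta}>_{\N}$, while by \ref{rem:description of piE} the support of $\E$ contains $\Supp\E^{i}$; hence $\Supp\E^{i}\subseteq\Supp\underline{\delta}$, so $\E^{i}\in<\underline{\delta}>_{\N}$ by \ref{rem:the equivalently in definition of smooth sequence}, and indecomposability forces $\E^{i}$ to equal one of the rays of $\underline{\delta}$, i.e.\ to be a smooth integral extremal ray — contradicting $\E^{i}\notin\underline{\E}'$. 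With this lemma added (applied to each $\underline{\delta}\in\Theta$, so that $\pi_{\underline{\E}}^{-1}(\stX_{\phi}^{\underline{\delta}})=\stF_{\underline{\delta}}=\pi_{\underline{\E}'}^{-1}(\stX_{\phi}^{\underline{\delta}})$), your proof goes through and coincides with the paper's.
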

\begin{proof}
\ref{pro:smooth locus of Z[T +]} says us that $\stZ_{\phi}^{\textup{sm}}=\stX_{\phi}^{\Theta}$,
where $\Theta$ is the collection of all smooth sequences, while \ref{lem:fundamental lemma for the smooth locus of X phi}
allows us to replace $\underline{\E}$ with the sequence of all smooth
extremal rays. Therefore it is enough to apply \ref{pro:piE for theta isomorphism}
and \ref{pro:Xphitheta is a smooth toric stack}.\end{proof}
\begin{prop}
\label{pro:equivalent condition for belonging in the smooth locus of the main irreducible component of X phi}Let
$a\colon T_{+}\arr k\in\stX_{\phi}(k)$, where $k$ is a field. Then
$a$ lies in $\stZ_{\phi}^{\textup{sm}}$ if and only if there exists
a smooth ray $\E\in\duale T_{+}$ and $\lambda\colon T\arr\overline{k}^{*}$
such that $a=\lambda0^{\E}$. \end{prop}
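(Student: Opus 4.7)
The plan is to deduce the statement directly from the two earlier results \ref{pro:smooth locus of Z[T +]} and \ref{pro:points of XphiE}, without going through any fresh geometry. Recall from \ref{pro:smooth locus of Z[T +]} that
\[
\stZ_{\phi}^{\textup{sm}} = \stX_{\phi}^{\Theta} = \bigcup_{\underline{\delta}\in\Theta}\stX_{\phi}^{\underline{\delta}},
\]
where $\Theta$ is the set of all smooth sequences for $T_{+}$. Hence the question of whether $a\in\stZ_{\phi}^{\textup{sm}}(k)$ reduces to whether $a$ lies in $\stX_{\phi}^{\underline{\delta}}(k)$ for some smooth sequence $\underline{\delta}$, and this is precisely what \ref{pro:points of XphiE} characterizes.

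For the forward direction, assume $a\in\stZ_{\phi}^{\textup{sm}}(k)$. Then $a\in\stX_{\phi}^{\underline{\delta}}(k)$ for some smooth sequence $\underline{\delta}=\delta^{1},\dots,\delta^{r}$, so \ref{pro:points of XphiE} furnishes $\lambda\colon T\arr\overline{k}^{*}$ and $\E\in\langle\delta^{1},\dots,\delta^{r}\rangle_{\N}\subseteq\duale T_{+}$ with $a=\lambda 0^{\E}$. By the very definition of smooth ray (\ref{def:definition of smooth sequence and smooth elements}), any nonzero element of the submonoid generated by a smooth sequence is a smooth ray, so $\E$ is smooth (the case $\E=0$ corresponds to $a$ being a torsor point, which is automatically contained in $\stB_{\phi}\subseteq\stZ_{\phi}^{\textup{sm}}$ and can be handled by the trivial empty smooth sequence).

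Conversely, suppose $a=\lambda 0^{\E}$ with $\E$ a smooth ray and $\lambda\colon T\arr\overline{k}^{*}$. By definition there is a smooth sequence $\underline{\delta}=\delta^{1},\dots,\delta^{r}$ such that $\E\in\langle\delta^{1},\dots,\delta^{r}\rangle_{\N}$. Applying \ref{pro:points of XphiE} in the opposite direction gives $a\in\stX_{\phi}^{\underline{\delta}}(k)\subseteq\stX_{\phi}^{\Theta}(k)=\stZ_{\phi}^{\textup{sm}}(k)$, as desired.

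The only mild subtlety is the treatment of the trivial case $\E=0$ in the forward direction, since by convention smooth rays are required to be nonzero; but such points lie in $\stB_{\phi}$, which is obviously contained in the smooth locus, so no genuine obstacle arises. All other work has already been done in \ref{pro:smooth locus of Z[T +]} and \ref{pro:points of XphiE}, and the present statement is essentially a repackaging of those two results in terms of the intrinsic notion of smooth ray.
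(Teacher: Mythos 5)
Your proof is correct and follows essentially the same route as the paper, which simply invokes \ref{thm:fundamental theorem for the smooth locus of ZM} (itself a repackaging of \ref{pro:smooth locus of Z[T +]}) together with \ref{pro:points of XphiE}; you just cite the underlying lemma directly instead of the theorem built on it. Your explicit handling of the $\E=0$ (torsor) case via the empty smooth sequence is a sensible clarification of a point the paper leaves implicit.
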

\begin{proof}
Apply \ref{thm:fundamental theorem for the smooth locus of ZM} and
\ref{pro:points of XphiE}.
\end{proof}

\subsection{Extension of objects from codimension $1$.}

In this subsection we want to explain how it is possible, in certain
cases, to check that an object of $\stX_{\phi}$ over a 'good' scheme
$X$ comes (uniquely) from $\stF_{\underline{\E}}$ only checking
what happens in codimension $1$.
\begin{notation}
Given a scheme $X$ we will denote by $\Picsh X$ the category whose
objects are invertible sheaves and whose arrows are maps between them.\end{notation}
\begin{prop}
\label{pro:the map X(Y) --> X(X) is fully faithful (equivalence) is it so between Pic}Let
$X\arrdi fY$ be a map of scheme. If $\Picsh Y\arrdi{f^{*}}\Picsh X$
is fully faithful (an equivalence) then also $\stX_{\phi}(Y)\arrdi{f^{*}}\stX_{\phi}(X)$
is so.\end{prop}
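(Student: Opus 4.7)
The plan is to unpack the data of an object and an isomorphism in $\stX_\phi$ into three types of data on the base scheme, namely (i) isomorphism classes of invertible sheaves, (ii) morphisms between invertible sheaves, (iii) global sections of invertible sheaves, and to observe that all three transfer along $f^*$ under the hypothesis. Point (iii) is automatic: fully faithfulness of $f^*\colon\Picsh Y\arr\Picsh X$ applied to $\Hom_Y(\odi Y,\shM)$ shows that $\Hl^0(Y,\shM)\arr\Hl^0(X,f^*\shM)$ is bijective for every invertible sheaf $\shM$ on $Y$. Throughout, I would use the canonical isomorphisms $f^*(\shM\otimes\shN)\simeq f^*\shM\otimes f^*\shN$ to identify $\ISym f^*\underline{\shL}$ with $f^*\ISym\underline{\shL}$.

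For faithfulness of $\stX_\phi(Y)\arr\stX_\phi(X)$: an isomorphism $\underline{\sigma}$ in $\stX_\phi(Y)$ is the tuple $(\sigma_1,\dots,\sigma_r)$ in $\Picsh Y$, so faithfulness of $f^*$ on $\Picsh$ gives the conclusion componentwise. For fullness: given an isomorphism $\underline{\sigma}'$ between $f^*(\underline{\shL},a)$ and $f^*(\underline{\shL}',a')$ in $\stX_\phi(X)$, fullness of $f^*$ on $\Picsh$ lifts each $\sigma_i'$ to a morphism $\sigma_i\colon\shL_i\arr\shL_i'$. That $\sigma_i$ is an isomorphism follows by lifting $(\sigma_i')^{-1}$ to a morphism $\tau_i$ and observing that $f^*(\sigma_i\tau_i)=\id=f^*\id$ and $f^*(\tau_i\sigma_i)=\id=f^*\id$, so faithfulness forces $\sigma_i\tau_i=\id$ and $\tau_i\sigma_i=\id$. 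The remaining compatibility $\underline{\sigma}^{\phi(t)}(a(t))=a'(t)$ is an equality of sections of $\underline{\shL}'^{\phi(t)}$ on $Y$; it holds after pullback to $X$ by assumption and hence on $Y$ by injectivity of sections under $f^*$.

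For essential surjectivity, assuming $f^*$ on $\Picsh$ is an equivalence: given $(\underline{\shM},b)\in\stX_\phi(X)$, essential surjectivity on $\Picsh$ produces invertible sheaves $\shL_1,\dots,\shL_r$ on $Y$ together with isomorphisms $f^*\shL_i\simeq\shM_i$. Transporting $b$ through these, one obtains an equivalent object $(f^*\underline{\shL},b')$, where $b'(t)\in\Hl^0(X,f^*\underline{\shL}^{\phi(t)})$. Bijectivity of sections along $f^*$ yields, for each $t\in T_+$, a unique $a(t)\in\Hl^0(Y,\underline{\shL}^{\phi(t)})$ with $f^*a(t)=b'(t)$. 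Additivity $a(t+s)=a(t)\cdot a(s)$ (as sections of $\underline{\shL}^{\phi(t+s)}$) is then a consequence of the same additivity for $b'$, applied through the bijection on sections of $\underline{\shL}^{\phi(t+s)}$; this exhibits $(\underline{\shL},a)$ as a preimage of $(\underline{\shM},b)$ up to isomorphism.

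I do not expect any serious obstacle: once the identification $\ISym f^*\underline{\shL}\simeq f^*\ISym\underline{\shL}$ is fixed and invoked consistently, every condition to verify is an equality or an existence of (iso)morphisms between invertible sheaves or of their sections on $Y$, each of which is handed to us by the hypothesis on $f^*\colon\Picsh Y\arr\Picsh X$. The only mild bookkeeping is the argument that liftings of isomorphisms under a fully faithful $f^*$ are again isomorphisms, which is standard.
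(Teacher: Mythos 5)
Your proposal is correct and follows essentially the same route as the paper: full faithfulness is obtained by lifting each $\sigma_i$ componentwise through $f^*$ on $\Picsh$ and checking the compatibility with $a$ via injectivity on sections, while essential surjectivity descends the $\shM_i$ and then the sections $b(t)$ using the bijection $\Hl^0(Y,\shL)\simeq\Hl^0(X,f^*\shL)$, with additivity of $a$ verified after pullback. The only addition is your explicit check that the lift of an isomorphism is again an isomorphism, a point the paper leaves implicit.
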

\begin{proof}
Let $(\underline{\shL},a),(\underline{\shL}',a')\in\stX_{\phi}(Y)$
and $\underline{\sigma}\colon f^{*}(\underline{\shL},a)\arr f^{*}(\underline{\shL}',a')$
be a map in $\stX_{\phi}(X)$. Any map $\sigma_{i}\colon f^{*}\shL_{i}\arr f^{*}\shL_{i}$
comes from a unique map $\tau_{i}\colon\shL_{i}\arr\shL_{i}$, i.e.
$\sigma_{i}=f^{*}\tau_{i}$. Since 
\[
f^{*}(\underline{\tau}^{\phi(t)}(a(t)))=\underline{\sigma}^{\phi(t)}(f^{*}a(t))=f^{*}(a'(t))\then\underline{\tau}^{\phi(t)}(a(t))=a'(t)
\]
$\underline{\tau}$ is a map $(\underline{\shL},a)\arr(\underline{\shL}',a')$
such that $f^{*}\underline{\tau}=\underline{\sigma}$. We can conclude
that $f^{*}\colon\stX_{\phi}(Y)\arr\stX_{\phi}(X)$ is fully faithful.

Now assume that $\Picsh Y\arrdi{f^{*}}\Picsh X$ is an equivalence.
We have to prove that $\stX_{\phi}(Y)\arrdi{f^{*}}\stX_{\phi}(X)$
is essentially surjective. So let $(\underline{\shM},b)\in\stX_{\phi}(X)$.
Since $f^{*}$ is an equivalence we can assume $\shM_{i}=f^{*}\shL_{i}$
for some invertible sheaf $\shL_{i}$ on $Y$. Since for any invertible
sheaf $\shL$ on Y one has that $\shL(Y)\simeq(f^{*}\shL)(X)$, any
section $b(t)\in\underline{\shM}^{\phi(t)}$ extends to a unique section
$a(t)\in\underline{\shL}^{\phi(t)}$. Since
\[
f^{*}(a(t)\otimes a(s))=b(t)\otimes b(s)=b(t+s)=f^{*}(a(t+s))\then a(t)\otimes a(s)=a(t+s)
\]
for any $t,s\in T_{+}$ and $a(0)=1$, it follows that $(\underline{\shL},a)\in\stX_{\phi}(Y)$
and $f^{*}(\underline{\shL},a)=(\underline{\shM},b)$.\end{proof}
\begin{cor}
\label{cor:lift when Picsh is the same}Let $X\arrdi fY$ be a map
of schemes and consider a commutative diagram    \[   \begin{tikzpicture}[xscale=1.3,yscale=-1.0]     \node (A0_0) at (0, 0) {$X$};     \node (A0_1) at (1, 0) {$\stF_{\underline \E}$};     \node (A1_0) at (0, 1) {$Y$};     \node (A1_1) at (1, 1) {$\stX_\phi$};     \path (A0_0) edge [->]node [auto] {$\scriptstyle{}$} (A0_1);     \path (A1_0) edge [->,dashed]node [auto] {$\scriptstyle{}$} (A0_1);     \path (A0_0) edge [->]node [auto,swap] {$\scriptstyle{f}$} (A1_0);     \path (A0_1) edge [->]node [auto] {$\scriptstyle{\pi_{\underline \E}}$} (A1_1);     \path (A1_0) edge [->]node [auto] {$\scriptstyle{}$} (A1_1);   \end{tikzpicture}   \] where
$\underline{\E}$ is a sequence of elements of $\duale T_{+}$. Then
if $\Picsh X\arrdi{f^{*}}\Picsh Y$ is fully faithful (an equivalence)
the dashed lifting is unique (exists).\end{cor}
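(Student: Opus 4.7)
The plan is to reduce everything to Proposition \ref{pro:the map X(Y) --> X(X) is fully faithful (equivalence) is it so between Pic}. The key observation is that $\stF_{\underline{\E}}$ is itself of the form $\stX_{\psi}$ (with $\psi = \sigma_{\underline{\E}}$), so the Proposition applies equally to $\stF_{\underline{\E}}$ as to $\stX_{\phi}$. Thus under the hypothesis on $f^{*}\colon\Picsh Y\arr\Picsh X$, both functors
\[
f^{*}\colon\stX_{\phi}(Y)\arr\stX_{\phi}(X)\quad\text{and}\quad f^{*}\colon\stF_{\underline{\E}}(Y)\arr\stF_{\underline{\E}}(X)
\]
are fully faithful (an equivalence). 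Let $\xi\in\stF_{\underline{\E}}(X)$ and $\eta\in\stX_{\phi}(Y)$ be the objects induced by the two solid arrows, and let $\alpha\colon f^{*}\eta\arrdi{\simeq}\pi_{\underline{\E}}(\xi)$ be the $2$-isomorphism witnessing the commutativity of the given square.

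For uniqueness, I would take two liftings $\tilde{\eta}_{1},\tilde{\eta}_{2}\in\stF_{\underline{\E}}(Y)$ of the square. By definition each one comes equipped with an isomorphism $f^{*}\tilde{\eta}_{i}\simeq\xi$ compatible with $\alpha$ under $\pi_{\underline{\E}}$. Composing, one obtains an isomorphism $f^{*}\tilde{\eta}_{1}\simeq f^{*}\tilde{\eta}_{2}$, and fully faithfulness of $f^{*}\colon\stF_{\underline{\E}}(Y)\arr\stF_{\underline{\E}}(X)$ then produces a unique isomorphism $\tilde{\eta}_{1}\simeq\tilde{\eta}_{2}$ restricting to it; the compatibility with $\eta$ is automatic since $\pi_{\underline{\E}}$ takes this isomorphism to one whose pullback along $f$ equals the comparison $f^{*}\eta\simeq f^{*}\eta$ coming from the two structures, and fully faithfulness on $\stX_{\phi}$ forces it to be the identity.

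For existence (assuming the equivalence), I would first use essential surjectivity of $f^{*}\colon\stF_{\underline{\E}}(Y)\arr\stF_{\underline{\E}}(X)$ to pick some $\tilde{\eta}\in\stF_{\underline{\E}}(Y)$ with an isomorphism $\beta\colon f^{*}\tilde{\eta}\arrdi{\simeq}\xi$. Applying $\pi_{\underline{\E}}$ and composing with $\alpha^{-1}$ yields an isomorphism
\[
f^{*}\pi_{\underline{\E}}(\tilde{\eta})=\pi_{\underline{\E}}(f^{*}\tilde{\eta})\arrdi{\pi_{\underline{\E}}(\beta)}\pi_{\underline{\E}}(\xi)\arrdi{\alpha^{-1}}f^{*}\eta.
\]
Since $f^{*}\colon\stX_{\phi}(Y)\arr\stX_{\phi}(X)$ is fully faithful (indeed an equivalence), this descends to a unique isomorphism $\pi_{\underline{\E}}(\tilde{\eta})\simeq\eta$ in $\stX_{\phi}(Y)$, which upgrades $\tilde{\eta}$ to the desired lift of the square.

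The only delicate point is bookkeeping of $2$-isomorphisms — making sure that the lift not only exists as an object but fits into the prescribed $2$-commutative square — but this is handled entirely by the fully faithfulness half of the hypothesis, so no computation beyond Proposition \ref{pro:the map X(Y) --> X(X) is fully faithful (equivalence) is it so between Pic} is required.
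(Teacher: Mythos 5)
Your proof is correct and follows the same route as the paper: the paper's own argument likewise reduces to the $2$-commutative square of categories $\stF_{\underline{\E}}(Y)\arr\stF_{\underline{\E}}(X)$, $\stX_{\phi}(Y)\arr\stX_{\phi}(X)$ and the observation that Proposition \ref{pro:the map X(Y) --> X(X) is fully faithful (equivalence) is it so between Pic} applies to both stacks, since $\stF_{\underline{\E}}=\stX_{\sigma_{\underline{\E}}}$ is itself of the form $\stX_{\psi}$. You merely spell out the $2$-categorical diagram chase that the paper leaves implicit.
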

\begin{proof}
It is enough to consider the $2$-commutative diagram   \[   \begin{tikzpicture}[xscale=2.0,yscale=-1.0]     \node (A0_0) at (0, 0) {$\stF_{\underline \E}(Y)$};     \node (A0_1) at (1, 0) {$\stF_{\underline \E}(X)$};     \node (A1_0) at (0, 1) {$\stX_\phi(Y)$};     \node (A1_1) at (1, 1) {$\stX_\phi(X)$};     \path (A0_0) edge [->]node [auto] {$\scriptstyle{f^*}$} (A0_1);     \path (A0_0) edge [->]node [auto,swap] {$\scriptstyle{\pi_{\underline \E}}$} (A1_0);     \path (A0_1) edge [->]node [auto] {$\scriptstyle{\pi_{\underline \E}}$} (A1_1);     \path (A1_0) edge [->]node [auto] {$\scriptstyle{f^*}$} (A1_1);   \end{tikzpicture}   \] and
note that $f^{*}$ is fully faithful (an equivalence) in both cases.\end{proof}
\begin{thm}
\label{thm:fundamental theorem for locally factorial schemes}Let
$X$ be a locally noetherian and locally factorial scheme, $\underline{\E}=(\E^{i})_{i\in I}$
be a sequence of distinct smooth integral rays and $\Theta$ be a
collection of smooth sequences with rays in $\underline{\E}$. Set
\[
\catC_{X}^{\Theta}=\left\{ (\underline{\shL},\underline{\shM},\underline{z},\delta)\in\stF_{\underline{\E}}(X)\left|\begin{array}{c}
\codim_{X}V(z_{i_{1}})\cap\cdots\cap V(z_{i_{s}})\geq2\\
\text{if }\nexists\underline{\delta}\in\Theta\text{ s.t. }\E^{i_{1}},\dots\E^{i_{s}}\subseteq\underline{\delta}
\end{array}\right.\right\} 
\]
and 
\[
\catD_{X}^{\Theta}=\left\{ \chi\in\stX_{\phi}(X)\left|\begin{array}{c}
\forall p\in X\text{ with }\codim_{p}X\leq1\\
\chi_{|\overline{k(p)}}\in\stX_{\phi}^{\Theta}
\end{array}\right.\right\} 
\]
Then $\pi_{\underline{\E}}$ induces an equivalence of categories
\[
\catC_{X}^{\Theta}=\pi_{\underline{\E}}^{-1}(\catD_{X}^{\Theta})\arrdi{\simeq}\catD_{X}^{\Theta}
\]
\end{thm}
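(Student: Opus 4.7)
The plan is to reduce everything to the open $U=\{p\in X\st\codim_{p}X\leq 1\}$, where Theorem~\ref{pro:piE for theta isomorphism} already gives the required equivalence, and then to extend across $X$ via the Picard-rigidity statements. Since $X$ is locally noetherian and locally factorial, $X$ is reduced (local rings are UFDs, hence domains) and $X\setminus U$ has codimension $\geq 2$; standard properties of locally factorial (in particular S2) schemes yield that $\Picsh X\arr\Picsh U$ is an equivalence. Proposition~\ref{pro:the map X(Y) --> X(X) is fully faithful (equivalence) is it so between Pic} then upgrades this to equivalences of categories $\stX_{\phi}(X)\arr\stX_{\phi}(U)$ and $\stF_{\underline{\E}}(X)\arr\stF_{\underline{\E}}(U)$.

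\textbf{Identification $\catC_{X}^{\Theta}=\pi_{\underline{\E}}^{-1}(\catD_{X}^{\Theta})$.} For the inclusion $\subseteq$, fix $\tilde{\chi}=(\underline{\shL},\underline{\shM},\underline{z},\lambda)\in\catC_{X}^{\Theta}$ and $p\in U$, and set $J=\{i\in I\st z_{i}(p)=0\}$. The codimension hypothesis forces the subfamily $(\E^{j})_{j\in J}$ to sit inside some $\underline{\delta}\in\Theta$, so by Lemma~\ref{lem:fundamental lemma for all the classification for h} $\tilde{\chi}_{|\overline{k(p)}}\in\stF_{\underline{\E}}^{\Theta}$, and then $\pi_{\underline{\E}}(\tilde{\chi})_{|\overline{k(p)}}\in\stX_{\phi}^{\Theta}$ by Theorem~\ref{pro:piE for theta isomorphism}. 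For the inclusion $\supseteq$, suppose $\pi_{\underline{\E}}(\tilde{\chi})\in\catD_{X}^{\Theta}$. Since $\stF_{\underline{\E}}^{\Theta}=\pi_{\underline{\E}}^{-1}(\stX_{\phi}^{\Theta})$ is open in $\stF_{\underline{\E}}$ (it is a finite union of the $\stF_{\underline{\delta}}$'s which are open by Remark~\ref{rem:Fdelta is an open substack of Fepsilon se delta sottosequenza di epsilon}), the pointwise assumption on codimension $\leq 1$ points forces $\tilde{\chi}_{|U}\in\stF_{\underline{\E}}^{\Theta}(U)$, and the contrapositive of the iff in Lemma~\ref{lem:fundamental lemma for all the classification for h} then shows that any ``bad'' intersection $V(z_{i_{1}})\cap\cdots\cap V(z_{i_{s}})$ meets $U$ in the empty set, hence is contained in $X\setminus U$ and has codimension $\geq 2$.

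\textbf{The equivalence and the main obstacle.} Given $\chi\in\catD_{X}^{\Theta}$, the hypothesis places $\chi|_{U}$ pointwise in $\stX_{\phi}^{\Theta}\subseteq\stZ_{\phi}\subseteq\stX_{\phi}$. The delicate point, and the main obstacle, is upgrading this to a genuine factorization $\chi|_{U}\arr\stX_{\phi}^{\Theta}$ of stack morphisms: since $\stZ_{\phi}\hookrightarrow\stX_{\phi}$ is a closed immersion, the preimage $\chi|_{U}^{-1}(\stZ_{\phi})$ is a closed substack of $U$ whose underlying topological space is all of $|U|$, and reducedness of $U$ (a consequence of local factoriality) forces this closed substack to coincide with $U$; the resulting morphism $\chi|_{U}\colon U\arr\stZ_{\phi}$ then factors through the open substack $\stX_{\phi}^{\Theta}$ of Proposition~\ref{pro:Xphitheta is a smooth toric stack} because it does so at every point. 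Theorem~\ref{pro:piE for theta isomorphism} lifts $\chi|_{U}$ uniquely to $\tilde{\chi}_{U}\in\stF_{\underline{\E}}^{\Theta}(U)$; the equivalence $\stF_{\underline{\E}}(X)\simeq\stF_{\underline{\E}}(U)$ extends it uniquely to $\tilde{\chi}\in\stF_{\underline{\E}}(X)$; and the equivalence $\stX_{\phi}(X)\simeq\stX_{\phi}(U)$ identifies $\pi_{\underline{\E}}(\tilde{\chi})$ with $\chi$. The previous step places $\tilde{\chi}$ in $\catC_{X}^{\Theta}$, giving essential surjectivity. Fully faithfulness follows the same template: an isomorphism between the images restricts to $U$, lifts uniquely through the isomorphism $\stF_{\underline{\E}}^{\Theta}\simeq\stX_{\phi}^{\Theta}$ on $U$, and extends uniquely to $X$ via the fully faithful part of Corollary~\ref{cor:lift when Picsh is the same}.
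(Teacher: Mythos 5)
There is a genuine gap at the very first step: the set $U=\{p\in X\st\codim_{p}X\leq1\}$ is \emph{not} an open subset of $X$ in general. Its complement, the set of points of codimension $\geq2$, is stable under specialization but usually not closed; already for $X=\A_{k}^{2}$ the complement is the set of closed points, which is dense and not closed. Consequently you cannot treat $U$ as an open subscheme, you cannot speak of $\chi_{|U}$ as a morphism of schemes/stacks, and the appeal to ``$\Picsh X\arr\Picsh U$ is an equivalence'' (which is a statement about open subschemes with complement of codimension $\geq2$ in a normal locally noetherian scheme) does not apply. Since your entire strategy --- restrict to $U$, apply Theorem \ref{pro:piE for theta isomorphism} over $U$, extend back to $X$ by Picard rigidity --- is routed through this non-existent open, the argument as written does not go through, even though several individual observations (e.g.\ that a closed set avoiding all codimension $\leq1$ points has codimension $\geq2$, and the factorization of $\chi$ through $\stZ_{\phi}$ via reducedness) are correct.

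The repair is to make the open set depend on the object, which is exactly what the paper does. One shows that $\catC_{X}^{\Theta}$ consists of those $\chi\in\stF_{\underline{\E}}(X)$ admitting \emph{some} open $U\subseteq X$ with $\codim_{X}(X-U)\geq2$ and $\chi_{|U}\in\stF_{\underline{\E}}^{\Theta}(U)$ (take $U$ to be the complement of the union of the ``bad'' intersections $V(z_{i_{1}})\cap\cdots\cap V(z_{i_{s}})$), and similarly $\catD_{X}^{\Theta}$ consists of those $\chi$ with $\chi_{|U}\in\stX_{\phi}^{\Theta}(U)$ for some such $U$ (take $U=g^{-1}(\stX_{\phi}^{\Theta})$ after factoring $g\colon X\arr\stZ_{\phi}$ by reducedness; the codimension bound on $X-U$ is then precisely the defining condition of $\catD_{X}^{\Theta}$). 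These $U$ are genuine opens, so $\Picsh X\simeq\Picsh U$ and Corollary \ref{cor:lift when Picsh is the same} apply; for full faithfulness one intersects the two opens attached to the two objects. With this modification your outline becomes essentially the paper's proof.
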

\begin{proof}
We claim that
\[
\catC_{X}^{\Theta}=\{\chi\in\stF_{\underline{\E}}(X)\st\exists U\subseteq X\text{ open subset s.t. }\codim_{X}X-U\geq2\comma\chi_{|U}\in\stF_{\underline{\E}}^{\Theta}(U)\}
\]

$\subseteq$ Taking into account the definition of $\stF_{\underline{\E}}^{\Theta}$
in \ref{lem:fundamental lemma for all the classification for h},
it is enough to consider 
\[
U=X-\bigcup_{\nexists\underline{\delta}\in\Theta\text{ s.t. }\E^{i_{1}},\dots\E^{i_{s}}\subseteq\underline{\delta}}V(z_{i_{1}})\cap\cdots\cap V(z_{i_{s}})
\]

$\supseteq$ If $p\in V(z_{i_{1}})\cap\cdots\cap V(z_{i_{s}})$ and
$\codim_{p}X\leq1$ then $p\in U$ and again by definition of $\stF_{\underline{\E}}^{\Theta}$
there exists $\underline{\delta}\in\Theta$ such that $\E^{i_{1}},\dots,\E^{i_{s}}\subseteq\underline{\delta}$.

We also claim that 
\[
\catD_{X}^{\Theta}=\{\chi\in\stX_{\phi}(X)\st\exists U\subseteq X\text{ open subset s.t. }\codim_{X}X-U\geq2\comma\chi_{|U}\in\stX_{\phi}^{\Theta}(U)\}
\]

$\supseteq$ Such a $U$ contains all the codimension $1$ or $0$
points of $X$.

$\subseteq$ Let $\chi\in\catD_{X}^{\Theta}$ and $X\arrdi g\stX_{\phi}$
be the induced map. If $\xi$ is a generic point of $X$, we know
that $f(\xi)\in|\stX_{\phi}^{\Theta}|\subseteq|\stZ_{\phi}|$. In
particular $f(|X|)\subseteq|\stZ_{\phi}|$. Since both $X$ and $\stZ_{\phi}$
are reduced $g$ factors to a map $X\arrdi g\stZ_{\phi}$. Since $\stX_{\phi}^{\Theta}$
is an open substack of $\stZ_{\phi}$, it follows that $U=g^{-1}(\stX_{\phi}^{\Theta})$
is an open subscheme of $X$, $\chi_{|U}\in\stX_{\phi}^{\Theta}(U)$
and, by definition of $\catD_{X}^{\Theta}$, $\codim_{X}X-U\geq2$.

Taking into account \ref{pro:piE for theta isomorphism} it is clear
that $\catC_{X}^{\Theta}=\pi_{\underline{\E}}^{-1}(\catD_{X}^{\Theta})$.
We will make use of the fact that if $U\subseteq X$ is an open subscheme
such that $\codim_{X}X-U\geq2$ then the restriction yields an equivalence
$\Picsh X\simeq\Picsh U$. The map $\catC_{X}^{\Theta}\arr\catD_{X}^{\Theta}$
is essentially surjective since, given an object of $\catD_{X}^{\Theta}$,
the associated map $X\arrdi g\stX_{\phi}$ fits in a $2$-commutative
diagram   \[   \begin{tikzpicture}[xscale=1.5,yscale=-1.2]     \node (A0_0) at (0, 0) {$U$};     \node (A0_1) at (1, 0) {$\stF_{\underline \E}^\Theta\subseteq \stF_{\underline \E}$};     \node (A1_0) at (0, 1) {$X$};     \node (A1_1) at (1, 1) {$\stX_\phi$};     \path (A0_0) edge [->]node [auto] {$\scriptstyle{}$} (A0_1);     \path (A1_0) edge [->]node [auto] {$\scriptstyle{g}$} (A1_1);     \path (A0_1) edge [->]node [auto] {$\scriptstyle{\pi_{\underline \E}}$} (A1_1);     \path (A0_0) edge [->]node [auto] {$\scriptstyle{}$} (A1_0);   \end{tikzpicture}   \] 
and so lifts to a map $X\arr\stF_{\underline{\E}}$ thanks to \ref{cor:lift when Picsh is the same}.

It remains to show that $\catC_{X}^{\Theta}\arr\catD_{X}^{\Theta}$
is fully faithful. Let $\chi,\chi'\in\catC_{X}^{\Theta}$ and $U,U'$
be the open subscheme given in the definition of $\catC_{X}^{\Theta}$.
Set $V=U\cap U'$. Taking into account \ref{pro:the map X(Y) --> X(X) is fully faithful (equivalence) is it so between Pic}
and \ref{pro:piE for theta isomorphism} we have   \[   \begin{tikzpicture}[xscale=4.3,yscale=-0.9]     \node (A0_0) at (0, 0) {$\Hom_{\stF_{\underline \E}(X)}(\chi,\chi')$};     \node (A0_1) at (1, 0) {$\Hom_{\stX_\phi(X)}(\chi,\chi')$};     \node (A1_0) at (0, 1) {$\Hom_{\stF_{\underline \E}(V)}(\chi_{|V},\chi'_{|V})$};     \node (A1_1) at (1, 1) {$\Hom_{\stX_\phi(V)}(\chi_{|V},\chi'_{|V})$};     \node (A2_0) at (0, 2) {$\Hom_{\stF_{\underline \E}^\Theta(V)}(\chi_{|V},\chi'_{|V})$};     \node (A2_1) at (1, 2) {$\Hom_{\stX_\phi^\Theta(V)}(\chi_{|V},\chi'_{|V})$};     
\node[rotate=-90] (s) at (0, 0.5) {$\simeq$};
\node[rotate=-90] (ss) at (1, 0.5) {$\simeq$};
\node[rotate=-90] (s2) at (0, 1.5) {$\simeq$};
\node[rotate=-90] (ss2) at (1, 1.5) {$\simeq$};

\path (A0_0) edge [->]node [auto] {$\scriptstyle{}$} (A0_1);     \path (A1_0) edge [->]node [auto] {$\scriptstyle{}$} (A1_1);     \path (A2_0) edge [->]node [auto,swap] {$\scriptstyle{\simeq}$} (A2_1);   \end{tikzpicture}   \] 
\end{proof}

\section{Galois covers for a diagonalizable group}

In this section we will fix a finite diagonalizable group scheme $G$
over $\Z$ and we will call $M=\Hom(G,\Gm)$ its dual group. So $M$
is a finite abelian group and $G=\Di M$. With abuse of notation we
will write $\odi U[M]=\odi U[G_{U}]$ and $\stZ_{M}=\stZ_{\Di M}$,
the main component of $\MCov$. It turns out that in this case $\Di M$-covers
have a nice and more explicit description.

In the first subsection we will show that $\MCov\simeq\stX_{\phi}$
for an explicit map $T_{+}\arrdi{\phi}\Z^{M}/<e_{0}>$ and that this
isomorphism preserves the main irreducible components of both stacks.
Moreover we will study the connection between $\MCov$ and the equivariant
Hilbert schemes $\MHilb^{\underline{m}}$ and prove some results about
their geometry.

Then we will introduce an upper semicontinuous map $|\MCov|\arrdi h\N$
that yields a stratification by open substacks of $\MCov$. We will
also see that $\{h=0\}$ coincides with the open substack of $\Di M$-torsors,
while $\{h\leq1\}$ lies in the smooth locus of $\stZ_{M}$ and can
be described by a particular set of smooth integral extremal rays.
This will allow to describe the $\Di M$-covers over locally noetherian
and locally factorial scheme $X$ with $(\car X,|M|)=1$ whose total
space is regular in codimension $1$ (which, a posteriori, is equivalent
to the normal condition).

\subsection{The stack $\MCov$ and its main irreducible component $\stZ_{M}$.}

Consider a scheme $U$ and a cover $X=\Spec\alA$ on it. An action
of $\Di M$ on it consists of a decomposition 
\[
\alA=\bigoplus_{m\in M}\alA_{m}
\]
 such that $\odi U\subseteq\alA_{0}$ and the multiplication satisfies
the rules $\alA_{m}\otimes\alA_{n}\arr\alA_{m+n}$. If $X/U$ is a
$\Di M$-cover there exists an fppf covering $\{U_{i}\arr U\}$ such
that $\alA_{|U_{i}}\simeq\odi{U_{i}}[M]$ as $\Di M$-comodule. This
means that for any $m\in M$ we have
\[
\forall i\;(\alA_{m})_{|U_{i}}\simeq\odi{U_{i}}\then\alA_{m}\text{ invertible}
\]
Conversely any $M$-graded quasi-coherent algebra $\alA=\bigoplus_{m\in M}\alA_{m}$
with $\alA_{0}=\odi U$ and $\alA_{m}$ invertible for any $m$ yields
a $\Di M$-cover $\Spec\alA$.

So the stack $\MCov$ can be described as follows. An object of $\MCov(U)$
is given by a collection of invertible sheaves $\shL_{m}$ for $m\in M$
with maps
\[
\psi_{m,n}\colon\shL_{m}\otimes\shL_{n}\arr\shL_{m+n}
\]
and an isomorphism $\odi U\simeq\shL_{0}$ satisfying the following
relations:   \[   \begin{tikzpicture}[xscale=1.5,yscale=-1.2]     
\node (A0_1) at (1, 0.4) {$\textup{Commutativity}$};     
\node (A0_5) at (5, 0.4) {$\textup{Associativity}$};     
\node (A1_0) at (0, 1) {$\shL_m\otimes \shL_n$};     
\node (A1_2) at (2, 1) {$\shL_n \otimes \shL_m$};     
\node (A1_4) at (4, 1) {$\shL_m \otimes \shL_n \otimes \shL_t$};     
\node (A1_6) at (6, 1) {$\shL_m \otimes \shL_{n+t}$};     \node (A2_1) at (1, 2) {$\shL_{m+n}$};     
\node (A2_4) at (4, 2) {$\shL_{m+n}\otimes \shL_t$};     
\node (A2_6) at (6, 2) {$\shL_{m+n+t}$};
\node (name) at (0.2, 3.3) {$\begin{array}{c}
\textup{Neutral}\\
\textup{Element}
\end{array}
$};
\node (A3_3) at (1.2, 3) {$\shL_m$};     
\node (A3_4) at (2.7, 3) {$\shL_m \otimes \odi{U}$};     
\node (A3_5) at (4.2, 3) {$\shL_m \otimes \shL_0$};     
\node (A3_6) at (5.7, 3) {$\shL_m$};

\path (A3_4) edge [->]node [auto] {$\scriptstyle{\simeq}$} (A3_5);     \path (A1_0) edge [->]node [auto,swap] {$\scriptstyle{\psi_{m,n}}$} (A2_1);     \path (A1_6) edge [->]node [auto] {$\scriptstyle{\psi_{m,n+t}}$} (A2_6);     \path (A1_0) edge [->]node [auto] {$\scriptstyle{\simeq}$} (A1_2);     \path (A1_2) edge [->]node [auto] {$\scriptstyle{\psi_{n,m}}$} (A2_1);     \path (A3_5) edge [->]node [auto] {$\scriptstyle{\psi_{m,0}}$} (A3_6);     \path (A2_4) edge [->]node [auto] {$\scriptstyle{\psi_{m+n,t}}$} (A2_6);     \path (A1_4) edge [->]node [auto] {$\scriptstyle{\id \otimes \psi_{n,t}}$} (A1_6);     \path (A3_3) edge [->]node [auto] {$\scriptstyle{\simeq}$} (A3_4);     \path (A1_4) edge [->]node [auto,swap] {$\scriptstyle{\psi_{m,n}\otimes \id}$} (A2_4);   
\path (A3_3) edge [->,bend left=20]node [auto,swap] {$\scriptstyle{\id}$} (A3_6);\end{tikzpicture}   \] 

If we assume that $\shL_{m}=\odi Uv_{m}$, i.e. to have sections $v_{m}$
generating $\shL_{m}$, the maps $\psi_{m,n}$ can be thought as elements
of $\odi U$ and the algebra structure is given by $v_{m}v_{n}=\psi_{m,n}v_{m+n}$.
In this case we can rewrite the above conditions obtaining 
\begin{equation}
\psi_{m,n}=\psi_{n,m},\quad\psi_{m,0}=1,\quad\psi_{m,n}\psi_{m+n,t}=\psi_{n,t}\psi_{n+t,m}\label{eq:condition on psi}
\end{equation}
The functor that associates to a scheme $U$ the functions $\psi\colon M\times M\arr\odi U$
satisfying the above conditions is clearly representable by the spectrum
of the ring
\begin{equation}
R_{M}=\Z[x_{m,n}]/(x_{m,n}-x_{n,m},x_{m,0}-1,x_{m,n}x_{m+n,t}-x_{n,t}x_{n+t,m})\label{eq:writing of RM}
\end{equation}
In this way we obtain a Zariski epimorphism $\Spec R_{M}\arr\MCov$,
that we will prove to be smooth. We now want to prove that the stack
$\MCov$ is isomorphic to a stack of the form $\stX_{\phi}$. 
\begin{defn}
Define $\tilde{K}_{+}$ as the monoid quotient of $\N^{M\times M}$
by the equivalence relation generated by 
\[
e_{m,n}\sim e_{n,m},\quad e_{m,0}\sim0,\quad e_{m,n}+e_{m+n,t}\sim e_{n,t}+e_{n+t,m}
\]
Also define $\phi_{M}\colon\tilde{K}_{+}\arr\Z^{M}/<e_{0}>$ by $\phi_{M}(e_{m,n})=e_{m}+e_{n}-e_{m+n}$. \end{defn}
\begin{prop}
$R_{M}=\Z[\tilde{K}_{+}]$ and there exists an isomorphism 
\begin{equation}
\stX_{\phi_{M}}\simeq\MCov\label{eq:MCov isomorphic to Xphi}
\end{equation}
 such that $\Spec R_{M}=\Spec\Z[\tilde{K}_{+}]\arr\stX_{\phi_{M}}\simeq\MCov$
is the forgetful map. In particular
\[
\MCov\simeq[\Spec R_{M}/\Di{\Z^{M}/<e_{0}>}]
\]
\end{prop}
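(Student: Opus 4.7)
The plan is to verify that the monoid relations defining $\tilde{K}_{+}$ translate, under the association $x_{m,n}\leftrightarrow e_{m,n}$, precisely to the polynomial relations defining $R_{M}$, so $R_{M}=\Z[\tilde{K}_{+}]$ by the universal property of monoid algebras. This is a direct comparison: the multiplicative relations $x_{m,0}=1$, $x_{m,n}=x_{n,m}$ and $x_{m,n}x_{m+n,t}=x_{n,t}x_{n+t,m}$ in \eqref{eq:writing of RM} are the images under $S_{+}\mapsto\Z[S_{+}]$ of the three additive relations generating the equivalence relation that defines $\tilde{K}_{+}$.

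For the isomorphism $\stX_{\phi_{M}}\simeq\MCov$ I would construct a functor in each direction. Given $(\underline{\shL},a)\in\stX_{\phi_{M}}(U)$, where $\underline{\shL}=(\shL_{m})_{m\in M\setminus\{0\}}$, set $\shL_{0}=\odi{U}$, so that the sheaf $\underline{\shL}^{\phi_{M}(e_{m,n})}=\underline{\shL}^{e_{m}+e_{n}-e_{m+n}}$ is naturally $\shL_{m}\otimes\shL_{n}\otimes\shL_{m+n}^{-1}$ for every $m,n\in M$ (including $m,n$ or $m+n$ equal to $0$, using $e_{0}=0$ in $\Z^{M}/\langle e_{0}\rangle$). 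Define $\psi_{m,n}\colon\shL_{m}\otimes\shL_{n}\arr\shL_{m+n}$ to be the section $a(e_{m,n})$. Additivity of $a$ together with the three defining relations of $\tilde{K}_{+}$ then translates verbatim into the commutativity, unit and associativity diagrams listed before \eqref{eq:condition on psi}, so $(\shL_{m},\psi_{m,n})$ is a $\Di M$-cover.

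In the reverse direction, a $\Di M$-cover $(\shL_{m},\psi_{m,n})$ on $U$ has $\shL_{0}\simeq\odi{U}$; choosing this isomorphism (or equivalently using the description above definition \ref{def:Gcovers}), the $\psi_{m,n}$ become global sections of $\underline{\shL}^{\phi_{M}(e_{m,n})}$ and the assignment $e_{m,n}\mapsto\psi_{m,n}$ extends to a monoid map $\N^{M\times M}\arr\ISym\underline{\shL}$; the three $\Di M$-cover axioms are exactly the relations defining $\tilde{K}_{+}$, so it descends to the required additive $a\colon\tilde{K}_{+}\arr\ISym\underline{\shL}$. On morphisms, a sequence of isomorphisms $\sigma_{m}\colon\shL_{m}\arr\shL_{m}'$ satisfies $\underline{\sigma}^{\phi_{M}(e_{m,n})}a(e_{m,n})=a'(e_{m,n})$ if and only if it intertwines the multiplications $\psi_{m,n}$ and $\psi_{m,n}'$, so the two functors are mutually inverse equivalences.

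Finally, the presentation $\MCov\simeq[\Spec R_{M}/\Di{\Z^{M}/\langle e_{0}\rangle}]$ follows by applying Proposition \ref{pro:atlas for the stack associated to a monoid map} to $\phi_{M}$, which gives $\stX_{\phi_{M}}\simeq[\Spec\Z[\tilde{K}_{+}]/\Di{\Z^{M}/\langle e_{0}\rangle}]$ and identifies the canonical atlas with the object obtained by taking every $\shL_{m}$ trivial and every $a(e_{m,n})$ equal to $x_{m,n}$; under the equivalence this is precisely the $\Di M$-cover $\Spec\odi{}[M]$ with multiplications $\psi_{m,n}=x_{m,n}$, i.e.\ the forgetful map sending a $\Di M$-cover with trivialized $\shL_{m}$ to its table of structure constants. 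The only real bookkeeping is the convention $\shL_{0}=\odi{U}$ and the identification of the index sets $M$ (used to describe $\MCov$) and $M\setminus\{0\}$ (the basis of $\Z^{M}/\langle e_{0}\rangle$); no genuine obstacle arises beyond this, since the whole statement is a translation of generators and relations.
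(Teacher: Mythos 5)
Your overall strategy — reading off $R_M=\Z[\tilde K_+]$ from generators and relations, building the equivalence object-by-object, and then invoking Proposition \ref{pro:atlas for the stack associated to a monoid map} for the quotient presentation — is exactly the paper's route (the paper's own proof is a single sentence doing the same translation). However, there is one concrete error in your dictionary, and it happens to be the only nontrivial point of the proposition. For $(\underline{\shL},a)\in\stX_{\phi_M}(U)$ the datum $a(e_{m,n})$ is a global section of $\underline{\shL}^{\phi_M(e_{m,n})}=\shL_m\otimes\shL_n\otimes\shL_{m+n}^{-1}$, i.e.\ a map $\shL_{m+n}\arr\shL_m\otimes\shL_n$ — a \emph{co}multiplication on the $\shL_m$, not a map $\shL_m\otimes\shL_n\arr\shL_{m+n}$ as you assert. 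A multiplication $\psi_{m,n}\colon\shL'_m\otimes\shL'_n\arr\shL'_{m+n}$ is a section of $\shL'_{m+n}\otimes(\shL'_m)^{-1}\otimes(\shL'_n)^{-1}$, which matches $\underline{\shL}^{\phi_M(e_{m,n})}$ only after setting $\shL'_m=\shL_m^{-1}$. This dualization is precisely what the paper's proof does ("$\shL'_m=\shL_m^{-1}$ and $\psi_{m,n}=\psi(e_{m,n})$"), and the same issue recurs in your reverse direction, where the $\psi_{m,n}$ of a $\Di M$-cover are sections of $\underline{\shL}^{-\phi_M(e_{m,n})}$, not of $\underline{\shL}^{\phi_M(e_{m,n})}$.

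The error is easily repaired (replace every $\shL_m$ by its dual on one side of the equivalence), and none of the remaining verifications change, since the three relations defining $\tilde K_+$ are self-dual under this swap. But as written your functor is not well defined, and your closing remark that the only bookkeeping concerns $\shL_0=\odi{}$ and the index set misses the actual subtlety. One further small caution: the identification of the atlas with the "forgetful map" should also be checked after dualizing — trivializing $\shL_m\simeq\odi{}$ trivializes $\shL_m^{-1}$ as well, so the conclusion survives, but it is worth saying.
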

\begin{proof}
The required isomorphism sends $(\underline{\shL},\tilde{K}_{+}\arrdi{\psi}\ISym\underline{\shL})\in\stX_{\phi_{M}}$
to the object of $\MCov$ given by invertible sheaves $(\shL'_{m}=\shL_{m}^{-1})$
and $\psi_{m,n}=\psi(e_{m,n})$.
\end{proof}
We want to prove that the isomorphism \ref{eq:MCov isomorphic to Xphi}
sends $\stZ_{\phi_{M}}$ to $\stZ_{M}$ (see def. \ref{def:the main component of GCov})
and $\stB_{\phi_{M}}$ to $\Bi\Di M$. We need the following classical
result on the structure of a $\Di M$-torsor (see \cite[Proposition 4.1 and 4.6]{SGA3Exp8}):
\begin{prop}
\label{pro:equivalent conditions for a D(M)-torsor}Let $M$ be a
finite abelian group and $P\arr U$ a $\Di M$-equivariant map. Then
$P$ is an fppf $\Di M$-torsor if and only if $P\in\MCov(U)$ and
all the multiplication maps are isomorphisms.
\end{prop}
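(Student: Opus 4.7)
The plan is to handle the two implications of the biconditional separately, with the reverse direction being the more substantive one.

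For the direct implication, I assume $P$ is an fppf $\Di M$-torsor. By definition of torsor, there exists an fppf cover $\{U_i \to U\}$ such that $P_{U_i} \simeq \Di M_{U_i}$ as $\Di M$-spaces, hence $\alA_{|U_i} \simeq \odi{U_i}[M]$ as $M$-graded $\odi{U_i}$-algebras. In the regular representation the degree-$m$ component is the free module $\odi{U_i}\cdot e_m$ and the multiplication maps $e_m \otimes e_n \mapsto e_{m+n}$ are tautologically isomorphisms. Thus $P \in \MCov(U)$ (invertibility of the $\alA_m$ and the comodule condition being local) and each $\psi_{m,n}$ is fppf-locally an isomorphism, hence an isomorphism.

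For the reverse implication, suppose $P = \Spec\alA \in \MCov(U)$ with $\alA = \bigoplus_{m \in M} \shL_m$, each $\shL_m$ invertible, and every $\psi_{m,n}\colon \shL_m \otimes \shL_n \to \shL_{m+n}$ an isomorphism. I want to show that the canonical $\Di M$-equivariant morphism
\[
\Phi\colon \Di M \times_U P \longrightarrow P \times_U P, \qquad (g,x) \longmapsto (x, gx),
\]
is an isomorphism; once this is known, $P$ is a pseudo-torsor, and since $P \to U$ is a cover (in particular fppf and surjective) it is an actual fppf $\Di M$-torsor. Dually, $\Phi$ corresponds to the $\odi U$-algebra map
\[
\Psi\colon \alA \otimes_{\odi U} \alA \longrightarrow \alA \otimes_{\odi U} \odi U[M], \qquad x \otimes y \longmapsto xy \otimes e_{\deg y},
\]
where $e_{\deg y} \in \odi U[M]$ is the character corresponding to the degree of the homogeneous element $y$. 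Decomposing both sides into isotypic components for the $M \times M$-grading (where on the target $\shL_k \otimes \odi U \cdot e_n$ sits in bidegree $(k,n)$), the map $\Psi$ restricts on the bidegree-$(k,n)$ piece to
\[
\bigoplus_{m+n=k} \shL_m \otimes \shL_n \;=\; \shL_{k-n} \otimes \shL_n \xrightarrow{\;\psi_{k-n,n}\;} \shL_k,
\]
and this is an isomorphism by hypothesis. Since $\Psi$ is an isomorphism on each bigraded component, $\Psi$ itself is an isomorphism, hence so is $\Phi$.

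The only real subtlety is the bookkeeping of the map $\Psi$, and in particular identifying the correct source and target components; once done, the conclusion is immediate. The passage from pseudo-torsor to torsor uses only that $P \to U$ is fppf surjective, which is built into the definition of cover.
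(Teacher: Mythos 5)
Your proof is correct. Note, however, that the paper does not actually prove this proposition: it is quoted as a classical fact with a reference to SGA3 (Exp.\ VIII, Prop.\ 4.1 and 4.6), so you are supplying an argument the paper deliberately omits. What you do is the standard one and it checks out: the torsor condition is equivalent to the map $\Di M\times_U P\arr P\times_U P$ being an isomorphism together with $P\arr U$ being an fppf cover, and dually your map $\Psi\colon\alA\otimes\alA\arr\alA\otimes\odi U[M]$ is exactly the map $h$ that the paper itself uses in the proof of \ref{pro:BG open in GCov} to cut out the open locus $\Bi G\subseteq\GCov$; your observation that $\Psi$ splits into the bigraded pieces $\psi_{k-n,n}\colon\shL_{k-n}\otimes\shL_n\arr\shL_k\otimes e_n$ (note each bidegree $(k,n)$ receives a single summand, so the direct sum you write is really one term) shows that $\Coker h=0$ precisely when all multiplications are isomorphisms, which is the content of the proposition.

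Two small points. First, surjectivity of $P\arr U$ is not literally ``built into the definition of cover'' (a finite flat map of rank $0$ somewhere is allowed by that definition); here it follows because $f_*\odi P=\bigoplus_m\shL_m$ has constant rank $|M|\geq 1$. Second, in the converse direction you should also say why a pseudo-torsor that is fppf over $U$ is a torsor: base-changing along $P\arr U$ itself, the diagonal gives a section of $P\times_U P\arr P$, so $P$ trivializes over the fppf cover $P\arr U$. Both are routine, but worth a line each.
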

Now consider the exact sequence   \[   \begin{tikzpicture}[xscale=1.5,yscale=-0.5]     \node (A0_0) at (0, 0) {$0$};     \node (A0_1) at (1, 0) {$K$};     \node (A0_2) at (2, 0) {$\Z^M/<e_0>$};     \node (A0_3) at (3, 0) {$M$};     \node (A0_4) at (4, 0) {$0$};     \node (A1_2) at (2, 1) {$e_m$};     \node (A1_3) at (3, 1) {$m$};     \path (A0_0) edge [->] node [auto] {$\scriptstyle{}$} (A0_1);     \path (A0_2) edge [->] node [auto] {$\scriptstyle{}$} (A0_3);     \path (A0_3) edge [->] node [auto] {$\scriptstyle{}$} (A0_4);     \path (A0_1) edge [->] node [auto] {$\scriptstyle{}$} (A0_2);     \path (A1_2) edge [|->,gray] node [auto] {$\scriptstyle{}$} (A1_3);   \end{tikzpicture}   \] 
\begin{defn}
For $m,n\in M$ we define
\[
v_{m,n}=\phi_{M}(e_{m,n})=e_{m}+e_{n}-e_{m+n}\in K
\]
 and $K_{+}$ as the submonoid of $K$ generated by the $v_{m,n}$.
We will set $x_{m,n}=x^{v_{m,n}}\in\Z[K_{+}]$ and, for $\E\in\duale K_{+}$,
$\E_{m,n}=\E(v_{m,n})$.\end{defn}
\begin{lem}
The map   \[   \begin{tikzpicture}[xscale=2.2,yscale=-0.5]     \node (A0_0) at (0, 0) {$\tilde K_+$};     \node (A0_1) at (1, 0) {$K$};     \node (A1_0) at (0, 1) {$e_{m,n}$};     \node (A1_1) at (1, 1) {$v_{m,n}$};     \path (A0_0) edge [->] node [auto] {$\scriptstyle{}$} (A0_1);     \path (A1_0) edge [|->,gray] node [auto] {$\scriptstyle{}$} (A1_1);   \end{tikzpicture}   \] 
is the associated group of $\tilde{K}_{+}$ and $K_{+}$ is its associated
integral monoid. In particular we have a $2$-cartesian diagram   \[   \begin{tikzpicture}[xscale=2.6,yscale=-1.5]     \node (A0_0) at (0, 0) {$\Spec \Z[K]$};     \node (A0_1) at (1, 0) {$\Spec \Z[K_+]$};     \node (A0_2) at (2, 0) {$\Spec R_M$};     \node (A1_0) at (0, 1) {$\Bi \Di{M}$};     \node (A1_1) at (1, 1) {$\stZ_M$};     \node (A1_2) at (2, 1) {$\MCov$};     \path (A0_0) edge [->] node [auto] {$\scriptstyle{}$} (A0_1);     \path (A0_1) edge [->] node [auto] {$\scriptstyle{}$} (A0_2);     \path (A1_0) edge [->] node [auto] {$\scriptstyle{}$} (A1_1);     \path (A0_2) edge [->] node [auto] {$\scriptstyle{}$} (A1_2);     \path (A1_1) edge [->] node [auto] {$\scriptstyle{}$} (A1_2);     \path (A0_0) edge [->] node [auto] {$\scriptstyle{}$} (A1_0);     \path (A0_1) edge [->] node [auto] {$\scriptstyle{}$} (A1_1);   \end{tikzpicture}   \] \end{lem}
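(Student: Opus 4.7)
The plan is to reduce everything to Proposition \ref{cor:the domain monoid and group monoid associated to T +: stack} applied to $\phi_{M}\colon\tilde{K}_{+}\arr\Z^{M}/<e_{0}>$. The core task is to identify the associated group of $\tilde{K}_{+}$ with $K$ (via $e_{m,n}\mapsto v_{m,n}$) and the associated integral monoid with $K_{+}$.

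First, the assignment $e_{m,n}\mapsto v_{m,n}$ respects the defining relations of $\tilde{K}_{+}$: $v_{m,n}=v_{n,m}$, $v_{m,0}=e_{0}=0$ in $\Z^{M}/<e_{0}>$, and both sides of the cocycle relation evaluate to $e_{m}+e_{n}+e_{t}-e_{m+n+t}$. For surjectivity of the induced $\tilde{K}_{+}^{gp}\arr K$, I would show that the $v_{m,n}$ generate $K$ as a subgroup of $\Z^{M}/<e_{0}>$: the further quotient by $<v_{m,n}>$ forces $e_{m}+e_{n}=e_{m+n}$, so it is canonically isomorphic to $M$, and comparison with the exact sequence $0\arr K\arr\Z^{M}/<e_{0}>\arr M\arr0$ forces $<v_{m,n}>_{\Z}=K$.

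The main obstacle is the injectivity of $\tilde{K}_{+}^{gp}\arr K$. The relations defining $\tilde{K}_{+}$ say precisely that a homomorphism $\tilde{K}_{+}^{gp}\arr A$ into an abelian group $A$ is the same as a symmetric normalized $2$-cocycle $f\colon M\times M\arr A$, so it suffices to establish the same universal property for $K$ via $v$. Given such an $f$, I would form the abelian extension $E_{f}=A\oplus M$ (as a set) with group law $(a,m)+(b,n)=(a+b+f(m,n),m+n)$ and consider the $\Z$-linear map $\bar{\psi}\colon\Z^{M}/<e_{0}>\arr E_{f}$ determined by $e_{m}\mapsto(0,m)$; this is well-defined because $(0,0)=0\in E_{f}$. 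Its composition with the projection $E_{f}\arr M$ is the canonical surjection, so $\bar{\psi}$ restricts to a homomorphism $\alpha\colon K\arr A$, and a direct computation in $E_{f}$ gives $\alpha(v_{m,n})=f(m,n)\in A$. Uniqueness follows from the surjectivity step. Thus $\tilde{K}_{+}^{gp}\simeq K$, and the associated integral monoid $\tilde{K}_{+}^{int}$, being the image of $\tilde{K}_{+}$ in $K$, equals the submonoid generated by the $v_{m,n}$, which is $K_{+}$ by definition.

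With these identifications, Proposition \ref{cor:the domain monoid and group monoid associated to T +: stack} yields $\stB_{\phi_{M}}\simeq[\Spec\Z[K]/\Di{\Z^{M}/<e_{0}>}]$ and $\stZ_{\phi_{M}}\simeq[\Spec\Z[K_{+}]/\Di{\Z^{M}/<e_{0}>}]$, with $\stZ_{\phi_{M}}$ the reduced closure of $\stB_{\phi_{M}}$ in $\stX_{\phi_{M}}\simeq\MCov$. Under \eqref{eq:MCov isomorphic to Xphi}, $\stB_{\phi_{M}}$ is the open locus where every $x_{m,n}$ is invertible, which by Proposition \ref{pro:equivalent conditions for a D(M)-torsor} is exactly $\Bi\Di M$; consequently $\stZ_{\phi_{M}}$ matches $\stZ_{M}$ by Definition \ref{def:the main component of GCov}. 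Since all three stacks $\Bi\Di M\subseteq\stZ_{M}\subseteq\MCov$ are quotient stacks by the common torus $\Di{\Z^{M}/<e_{0}>}$ with the indicated schemes as atlases, the $2$-cartesian diagram of the statement follows.
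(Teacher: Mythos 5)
Your proposal is correct, and the endgame coincides with the paper's: both reduce to \ref{lem:the domain monoid and group monoid associated to T +: ring} and \ref{cor:the domain monoid and group monoid associated to T +: stack} once the associated group and integral monoid of $\tilde K_+$ are identified with $K$ and $K_+$, and both identify $\stB_{\phi_M}$ with $\Bi\Di M$ via the criterion that a cover is a torsor exactly when all $\psi_{m,n}$ are invertible. Where you genuinely diverge is in the key identification $\tilde K_+^{gp}\simeq K$. The paper argues geometrically: it localizes the universal algebra $S_M$ over $R_M=\Z[\tilde K_+]$ at $x=\prod x_{m,n}$, observes that $(S_M)_x$ is a $\Di M$-torsor so the graded basis elements $w_m$ become units, and uses $e_m\mapsto w_m$ to produce the inverse $\Z[K]\arr (R_M)_x=\Z[\tilde K_+^{gp}]$. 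You instead read the defining relations of $\tilde K_+$ as the conditions for a symmetric normalized $2$-cocycle and verify directly that $K$ (with the elements $v_{m,n}$) corepresents the same functor, using the extension $E_f=A\oplus M$ with twisted addition to produce the factorization $\alpha\colon K\arr A$ with $\alpha(v_{m,n})=f(m,n)$, plus the surjectivity of $\tilde K_+^{gp}\arr K$ (which you correctly extract from comparing $(\Z^M/\langle e_0\rangle)/\langle v_{m,n}\rangle\simeq M$ with the defining exact sequence) for uniqueness. The two arguments are really multiplicative and additive incarnations of the same idea ($w_m$ plays the role of $(0,m)\in E_f$), but yours is self-contained and purely group-theoretic, while the paper's is shorter given that the universal algebra and the torsor criterion are already on the table. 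Your deduction that $\tilde K_+^{int}=K_+$ as the image of $\tilde K_+$ in its associated group, and the final assembly of the $2$-cartesian diagram from the common $\Di{\Z^M/\langle e_0\rangle}$-quotient presentations, are both sound.
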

\begin{proof}
Set $x=\prod_{m,n}x_{m,n}$. Since an object $\psi\in\Spec R_{M}(U)$
is a torsor if and only if $\psi_{m,n}\in\odi U^{*}$ for any $m,n$,
it follows that $(\Spec R_{M})_{x}=\Bi\Di M\times_{\MCov}\Spec R_{M}$.
We want to define an inverse to $(R_{M})_{x}\arr\Z[K]$. If $S_{M}$
is the universal algebra over $R_{M}$ and we call $w_{m}$ a graded
basis of $S_{M}$ with $w_{0}=1$, $(S_{M})_{x}$ is an $\Di M$-torsor
over $(R_{M})_{x}$ and so $w_{m}\in(S_{M})_{x}^{*}$ for any $m$.
In particular we can define a group homomorphism   \[   \begin{tikzpicture}[xscale=2.6,yscale=-0.7]     \node (A0_0) at (0, 0) {$\Z^M/<e_0>$};     \node (A0_1) at (1, 0) {$(S_M)_x^*$};     \node (A1_0) at (0, 1) {$e_m$};     \node (A1_1) at (1, 1) {$w_m$};     \path (A0_0) edge [->] node [auto] {$\scriptstyle{}$} (A0_1);     \path (A1_0) edge [|->,gray] node [auto] {$\scriptstyle{}$} (A1_1);   \end{tikzpicture}   \] 
which restricts to a map $K\arr(R_{M})_{x}$ that sends $v_{m,n}$
to $x_{m,n}$. In particular the map $\tilde{K}_{+}\arr K$ defined
in the statement gives the associated group of $\tilde{K}_{+}$ and
has as image exactly $K_{+}$, which means that $K_{+}$ is the integral
monoid associated to $\tilde{K}_{+}$.

In order to conclude the proof it's enough to apply \ref{lem:the domain monoid and group monoid associated to T +: ring}
and \ref{cor:the domain monoid and group monoid associated to T +: stack}.\end{proof}
\begin{cor}
\label{cor:MCov as global quotient}The isomorphism $\stX_{\phi_{M}}\simeq\MCov$
(\ref{eq:MCov isomorphic to Xphi}) induces isomorphisms $\stB_{\phi_{M}}\simeq\Bi\Di M$
and $\stZ_{\phi_{M}}\simeq\stZ_{M}$. In particular $\stZ_{M}$ is
an irreducible component of $\MCov$ and
\[
\Bi\Di M\simeq[\Spec\Z[K]/\Di{\Z^{M}/<e_{0}>}]\text{ and }\stZ_{M}\simeq[\Spec\Z[K_{+}]/\Di{\Z^{M}/<e_{0}>}]
\]

\end{cor}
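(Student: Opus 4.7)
The plan is to assemble the preceding lemma with Proposition \ref{cor:the domain monoid and group monoid associated to T +: stack}, and then transport the conclusion along the equivalence (\ref{eq:MCov isomorphic to Xphi}). The preceding lemma has already shown that $K$ is the associated group of $\tilde K_{+}$ and $K_{+}$ is its associated integral monoid. Feeding these data into Proposition \ref{cor:the domain monoid and group monoid associated to T +: stack} with $T_{+}=\tilde K_{+}$ and $\phi=\phi_{M}$ immediately yields
\[
\stB_{\phi_{M}}\simeq[\Spec\Z[K]/\Di{\Z^{M}/<e_{0}>}],\qquad \stZ_{\phi_{M}}\simeq[\Spec\Z[K_{+}]/\Di{\Z^{M}/<e_{0}>}],
\]
together with the fact that $\stZ_{\phi_{M}}$ is an irreducible component of $\stX_{\phi_{M}}$, realized as the reduced scheme-theoretic closure of the open substack $\stB_{\phi_{M}}$.

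It remains to transfer these identifications through the isomorphism (\ref{eq:MCov isomorphic to Xphi}). Since, by Definition \ref{def:the main component of GCov}, $\stZ_{M}$ is the reduced substack underlying the closure of $\Bi\Di M$, and since the formation of this closure commutes with an equivalence of stacks, it is enough to establish $\stB_{\phi_{M}}\simeq\Bi\Di M$. The other conclusions then follow formally.

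Under the explicit correspondence described after (\ref{eq:MCov isomorphic to Xphi}), an object $(\underline\shL,\psi)\in\stX_{\phi_{M}}(U)$ gets sent to the $\Di M$-cover whose graded pieces are the duals $\shL_{m}^{-1}$ and whose multiplication sections are $\psi_{m,n}=\psi(e_{m,n})\in\underline\shL^{v_{m,n}}$. The object lies in $\stB_{\phi_{M}}$ precisely when $\psi$ extends to the associated group $K$, which---since $K$ is generated by the $v_{m,n}$---amounts to requiring that every $\psi_{m,n}$ be a nowhere vanishing section, equivalently that every multiplication map $\shL_{m}\otimes\shL_{n}\arr\shL_{m+n}$ be an isomorphism of invertible sheaves. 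By Proposition \ref{pro:equivalent conditions for a D(M)-torsor} this is exactly the condition for the corresponding cover to be a $\Di M$-torsor, yielding $\stB_{\phi_{M}}\simeq\Bi\Di M$.

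The main (indeed, the only non-formal) step is this last identification via the torsor characterization; once it is in place, the explicit presentations as quotient stacks and the irreducibility claim are immediate consequences of the general theory of Section \ref{sec:stack Xphi} applied to the pair $(\tilde K_{+},\phi_{M})$.
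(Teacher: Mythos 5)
Your proposal is correct and follows essentially the same route as the paper: the preceding lemma supplies the identification of $K$ and $K_{+}$ as the associated group and integral monoid of $\tilde{K}_{+}$, Proposition \ref{cor:the domain monoid and group monoid associated to T +: stack} gives the quotient presentations and the irreducible-component statement, and the identification $\stB_{\phi_{M}}\simeq\Bi\Di M$ rests on Proposition \ref{pro:equivalent conditions for a D(M)-torsor} exactly as in the paper. The only cosmetic difference is that the paper carries out the torsor comparison on the atlas $\Spec R_{M}$ (showing $(\Spec R_{M})_{x}=\Bi\Di M\times_{\MCov}\Spec R_{M}$), while you argue directly on objects of the stacks; the content is the same.
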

Note that the induced map $\phi_{M}\colon K\arr\Z^{M}/<e_{0}>$ is
just the inclusion and so it is injective. This means that any result
obtained in section \ref{sec:stack Xphi} applies naturally in the
context of $\Di M$-covers. In particular now we show how we can describe
the objects of $\stF_{\underline{\E}}$, for a sequence of rays in
$\duale{\tilde{K}}_{+}$, in a simpler way.
\begin{prop}
\label{pro:stack of reduced data for M-covers}Let $M\simeq\prod_{i=1}^{n}\Z/l_{i}\Z$
be a decomposition and let $m_{1},\dots,m_{n}$ be the associated
generators. Given $\underline{\E}=\E^{1},\dots,\E^{r}\in\duale K_{+}$
define $\stF_{\underline{\E}}^{\textup{red}}$ as the stack whose
objects over a scheme $X$ are sequences $\underline{\shL}=\shL_{1},\dots,\shL_{n},\underline{\shM}=\shM_{1},\dots,\shM_{r},\underline{z}=z_{1},\dots,z_{r},\underline{\mu}=\mu_{1},\dots,\mu_{n}$
where $\underline{\shL}\comma\underline{\shM}$ are invertible sheaves
over $X$, $z_{i}\in\shM_{i}$ and $\underline{\mu}$ are isomorphisms
\[
\mu_{i}\colon\shL_{i}^{-l_{i}}\arrdi{\simeq}\underline{\shM}^{\underline{\E}(l_{i}e_{m_{i}})}=\shM_{1}^{\E^{1}(l_{i}e_{m_{i}})}\otimes\cdots\otimes\shM_{r}^{\E^{r}(l_{i}e_{m_{i}})}
\]

Then we have an isomorphism of stacks   \[   \begin{tikzpicture}[xscale=4.7,yscale=-0.6]     \node (A0_0) at (0, 0) {$\stF_{\underline{\E}}$};     \node (A0_1) at (1, 0) {$\stF_{\underline{\E}}^{\textup{red}}$};     \node (A1_0) at (0, 1) {$(\underline \shL,\underline \shM, \underline z,\lambda)$};     \node (A1_1) at (1, 1) {$( (\shL_{m_i})_{i=1,\dots,n},\underline \shM, \underline z,(\lambda(l_ie_{m_i}))_{i=1,\dots,n})$};     \path (A0_0) edge [->]node [auto] {$\scriptstyle{}$} (A0_1);     \path (A1_0) edge [|->,gray]node [auto] {$\scriptstyle{}$} (A1_1);   \end{tikzpicture}   \] \end{prop}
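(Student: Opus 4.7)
The plan is to realize this as an instance of Proposition \ref{pro:isomorphism with the stack of reduced data} applied to the embedding $\phi_M\colon K\hookrightarrow \Z^M/\langle e_0\rangle$. Concretely, I take $V=\langle e_{m_1},\dots,e_{m_n}\rangle\subseteq \Z^M/\langle e_0\rangle$ with distinguished basis $v_i=e_{m_i}$, $i=1,\dots,n$, and define a retraction $\sigma\colon \Z^M/\langle e_0\rangle\twoheadrightarrow V$ by the canonical decomposition coming from $M\simeq\prod_i \Z/l_i\Z$: for $m\in M\setminus\{0\}$ write $m=\sum_i a_i(m)\,m_i$ with $0\leq a_i(m)<l_i$ and set $\sigma(e_m)=\sum_i a_i(m)\,e_{m_i}$. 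By construction $\sigma|_V=\id_V$ and the projection $\pi\colon\Z^M/\langle e_0\rangle\arr M$ satisfies $\pi\circ\sigma=\pi$, which is exactly the condition $(\id-\sigma)(\Z^M/\langle e_0\rangle)\subseteq K$ required in \ref{def:stack of reduced data}.

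Next I would compute the module $W=\langle(\id-\sigma)V,\sigma(K)\rangle$. Since $\sigma$ fixes $V$ pointwise, $(\id-\sigma)V=0$, so $W=\sigma(K)$. Given $\sum_m b_m e_m\in K$, i.e.\ $\sum_m b_m\,m=0$ in $M$, applying $\sigma$ yields $\sum_i\bigl(\sum_m b_m a_i(m)\bigr)e_{m_i}$; the vanishing condition in $M=\prod\Z/l_i\Z$ forces $\sum_m b_m a_i(m)\equiv 0\pmod{l_i}$ for each $i$, so $\sigma(K)\subseteq\bigoplus_i l_i\Z\,e_{m_i}$. The reverse inclusion is immediate because $l_i e_{m_i}\in K$ and $\sigma(l_i e_{m_i})=l_i e_{m_i}$. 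Hence $W=\bigoplus_i l_i\Z\,e_{m_i}$, a free $\Z$-module with basis $l_1 e_{m_1},\dots,l_n e_{m_n}$.

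Now I apply \ref{pro:isomorphism with the stack of reduced data} to obtain an isomorphism $\stF_{\underline{\E}}\simeq \stF_{\underline{\E}}^{\textup{red},\sigma}$ under which an object $(\underline\shL,\underline\shM,\underline z,\lambda)\in\stF_{\underline{\E}}$ corresponds to $((\underline\shL^{v_i})_{i=1,\dots,n},\underline\shM,\underline z,\lambda_{|W})=((\shL_{m_i})_i,\underline\shM,\underline z,(\lambda(l_i e_{m_i}))_i)$. It remains to unwind the definition of $\stF_{\underline{\E}}^{\textup{red},\sigma}=\stX_{\psi_{\underline{\E},\sigma}}$, with $\psi_{\underline{\E},\sigma}\colon W\oplus \N^r\arr \Z^n\oplus\Z^r$, $(w,z)\mapsto(-w,\underline{\E}(w)+z)$, and match it to the description in the statement: elements of $\N^r$ contribute sections $z_i\in\shM_i$, while elements of the free $\Z$-module $W$, being invertible in the associated group, correspond (by \ref{rem:description of objects of FE}) to isomorphisms; on the chosen basis $l_i e_{m_i}$ these are exactly the required isomorphisms $\mu_i\colon\shL_i^{-l_i}\arrdi{\simeq}\underline\shM^{\underline{\E}(l_i e_{m_i})}$. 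Composing the two identifications gives the functor displayed in the proposition.

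The only real subtlety is the computation $\sigma(K)=\bigoplus_i l_i\Z\,e_{m_i}$; once this is in place, everything else is a direct translation between the abstract reduced-data stack of \ref{def:stack of reduced data} and its concrete incarnation for the cover problem, using the dictionary of \ref{rem:description of objects of FE}.
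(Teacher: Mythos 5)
Your proposal is correct and follows essentially the same route as the paper: both apply \ref{def:stack of reduced data} and \ref{pro:isomorphism with the stack of reduced data} with $V=\bigoplus_i\Z e_{m_i}$, $v_i=e_{m_i}$, and the retraction $\sigma(e_m)=\sum_i a_i(m)e_{m_i}$, reducing everything to the computation $W=\sigma(K)=\bigoplus_i l_i\Z e_{m_i}$. The only cosmetic difference is that you verify $\sigma(K)\subseteq\bigoplus_i l_i\Z e_{m_i}$ on an arbitrary element of $K$, whereas the paper checks it on the generators $v_{m,n}$ (using $\sigma(v_{m,n})\in\{0,l_i\}\cdot v_i$ componentwise); both arguments are valid and equally short.
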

\begin{proof}
We want to find $\sigma\comma V\comma v_{1},\dots,v_{q}$ as in \ref{def:stack of reduced data}
such that $\stF_{\underline{\E}}^{\textup{red},\sigma}=\stF_{\underline{\E}}^{\textup{red}}$
and that the map in the statement coincide with the one defined in
\ref{pro:isomorphism with the stack of reduced data}. Set $\delta^{i}\colon M\arr\{0,\dots,l_{i}-1\}$
as the map such that $\pi_{i}(m)=\pi_{i}(\delta_{m}^{i}m_{i})$, where
$\pi_{i}\colon M\arr\Z/l_{i}\Z$, and think it also as a map $\delta^{i}\colon\Z^{M}/<e_{0}>\arr\Z$.
Set $V=\bigoplus_{i=1}^{n}\Z e_{m_{i}}$, $v_{i}=e_{m_{i}}$ and $\sigma\colon\Z^{M}/<e_{0}>\arr V$
as $\sigma(e_{m})=\sum_{i=1}^{n}\delta_{m}^{i}v_{i}$. Clearly $(\id-\sigma)\Z^{M}/<e_{0}>\subseteq K$
and $(\id-\sigma)V=0$. So $W=\sigma K$. We have
\[
\sigma(v_{m,n})=\sum_{i=1}^{n}\delta_{m,n}^{i}v_{i}\in\bigoplus_{i=1}^{n}l_{i}\Z v_{i}
\]
since for any $i$ $\delta_{m,n}^{i}\in\{0,l_{i}\}$. On the other
hand $\sigma(v_{(l_{i}-1)m_{i},m_{i}})=l_{i}v_{i}$. Therefore we
have $W=\bigoplus_{i=1}^{n}l_{i}\Z v_{i}$. It's now easy to check
that all the definitions agree.
\end{proof}
We now want to express the relation between $\MCov$ and the equivariant
Hilbert scheme, that can be defined as follows. Given $\underline{m}=m_{1},\dots,m_{r}\in M$,
so that $\Di M$ acts on $\A_{\Z}^{r}=\Spec\Z[x_{1},\dots,x_{r}]$
with graduation $\deg x_{i}=m_{i}$, we define $\MHilb^{\underline{m}}\colon\Sch^{\textup{op}}\arr\set$
as the functor that associates to a scheme $Y$ the pairs $(X\arrdi fY,j)$
where $X\in\MCov(Y)$ and $j\colon X\arr\A_{Y}^{r}$ is an equivariant
closed immersion over $Y$. Such a pair can be also thought as a coherent
sheaf of algebras $\alA\in\MCov(Y)$ together with a graded surjective
map $\odi Y[x_{1},\dots,x_{r}]\arr\alA$. This functor is proved to
be a scheme of finite type in \cite{Haiman2002}.
\begin{prop}
\label{pro:MHlb --> MCov has irreducible fibers}Let $\underline{m}=m_{1},\dots,m_{r}\in M$.
The forgetful map $\vartheta_{\underline{m}}\colon\MHilb^{\underline{m}}\arr\MCov$
is a smooth zariski epimorphism onto the open substack $\MCov^{\underline{m}}$
of $\MCov$ of sheaves of algebras $\alA$ such that, for any $y\in Y$,
$\alA\otimes k(y)$ is generated in the degrees $m_{1},\dots,m_{r}$
as a $k(y)$-algebra. Moreover $\MHilb^{\underline{m}}$ is an open
subscheme of a vector bundle over $\MCov^{\underline{m}}$.\end{prop}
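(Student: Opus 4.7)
The strategy is to exhibit $\MHilb^{\underline{m}}$ as an open substack of an explicit vector bundle over $\MCov$ and then read off everything from this description. Let $\shL^{\textup{univ}}_{m}$ denote the degree-$m$ component of the universal graded algebra on $\MCov$, and consider
\[
V=\WW\Bigl(\bigoplus_{i=1}^{r}\shL^{\textup{univ}}_{m_{i}}\Bigr)\arr \MCov,
\]
a smooth, affine, representable map whose points over a scheme $Y$ are tuples $(\alA,s_{1},\dots,s_{r})$ with $\alA\in\MCov(Y)$ and $s_{i}\in\Hl^{0}(Y,\alA_{m_{i}})$. Specifying such a tuple is equivalent to specifying a graded $\odi Y$-algebra map $\rho\colon\odi Y[x_{1},\dots,x_{r}]\arr\alA$ (with $\deg x_{i}=m_{i}$) by sending $x_{i}\mapsto s_{i}$, and by construction $\MHilb^{\underline{m}}\subseteq V$ is the subfunctor on which $\rho$ is surjective.

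First I would verify that the surjectivity locus is open in $V$. Since $\alA$ is finite over $\odi Y$, the cokernel $\Coker\rho$ is a finitely generated $\odi Y$-module and its support is closed; surjectivity is equivalent to $\Coker\rho=0$, and by the graded form of Nakayama this holds at a point $y\in Y$ exactly when the images $s_{i}(y)\in\alA_{m_{i}}\otimes k(y)$ generate $\alA\otimes k(y)$ as a $k(y)$-algebra. Hence $\MHilb^{\underline{m}}\hookrightarrow V$ is an open immersion, which gives the final clause of the statement; smoothness of $\vartheta_{\underline{m}}$ is then automatic, since $V\arr\MCov$ is a vector bundle and open immersions are smooth.

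It remains to identify the image with $\MCov^{\underline{m}}$ and to check that $\vartheta_{\underline{m}}$ is a Zariski epimorphism onto it. Given $\alA\in\MCov(Y)$, each invertible sheaf $\alA_{m_{i}}$ is trivializable on some Zariski cover $\{U_{j}\}$ of $Y$; choosing trivializing sections $s_{i}\in\alA_{m_{i}}(U_{j})$ produces a local lift to $V$, and by the openness of the previous paragraph this lift lands in $\MHilb^{\underline{m}}$ precisely on the open locus of $U_{j}$ where every fiber $\alA\otimes k(y)$ is generated by the images in degrees $m_{1},\dots,m_{r}$. Thus $\alA$ is Zariski-locally in the image exactly when it satisfies this fiberwise generation condition, i.e.\ when $\alA\in\MCov^{\underline{m}}(Y)$; openness of $\MCov^{\underline{m}}$ inside $\MCov$ follows from the same fiberwise-open argument applied to the universal family (after trivializing the universal line bundles on a smooth atlas).

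The one genuine point requiring attention is the openness of the fiberwise generation condition, i.e.\ the Nakayama argument applied to the coherent cokernel $\Coker\rho$; once that is in hand the rest of the proposition is formal from the vector bundle description $\MHilb^{\underline{m}}\hookrightarrow V\arr\MCov^{\underline{m}}$.
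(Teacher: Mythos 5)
Your proposal is correct and follows essentially the same route as the paper: both realize $\MHilb^{\underline{m}}$ as the open subfunctor, cut out by surjectivity of the tautological graded map $\odi{}[x_{1},\dots,x_{r}]\arr\alA$, inside the total space of the vector bundle $\WW(\bigoplus_{i}\alA_{m_{i}})$, with openness coming from coherence of the cokernel plus Nakayama, and the Zariski-local sections coming from trivializing the line bundles $\alA_{m_{i}}$. The only (cosmetic) difference is that you work with the universal bundle on the whole stack and then restrict over the open substack $\MCov^{\underline{m}}$, whereas the paper checks openness of $\MCov^{\underline{m}}$ first via the intrinsic map $\Sym(\alA_{m_{1}}\oplus\cdots\oplus\alA_{m_{r}})\arr\alA$ and then base-changes to a scheme.
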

\begin{proof}
Let $\alA=\oplus_{m\in M}\alA_{m}\in\MCov$ and consider the map 
\[
\eta_{\alA}\colon\Sym(\alA_{m_{1}}\oplus\cdots\oplus\alA_{m_{r}})\arr\alA
\]
It's easy to check that $\eta_{\alA}$ is surjective if and only if
$\alA\in\MCov^{\underline{m}}$. Therefore $\MCov^{\underline{m}}$
is an open substack of $\MCov$ and clearly contains the image of
$\vartheta_{\underline{m}}$. Consider now a cartesian diagram   \[   \begin{tikzpicture}[xscale=2.0,yscale=-1.2]     \node (A0_0) at (0, 0) {$F$};     \node (A0_1) at (1, 0) {$\MHilb^{\underline m}$};     \node (A1_0) at (0, 1) {$T$};     \node (A1_1) at (1, 1) {$\MCov^{\underline m}$};     \path (A0_0) edge [->] node [auto] {$\scriptstyle{}$} (A0_1);     \path (A1_0) edge [->] node [auto] {$\scriptstyle{\alA}$} (A1_1);     \path (A0_1) edge [->] node [auto] {$\scriptstyle{\vartheta_{\underline m}}$} (A1_1);     \path (A0_0) edge [->] node [auto] {$\scriptstyle{}$} (A1_0);   \end{tikzpicture}   \] and
let $U\arrdi{\phi}T$ be a map. $F(U)$ is given by a graded surjection
$\odi U[x_{1},\dots,x_{r}]\arr\alB$ and an isomorphism $\alB\simeq\phi^{*}\alA$.
This is equivalent to giving a graded surjection $\odi U[x_{1},\dots,x_{r}]\arr\phi^{*}\alA$.
In this way we obtain a map
\[
F\arrdi{g_{T}}\prod_{i}\Homsh_{T}(\odi T,\alA_{m_{i}})\simeq\Spec\Sym(\bigoplus_{i}\alA_{m_{i}}^{-1})
\]
 We claim that this is an open immersion. Indeed given $(a_{i})_{i}\colon U\arr\prod_{i}\Homsh_{T}(\odi T,\alA_{m_{i}})$,
the fiber product with $F$ is the locus where the induced graded
map $\odi U[x_{1},\dots,x_{r}]\arr\alA\otimes\odi U$ is surjective,
that is an open subscheme of $U$. In particular $F$ is smooth and
so $\vartheta_{\underline{m}}$ is smooth too. It's easy to check
that it is also a zariski epimorphism. Finally the vector bundle $\shN$
of the statement is defined over any $U\arr\MCov^{\underline{m}}$
given by $\alA=\bigoplus_{m}\alA_{m}$ by $\shN_{|U}=\oplus_{i}\alA_{m_{i}}^{-1}$.\end{proof}
\begin{rem}
\label{rem:relation MHilb MCov}If the sequence $\underline{m}$ contains
any elements of $M-\{0\}$, then $\MCov^{\underline{m}}=\MCov$. Therefore
in this case $\MHilb^{\underline{m}}$ is an atlas for $\MCov$.
\end{rem}

\begin{rem}
\label{rem:relation Mhilbm MCovm}We have cartesian diagrams   \[   \begin{tikzpicture}[xscale=2.5,yscale=-1.2]     
\node (A0_0) at (0, 0) {$W_{\underline m}$};     
\node (A0_1) at (1, 0) {$V_{\underline m}$};    
\node (A0_2) at (2, 0) {$U_{\underline m}$};     
\node (A0_3) at (3, 0) {$\Spec R_M$};     
\node (A1_0) at (0, 1) {$\MHilb^{\underline m}$};     
\node (A1_1) at (1, 1) {$H_{\underline m}$};     
\node (A1_2) at (2, 1) {$\MCov^{\underline m}$};     
\node (A1_3) at (3, 1) {$\MCov$};
\node (o) at (0.5, 0.4) {$\scriptstyle{\textup{open}}$};
\node (i) at (0.5, 0.6) {$\scriptstyle{\textup{immersions}}$};
\node (o2) at (2.5, 0.4) {$\scriptstyle{\textup{open}}$};
\node (i2) at (2.5, 0.6) {$\scriptstyle{\textup{immersions}}$};
\node (tor1) at (-0.5, 0.35) {$\scriptstyle{\Di{\Z^M/<e_0>}}$};
\node (tor2) at (-0.5, 0.65) {$\scriptstyle{\textup{torsors}}$};
\node (v) at (1.5, 0.4) {$\scriptstyle{\textup{vector}}$};
\node (b) at (1.5, 0.6) {$\scriptstyle{\textup{bundles}}$};

\path (A0_1) edge [->]node [auto] {$\scriptstyle{}$} (A1_1);     \path (A0_0) edge [->]node [auto] {$\scriptstyle{}$} (A0_1);     \path (A0_1) edge [->]node [auto] {$\scriptstyle{}$} (A0_2);     \path (A1_0) edge [->]node [auto] {$\scriptstyle{}$} (A1_1);     \path (A0_3) edge [->]node [auto] {$\scriptstyle{}$} (A1_3);     \path (A0_2) edge [->]node [auto] {$\scriptstyle{}$} (A1_2);     \path (A1_1) edge [->]node [auto] {$\scriptstyle{}$} (A1_2);     \path (A0_0) edge [->]node [auto] {$\scriptstyle{}$} (A1_0);     \path (A0_2) edge [->]node [auto] {$\scriptstyle{}$} (A0_3);     \path (A1_2) edge [->]node [auto] {$\scriptstyle{}$} (A1_3);   \end{tikzpicture}   \]  In particular, since $\Bi\Di M\subseteq\MCov^{\underline{m}}$, we
can conclude that $\vartheta_{\underline{m}}^{-1}(\stZ_{M})$ is the
main irreducible component of $\MHilb^{\underline{m}}$. Moreover
the above diagram shows that $\MHilb^{\underline{m}}$ and $\MCov^{\underline{m}}$,
as well as their main irreducible components, share many properties
like smoothness, connection, integrality, reducibility.
\end{rem}
We want now study some geometrical properties of the stack $\MCov$
and, therefore, of the equivariant Hilbert schemes.
\begin{rem}
\label{rem:reduced relations for RM}The ring $R_{M}$ can be written
as quotient of the ring $\Z[x_{m,n}]_{(m,n)\in J}$, where $J$ is
$\{(m,n)\in M^{2}\st m,n,m+n\neq0\}$ divided by the equivalence relation
$(m,n)\sim(n,m)$, by the ideal
\[
I=\left(\begin{array}{c}
x_{m,n}x_{m+n,t}-x_{n,t}x_{n+t,m}\text{ with }m,n,t,m+n,n+t,m+n+t\neq0\text{ and }m\neq t,\\
x_{-m,t}x_{-m+t,m}-x_{-m,s}x_{-m+s,m}\text{ with }m,s,t\neq0\text{ and distinct}
\end{array}\right)
\]
Indeed the first relations are trivial when one of $m,n,t$ is zero
or $m=t$, while if $m+n=0$ yield relations $x_{m,-m}=x_{-m,t}x_{-m+t,m}$.
Using these last relations we can remove all the variable $x_{m,n}$
with $0\in\{m,n,m+n\}$.
\end{rem}

\begin{rem}
\label{rem:N-graduation of RM}There exist a map $f\colon\tilde{K}_{+}\arr\N$
such that for any $m,n\neq0$ we have $f(e_{m,n})=1$ if $m+n\neq0$,
$f(e_{m,-m})=2$ otherwise. In particular $f(v)=0$ only if $v=0$.
Moreover $f$ induces an $\N$-graduation on both $(R_{M}\otimes A)$
and $\Z[K_{+}]\otimes A$, where $A$ is a ring, such that the degree
zero part is $A$ and that the elements $x_{m,n}$ with $m+n\neq0$
are homogeneous of degree $1$. $f$ is obtained as composition $\tilde{K}_{+}\arr K\subseteq\Z^{M}/<e_{0}>\arrdi h\Z$,
where $h(e_{m})=1$ if $m\neq0$.
\end{rem}
One of the open problems in the theory of equivariant Hilbert schemes
is whether those schemes are connected. As said above $\MHilb^{\underline{m}}$
is connected if and only if $\MCov^{\underline{m}}$ is so. What we
can say here is:
\begin{thm}
\label{thm:Mcov geom connected}The stack $\MCov$ is connected with
geometrically connected fibers. If $M-\{0\}\subseteq\underline{m}$,
then $\MHilb^{\underline{m}}$ has the same properties.\end{thm}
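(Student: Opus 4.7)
The plan is to exploit the presentation $\MCov\simeq[\Spec R_{M}/\Di{\Z^{M}/<e_{0}>}]$ from Corollary \ref{cor:MCov as global quotient} together with the $\N$-grading on $R_{M}$ of Remark \ref{rem:N-graduation of RM}, and to reduce both statements to the connectedness of the geometric fibers of $\Spec R_{M}\arr\Spec\Z$.

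First I would use the map $f\colon\tilde K_{+}\arr\N$ to equip $R_{M}\otimes A$, for every ring $A$, with an $\N$-grading whose degree-$0$ part is exactly $A$ and in which each generator $x_{m,n}$ (with $m,n,m+n\neq 0$) sits in strictly positive degree. Since the grading is nonnegative this promotes the usual $\Gm$-action to an $\A^{1}$-monoid action on $\Spec(R_{M}\otimes A)$, which is the identity at $t=1$ and collapses the whole space to the ``origin'' section $\Spec A\hookrightarrow\Spec(R_{M}\otimes A)$ at $t=0$. For an algebraically closed field $\overline k$ and a closed point $p$, sending a homogeneous element $g$ of degree $d$ to $t^{d}g(p)$ gives an explicit morphism $\A^{1}_{\overline k}\arr\Spec(R_{M}\otimes\overline k)$ joining $p$ to the origin; since closed points are dense (Jacobson) this shows $\Spec(R_{M}\otimes\overline k)$ is connected, and the same construction applied over $\Z$ (the origin is a $\Z$-rational point) shows that $\Spec R_{M}$ itself is connected. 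Because $\Spec R_{M}\arr\MCov$ is a smooth epimorphism and hence a topological quotient, this transfers immediately to $\MCov$ and all of its geometric fibers.

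For the second claim, the hypothesis $M-\{0\}\subseteq\underline m$ and Remark \ref{rem:relation MHilb MCov} give $\MCov^{\underline m}=\MCov$, and Proposition \ref{pro:MHlb --> MCov has irreducible fibers} then realises $\vartheta_{\underline m}\colon\MHilb^{\underline m}\arr\MCov$ as an open subscheme of a vector bundle over $\MCov$. The step I would verify carefully is that each non-empty geometric fibre of $\vartheta_{\underline m}$ is an open subscheme of an affine space, hence geometrically irreducible: given a $\Di M$-cover $\alA$ over $\overline k$, the relevant fibre is the open locus in $\bigoplus_{i}\alA_{m_{i}}\simeq\A^{r}_{\overline k}$ cut out by the surjectivity of $\overline k[x_{1},\dots,x_{r}]\arr\alA$, and it is non-empty because $\underline m$ hits every non-zero element of $M$, so one can choose an $\odi{}$-module generator in each invertible $\alA_{m}$. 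A standard ascent argument for fppf morphisms with geometrically connected fibres then propagates the (geometric) connectedness of $\MCov$ established above to $\MHilb^{\underline m}$ and to each of its geometric fibres.

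The only delicate point I foresee is the non-emptiness plus irreducibility of the fibres of $\vartheta_{\underline m}$: once this is in place, everything else is formal from the $\N$-grading and the quotient stack presentation. The cone structure coming from $f$ is really what is doing all the work, by reducing the question to the trivial statement that an $\N$-graded affine scheme with residue ring $\Z$ in degree zero has a deformation retract onto its vertex.
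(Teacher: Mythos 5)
Your argument is correct and follows essentially the same route as the paper: both reduce, via the presentation $\MCov\simeq[\Spec R_{M}/\Di{\Z^{M}/<e_{0}>}]$, to the connectedness of $\Spec(R_{M}\otimes\overline{k})$, and both deduce that from the $\N$-graduation of Remark \ref{rem:N-graduation of RM} with degree-zero part equal to the base --- the paper by observing that such a graded algebra has no nontrivial idempotents, you by the equivalent $\Gm$-contraction onto the vertex. Your handling of $\MHilb^{\underline{m}}$ through the fibration of Proposition \ref{pro:MHlb --> MCov has irreducible fibers}, with fibres that are nonempty open subschemes of affine spaces, is precisely the content the paper delegates to Remark \ref{rem:relation Mhilbm MCovm}, just written out in full.
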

\begin{proof}
It's enough to prove that $\Spec R_{M}\otimes k$ is connected for
any field $k$. But $R_{M}\otimes k$ has an $\N$-graduation such
that $(R_{M}\otimes k)_{0}=k$ by \ref{rem:N-graduation of RM} and
it is a general fact that such an algebra doesn't contain non trivial
idempotents.
\end{proof}
We now want to discuss the problem of the reducibility of $\MCov$.
\begin{defn}
\label{def:universally reducible}Let $X$ be a scheme over a base
scheme $S$. $X$ is said universally reducible over $S$ if for any
base change $S'\arr S$ the scheme $X\times_{S}S'$ is reducible.
A scheme is universally reducible if it is so over $\Z$.\end{defn}
\begin{rem}
It's easy to check that $X$ is universally reducible over $S$ if
and only if all the fibers are reducible.\end{rem}
\begin{lem}
\label{lem:necessary condition MCov reducible}If there exist $m,n,t,a\in M$
such that
\begin{enumerate}
\item $m,n,t$ are distinct and not zero;
\item $a\neq0,m,n,t,m-n,n-m,n-t,t-n,m-t,2m-t,2n-t,m+n-t,m+n-2t$;
\item $2a\neq m+n-t$;
\end{enumerate}
then $\Spec R_{M}$ is universally reducible.\end{lem}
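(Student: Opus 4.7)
The plan is to exhibit, for every field $k$, a $k$-point of $\Spec R_M \otimes k$ whose image in $\MCov \otimes k$ fails to lie in $\stZ_M\otimes k$. By \ref{cor:MCov as global quotient}, $\stZ_M$ is a closed substack of $\MCov$ arising from a $\Di{\Z^M/<e_0>}$-torsor presentation, so such a point forces $\stZ_M\otimes k\subsetneq\MCov\otimes k$ and hence the existence of a second irreducible component of $\MCov\otimes k$. Pulling back along the smooth torus torsor $\Spec R_M\arr\MCov$ then shows $\Spec R_M\otimes k$ is reducible for every $k$, i.e.\ universally reducible in the sense of \ref{def:universally reducible}.

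To produce the witness, I would define a multiplication $\psi\colon M\times M\arr k$ by choosing a small symmetric set $P\subseteq M\times M$ built from $m,n,t,a$ and the pairs $(0,\alpha),(\alpha,0)$ forced by the neutral-element condition, prescribing $\psi$ a nonzero value on $P$, and setting $\psi_{\alpha,\beta}=0$ for $(\alpha,\beta)\notin P$. When one writes out the two families of relations listed in \ref{rem:reduced relations for RM}, each associativity constraint $\psi_{\alpha,\beta}\psi_{\alpha+\beta,\gamma}=\psi_{\beta,\gamma}\psi_{\beta+\gamma,\alpha}$ must then reduce either to the trivial identity $0=0$ (because a factor on each side vanishes) or to a balanced identity between two prescribed nonzero monomials. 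The distinctness hypotheses $(1)$ and the eight exclusions in $(2)$ are exactly what guarantee the first dichotomy holds for every triple $(\alpha,\beta,\gamma)$, while condition $(3)$, $2a\neq m+n-t$, rules out a coincidence in the second family $x_{-m,t}x_{-m+t,m}=x_{-m,s}x_{-m+s,m}$ that would otherwise impose an unsatisfiable constraint on the prescribed values.

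With $\psi\in\Spec R_M(k)$ in hand, I would apply \ref{pro:characterization of points of Zphi} to show $\psi\notin\stZ_M(\bar k)$. Membership in the main component would yield a group homomorphism $\lambda\colon K\arr\bar k^*$ and a ray $\E\in\duale K_+$ with $\psi_{\alpha,\beta}=\lambda(v_{\alpha,\beta})\cdot 0^{\E(v_{\alpha,\beta})}$. Writing $\E$ as the restriction of an additive $c\colon\Z^M/<e_0>\arr\Z$ with $c_0=0$, the nonvanishing/vanishing pattern of $\psi$ translates into the finite system $c_\alpha+c_\beta-c_{\alpha+\beta}=0$ for $(\alpha,\beta)\in P$ and $c_\alpha+c_\beta-c_{\alpha+\beta}>0$ otherwise. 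Chaining the strict inequalities at the off-support pairs through the forced equalities on $P$ around a closed loop in the finite group $M$ produces a contradiction of the form $0>0$; the exclusions in $(2)$ are precisely what prevents this loop from short-circuiting through an unintended equality, and $(3)$ is what prevents it from closing up before the strict inequalities have accumulated.

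The main obstacle I anticipate is the combinatorial bookkeeping needed to synchronise (a) the verification that $\psi$ satisfies every associativity relation of $R_M$ with (b) the rigidity argument that no ray $\E$ can realise its support. The many exclusions on $a$ in hypothesis $(2)$ together with $(3)$ are exactly the coincidences one must avoid in order for a single explicit $P$ to meet both requirements simultaneously; identifying such a $P$ and exhibiting the corresponding closed loop in $M$ is the technical heart of the argument.
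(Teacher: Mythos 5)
Your overall strategy is legitimate and genuinely different from the paper's: you want to exhibit an explicit $\bar k$-point of $\Spec R_M\otimes k$ lying outside the main component $\Spec k[K_+]$, using the criterion of \ref{pro:characterization of points of Zphi} that membership in $\stZ_M(\bar k)$ is equivalent to the vanishing locus of the multiplication being $\Supp\E$ for some $\E\in\duale K_+$. Since $\Spec k[K_+]$ is an irreducible component, such a point would indeed force reducibility over every field. The paper argues on the other side of the duality: it works with the binomial presentation of $R_M\otimes k$ from \ref{rem:reduced relations for RM}, produces two explicit degree-three monomials $\underline{x}^{\alpha}$, $\underline{x}^{\beta}$ (with $\alpha=e_{a,m-a}+e_{m+n-t-a,t+a-m}+e_{t+a-n,n-a}$ and a companion $\beta$) whose difference $z$ annihilates $\mu=\prod x_{u,v}$ and hence lies in the minimal prime $P=\Ker(R_M\otimes k\arr k[K_+])$, and then shows via a combinatorial notion of ``transformable'' exponent that no $\N$-combination $a\alpha+b\beta$ can be reduced by the relations, so $\underline{x}^{\alpha},\underline{x}^{\beta}$ are algebraically independent and $z$ is not nilpotent; a minimal prime containing a non-nilpotent element cannot be the nilradical, whence reducibility. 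Conditions $(1)$ and $(2)$ are exactly what make $a\alpha+b\beta$ non-transformable, and $(3)$ is what makes $\alpha\neq\beta$.

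The genuine gap in your proposal is that the witness is never produced. Everything rests on ``a small symmetric set $P\subseteq M\times M$ built from $m,n,t,a$'' with prescribed nonzero values, but you neither specify $P$ nor verify that the resulting $\psi$ satisfies every associativity relation, nor that its zero set fails to be $\Supp\E$ for every $\E\in\duale K_+$; you yourself label this ``the technical heart of the argument.'' This is not bookkeeping one can wave at: the hypotheses $(1)$--$(3)$ of the lemma were calibrated for a specific pair of monomials, and there is no a priori reason the same thirteen exclusions on $a$ are the right ones for whatever $P$ you end up choosing — a different construction would in general need a different (and separately justified) list of coincidences to exclude. Note also that a set-theoretic obstruction alone is delicate: many ``strange-looking'' vanishing patterns do turn out to equal $\Supp\E$ (e.g.\ the everywhere-zero multiplication is $\Supp$ of the ray induced by the $\N$-grading of \ref{rem:N-graduation of RM}), so the loop argument you sketch must actually be exhibited and closed. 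As it stands the proposal is a plausible plan, not a proof; to complete it you would either have to carry out this construction in full, or switch to the paper's ideal-theoretic route, where the non-nilpotence of a single explicit binomial in the kernel does all the work.
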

\begin{proof}
Let $k$ be a field and $I=(\underline{x}^{\alpha_{i}}-\underline{x}^{\beta_{i}})$
be an ideal of $k[x_{1},\dots,x_{r}]=k[\underline{x}]$. We will say
that $\alpha\in\N^{r}$ is transformable (with respect to $I$) if
there exists $i$ such that $\alpha_{i}\leq\alpha$ or $\beta_{i}\leq\alpha$.
Here by $\alpha\leq\beta\in\N^{r}$ we mean $\alpha_{j}\leq\beta_{j}$
for all $j$. A direct computation shows that if $\underline{x}^{\alpha}-x^{\beta}\in I$
and $\alpha\neq\beta$, then both $\alpha$ and $\beta$ are transformable. 

We will use the above notation for the ideal $I$ defining $R_{M}\otimes k$
as in \ref{rem:reduced relations for RM}. In particular the elements
$\alpha_{i},\beta_{i}\in\N^{J}$ associated to the ideal $I$ are
of the form $e_{u,v}+e_{u+v,w}$ with $u,v,u+v,w,u+v+w\neq0$. 

Set $\mu=\prod_{m,n}x_{m,n}$. Since $R_{M}\otimes k\arr k[K_{+}]\subseteq k[K]=(R_{M}\otimes k)_{\mu}$,
there exists $N>0$ such that $P=\Ker(R_{M}\otimes k\arr k[K_{+}])=\Ann\mu^{N}$.
Our strategy will be to find an element of $P$ which is not nilpotent.
Since $P$ is a minimal prime, being $\Spec k[K_{+}]$ an irreducible
component of $\Spec R_{M}\otimes k$, it follows that $R_{M}\otimes k$
is reducible. Now consider $\alpha=e_{a,m-a}+e_{m+n-t-a,t+a-m}+e_{t+a-n,n-a}$,
$\beta=e_{m+n-t-a,t+a-n}+e_{a,n-a}+e_{m-a,t+a-m}\in\N^{J}$ and $z=\underline{x}^{\alpha}-\underline{x}^{\beta}$.
We will show that $\mu z=0$, i.e. $z\in P$ and that $z$ is not
nilpotent. First of all note that $z$ is well defined since for any
$e_{u,v}$ in $\alpha$ or $\beta$ we have $u,v\neq0$ and $0\neq u+v\in\{m,n,t\}$
thanks to $1)$, $2)$. Let $S_{M}$ be the universal algebra over
$R_{M}$, i.e. $S_{M}=\bigoplus_{m\in M}R_{M}v_{m}$ with $v_{m}v_{n}=x_{m,n}v_{m+n}$
and $v_{0}=1$. By construction we have
\begin{alignat*}{1}
(v_{a}v_{m-a})(v_{m+n-t-a}v_{t+a-m})(v_{t+a-n}v_{n-a}) & =\underline{x}^{\alpha}v_{m}v_{n}v_{t}=\\
(v_{m+n-t-a}v_{t+a-n})(v_{a}v_{n-a})(v_{m-a,t+a-m}) & =\underline{x}^{\beta}v_{m}v_{n}v_{t}
\end{alignat*}
So $\underline{x}^{\alpha}x_{m,n}x_{m+n,t}v_{m+n+t}=\underline{x}^{\beta}x_{m,n}x_{m+n,t}v_{m+n+t}$
and therefore $z\mu=0$, i.e. $z\in P$.

Now we want to prove that any linear combination $\gamma=a\alpha+b\beta\in\N^{J}$
with $a,b\in\N$ is not transformable. First remember that each $e_{u,v}$
in $\gamma$ is such that $u+v\in\{m,n,t\}$. If we will have $e_{u,v}+e_{u+v,w}\leq\gamma$
then there must exist $e_{i,j}\leq\gamma$ such that $i\in\{m,n,t\}$
or $j\in\{m,n,t\}$. Condition $2)$ is exactly what we need to avoid
this situation and can be written as $\{a,m-a,m+n-t-a,t+a-m,t+a-n,n-a\}\cap\{m,n,t\}=\emptyset$.

In particular, if we think $\tilde{K}_{+}$ as a quotient of $\N^{J}$,
we have that $a\alpha+b\beta=a'\alpha+b'\beta$ in $\tilde{K}_{+}$
if and only if they are equals in $\N^{J}$. Assume for a moment that
$\alpha\neq\beta$ in $\N^{J}$. Clearly this means that $\alpha$
and $\beta$ are $\Z$-independent in $\Z^{J}$. Since any linear
combination of $\alpha$ and $\beta$ is not transformable, it follows
that $\underline{x}^{\alpha},\underline{x}^{\beta}$ are algebraically
independent over $k$ in $R_{M}\otimes k$ and, in particular, that
$z=\underline{x}^{\alpha}-\underline{x}^{\beta}$ cannot be nilpotent.
So it remains to prove that $\alpha\neq\beta$ in $\N^{J}$. Note
that for any $i\in\{m,n,t\}$ there exists only one $e_{u,v}$ in
$\alpha$ such that $u+v=i$ and the same happens for $\beta$. So,
if $\alpha=\beta$ and since $m,n,t$ are distinct, those terms have
to be equal, for instance $e_{a,m-a}=e_{m+n-t-a,t+a-n}$. But $a\neq m+n-t-a$
by $3)$, while $a\neq t+a-n$ since $t\neq n$. Therefore $\alpha\neq\beta$.\end{proof}
\begin{cor}
\label{cor:Mcov reducible}If $|M|>7$ and $M\not\simeq(\Z/2\Z)^{3}$
then $\MCov$ is universally reducible and the same holds for $\MHilb^{\underline{m}}$
provided that $\underline{m}$ contains any element of $M-\{0\}$.\end{cor}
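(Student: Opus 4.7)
The plan is to derive Corollary \ref{cor:Mcov reducible} from Lemma \ref{lem:necessary condition MCov reducible} by producing, for each finite abelian group $M$ with $|M|>7$ and $M\not\simeq(\Z/2\Z)^{3}$, a quadruple $(m,n,t,a)\in M^{4}$ meeting the three numerical conditions of the lemma. Once $\Spec R_{M}$ is universally reducible, Corollary \ref{cor:MCov as global quotient} gives $\MCov\simeq[\Spec R_{M}/\Di{\Z^{M}/<e_{0}>}]$, and since the atlas $\Spec R_{M}\arr\MCov$ is faithfully flat the universal reducibility transfers to $\MCov$. For the Hilbert scheme statement I would invoke Remark \ref{rem:relation MHilb MCov}, which gives $\MCov^{\underline{m}}=\MCov$ when $M-\{0\}\subseteq\underline{m}$, and Remark \ref{rem:relation Mhilbm MCovm}, which realises $\MHilb^{\underline{m}}$ as an open subscheme of a vector bundle over $\MCov^{\underline{m}}$; both operations preserve the property that every geometric fibre is reducible.

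The combinatorial core is the production of $(m,n,t,a)$. Condition $(1)$ asks for three distinct non-zero elements of $M$; condition $(2)$ forbids $a$ from the thirteen expressions $\{0,m,n,t,\pm(m-n),\pm(n-t),m-t,2m-t,2n-t,m+n-t,m+n-2t\}$; and condition $(3)$ forbids at most two further residues of $a$ modulo the $2$-torsion subgroup $M[2]$. So the bound $|M|\geq 16$ is handled by a single pigeonhole argument: pick any three distinct non-zero $m,n,t$, then select $a$ outside a set of at most fifteen elements.

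For the intermediate isomorphism classes $8\leq|M|\leq 15$ with $M\not\simeq(\Z/2\Z)^{3}$, namely $\Z/8\Z,\,\Z/4\Z\times\Z/2\Z,\,\Z/9\Z,\,(\Z/3\Z)^{2},\,\Z/10\Z,\,\Z/11\Z,\,\Z/12\Z,\,\Z/6\Z\times\Z/2\Z,\,\Z/13\Z,\,\Z/14\Z,\,\Z/15\Z$, I would carry out an explicit case analysis. In each cyclic case $M=\Z/N\Z$ a uniform try is $m=1,\ n=2,\ t=4$ (replacing $t$ by $5$ when coincidences force it), computing the forbidden set modulo $N$ and exhibiting a surviving $a$ (typically $a=6$ or $a=7$). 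The two non-cyclic cases are treated by choosing $m,n$ from distinct cyclic factors and $t=m+n$ adjusted so as to avoid the collapse of the forbidden list.

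The main obstacle is checking that the excluded group $M\simeq(\Z/2\Z)^{3}$ is genuinely exceptional, that is, that the approach cannot be patched for it. There every non-zero element satisfies $2x=0$, so $m-n=m+n$, $2m-t=t$, $2n-t=t$ and $m+n-2t=m+n$, which collapses the forbidden set of $(2)$ to $\{0,m,n,t,m+n,n+t,m+t,m+n+t\}$. If $m,n,t$ are $\F_{2}$-independent this subset already equals $M$; if instead $m+n+t=0$ then $m+n-t=0$ is included and a short inspection shows that the eight listed expressions still cover $M$. Hence no valid $a$ exists for $(\Z/2\Z)^{3}$, which accounts for its exclusion from the statement.
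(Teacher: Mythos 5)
Your reduction is the same as the paper's: everything comes down to exhibiting, for each admissible $M$, a quadruple $(m,n,t,a)$ satisfying the three conditions of Lemma \ref{lem:necessary condition MCov reducible}, after which universal reducibility of $\Spec R_{M}$ passes to $\MCov\simeq[\Spec R_{M}/\Di{\Z^{M}/<e_{0}>}]$ and to $\MHilb^{\underline{m}}$ exactly as you say. The only difference from the paper is the organisation of the case analysis: the paper splits structurally ($M=C\times T$ with $C$ cyclic of order $\geq4$ and $T\neq0$, then the residual families $\Z/8\Z$, cyclic of order $>8$ and $\neq10$, $(\Z/2\Z)^{l}$, $(\Z/3\Z)^{l}$), while you split by $|M|$ with a pigeonhole for $|M|\geq16$ and explicit small cases.

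The pigeonhole step contains a genuine error. Condition $(3)$ forbids the solution set of $2a=m+n-t$, which is either empty or a full coset of the $2$-torsion subgroup $M[2]$; it therefore excludes $|M[2]|$ elements, not ``at most two'', and your bound of fifteen forbidden elements fails whenever $|M[2]|>2$. Worse, the claim that \emph{any} three distinct nonzero $m,n,t$ will do is false: in $M\simeq(\Z/2\Z)^{4}$ (so $|M|=16$) take $m=e_{1}$, $n=e_{2}$, $t=e_{1}+e_{2}$; then $m+n-t=0=2a$ for every $a$, so condition $(3)$ is violated by all $a$ and no valid quadruple exists for that triple. So the groups with large $2$-torsion and $|M|\geq16$ are not actually covered by your argument as written. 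The repair is exactly the paper's: either arrange $m+n-t\notin2M$ so that condition $(3)$ is vacuous, or handle these groups by an explicit choice such as $m=e_{1},n=e_{2},t=e_{3},a=e_{4}$ for $(\Z/2\Z)^{l}$, $l\geq4$, where the thirteen expressions of condition $(2)$ collapse to the eight-element subgroup generated by $m,n,t$ and condition $(3)$ holds automatically. A secondary point: the cases $8\leq|M|\leq15$ are only sketched, and the sketches need checking --- for instance in $\Z/10\Z$ with $m=1,n=2,t=4$ your suggested $a=7$ is forbidden (it equals $m-t$), although $a=6$ does work; these are finite verifications but they must actually be performed for the proof to be complete.
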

\begin{proof}
We have to show that $R_{M}$ is universally reducible and so we will
apply \ref{lem:necessary condition MCov reducible}. If $M=C\times T$,
where $C$ is cyclic with $|C|\geq4$ and $T\neq0$ we can choose:
$m$ a generator of $C$, $n=3m$, $t=2m$ and $a\in T-\{0\}$. If
$M$ cannot be written as above, there are four remaining cases. $1)$
$M\simeq\Z/8\Z$: choose $m=2\comma n=4\comma t=6\comma a=1$. $2)$
$M$ cyclic with $|M|>8$ and $|M|\neq10$: choose $m=1\comma n=2\comma t=3\comma a=5$.
$3)$ $M\simeq(\Z/2\Z)^{l}$ with $l\geq4$: choose $m=e_{1}\comma n=e_{2}\comma t=e_{3}\comma a=e_{4}$.
$4)$ $M\simeq(\Z/3\Z)^{l}$ with $l\geq2$: choose $m=e_{1}\comma n=2e_{1}\comma t=e_{2}\comma a=m+t=e_{1}+e_{2}$.\end{proof}
\begin{prop}
\label{pro:smooth DMCov}$\MCov$ is smooth if and only if $\stZ_{M}$
is so. This happens if and only if $M\simeq\Z/2\Z,\Z/3\Z,\Z/2\Z\times\Z/2\Z$
and in this case $\MCov=\stZ_{M}$. To be more precise $R_{M}=\Z[x_{m,n}]_{(m,n)\in J}$,
where $J$ is the set defined in \ref{rem:reduced relations for RM}.

In particular $\MHilb^{\underline{m}}$ is smooth and irreducible
for any sequence $\underline{m}$ if $M$ is as above. Otherwise,
if $M-\{0\}\subseteq\underline{m}$, $\MHilb^{\underline{m}}$ is
not smooth.\end{prop}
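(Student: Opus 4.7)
The strategy is to use the toric presentation $\stZ_{M}\simeq[\Spec\Z[K_{+}]/\shT]$ from Corollary~\ref{cor:MCov as global quotient} to reduce the smoothness of $\stZ_{M}$ to freeness of the commutative monoid $K_{+}$, and to connect this with smoothness of $\MCov$ via the connectedness result of Theorem~\ref{thm:Mcov geom connected}. The smooth direction is a finite verification, while the converse admits a clean uniform count of indecomposable generators.

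For $M\in\{\Z/2\Z,\Z/3\Z,(\Z/2\Z)^{2}\}$, I would use the reduced presentation of Remark~\ref{rem:reduced relations for RM} and show that the ideal $I$ defining $R_{M}$ is zero. For $M=\Z/2\Z$ or $\Z/3\Z$ the defining conditions of the generators of $I$ cannot be met since they require three distinct nonzero elements. For $M=(\Z/2\Z)^{2}$ the first-type relations are excluded because any three distinct nonzero elements sum to $0$, contradicting $m+n+t\neq 0$, while the second-type relations collapse to $x_{m,t}x_{m,s}=x_{m,s}x_{m,t}$ by commutativity (using $-m=m$ and, when $\{m,s,t\}$ exhausts the nonzero elements, $m+t=s$ and $m+s=t$). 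Hence $R_{M}$ is a polynomial ring, so $\Spec R_{M}$ is smooth and integral, and therefore $\MCov\simeq[\Spec R_{M}/\Di{\Z^{M}/<e_{0}>}]$ is smooth and irreducible; irreducibility together with $\stZ_{M}$ being an irreducible component forces $\MCov=\stZ_{M}$.

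The converse direction is the main obstacle. By Corollary~\ref{cor:MCov as global quotient}, smoothness of $\stZ_{M}$ is equivalent to smoothness of the affine toric variety $\Spec\Z[K_{+}]$, equivalently to the commutative monoid $K_{+}$ being free of rank $|M|-1=\rk K$. Using the positive $\N$-grading on $\Z[K_{+}]$ from Remark~\ref{rem:N-graduation of RM}, every element of degree $1$ is automatically indecomposable; these are precisely the $v_{m,n}$ with $m,n,m+n\neq 0$. A direct comparison of coefficient vectors in $\Z^{M}$ shows the assignment $\{m,n\}\mapsto v_{m,n}$ is injective on such unordered pairs, so the number of distinct degree-$1$ indecomposables is exactly $(k^{2}-\tau)/2$, where $k=|M|-1$ and $\tau$ is the number of nonzero $2$-torsion elements of $M$. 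The inequality $(k^{2}-\tau)/2>k$, equivalent to $k(k-2)>\tau$, holds in every case outside the three listed: automatically for $k\geq 4$ since $\tau\leq k<k(k-2)$, and for $k=3$ exactly when $\tau<3$, i.e. for $M=\Z/4\Z$ but not for $(\Z/2\Z)^{2}$. In the three listed cases one checks by inspection that the remaining degree-$\geq 2$ generators $v_{m,-m}$ are decomposable (for instance $v_{a,a}=v_{a,b}+v_{a,c}$ in $(\Z/2\Z)^{2}$ and $v_{1,2}=v_{1,1}+v_{2,2}$ in $\Z/3\Z$), so $K_{+}$ is free of the right rank. For all other $M$ the degree-$1$ indecomposables alone already outnumber $\rk K$, so $K_{+}$ is not free and $\stZ_{M}$ is singular. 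Finally, since $\MCov$ is connected (Theorem~\ref{thm:Mcov geom connected}) and finitely presented over $\Z$, smoothness of $\MCov$ would force $\Spec R_{M}$ to be a regular connected scheme, hence integral, hence $\MCov$ irreducible and equal to $\stZ_{M}$; thus smoothness of $\MCov$ and of $\stZ_{M}$ are equivalent.

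The $\MHilb^{\underline{m}}$ statements then follow formally from Proposition~\ref{pro:MHlb --> MCov has irreducible fibers}: when $M$ is in the list, $\MHilb^{\underline{m}}$ is an open subscheme of a vector bundle over the smooth irreducible $\MCov^{\underline{m}}$ (itself open in $\MCov$), hence smooth and irreducible for any $\underline{m}$; when $M$ is not in the list and $M\setminus\{0\}\subseteq\underline{m}$, Remark~\ref{rem:relation MHilb MCov} identifies $\MHilb^{\underline{m}}\to\MCov$ as a smooth atlas, so $\MHilb^{\underline{m}}$ is smooth if and only if $\MCov$ is, and by the above the latter fails. The chief technical point throughout is that the positive $\N$-grading of $\Z[K_{+}]$ makes all degree-$1$ generators automatically indecomposable and pairwise distinct, turning the question into an elementary count.
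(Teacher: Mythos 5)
Your proof is correct, and for the key converse direction it takes a genuinely different route from the paper. For the three listed groups you do essentially what the paper does: check that the ideal $I$ of \ref{rem:reduced relations for RM} vanishes, so $R_{M}$ is a polynomial ring. For the converse, the paper also reduces to smoothness of $k[K_{+}]$, but then argues via unique factorization: a smooth $k[K_{+}]$ would be a UFD in which every degree-one element $x_{m,n}$ is prime, so a single nontrivial relation $x_{m,n}x_{m+n,t}=x_{n,t}x_{n+t,m}$ gives a contradiction; exhibiting such a relation forces a case analysis over the subgroups $\Z/d\Z$ ($d\geq5$), $\Z/4\Z$, $(\Z/2\Z)^{3}$, $(\Z/3\Z)^{2}$. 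You instead count the degree-one elements $v_{m,n}$ of $K_{+}$, which are automatically indecomposable by the positive grading of \ref{rem:N-graduation of RM} and pairwise distinct as unordered pairs, and compare with $\rk K=|M|-1$. Since the number of indecomposables is exactly the embedding dimension of $k[K_{+}]$ at the irrelevant graded maximal ideal (the ideal $p_{f}$ of \ref{lem:properties of pE} for the grading $f$), the inequality $(k^{2}-\tau)/2>k$ already kills regularity; in particular you never need the slightly delicate implication ``smooth $\Rightarrow$ $K_{+}$ free'', only the trivial bound on the number of indecomposables. This is uniform in $M$ and eliminates the paper's subgroup case analysis, which I find cleaner. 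Two cosmetic repairs: the first family of generators of $I$ only requires $m\neq t$ (not three distinct nonzero elements), so for $\Z/3\Z$ one must still note that $n\neq0$ and $m+n\neq0$ force $n=m$ and hence $n+t=0$, and for $(\Z/2\Z)^{2}$ the subcase $n\in\{m,t\}$ dies by $2$-torsion; and for $M=\Z/2\Z$ the single generator $v_{1,1}$ is itself indecomposable (the monoid is just $\N$), though there the polynomial-ring computation already settles smoothness.
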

\begin{proof}
Let $k$ be a field. Note that 
\[
\MCov\text{ smooth}\iff R_{M}\text{ smooth}\then\stZ_{M}\text{ smooth }\then k[K_{+}]/k\text{ smooth}
\]
We first prove that if $k[K_{+}]$ is smooth then $M$ has to be one
of the groups of the statement. We have $K_{+}\simeq\N^{r}\oplus\Z^{s}$
and therefore $k[K_{+}]$ is UFD. We will consider $k[K_{+}]$ endowed
with the $\N$-graduation defined in \ref{rem:N-graduation of RM}.
Since any of the $x_{m,n}$ has degree $1$, it is irreducible and
so prime. If we have a relation $x_{m,n}x_{m+n,t}=x_{n,t}x_{n+t,m}$
with $m,n,t,m+n,n+t,m+n+t\neq0$ and $m\neq t$, then $x_{m,n}\mid x_{n,t}x_{n+t,m}$
implies that $x_{m,n}=x_{n,t}$ or $x_{m,n}=x_{n+t,m}$, which is
impossible thanks to our assumptions. We will prove that if $M$ is
not isomorphic to one of the group in the statement, then such a relation
exists. Clearly it is enough to find this relation in a subgroup of
$M$. So it is enough to consider the following cases. $1)$ $M$
cyclic with $|M|\geq5$: choose $m=n=1\comma t=2$. $2)$ $M\simeq\Z/4\Z$:
choose $m=1\comma n=2\comma t=3$. $3)$ $M\simeq(\Z/2\Z)^{3}$: choose
$m=e_{1}\comma n=e_{2}\comma t=e_{3}$. $4)$ $M\simeq(\Z/3\Z)^{2}$:
choose $m=n=e_{1}\comma t=e_{2}$.

We now want to prove that when $M$ is as in the statement, then the
ideal $I$ of \ref{rem:reduced relations for RM} is zero. If we have
a relation as in the first row, since $m\neq t$ we have $|M|\geq3$.
If $M\simeq\Z/3\Z$ then $t=2m$ and $m+t=0$. If $M\simeq(\Z/2\Z)^{2}$,
if $m,n,t$ are distinct then $m+n+t=0$, otherwise $m=n$ and $m+n=0$.
If we have a relation as in the second row, since $m,t,s$ are distinct,
we must have $M\simeq(\Z/2\Z)^{2}$. Therefore $m+t=s$ and the relation
become trivial.\end{proof}
\begin{cor}
$\GrCov{\Z/2\Z\times\Z/2\Z}$ is isomorphic to the stack of sequences
$(\shL_{i},\psi_{i})_{i=1,2,3}$, where $\shL_{1},\shL_{2},\shL_{3}$
are invertible sheaves and $\psi_{1}\colon\shL_{2}\otimes\shL_{3}\arr\shL_{1}$,
$\psi_{2}\colon\shL_{1}\otimes\shL_{3}\arr\shL_{2}$, $\psi_{3}\colon\shL_{1}\otimes\shL_{2}\arr\shL_{3}$
are maps.\end{cor}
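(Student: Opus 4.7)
The plan is to deduce this from the preceding proposition (\ref{pro:smooth DMCov}), which shows that for $M=\Z/2\Z\times\Z/2\Z$ the ring $R_M$ is polynomial in the three variables $x_{m_i,m_j}$ indexed by unordered pairs $\{m_i,m_j\}$ with $i\neq j$ in $\{1,2,3\}$. Equivalently, the monoid $\tilde K_+$ is free on the three generators $e_{m_i,m_j}$ with $i\neq j$, and the ``self-multiplication'' generators $e_{m_i,m_i}$ are expressed as $e_{m_i,m_i}=e_{m_i,m_j}+e_{m_i,m_k}$ via the relations used in \ref{rem:reduced relations for RM}. Combined with \ref{cor:MCov as global quotient}, this presents $\MCov$ as the quotient $[\A^3_\Z/\Di{\Z^{M-\{0\}}}]$, with the torus acting on the coordinates $x_{m_j,m_k}$ by weight $e_{m_j}+e_{m_k}-e_{m_i}$ for $\{i,j,k\}=\{1,2,3\}$.

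Next I would interpret the stack $\catF$ appearing in the statement as one of the form $\stX_\psi$ from Section~\ref{sec:stack Xphi}. Since $\psi_i\colon\shL_j\otimes\shL_k\to\shL_i$ is a section of $\shL_i\otimes\shL_j^{-1}\otimes\shL_k^{-1}$, one has $\catF\simeq\stX_\psi$ where $\psi\colon\N^3\to\Z^3$ sends the $i$-th basis vector $f_i$ to $a_i-a_j-a_k$ (using bases $f_i$ of $\N^3$ and $a_i$ of $\Z^3$). By \ref{pro:atlas for the stack associated to a monoid map} this gives $\catF\simeq[\A^3_\Z/\Di{\Z^3}]$ with this action.

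To finish I would exhibit an isomorphism between the two quotient presentations via the identifications $e_{m_i}\leftrightarrow -a_i$ and $e_{m_j,m_k}\leftrightarrow f_i$ (for $\{i,j,k\}=\{1,2,3\}$); a direct check shows $\phi_M$ and $\psi$ correspond under these, so $\MCov\simeq\catF$. Concretely, the equivalence sends $\alA=\bigoplus_{m\in M}\alA_m\in\MCov(U)$ to the triple $\shL_i=\alA_{m_i}$ together with the cross-multiplications $\alA_{m_j}\otimes\alA_{m_k}\to\alA_{m_i}$; conversely, any such triple extends uniquely to a cover because the self-multiplications $\alA_{m_i}\otimes\alA_{m_i}\to\odi{}$ are forced by the monoid relation $e_{m_i,m_i}=e_{m_i,m_j}+e_{m_i,m_k}$. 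There is no real obstacle here: the substance lies entirely in the freeness of $\tilde K_+$ already established in the preceding proposition, which guarantees that the three cross-multiplications determine the cover.
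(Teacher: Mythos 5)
Your proof is correct and rests on exactly the same key input as the paper's: Proposition \ref{pro:smooth DMCov} showing that $\tilde{K}_{+}$ is free on the three cross-multiplication generators (equivalently $R_{M}$ is a polynomial ring in three variables), after which the description of objects is immediate. The paper reads this off directly from the definition of $\stX_{\phi_{M}}$ rather than matching two quotient presentations, but that is only a cosmetic difference.
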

\begin{proof}
Set $M=(\Z/2\Z)^{2}$. Thanks to \ref{pro:smooth DMCov}, we know
that $\tilde{K}_{+}=K_{+}\simeq\N v_{e_{1},e_{2}}\oplus\N v_{e_{1},e_{1}+e_{2}}\oplus\N v_{e_{2},e_{1}+e_{2}}$.
So an object of $\MCov$ is given by invertible sheaves $\shL_{1}=\shL_{e_{1}}\comma\shL_{2}=\shL_{e_{2}}\comma\shL_{3}=\shL_{e_{1}+e_{2}}$
and maps $\psi_{1}=\psi_{e_{2},e_{1}+e_{2}}\comma\psi_{2}=\psi_{e_{1},e_{1}+e_{2}}\comma\psi_{3}=\psi_{e_{1},e_{2}}$.\end{proof}
\begin{rem}
\label{rem:MCov for M=00003DZfour}$\GrCov{\Z/4\Z}$ and $\Z/4\Z\textup{-Hilb}^{\underline{m}}$,
for any sequence $\underline{m}$, are integral and normal since one
can check directly that $R_{\Z/4\Z}=\Z[x_{1,2},x_{3,3},x_{2,3},x_{1,1}]/(x_{1,2}x_{3,3}-x_{2,3}x_{1,1})$.
I'm not able to prove that $\MCov$ is irreducible when $M$ is one
of $\Z/5\Z\comma\Z/6\Z$, $\Z/7\Z\comma(\Z/2\Z)^{3}$. Anyway the
first two cases seems to be integral thanks to a computer program,
while for the last ones there are some techniques that can be used
to study this problem but they are too complicated to be explained
here.
\end{rem}

\subsection{The invariant $h\colon|\MCov|\arr\N$.}

In this subsection we investigate the local structure of a $\Di M$-cover,
especially over a local ring. In particular we will be able to define
an upper semicontinuous map $h\colon|\MCov|\arr\N$.
\begin{notation}
Given a ring $A$, we will write $B\in\Spec R_{M}(A)$ meaning that
$B$ is an $M$-graded $A$-algebra with a given $M$-graded basis,
usually denoted by $\{v_{m}\}_{m\in M}$ with $v_{0}=1$, and a given
multiplication $\psi$ such that
\[
B=\bigoplus_{m\in M}Av_{m}\comma v_{m}v_{n}=\psi_{m,n}v_{m+n}
\]
\end{notation}
\begin{lem}
\label{lem:factorization of covers through torsors on local rings}Let
$A$ be a ring and $B\in\Spec R_{M}(A)$, with graded basis $v_{m}$
and multiplication map $\psi$. Then the set
\[
H_{\psi}=H_{B/A}=\{m\in M\;|\; v_{m}\in B^{*}\}=\{m\in M\;|\;\psi_{m,-m}\in A^{*}\}
\]
 is a subgroup of $M$. Moreover if $m,n\in M$ and $h\in H_{\psi}$
then $\psi_{m,n}$ and $\psi_{m,n+h}$ differs by an element of $A^{*}$.
If $H<H_{\psi}$ then $C=\bigoplus_{m\in H}Av_{m}\in\Bi\Di H(A)$.
Moreover if $\sigma\colon M/H\arr M$ gives representatives of $M/H$
in $M$ and we set $w_{m}=v_{\sigma(m)}$ for $m\in M/H$ we have
\[
B=\bigoplus_{m\in M/H}Cw_{m}\in\Spec R_{M/H}(C)
\]
 Finally if we denote by $\psi'$ the induced multiplication on $B$
over $C$ we have $H_{\psi'}=H_{\psi}/H$ and for any $m,n\in M$
$\psi'_{m,n}$ and $\psi_{m,n}$ differ by an element of $C^{*}$.\end{lem}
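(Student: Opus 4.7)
The plan is to verify each assertion in turn, using associativity of $\psi$ and the description of $\Di H$-torsors provided by Proposition \ref{pro:equivalent conditions for a D(M)-torsor}.

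First I would establish the alternative description $v_m\in B^*\iff\psi_{m,-m}\in A^*$. If $v_m$ is invertible, writing its inverse in the graded basis and comparing degree $0$ coefficients forces $\psi_{m,-m}$ to be a unit; conversely $\psi_{m,-m}^{-1}v_{-m}$ is a left (hence two-sided, by commutativity) inverse of $v_m$. With this in hand, $H_\psi$ obviously contains $0$ and is closed under negation. For closure under addition, the key observation is that multiplication by $v_h$ for $h\in H_\psi$ is an $A$-linear automorphism of $B$ shifting the $M$-grading by $h$, hence restricts to an isomorphism $Av_{t}\arrdi{\simeq}Av_{t+h}$ for every $t\in M$, so that $\psi_{h,t}=\psi_{t,h}\in A^*$ for every $t$. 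Applying this to $t=m+n$ with $m,n\in H_\psi$ and using $v_mv_n=\psi_{m,n}v_{m+n}\in B^*$ yields $v_{m+n}\in B^*$, so $H_\psi$ is a subgroup.

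The second claim follows from the associativity relation $\psi_{m,n}\psi_{m+n,h}=\psi_{n,h}\psi_{m,n+h}$: for $h\in H_\psi$ the factors $\psi_{m+n,h}$ and $\psi_{n,h}$ are units by the shift-by-$v_h$ argument just noted, whence $\psi_{m,n}$ and $\psi_{m,n+h}$ differ by a unit of $A^*$. For the third claim, $C=\bigoplus_{m\in H}Av_m$ is a graded $A$-subalgebra because $H$ is a subgroup, and its multiplication maps $Av_m\otimes Av_n\arr Av_{m+n}$ are isomorphisms since $\psi_{m,n}\in A^*$ for $m,n\in H\subseteq H_\psi$; Proposition \ref{pro:equivalent conditions for a D(M)-torsor} then identifies $C$ with a $\Di H$-torsor over $A$.

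For the decomposition $B=\bigoplus_{\bar m\in M/H}Cw_{\bar m}$: given $m\in M$ write $m=h+\sigma(\bar m)$ with $h\in H$, so $v_m=\psi_{h,\sigma(\bar m)}^{-1}v_h\cdot w_{\bar m}$, and $\psi_{h,\sigma(\bar m)}^{-1}v_h\in C^*$ by the previous steps. Computing $w_{\bar m}w_{\bar n}=v_{\sigma(\bar m)}v_{\sigma(\bar n)}$ and re-expressing $v_{\sigma(\bar m)+\sigma(\bar n)}$ in terms of $w_{\overline{m+n}}$ via the same trick produces an explicit $\psi'_{\bar m,\bar n}\in C^*$ whose commutativity, normalization and associativity are inherited from those of $\psi$; thus $B\in\Spec R_{M/H}(C)$. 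The identity $H_{\psi'}=H_\psi/H$ is immediate from the characterization of invertibility, since $w_{\bar m}=v_{\sigma(\bar m)}\in B^*$ iff $\sigma(\bar m)\in H_\psi$, which (as $H\subseteq H_\psi$) depends only on $\bar m$. Finally, combining the explicit formula for $\psi'_{\bar m,\bar n}$ with claim (2) applied to the shift by $h\in H$ shows that $\psi'_{\bar m,\bar n}$ and $\psi_{m,n}$ differ by a unit of $C^*$. No step is really a serious obstacle; the only bookkeeping care needed is keeping track of which identities take place in $A^*$ versus in $C^*$.
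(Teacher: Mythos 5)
Your proof is correct and follows essentially the same route as the paper: characterize $v_m\in B^*$ via $\psi_{m,-m}\in A^*$, deduce $\psi_{h,u}\in A^*$ for all $u$ when $h\in H_\psi$, use the associativity relation $\psi_{m,n}\psi_{m+n,h}=\psi_{n,h}\psi_{m,n+h}$, invoke the torsor criterion, and rewrite $v_m$ as a $C^*$-multiple of $v_{\sigma(\overline m)}$. The only cosmetic difference is that you obtain $\psi_{h,u}\in A^*$ from the graded-automorphism (shift by $v_h$) argument, whereas the paper reads it off the identity $\psi_{-h,h}=\psi_{h,u}\psi_{h+u,-h}$; both are fine.
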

\begin{proof}
From the relations $v_{m}v_{-m}=\psi_{m,-m}$, $v_{m}^{|M|-1}=\lambda v_{-m}$,
$v_{m}^{|M|}=\lambda\psi_{m,-m}$ and $v_{m}v_{n}=\psi_{m,n}v_{m+n}$
we see that $v_{m}\in B^{*}\iff\psi_{m,-m}\in A^{*}$ and that $H_{\psi}<M$.
From \ref{eq:condition on psi} we get the relations $\psi_{-h,h}=\psi_{h,u}\psi_{h+u,-h}$
and $\psi_{m,n}\psi_{m+n,h}=\psi_{n,h}\psi_{m,n+h}$. So if $h\in H$
then $\psi_{h,u}\in A^{*}$ for any $u$ and $\psi_{m,n}$ and $\psi_{m,n+h}$
differ by an element of $A^{*}$.

Now consider the second part of the statement. From \ref{pro:equivalent conditions for a D(M)-torsor}
we know that $C$ is a torsor over $A$. Since for any $m$ we have
$v_{m}=(\psi_{h,m}/v_{h})v_{\sigma(\overline{m})}$, where $h=\sigma(\overline{m})-m\in H$
we obtain the writing of $B$ as $M/H$ graded $C$-algebra and that
\[
\psi'_{m,n}=\psi_{\sigma(m),\sigma(n)}(\psi_{h,\sigma(m)+\sigma(n)}/v_{h})\text{ where }h=\sigma(m+n)-\sigma(m)-\sigma(n)
\]
From the above equation it is easy to conclude the proof.\end{proof}
\begin{defn}
\label{def:of the maximal torsor and the subgroup associated}Given
a ring $A$ and $B\in\Spec R_{M}(A)$ we continue to use the notation
$H_{B/A}$ introduced in \ref{lem:factorization of covers through torsors on local rings}
and we will call the algebra $C$ obtained in it setting $H=H_{B/A}$
the \emph{maximal torsor} of the extension $B/A$. If $\E\in\duale K_{+}$
we will write $H_{\E}=H_{B/k}$ where $B$ is the algebra induced
by the multiplication $0^{\E}$ and $k$ is any field. In particular
\[
H_{\E}=\{m\in M\st\E_{m,-m}=0\}
\]

Finally if $f\colon X\arr Y\in\MCov(Y)$ and $q\in Y$ we define $\shH_{f}(q)=H_{\odi{X,q}/\odi{Y,q}}$.\end{defn}
\begin{prop}
\label{pro:the H* is well defined as a map from MCov}We have a map
  \[   \begin{tikzpicture}[xscale=3.5,yscale=-0.5]     \node (A0_0) at (0, 0) {$|\MCov|$};     \node (A0_1) at (1, 0) {$\{\text{subgroups of }M\}$};     \node (A1_0) at (0, 1) {$B/k$};     \node (A1_1) at (1, 1) {$H_{B/k}$};     \path (A0_0) edge [->]node [auto] {$\scriptstyle{\shH}$} (A0_1);     \path (A1_0) edge [|->,gray]node [auto] {$\scriptstyle{}$} (A1_1);   \end{tikzpicture}   \] such
that, if $Y\arrdi u\MCov$ is given by $X\arrdi fY$, then $\shH_{f}=\shH\circ u$.\end{prop}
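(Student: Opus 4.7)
The plan is to establish two independent facts: (a) the assignment $B/k \mapsto H_{B/k}$ descends to a well-defined function on $|\MCov|$, and (b) this function pulls back correctly under the map $u\colon Y \to \MCov$ induced by any $\Di M$-cover $f\colon X \to Y$.

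For (a), I would first check that $H_{B/k}$ is intrinsic to the $\Di M$-cover $B$ over $k$, i.e.\ independent of the chosen graded basis. If $\{v_m\}$ and $\{v'_m\}$ are two $M$-graded $k$-bases of $B$ with $v_0 = v'_0 = 1$, then since each graded component $B_m$ is one-dimensional over $k$ we have $v'_m = \lambda_m v_m$ for some $\lambda_m \in k^*$, whence $v'_m \in B^*$ iff $v_m \in B^*$. Next I would check invariance under an arbitrary field extension $k \hookrightarrow k'$: using the description $H_{B/k} = \{m \in M \mid \psi_{m,-m} \in k^*\} = \{m \in M \mid \psi_{m,-m} \neq 0\}$ from \ref{lem:factorization of covers through torsors on local rings}, the base change $B \otimes_k k'$ inherits the basis and multiplication, and a nonzero element of $k$ stays nonzero in $k'$, so $H_{B \otimes_k k'/k'} = H_{B/k}$. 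Since two representatives $B/k$ and $B'/k'$ of the same point of $|\MCov|$ admit a common field extension over which they become isomorphic covers, these two observations together force $H_{B/k} = H_{B'/k'}$, producing the desired map $\shH$.

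For (b), given $f\colon X \to Y \in \MCov(Y)$ and $q \in Y$, the image $u(q) \in |\MCov|$ is represented by the fiber $B(q) = (f_*\odi X) \otimes_{\odi Y} k(q)$, so by definition $\shH(u(q)) = H_{B(q)/k(q)}$. On the other hand $\shH_f(q) = H_{\odi{X,q}/\odi{Y,q}}$. Fix a graded $\odi{Y,q}$-basis $\{v_m\}$ of $\odi{X,q} = (f_*\odi X)_q$ with multiplication constants $\psi_{m,n} \in \odi{Y,q}$; its reduction mod the maximal ideal of $\odi{Y,q}$ is a graded basis of $B(q)$ with multiplication the images $\bar\psi_{m,n} \in k(q)$. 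Since $\odi{Y,q}$ is local with residue field $k(q)$, an element $\psi_{m,-m} \in \odi{Y,q}$ is a unit iff $\bar\psi_{m,-m} \in k(q)$ is nonzero, so $H_{\odi{X,q}/\odi{Y,q}} = H_{B(q)/k(q)}$, proving $\shH_f = \shH \circ u$.

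The only mildly delicate step is the well-definedness in (a): everything reduces to the fact that for an element of a field, being a unit is the same as being nonzero, which is preserved under any ring map out of a field. No genuine obstacle is anticipated.
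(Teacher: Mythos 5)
Your proof is correct and follows essentially the same route as the paper: everything reduces to the observation that for a local ring $A$ and a morphism $A\arr A/m_{A}\arr k$ to a field, $\psi_{m,-m}\in A^{*}$ if and only if its image in $k$ is nonzero. The paper states only this one-line reduction; you have merely spelled out the two consequences (well-definedness on $|\MCov|$ via a common field extension, and compatibility with $\shH_{f}$ via reduction modulo the maximal ideal of $\odi{Y,q}$) that the paper leaves implicit.
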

\begin{proof}
It's enough to note that if $A$ is a local ring, $B\in\MCov(A)$
is given by multiplications $\psi$ and $\pi\colon A\arr A/m_{A}\arr k$
is a morphism, where $k$ is a field, then $\psi_{m,-m}\in A^{*}\iff\pi(\psi_{m,-m})\neq0$.\end{proof}
\begin{rem}
Let $(A,m_{A})$ be a local ring and $B\in\Spec R_{M}(A)$ with $M$-graded
basis $\{v_{m}\}_{m\in M}$. Then $H_{B/A}=\shH_{B/A}(m_{A})$. If
$H_{B/A}=0$ then $B$ is local with maximal ideal
\[
m_{B}=m_{A}\oplus\bigoplus_{m\in M-\{0\}}Av_{m}
\]
and residue field $B/m_{B}=A/m_{A}$. In particular $m_{B}/m_{B}^{2}$
is $M$-graded.\end{rem}
\begin{lem}
\label{lem:equivalent condition for an algebra to be generated in degrees m1,...,mr}Let
$A$ be a local ring and $B=\bigoplus_{m\in M}Av_{m}\in\MCov(A)$
such that $H_{B/A}=0$. If $m_{1},\dots,m_{r}\in M$ then $B$ is
generated in degrees $m_{1},\dots,m_{r}$ as an $A$-algebra if and
only if $m_{B}=(m_{A},v_{m_{1}},\dots,v_{m_{r}})_{B}$.\end{lem}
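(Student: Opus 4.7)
The plan is to use the remark preceding the statement, which says that $H_{B/A}=0$ forces $B$ to be local with $m_B = m_A \oplus \bigoplus_{m\neq 0} Av_m$ and residue field $k:=A/m_A$, together with a key observation: whenever $m_{i_1}+\cdots+m_{i_k}=0$ in $M$ with $k\geq 1$, the product $v_{m_{i_1}}\cdots v_{m_{i_k}}$ equals some scalar $\psi\in A$; and this $\psi$ must lie in $m_A$, for otherwise $v_{m_{i_1}}$ would be invertible in $B$, contradicting $H_{B/A}=0$. We may also assume each $m_i\neq 0$, since $v_0=1$ contributes nothing to either condition.

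For the forward direction, assume $B=A[v_{m_1},\ldots,v_{m_r}]$ and let $x\in m_B$. Write $x = p(v_{m_1},\ldots,v_{m_r})$ with $p\in A[X_1,\ldots,X_r]$, and decompose $p$ into monomials. The degree-$0$ component of $x$ in the decomposition $B = \bigoplus_m Av_m$ equals the constant term $c$ of $p$ plus a sum of scalars of the form $v_{m_{i_1}}\cdots v_{m_{i_k}}\in A$ (with $k\geq 1$ and total sum zero), each of which lies in $m_A$ by the observation above. Since this degree-$0$ component lies in $m_A$, we deduce $c\in m_A$, so $x = c + (p-c)(v_{m_1},\ldots,v_{m_r})$ visibly lies in the ideal $(m_A, v_{m_1},\ldots,v_{m_r})_B$.

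For the reverse direction, the plan is Nakayama applied to the finitely generated $A$-module $N = B/A[v_{m_1},\ldots,v_{m_r}]$ (finitely generated since $B$ is $A$-free of finite rank). It suffices to show $N\otimes_A k=0$, i.e. that reducing modulo $m_A$ the $k$-algebra $\bar B := B/m_AB$ is generated by the images $\bar v_{m_i}$. Now $\bar B$ is a finite local $k$-algebra with maximal ideal $\bar m_B$, which is nilpotent and, by hypothesis, generated as an \emph{ideal} by the $\bar v_{m_i}$. Writing an arbitrary element of $\bar m_B$ as $\sum \bar a_j\bar v_{m_{i_j}}$ and expanding each $\bar a_j \in \bar B = k\oplus \bar m_B$ into a scalar plus an element of $\bar m_B$, one obtains by induction on the nilpotence order that every element of $\bar m_B$ is a polynomial without constant term in the $\bar v_{m_i}$ with coefficients in $k$. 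Hence $\bar B = k[\bar v_{m_1},\ldots,\bar v_{m_r}]$, which closes the argument.

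The only delicate point is the scalar-lies-in-$m_A$ observation used in the forward direction; once that is in hand, both directions are routine. I do not foresee any serious obstacle.
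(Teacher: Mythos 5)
Your proof is correct, and the converse direction takes a genuinely different route from the paper's. The forward direction is in substance the paper's argument: both exploit the $M$-grading together with the observation that, since $H_{B/A}=0$, any product of the $v_{m_i}$ landing in degree $0$ is a scalar lying in $m_A$ (the paper packages this as: each $v_l$ with $l\neq 0$ is a unit multiple of a nonconstant monomial in the $v_{m_i}$, hence lies in the ideal). For the converse the two arguments diverge. The paper works by contradiction and descent: from $v_l\in(m_A,v_{m_1},\dots,v_{m_r})_B$ and locality of $A$ it extracts a relation $v_l=\lambda v_{l-m_i}v_{m_i}$ with $\lambda\in A^{*}$, iterates it to write $v_l=\mu v_{n_1}\cdots v_{n_s}$ with $s\geq |M|^{2}$, and then pigeonholes to find a factor $v_m^{|M|}\in m_A$, forcing $v_l\in m_AB$, a contradiction. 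You instead apply Nakayama to the finitely generated $A$-module $B/A[v_{m_1},\dots,v_{m_r}]$ and use the standard fact that in the Artinian local $k$-algebra $\bar B=k\oplus\bar m_B$ ideal generators of the (nilpotent) maximal ideal are algebra generators; both steps are sound. Your route is shorter and avoids the combinatorial step at the cost of invoking Nakayama, while the paper's is self-contained and exhibits the explicit monomial representing each $v_l$. One small caveat: if some $m_i=0$ then $(m_A,v_{m_1},\dots,v_{m_r})_B=B\neq m_B$, so the stated equivalence implicitly presupposes $m_i\neq 0$; your remark that $v_0$ \emph{contributes nothing to either condition} is accurate only for the algebra-generation side, though the paper's own proof glosses over the same degenerate case.
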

\begin{proof}
We can write $m_{B}=m_{A}\oplus\bigoplus_{m\in M-\{0\}}Av_{m}$. Denote
$\underline{v}=v_{m_{1}},\dots,v_{m_{r}}$ and $\pi(\alpha)=\sum_{i}\alpha_{i}m_{i}$
for $\alpha\in\N^{r}$. The only if follows since given $l\in M-\{0\}$
there exists a relation of the form $v_{l}=\mu\underline{v}^{\alpha}$
with $\mu\in A^{*}$ and $\alpha\neq0$ and so $v_{l}\in(m_{A},v_{m_{1}},\dots,v_{m_{r}})_{B}$.
For the converse note that, given $l\in M-\{0\}$, $v_{l}\in m_{B}=(m_{A},v_{m_{1}},\dots,v_{m_{r}})$
means that we have a relation $v_{l}=\lambda v_{l'}v_{m_{i}}$ for
some $i$, $\lambda\in A^{*}$ and $l'=l-m_{i}$. Moreover $v_{l}\notin A[\underline{v}]$
implies that $v_{l'}\notin A[\underline{v}]$ and $l'\neq0$. If,
by contradiction, we have such an element $l$ we can write $v_{l}=\mu v_{n_{1}}\cdots v_{n_{s}}$
with $n_{i}\in M-\{0\}$ and $s\geq|M|^{2}$. In particular there
must exist $i$ such that $m=n_{i}$ appears at least $|M|$ times
in this product. So $m_{A}\ni v_{m}^{|M|}\mid v_{l}$ and $v_{l}\in m_{A}B$,
which is not the case.
\end{proof}
Assume to have a cover $X\arrdi fY\in\MCov(Y)$. We want to define,
for any $m\in M$ a map $h_{f,m}=h_{X/Y,m}\colon Y\arr\{0,1\}$. Let
$q\in Y$ and denote by $C$ the 'maximal torsor' of $\odi{X,q}/\odi{Y,q}$
(see \ref{def:of the maximal torsor and the subgroup associated}).
Also let $p\in f^{-1}(q)$ and set $p_{C}=p\cap C$. We know that
$(\odi{X,q})_{p}=(\odi{X,q})_{p_{C}}=B$ and that $B\in\GrCov{M/\shH_{f}(q)}(C_{p_{C}})$
with $H_{B/C_{p_{C}}}=0$. If we denote by $\overline{m}$ the image
of $m$ in $M/\shH_{f}(q)$ we can define:
\begin{defn}
With notation above we set 
\[
h_{f,m}(q)=\left\{ \begin{array}{cc}
0 & \text{ if }m\in\shH_{f}(q)\\
\dim_{C_{p_{C}}/p_{C}}(m_{B}/m_{B}^{2})_{\overline{m}} & \text{otherwise}
\end{array}\right.
\]
We also set 
\[
h_{f}(q)=\dim_{C_{p_{C}}/p_{C}}(m_{B}/m_{B}^{2})-\dim_{C_{p_{C}}/p_{C}}(m_{B}/m_{B}^{2})_{0}=(\sum_{m\in M}h_{f,m}(q))/|\shH_{f}(q)|
\]
If $\E\in\duale K_{+}$ we set $h_{\E,m}=h_{f,m}\comma h_{\E}=h_{f}\in\N$
where $f$ is the cover $\Spec A\arr\Spec k$ and $A$ is the algebra
given by multiplication $0^{\E}$ over some field $k$.\end{defn}
\begin{lem}
\label{lem:showing that hm and h are well defined}Let $(A,m_{A})$
be a local ring, $B\in\MCov(A)$ given by multiplication $\psi$ and
$t\in M$. Then $h_{B/A,t}=h_{B/A,t}(m_{A})$ is well defined and
$h_{B/A,t}=1$ if and only if $t\notin H_{B/A}$ and for any $u,n\in M-H_{B/A}$
such that $u+n\equiv t\text{ mod }H_{A/B}$ we have $\psi_{u,n}\notin A^{*}$.\end{lem}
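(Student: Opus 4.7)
The plan is first to reduce to the case $H = H_{B/A} = 0$ (where $B$ becomes local), then to compute the graded piece $(m_B/m_B^2)_t$ explicitly from the multiplications $\psi$, and finally to translate the result back through the factorization by the maximal torsor.

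For the reduction, I would apply Lemma \ref{lem:factorization of covers through torsors on local rings} to write $B = \bigoplus_{\bar m \in M/H} C w_{\bar m}$, where $C$ is the maximal torsor of $B/A$ and $H_{B/C} = 0$, with induced multiplication $\psi'$ on $B$ over $C$ satisfying $\psi'_{\bar u, \bar n} \in C^* \cdot \psi_{\sigma(\bar u), \sigma(\bar n)}$ for a section $\sigma \colon M/H \to M$. For any prime $p \in \Spec B$ above $m_A$, setting $p_C = p \cap C$, the localization $B_{p_C}$ is local and is an $M/H$-cover of $C_{p_C}$ with $H_{B_{p_C}/C_{p_C}} = 0$. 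If $t \in H$, then by definition $h_{B/A, t}(m_A) = 0$ and the claim holds trivially; otherwise $\bar t \neq 0$ in $M/H$ and we are reduced to the local situation.

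In the reduced case $B$ is local with $m_B = m_A \oplus \bigoplus_{m \neq 0} A v_m$, and $m_B/m_B^2$ inherits the $M$-grading. For $t \neq 0$ I have $(m_B)_t = A v_t$, while the degree-$t$ piece of $m_B^2$ is spanned by products of two homogeneous elements of $m_B$ whose degrees sum to $t$: namely $m_A \cdot v_t$ (from the degree $0 + t$ and symmetric $t + 0$ contributions) and the products $v_u v_n = \psi_{u,n} v_t$ with $u, n \neq 0$ and $u + n = t$. Hence
\[
(m_B/m_B^2)_t \simeq A \big/ \bigl(m_A + \textstyle\sum_{u + n = t,\ u, n \neq 0} A\,\psi_{u, n}\bigr),
\]
which, as a $k = A/m_A$-vector space, has dimension $1$ precisely when every such $\psi_{u, n}$ is a non-unit of $A$, and dimension $0$ as soon as one of them lies in $A^*$.

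Going back to the general case, $h_{B/A, t}(m_A) = 1$ is equivalent, via the reduced computation applied to $B_{p_C}/C_{p_C}$, to $\psi'_{\bar u, \bar n} \in p_C$ for every $\bar u, \bar n \in (M/H) \setminus \{0\}$ with $\bar u + \bar n = \bar t$. Since $\psi'_{\bar u, \bar n}$ and $\psi_{u, n}$ differ by a unit of $C$ for any lifts $u, n$ of $\bar u, \bar n$, and since replacing a lift $u$ by $u + h$ with $h \in H$ multiplies $\psi_{u, n}$ by an element of $A^*$ (again from Lemma \ref{lem:factorization of covers through torsors on local rings}), this condition translates into $\psi_{u, n} \notin A^*$ for every $u, n \in M - H$ with $u + n \equiv t \pmod{H}$. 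The main delicacy, which also settles well-definedness, is that this criterion refers only to ideals of $A$ itself and is therefore independent of the choice of prime $p$ above $m_A$.
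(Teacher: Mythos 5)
Your proof is correct and follows essentially the same route as the paper's: both pass to the maximal torsor $C$ to reduce to the case $H_{B/A}=0$, identify the degree-$\bar t$ piece of $m_B^2$ as $(p_C+\sum C_{p_C}\psi'_{\bar u,\bar n})w_{\bar t}$, and use the unit relations from the factorization lemma to translate between $\psi'$ and arbitrary lifts of $\bar u,\bar n$, with well-definedness following because the resulting criterion only involves $\psi_{u,n}\bmod m_A$. Your explicit presentation of $(m_B/m_B^2)_t$ as the cyclic quotient $A/(m_A+\sum A\psi_{u,n})$ is a slightly tidier packaging of the paper's converse direction, but the content is the same.
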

\begin{proof}
Let $C$ be the maximal torsor of the extension $B/A$ and $p$ be
a maximal prime of it. We use notation from \ref{lem:factorization of covers through torsors on local rings}.
For any $l\in M-H_{B/A}$ we have a surjective map
\[
k(p)=(m_{B_{p}}/pC_{p})_{\overline{l}}\arr(m_{B_{p}}/m_{B_{p}}^{2})_{\overline{l}}
\]
and so $\dim_{k(p)}(m_{B_{p}}/m_{B_{p}}^{2})_{\overline{l}}\in\{0,1\}$,
where $\overline{l}$ is the image of $l$ through the projection
$M\arr M/H_{A/B}$. If we prove the last part of the statement clearly
we will also have that $h_{B/A,t}$ is well defined. If $t\in H_{B/A}$
then $h_{B/A,t}=0$, while if there exist $u,n$ as in the statement
such that $\psi_{u,n}\in A^{*}$, then $w_{\overline{t}}\in C_{p}^{*}w_{\overline{u}}w_{\overline{n}}\subseteq m_{B_{p}}^{2}$
and again $h_{B/A,t}=0$. On the other hand if $h_{B/A,t}=0$ and
$t\notin H_{B/A}$ then $w_{\overline{t}}\in m_{B_{p}}^{2}$ and therefore
we have a writing
\[
w_{\overline{t}}=bx+\sum_{\overline{u},\overline{n}\neq0}b_{\overline{u},\overline{n}}w_{\overline{u}}w_{\overline{n}}\text{ with }b,b_{\overline{u},\overline{n}}\in B_{p},x\in m_{C_{p}}
\]
The second sum splits as a sum of products of the form $c_{s,\overline{u},\overline{n}}w_{s}w_{\overline{u}}w_{\overline{n}}$
with $s+\overline{u}+\overline{n}=\overline{t}$ and $c_{s,\overline{u},\overline{n}}\in C_{p}$.
Since $C_{p}$ is local, one of these monomials generates $C_{p}w_{\overline{t}}$.
In this case, if $s+\overline{u}=0$ then $\overline{u}\in H_{B_{p}/C_{p}}=0$
which is not the case. So we have a writing 
\[
w_{\overline{t}}=\lambda w_{\overline{u}}w_{\overline{n}}=\lambda\psi'_{\overline{u},\overline{n}}w_{\overline{t}}\then\psi'_{\overline{u},\overline{n}}\in C_{p}^{*}
\]
where $\overline{u},\overline{n}\neq0$ and $\overline{u}+\overline{n}=\overline{t}$.
Since $\psi'_{\overline{u},\overline{n}}$ and $\psi_{u,n}$ differs
by an element of $C^{*}$ thanks to \ref{lem:factorization of covers through torsors on local rings},
it follows that $\psi_{u,n}\in A^{*}$.\end{proof}
\begin{prop}
We have maps   \[   \begin{tikzpicture}[xscale=3.0,yscale=-0.5]     \node (A0_0) at (0, 0) {$|\MCov|$};     \node (A0_1) at (1, 0) {$\{0,1\}$};     \node (A0_2) at (2, 0) {$|\MCov|$};     \node (A0_3) at (3, 0) {$\N$};     \node (A1_0) at (0, 1) {$B/k$};     \node (A1_1) at (1, 1) {$h_{B/k,m}$};     \node (A1_2) at (2, 1) {$B/k$};     \node (A1_3) at (3, 1) {$h_{B/k}$};     \path (A0_0) edge [->]node [auto] {$\scriptstyle{h_m}$} (A0_1);     \path (A1_0) edge [|->,gray]node [auto] {$\scriptstyle{}$} (A1_1);     \path (A0_2) edge [->]node [auto] {$\scriptstyle{h}$} (A0_3);     \path (A1_2) edge [|->,gray]node [auto] {$\scriptstyle{}$} (A1_3);   \end{tikzpicture}   \] 
such that, if $Y\arrdi u\MCov$ is given by $X\arrdi fY$, then $h_{f,m}=h_{m}\circ u$
and $h_{f}=h\circ g$.\end{prop}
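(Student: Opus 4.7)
The plan is to verify two things: first, that the formulas defining $h_{B/k,m}$ and $h_{B/k}$ depend only on the isomorphism class of $B$ in $\MCov(k)$ and are stable under base field extension, so they descend to maps $h_m, h$ on $|\MCov|$; and second, that for a cover $f\colon X\arr Y$ corresponding to $u\colon Y\arr \MCov$ and a point $q\in Y$, the values $h_{f,m}(q)$ and $h_f(q)$, defined via the local cover $\odi{X,q}/\odi{Y,q}$, coincide with $h_m$ and $h$ applied to the fiber algebra $u(q)$. In both cases the key observation is that Lemma \ref{lem:showing that hm and h are well defined} has already characterized $h_{B/A,m}$ in terms of the membership of $m$ in $H_{B/A}$ and the vanishing pattern of the multiplications $\psi_{u,n}$, data that depends only on the graded $A$-algebra structure and behaves well under base change.

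For well-definedness, fix $B=\bigoplus_{m\in M} kv_m\in\MCov(k)$ with multiplications $\psi_{m,n}$. A basis change $v_m\mapsto \lambda_m v_m$ with $\lambda_m\in k^*$ replaces $\psi_{m,n}$ by $\lambda_m\lambda_n\lambda_{m+n}^{-1}\psi_{m,n}$, so both the subgroup $H_{B/k}=\{m\st \psi_{m,-m}\ne 0\}$ and the vanishing loci of the $\psi_{u,n}$ are basis-independent, hence invariants of the class in $\MCov(k)$. By Lemma \ref{lem:showing that hm and h are well defined} applied with $A=k$, $h_{B/k,m}$ is computed from exactly this data and always lies in $\{0,1\}$. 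Extending scalars along $k\subseteq k'$ preserves both non-vanishing and vanishing of elements in $k$, so $H_{B/k}$ and the zero locus of the $\psi_{u,n}$ are unchanged; thus $h_m$ descends to $|\MCov|$, and dividing the sum $\sum_m h_{B/k,m}$ by $|H_{B/k}|$ gives a well-defined $h$.

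For compatibility with $u$, fix $q\in Y$ and set $A=\odi{Y,q}$, $B=\odi{X,q}$, $k=k(q)$, and $B_0=B\otimes_A k$, so that $u(q)$ is represented by $B_0/k$. In a local trivialization $B=\bigoplus_m Av_m$ with multiplications $\psi_{m,n}\in A$, and $B_0=\bigoplus_m k\bar v_m$ with multiplications $\bar \psi_{m,n}$. An element $\psi_{m,-m}\in A$ is a unit iff its image in $k$ is nonzero, so $H_{B/A}=H_{B_0/k}=\shH_f(q)$; this identifies $\shH(u(q))$ with $\shH_f(q)$, confirming the corresponding part of Proposition \ref{pro:the H* is well defined as a map from MCov}. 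Moreover, for $u,n\in M-\shH_f(q)$, the condition $\psi_{u,n}\notin A^*$ used in Lemma \ref{lem:showing that hm and h are well defined} to compute $h_{f,m}(q)$ is equivalent to $\bar\psi_{u,n}=0$ in $k$, the condition used to compute $h_m(B_0/k)$. Applying the lemma on both sides yields $h_{f,m}(q)=h_m(u(q))$ for every $m$, and summing gives $h_f(q)=h(u(q))$.

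The main technical point is reconciling the definition of $h_{f,m}(q)$, which passes through the maximal torsor $C$ of $\odi{X,q}/\odi{Y,q}$ and a localization $(\odi{X,q})_p$ at a preimage $p$ of $q$, with the more direct computation of $h_m(B_0/k)$ from the fiber. One checks that $C/m_A C$ is the maximal torsor of $B_0/k$ (since the two $H$-subgroups agree), that $p$ descends to a maximal prime of $B_0$ over $k$, and that for elements of $A\subseteq C\subseteq C_{p_C}$ the property of lying in $m_A$ coincides with lying in $m_{C_{p_C}}$; this ensures that the same vanishing conditions on the $\psi_{u,n}$ govern both computations. Once this is established, the identities $h_{f,m}=h_m\circ u$ and $h_f=h\circ u$ are immediate, and the upper semicontinuity asserted in the subsection title follows because both conditions $h_{f,m}(q)\ge 1$ and $h_f(q)\ge n$ are cut out by the vanishing of finitely many $\psi_{u,n}\in \odi Y$.
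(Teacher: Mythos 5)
Your argument is correct and takes essentially the same route as the paper: both reduce to the characterization in \ref{lem:showing that hm and h are well defined} together with the two observations that, for a local ring $A$ with residue map $\pi\colon A\arr k$, one has $\psi_{u,v}\in A^{*}\iff\pi(\psi_{u,v})\neq0$ and $H_{B/A}=H_{B\otimes_{A}k/k}$; you simply spell out the basis-independence and base-change stability that the paper leaves implicit. (Your closing aside on upper semicontinuity is extraneous to this statement and, as phrased, too quick --- the subgroup $\shH_{f}$ varies with the point, which is why the paper proves semicontinuity separately via $\Omega_{B/C}$ --- but this does not affect the proof of the proposition itself.)
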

\begin{proof}
Taking into account \ref{lem:showing that hm and h are well defined}
and \ref{pro:the H* is well defined as a map from MCov}, it's enough
to note that if $A$ is a local ring, $B\in\MCov(A)$ is given by
multiplications $\psi$ and $\pi\colon A\arr A/m_{A}\arr k$ is a
morphism, where $k$ is a field, then $\psi_{u,v}\in A^{*}\iff\pi(\psi_{u,v})\neq0$
and $H_{B/A}=H_{B\otimes_{A}k/k}$.\end{proof}
\begin{cor}
\label{cor:An algebra with H =00003D 0 is generated in the degrees where h=00003D1}Under
the hypothesis of \ref{lem:equivalent condition for an algebra to be generated in degrees m1,...,mr},
$\{m\in M\;|\; h_{B/A,m}=1\}$ is the minimum subset of $M$ such
that $B$ is generated as an $A$-algebra in those degrees. In particular
$B$ is generated in $h_{B/A}$ degrees.\end{cor}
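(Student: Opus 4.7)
The plan is to apply the generation criterion of the previous lemma and translate it into the language of the graded vector space $m_B/m_B^2$, which under the hypothesis $H_{B/A}=0$ carries an $M$-grading. Write $k=A/m_A=B/m_B$ and recall
\[
m_B=m_A\oplus\bigoplus_{m\in M-\{0\}}Av_m.
\]
The key observation is that, for $t\in M-\{0\}$, the graded piece $(m_B/m_B^2)_t$ is the cyclic $k$-module generated by the class $\overline{v_t}$, and hence has dimension $0$ or $1$. By definition this dimension is exactly $h_{B/A,t}$, so
\[
h_{B/A,t}=1\iff \overline{v_t}\neq 0\text{ in }m_B/m_B^2,\qquad h_{B/A,t}=0\iff v_t\in m_B^2.
\]

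First I would prove the ``sufficient'' direction: taking $S=\{m\in M\st h_{B/A,m}=1\}$, the classes $\{\overline{v_m}\}_{m\in S}$ together with the image of $m_A$ span $m_B/m_B^2$ as a $k$-vector space, since every other graded component vanishes and the $v_m$ for $m\in S$ each span the corresponding (one-dimensional) graded piece. By Nakayama applied to $m_B$ as a $B$-module (legitimate because $B$ is local), this gives $m_B=(m_A,\{v_m\}_{m\in S})_B$, and therefore the previous lemma yields that $B$ is generated as an $A$-algebra in degrees $S$.

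Next I would establish minimality by showing that any generating set of degrees $m_1,\dots,m_r$ must contain $S$. By the previous lemma, generation of $B$ in degrees $m_1,\dots,m_r$ is equivalent to $m_B=(m_A,v_{m_1},\dots,v_{m_r})_B$, which in turn (by Nakayama) is equivalent to the classes $\overline{v_{m_1}},\dots,\overline{v_{m_r}}$ spanning $m_B/(m_AB+m_B^2)$. Since this quotient is $M$-graded with one-dimensional component in every degree $t\in S$ and zero component in every other nonzero degree, any spanning family must hit each $t\in S$, forcing $\{m_1,\dots,m_r\}\supseteq S$. This step is the only one requiring care: I will need to check that the grading on $m_B/m_B^2$ is well defined (which follows from $m_A\subseteq m_B$ being concentrated in degree $0$ and $m_B^2$ being a graded submodule because the multiplication is $M$-graded), and that the degree $0$ contribution coming from $m_A$ does not interfere with the argument in nonzero degrees. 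Granted this, the minimum generating set is exactly $S$.

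Finally, for the last sentence, since $H_{B/A}=0$ we have $|H_{B/A}|=1$ and
\[
|S|=\#\{m\in M\st h_{B/A,m}=1\}=\sum_{m\in M}h_{B/A,m}=h_{B/A},
\]
so $B$ is generated by $h_{B/A}$ elements as an $A$-algebra, as claimed.
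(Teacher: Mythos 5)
Your proposal is correct. The paper actually leaves this corollary without a written proof (it is meant to drop out of the two preceding lemmas), and your argument is a legitimate way to fill it in: you use that, under $H_{B/A}=0$, the ideal $m_B=m_A\oplus\bigoplus_{m\neq 0}Av_m$ and its square are $M$-graded, that $(m_B/m_B^2)_t$ is cyclic on $\overline{v_t}$ so $h_{B/A,t}\in\{0,1\}$ detects exactly whether $v_t\notin m_B^2$, and you then pass between ``generated in degrees $S$'' and ``$m_B=(m_A,\{v_m\}_{m\in S})_B$'' via the cited lemma; the minimality step via the graded quotient $m_B/(m_AB+m_B^2)$ is clean and correct. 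Two small points of care. First, Nakayama as you invoke it needs finite generation, and $m_B$ itself need not be finitely generated if $A$ is not noetherian; the fix is one line: the module you actually kill, $m_B/(m_AB+\sum_{m\in S}Bv_m)$, is a quotient of $m_B/m_AB\simeq \bigoplus_{m\neq 0}k\,v_m$, hence a finite-dimensional $k$-vector space. Second, in the minimality step you should discard $m_i=0$ (since $v_0=1$ the ideal would be all of $B$), after which your degree argument forces $S\subseteq\{m_1,\dots,m_r\}$ exactly as you say. An alternative, Nakayama-free route closer in spirit to the paper's proof of the generation lemma is to note that $h_{B/A,t}=0$ for $t\neq 0$ produces a unit $\psi_{u,n}$ with $u+n=t$, $u,n\neq 0$, so $v_t=\psi_{u,n}^{-1}v_uv_n$, and then to rule out infinite descent with the $v_m^{|M|}\in m_A$ trick; your tangent-space formulation buys a shorter argument at the cost of the (easily repaired) finiteness remark.
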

\begin{prop}
Let $(A,m_{A})$ be a local ring, $B\in\MCov(A)$ and $C$ the maximal
torsor of $B/A$. Then
\[
h_{B/A}(m_{A})=\dim_{k(p)}\Omega_{B/C}\otimes_{B}k(p)
\]
for any maximal prime $p$ of $B$. In particular if $(|H_{B/A}|,\car A/m_{A})=1$
we also have $h_{B/A}(m_{A})=\dim_{k(p)}\Omega_{B/A}\otimes_{B}k(p)$
for any maximal prime $p$ of $B$.\end{prop}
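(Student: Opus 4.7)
The plan is to factor the cover through its maximal torsor and reduce the computation to the Zariski cotangent space of a finite local algebra over its residue field, then match that cotangent space with the combinatorial quantity $h_{B/A}(m_A)$. First I would apply Lemma~\ref{lem:factorization of covers through torsors on local rings} with $H=H_{B/A}$, writing $B=\bigoplus_{\overline{m}\in M/H}Cw_{\overline{m}}$ with $C/A$ a $\Di{H}$-torsor and $H_{B/C}=0$. Fix a maximal prime $p$ of $B$ and set $p_C=p\cap C$; then $C_{p_C}$ is local, and since $H_{B_p/C_{p_C}}=0$ so is $B_p$, with common residue field $k(p)=k(p_C)$. Because $\Omega_{B/C}$ commutes with localization, after replacing $A$ by $C_{p_C}$ I may assume $C=A$ is local with residue field $k=k(p)$ and $H_{B/A}=0$; under this reduction $\Omega_{B/C}=\Omega_{B/A}$ and it suffices to prove the formula in this form.

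Next I would pass to the fiber algebra $\overline{B}=B\otimes_{A}k$. The hypothesis $H_{B/A}=0$ forces $\overline{\psi}_{u,-u}=0$ for every $u\neq 0$, so the maximal ideal $\overline{m}=\bigoplus_{u\neq 0}k\overline{v}_u$ is nilpotent and $\overline{B}$ is a finite local $k$-algebra with residue field $k$. By the standard base-change formula for Kähler differentials $\Omega_{B/A}\otimes_B k\cong\Omega_{\overline{B}/k}\otimes_{\overline{B}}k$, and for a finite local $k$-algebra with residue field $k$ the latter is canonically identified with $\overline{m}/\overline{m}^{2}$.

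The crucial step, and the one I expect to require the most care, is to identify $\dim_{k}\overline{m}/\overline{m}^{2}$ with $h_{B/A}(m_A)$. Using the $M$-grading one decomposes $\overline{m}/\overline{m}^{2}=\bigoplus_{t\neq 0}(\overline{m}/\overline{m}^{2})_t$, and in degree $t$ the space is generated by $\overline{v}_{t}$ modulo the images of the relations $\overline{v}_{u}\overline{v}_{n}=\overline{\psi}_{u,n}\overline{v}_{t}$ over all decompositions $u+n=t$ with $u,n\neq 0$. Hence $(\overline{m}/\overline{m}^{2})_t\neq 0$ precisely when no such $u,n$ satisfy $\psi_{u,n}\in A^{*}$, which is exactly the criterion from Lemma~\ref{lem:showing that hm and h are well defined} for $h_{B/A,t}(m_A)=1$. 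Summing over $t\neq 0$ yields $h_{B/A}(m_A)$; the subtlety here is bookkeeping the grading and recognizing that the kernel of the surjection $m_{B_p}/m_{B_p}^{2}\twoheadrightarrow\overline{m}/\overline{m}^{2}$ is exactly the degree-$0$ component, which is what is being subtracted off in the definition of $h_{B/A}(m_A)$.

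For the final assertion, when $(|H|,\car k)=1$ the integer $|H|$ is invertible in $A$, so $\Di{H}$ is étale over $A$ and hence the $\Di{H}$-torsor $C/A$ is étale, giving $\Omega_{C/A}=0$. The fundamental exact sequence $\Omega_{C/A}\otimes_{C}B\arr\Omega_{B/A}\arr\Omega_{B/C}\arr 0$ then yields $\Omega_{B/A}\cong\Omega_{B/C}$, transferring the formula already established for $\Omega_{B/C}$ to $\Omega_{B/A}$.
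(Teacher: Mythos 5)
Your proof is correct and follows essentially the same route as the paper: both pass to the maximal torsor via Lemma \ref{lem:factorization of covers through torsors on local rings} and compute the fiber of $\Omega_{B/C}$ at $p$ graded piece by graded piece, identifying the surviving degrees with the criterion of Lemma \ref{lem:showing that hm and h are well defined} (the paper does this from an explicit presentation of $\Omega_{B/C}$ as a quotient of $B^{M/H}$, you via $\overline{m}/\overline{m}^{2}$ of the closed fiber, which amounts to the same linear algebra). Your explicit treatment of the final assertion — $\Di H$ étale, hence $C/A$ étale, hence $\Omega_{B/A}\simeq\Omega_{B/C}$ by the cotangent sequence — is a welcome addition, as the paper leaves that step implicit.
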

\begin{proof}
If $A$ is any ring and $B\in\MCov(A)$ is given by basis $\{v_{m}\}_{m\in M}$
and multiplication $\psi$ one sees from the universal property that
\[
\Omega_{B/A}=B^{M}/<e_{0},v_{n}e_{m}+v_{m}e_{n}-\psi_{m,n}e_{m+n}>
\]
Now consider $B\in\GrCov{M/H}(C)$, where $H=H_{B/A}$ and let $p$
be a maximal prime of $B$. Following the notation of \ref{lem:factorization of covers through torsors on local rings},
we have that $w_{m}\in p$ for any $m\in M/H-\{0\}$ and $\psi_{m,n}'\in p\iff\psi_{m,n}\in m_{A}$.
So $\Omega_{B/C}\otimes_{B}k(p)$ is free on the $e_{m}$ for $m\in M/H-\{0\}$
such that for any $u,n\in M/H-\{0\}$, $u+n=m$ implies $\psi_{u,n}\notin A^{*}$,
that are exactly $h_{B/A}(m_{A})$ thanks to \ref{lem:showing that hm and h are well defined}.\end{proof}
\begin{cor}
The function $h$ is upper semicontinuous.\end{cor}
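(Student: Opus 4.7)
The plan is to reduce, via a $\Di{H_0}$-torsor quotient near a chosen point, to the case where the subgroup $\shH_f(q_0)$ is trivial, and then to apply Corollary~\ref{cor:An algebra with H =00003D 0 is generated in the degrees where h=00003D1} together with coherence and Nakayama to show that the minimal number of algebra generators can only drop under specialization. Upper semicontinuity on $|\MCov|$ can be checked on any scheme mapping to $\MCov$, so I fix a cover $f\colon X\arr Y$ with $\alA=f_*\odi X=\bigoplus_{m\in M}\shL_m$ and multiplications $\psi_{m,n}$, and must prove that $h_f\colon Y\arr\N$ is upper semicontinuous.

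Fix $q_0\in Y$, set $H_0=\shH_f(q_0)$ and $s=h_f(q_0)$. The locus $V=\{q\in Y\st\shH_f(q)\supseteq H_0\}$ is open, since it is the intersection over $m\in H_0$ of the open loci where $\psi_{m,-m}$ is a unit. Applying Lemma~\ref{lem:factorization of covers through torsors on local rings} fibrewise, the multiplication maps $\shL_m\otimes\shL_n\arr\shL_{m+n}$ are isomorphisms on $V$ for all $m,n\in H_0$; consequently $\alA_0:=\bigoplus_{m\in H_0}\shL_m\subseteq\alA_{|V}$ is a subsheaf of algebras, $Z:=\Specsh_V\alA_0\arr V$ is a $\Di{H_0}$-torsor, and $X_{|V}\arr Z$ is naturally a $\Di{M/H_0}$-cover. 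For $q\in V$ and any $p\in Z$ above $q$, Lemma~\ref{lem:factorization of covers through torsors on local rings} gives $\shH_{X/Z}(p)=\shH_f(q)/H_0$, and comparing the defining dimension formulae yields $h_{X/Z}(p)=h_f(q)$. Since $Z\arr V$ is faithfully flat, upper semicontinuity of $h_f$ on $V$ is equivalent to upper semicontinuity of $h_{X/Z}$ on $Z$. Replacing $Y$ by $Z$ and $M$ by $M/H_0$, we may assume $\shH_f(q_0)=0$, so that $(f_*\odi X)_{q_0}$ is local.

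Under this reduction, Corollary~\ref{cor:An algebra with H =00003D 0 is generated in the degrees where h=00003D1} provides elements $m_1,\dots,m_s\in M$ such that the sections $v_{m_i}\in\shL_{m_i,q_0}$ generate $(f_*\odi X)_{q_0}$ as an $\odi{Y,q_0}$-algebra. Since $\alA$ is coherent and the generators are finite in number, Nakayama furnishes an open neighborhood $U\subseteq V$ of $q_0$ on which $v_{m_1},\dots,v_{m_s}$ generate $\alA_{|U}$ as an $\odi U$-algebra. For $q\in U$, set $H_1=\shH_f(q)$ and let $C_1\subseteq(f_*\odi X)_q$ denote the maximal torsor. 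Then the $v_{m_i}$ generate $(f_*\odi X)_q$ as a $C_1$-algebra, and after discarding those $m_i$ landing in $H_1$ and collapsing indices with equal image in $M/H_1$ we obtain at most $s$ generators in distinct non-zero degrees of $M/H_1$. Localizing at a maximal prime and applying Corollary~\ref{cor:An algebra with H =00003D 0 is generated in the degrees where h=00003D1} once more—now to the cover of $(C_1)_{(\,\cdot\,)}$ with trivial maximal torsor—gives $h_f(q)\leq s=h_f(q_0)$, as desired.

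The main obstacle is the global existence of the torsor $Z\arr V$ (rather than only fppf-locally) and the identity $h_f(q)=h_{X/Z}(p)$: both are essentially bookkeeping on top of Lemma~\ref{lem:factorization of covers through torsors on local rings}, but require checking that a fibrewise statement spreads out over $V$, which works because the sheaves $\shL_m$ for $m\in H_0$ are already globally invertible on $Y$ and the relevant multiplications become units throughout $V$.
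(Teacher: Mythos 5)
Your argument is correct, but it takes a genuinely different route from the paper's. The paper fixes $q$, sets $C=A[v_m]_{m\in H}$ on a neighbourhood where the $\psi_{m,-m}$, $m\in H=\shH_f(q)$, are units, and uses the identity $h_f(q')=\dim_{k(p')}\Omega_{B_{q'}/D}\otimes k(p')$ ($D$ the maximal torsor at $q'$) proved in the preceding proposition; since $C_{q'}\subseteq D$, this is bounded by $\dim_{k(p')}\Omega_{B_{q'}/C_{q'}}\otimes k(p')$, and the conclusion follows from upper semicontinuity of the fibre rank of the coherent sheaf $\Omega_{B/C}$ together with openness of $f$. You instead use the other characterization of $h$, namely Corollary \ref{cor:An algebra with H =00003D 0 is generated in the degrees where h=00003D1} (minimal set of generating degrees), and replace semicontinuity of fibre ranks of $\Omega$ by openness of surjectivity of $\Sym(\shL_{m_1}\oplus\cdots\oplus\shL_{m_s})\arr\alA$ — essentially the same openness argument as in \ref{pro:MHlb --> MCov has irreducible fibers}. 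The price you pay is the preliminary reduction to $\shH_f(q_0)=0$ via the global $\Di{H_0}$-torsor $Z\arr V$ and the descent of semicontinuity along the finite surjection $Z\arr V$; this is correct (the multiplications within $H_0$ are isomorphisms on all of $V$, so $\alA_0$ is a torsor there by \ref{pro:equivalent conditions for a D(M)-torsor}, and the identification $h_{X/Z}(p)=h_f(q)$ follows from the last part of \ref{lem:factorization of covers through torsors on local rings}), but it is heavier than what the paper needs: the paper never requires $C$ to be a torsor away from $q$, only the containment $C_{q'}\subseteq D$ in the maximal torsor at nearby points, which gives the monotonicity for free. The one step worth writing out carefully in your version is the "collapse" of generators with equal image in $M/H_1$: it works because, by \ref{lem:factorization of covers through torsors on local rings}, $v_{m_i}$ and $w_{\overline{m_i}}$ differ by a unit of the maximal torsor $C_1$, so the $C_1$-subalgebras they generate coincide and the minimality clause of Corollary \ref{cor:An algebra with H =00003D 0 is generated in the degrees where h=00003D1} applies after localizing $C_1$ at a maximal prime.
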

\begin{proof}
Let $X\arrdi fY$ be an $\Di M$-cover and $q\in Y$. Set $r=h_{f}(q)$
and $H=\shH_{f}(q)$. We can assume that $Y=\Spec A$, $X=\Spec B$
with graded basis $\{v_{m}\}_{m\in M}$ and multiplication $\psi$
and that $\psi_{m,-m}\in A^{*}$ for any $m\in H$. Set $C=A[v_{m}]_{m\in H}$.
$C_{q}$ is the maximal torsor of $B_{q}/A_{q}$ and so, if $p\in X$
is a point over $q$, we have $r=\dim_{k(p)}\Omega_{B/C}\otimes_{B}k(p)$.
Finally let $U\subseteq X$ be an open neighborhood of $p$ such that
$\dim_{k(p')}\Omega_{B/C}\otimes_{B}k(p')\leq r$ for any $p'\in U$
and $V=f(U)$. We want to prove that $h\leq r$ on $V$. Indeed given
$q'=f(p')\in V$, if $D$ is the maximal torsor of $B_{q'}/A_{q'}$,
we have $C_{q'}\subseteq D\subseteq B_{q'}$. So 
\[
h_{f}(q')=\dim_{k(p')}\Omega_{B_{q'}/D}\otimes_{B_{q'}}k(p')\leq\dim_{k(p')}\Omega_{B_{q'}/C_{q'}}\otimes_{B_{q'}}k(p')\leq r
\]
\end{proof}
\begin{rem}
\label{rem:The 0 section and the 0 algebras}The $0$ section $R_{M}\arr\Z$
induces a closed immersion
\[
\Picsh^{|M|-1}\simeq\Bi\shT=[\Spec\Z/\shT]\subseteq[\Spec R_{M}/\shT]\simeq\MCov
\]
where $\shT=\Di{\Z^{M}/<e_{0}>}$.\end{rem}
\begin{prop}
The following results hold:
\begin{enumerate}
\item $\{h=0\}=|\Bi\Di M|$;
\item $\{h\geq|M|\}=\emptyset$;
\item $\{h=|M|-1\}=|\Bi\Di{\Z^{M}/<e_{0}>}|$ (see \ref{rem:The 0 section and the 0 algebras})
\end{enumerate}
\end{prop}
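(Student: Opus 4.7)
The plan is to work locally over a field $k$ and analyze each of the three assertions in terms of the subgroup $H = \shH(B/k) \subseteq M$ and the multiplication constants $\psi_{m,n}$.

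For (1), the key observation is that when $H_{B/k} \neq M$, the extension $B/C$ (with $C$ the maximal torsor) is a $\Di{M/H}$-cover of the field $C$ with $H_{B/C}=0$, so $B$ is a local Artinian $C$-algebra with maximal ideal $m_B = \bigoplus_{\bar m\neq 0} C w_{\bar m}$. Saying $h_{B/k}=0$ means $h_{B/k,m}=0$ for every $m\in M$, which forces $(m_B/m_B^2)_{\bar m}=0$ for every $\bar m \in M/H - \{0\}$, hence $m_B = m_B^2$. By Nakayama, $m_B = 0$, so $M/H$ is trivial and $H=M$. Conversely, if $B$ is a $\Di M$-torsor then $H=M$ and all $h_{B/k,m}=0$ tautologically; thus $\{h=0\} = |\Bi\Di M|$.

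For (2), I would use that $h_{B/k,m}=0$ for $m\in H$ and $h_{B/k,m}\in\{0,1\}$ otherwise (established in the discussion preceding \ref{lem:showing that hm and h are well defined}). Hence
\[
h_{B/k} = \frac{1}{|H|}\sum_{m\in M} h_{B/k,m} \leq \frac{|M|-|H|}{|H|} = |M/H| - 1 \leq |M|-1 < |M|,
\]
so $\{h \geq |M|\} = \emptyset$.

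For (3), the bound above shows $h_{B/k}=|M|-1$ forces $|M/H|-1 \geq |M|-1$, i.e.\ $H=\{0\}$, and simultaneously $h_{B/k,m}=1$ for every $m \in M\setminus\{0\}$. By \ref{lem:showing that hm and h are well defined} (with $H=0$), the latter is equivalent to: for every $m\neq 0$ and every decomposition $m=u+n$ with $u,n\in M\setminus\{0\}$, one has $\psi_{u,n}\notin k^*$. Since $k$ is a field, this means $\psi_{u,n}=0$ for all $u,n\in M\setminus\{0\}$ with $u+n\neq 0$. Combined with $H=\{0\}$, which gives $\psi_{u,-u}=0$ for $u\neq 0$, we conclude $\psi_{u,n}=0$ for all $u,n\in M\setminus\{0\}$ (and of course $\psi_{m,0}=1$). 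This is precisely the multiplication obtained from the zero section: under $R_M=\Z[\tilde K_+]\to\Z$, $x_v\mapsto 0$ for $v\neq 0$, one has $v_{m,n} = e_m+e_n-e_{m+n}=0$ in $\Z^M/\langle e_0\rangle$ iff $m=0$ or $n=0$, giving $\psi_{m,0}=\psi_{0,n}=1$ and $\psi_{u,n}=0$ otherwise. Hence these algebras are exactly the points in the image of $\Bi\shT = \Bi\Di{\Z^M/\langle e_0\rangle} \hookrightarrow \MCov$ from \ref{rem:The 0 section and the 0 algebras}, proving $\{h=|M|-1\} = |\Bi\Di{\Z^M/\langle e_0\rangle}|$.

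The main (minor) obstacle is the clean implementation of the Nakayama step for (1) and the combinatorial identification of $v_{m,n}=0$ in $K$ for (3); everything else is a direct application of the upper bound on $h_{B/k,m}$ and the characterization in \ref{lem:showing that hm and h are well defined}.
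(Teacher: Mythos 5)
Your proof is correct and follows essentially the same route as the paper: parts (1) and (2) come from the fact that $h_{B/k,m}\in\{0,1\}$ vanishes on $H_{B/k}$ (equivalently, from Corollary \ref{cor:An algebra with H =00003D 0 is generated in the degrees where h=00003D1}, whose graded-Nakayama content you simply re-derive by hand), and part (3) is exactly the paper's argument via Lemma \ref{lem:showing that hm and h are well defined} together with the identification of the zero-section multiplication. The only cosmetic difference is that you unwind the generation corollary instead of citing it.
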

\begin{proof}
If $X\arrdi fY$ is a $\Di M$-torsor, clearly $h_{f}=0$. So $1)$
and $2)$ follow from \ref{cor:An algebra with H =00003D 0 is generated in the degrees where h=00003D1}.
Finally, if $B\in\MCov(k)$ with multiplication $\psi$, $h_{B/k}=|M|-1$
if and only if $H_{B/k}=0$ and $h_{B/k,m}=1$ for any $m\in M-\{0\}$.
This means that $\psi_{m,n}=0$ for any $m,n\neq0$ by \ref{lem:showing that hm and h are well defined}.
\end{proof}
In particular, setting $U_{i}=\{h\leq i\}$, we obtain a stratification
$\Bi\Di M=U_{0}\subseteq U_{1}\subseteq\cdots\subseteq U_{|M|-1}=\MCov$
of $\MCov$ by open substacks.

\subsection{The locus $h\leq1$.}

In this subsection we want to describe $\Di M$-covers with $h\leq1$.
This means that 'up to torsors' we have a graded $M$-algebra generated
over the base ring in one degree. We will see that $\{h\leq1\}$ is
a smooth open substack of $\stZ_{M}$ determined by a special class
of explicit smooth integral extremal rays of $K_{+}$. This will allow
to give a description of covers over locally noetherian and locally
factorial scheme $X$ with $(\car X,|M|)=1$ whose total space is
normal. This result, when $X$ is a smooth algebraic variety over
an algebraic closed field $k$ with $(\car k,|M|)=1$, was already
proved in \cite[Theorem 2.1, Corollary 3.1]{Pardini1991}.
\begin{notation}
Given $\E\in\duale K_{+}$ we will write $\E_{m,n}=\E(v_{m,n})$.
Since $K\otimes\Q\simeq\Q^{M}/<e_{0}>$ we will also write $\E_{m}=\E(e_{m})\in\Q$,
so that $\E_{m,n}=\E_{m}+\E_{n}-\E_{m+n}$. Given a group homomorphism
$\eta\colon M\arr N$ we will denote by $\eta_{*}\colon K_{M}\arr K_{N}$
the homomorphism such that $\eta_{*}(v_{m,n})=v_{\eta(m),\eta(n)}$
for all $m,n\in M$.\end{notation}
\begin{rem}
Let $A$ be a ring and consider a sequence $\underline{\E}=\E^{1},\dots,\E^{r}\in\duale K_{+}$.
An element of $\stF_{\underline{\E}}(A)$ coming from the atlas (see
\ref{rem:description of objects of FE}) is given by a pair $(\underline{z},\lambda)$
where $\underline{z}=z_{1},\dots,z_{r}\in A$ and $\lambda\colon K\arr A^{*}$.
The image of this object under $\pi_{\underline{\E}}$ is the algebra
whose multiplication is given by $\psi_{m,n}=\lambda_{m,n}^{-1}z_{1}^{\E_{m,n}^{1}}\cdots z_{r}^{\E_{m,n}^{r}}$.\end{rem}
\begin{lem}
\label{lem:comparison smooth sequences for M covers}Let $\eta\colon M\arr N$
be a surjective morphism and $\underline{\E}$ be a sequence in $\duale{(K_{+N})}$.
Then $\underline{\E}$ is a smooth sequence for $N$ if and only if
$\underline{\E}\circ\eta_{*}$ is a smooth sequence for $M$.\end{lem}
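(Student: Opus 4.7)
The plan is to apply Lemma~\ref{lem:comparison smooth sequences for different monoids} to the homomorphism $\eta_{*}\colon K_{M}\arr K_{N}$ (already defined above), with $T_{+}=K_{+M}$ and $T_{+}'=K_{+N}$; both are integral, being submonoids of the free abelian groups $K_{M}\subseteq\Z^{M}/\langle e_{0}\rangle$ and $K_{N}\subseteq\Z^{N}/\langle e_{0}\rangle$ respectively. The conclusion then matches the statement word-for-word, and I need only verify the two hypotheses $\eta_{*}(K_{+M})=K_{+N}$ and $\Ker\eta_{*}=\langle\Ker\eta_{*}\cap K_{+M}\rangle_{\Z}$ inside $K_{M}$.

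The first hypothesis is immediate: every generator $v_{a,b}$ of $K_{+N}$ equals $\eta_{*}(v_{m,n})$ for any choice of lifts $m,n\in M$ of $a,b$ under the surjection $\eta$. For the second, set $H=\Ker\eta$ and note that $v_{m,n}\in K_{+M}\cap\Ker\eta_{*}$ whenever $m\in H$ or $n\in H$, since in $K_{N}$ one checks directly that $v_{a,b}=0$ as soon as $a$ or $b$ equals $0$. The essential computation is the algebraic identity
\[
v_{m+h_{1},n+h_{2}}-v_{m,n}=v_{h_{1},h_{2}}+v_{h_{1}+h_{2},m+n}-v_{h_{1},m}-v_{h_{2},n}\qquad(m,n\in M,\;h_{1},h_{2}\in H),
\]
obtained by expanding both sides in $\Z^{M}/\langle e_{0}\rangle$ and using the elementary rewrite $e_{m}-e_{m+h}=v_{h,m}-e_{h}$. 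Each of the four terms on the right has at least one argument in $H$, hence lies in $K_{+M}\cap\Ker\eta_{*}$, so the left-hand side lies in $\langle K_{+M}\cap\Ker\eta_{*}\rangle_{\Z}$.

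To finish, given $w\in\Ker\eta_{*}|_{K_{M}}$ I would write $w=x-y$ with $x,y\in K_{+M}$, fix a set-theoretic section $\sigma\colon N\arr M$ of $\eta$ with $\sigma(0)=0$, and use the displayed identity to replace each $v_{m,n}$ appearing in $x$ or $y$ by $v_{\sigma(\eta m),\sigma(\eta n)}$ modulo $\langle K_{+M}\cap\Ker\eta_{*}\rangle_{\Z}$. The main obstacle at this stage is the residual element $w'=\sum_{a,b\in N-\{0\}}C_{a,b}\,v_{\sigma(a),\sigma(b)}$ whose image $\sum C_{a,b}v_{a,b}$ vanishes in $K_{N}$: the relations responsible for this vanishing are the symmetry $v_{a,b}=v_{b,a}$ and the associativity $v_{a,b}+v_{a+b,c}=v_{a,c}+v_{b,a+c}$, and I must show that their lifts to $K_{M}$ also lie in $\langle K_{+M}\cap\Ker\eta_{*}\rangle_{\Z}$. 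This is achieved by applying the identity above in the specializations $h_{2}=0$ and $h_{1}=h_{2}=0$ to the defect cocycle $h_{a,b}=\sigma(a)+\sigma(b)-\sigma(a+b)\in H$, which absorbs the discrepancy between $v_{\sigma(a+b),\sigma(c)}$ and $v_{\sigma(a)+\sigma(b),\sigma(c)}$ into terms $v_{h_{a,b},*}$ already in $K_{+M}\cap\Ker\eta_{*}$. Hence $w'\in\langle K_{+M}\cap\Ker\eta_{*}\rangle_{\Z}$, and the second hypothesis of Lemma~\ref{lem:comparison smooth sequences for different monoids} holds.
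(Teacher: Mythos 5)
Your proof is correct, and at the top level it follows the same strategy as the paper: both reduce the statement to Lemma~\ref{lem:comparison smooth sequences for different monoids} applied to $\eta_{*}\colon K_{+M}\arr K_{+N}$, and the hypothesis $\eta_{*}(K_{+M})=K_{+N}$ is dispatched identically. Where you genuinely diverge is in verifying $\Ker\eta_{*}=\langle\Ker\eta_{*}\cap K_{+M}\rangle_{\Z}$. The paper argues indirectly: it introduces $f\colon\Z^{M}/\langle e_{0}\rangle\arr\Z^{N}/\langle e_{0}\rangle$, $f(e_{m})=e_{\eta(m)}$, sets $G=\langle v_{m,n}\st m\in H\rangle_{\Z}\subseteq\Ker\eta_{*}\subseteq\Ker f$, notes $\Ker f/\Ker\eta_{*}\simeq H$, and exhibits a surjection $H\arr\Ker f/G$ from the relations $e_{h}+e_{h'}-e_{h+h'}=v_{h,h'}$ and $e_{m+h}-e_{m}=e_{h}-v_{m,h}$; finiteness of $H$ then forces $G=\Ker\eta_{*}$ by counting. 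Your argument is direct and constructive: your four-term identity is exactly these two relations packaged together, and it lets you rewrite any $w\in\Ker\eta_{*}$, modulo $\langle\Ker\eta_{*}\cap K_{+M}\rangle_{\Z}$, in terms of the $v_{\sigma(a),\sigma(b)}$, after which you kill the residue by lifting a generating set of relations of $K_{N}$. The one step you should make explicit is the input that the relation module of $K_{N}$ on the generators $v_{a,b}$ is spanned by symmetry, the neutral-element relations $v_{a,0}=0$ (absent from your list, but harmlessly so, since their lifts vanish), and associativity: this holds because $K_{N}$ is the group completion of $\tilde{K}_{+N}$, which the paper establishes, but it is a citation you need. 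In exchange, your version identifies explicit generators of $\Ker\eta_{*}$ inside $K_{+M}$ by a rewriting procedure and does not lean on $|H|<\infty$ for the final counting step, whereas the paper's version is shorter but less transparent about which elements of $\Ker\eta_{*}\cap K_{+M}$ do the generating.
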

\begin{proof}
We want to apply \ref{lem:comparison smooth sequences for different monoids}.
Therefore we have to prove that $\eta_{*}(K_{+M})=K_{+N}$, which
is clear, and that $\Ker\eta_{*}=<\Ker\eta_{*}\cap K_{+N}>$. Consider
the map $f\colon\Z^{M}/<e_{0}>\arr\Z^{N}/<e_{0}>$ given by $f(e_{m})=e_{\eta(m)}$
and set $H=\Ker\eta$. Clearly $f_{|K_{M}}=\eta_{*}$. It is easy
to check that $G=<v_{m,n}\text{ for }m\in H>_{\Z}\subseteq\Ker\eta^{*}\subseteq\Ker f$
and that $\Ker f/\Ker\eta_{*}\simeq H$. So in order to conclude,
it is enough to note that the map $H\arr\Ker f/G$ sending $h$ to
$e_{h}$ is a surjective group homomorphism since we have relations
$e_{h}+e_{h'}-e_{h+h'}=v_{h,h'}$ and $e_{m+h}-e_{m}=e_{h}-v_{m,h}$
for $m\in M$ and $h,h'\in H$.\end{proof}
\begin{prop}
\label{pro:Rita's smooth integral extremal rays}Let $\eta\colon M\arr\Z/l\Z$
be a surjective homomorphism with $l>1$. Then 
\[
\E^{\eta}(v_{m,n})=\left\{ \begin{array}{cc}
0 & \text{if }\eta(m)+\eta(n)<l\\
1 & \text{otherwise}
\end{array}\right.
\]
defines a smooth integral extremal ray for $K_{+}$.\end{prop}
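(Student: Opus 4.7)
The plan is to exhibit $\E^{\eta}$ as the restriction of an explicit $\Q$-linear functional on $\Z^{M}/\langle e_{0}\rangle$, reduce to the cyclic case $M=\Z/l\Z$ via Lemma~\ref{lem:comparison smooth sequences for M covers}, and then verify the criterion of Proposition~\ref{lem:equivalent condition for a smooth integral extremal ray} by writing down an explicit $\Z$-basis of $\Ker\E^{\id}$ inside $K_{+}$.

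First I would define $\tilde{\E}^{\eta}\colon \Z^{M}/\langle e_{0}\rangle\arr\Q$ by $e_{m}\mapsto \tilde{\eta}(m)/l$, where $\tilde{\eta}(m)\in\{0,1,\dots,l-1\}$ is the standard lift of $\eta(m)$; since $\tilde{\eta}(0)=0$ this descends to the quotient. A direct computation gives
\[
  \tilde{\E}^{\eta}(v_{m,n})\;=\;\bigl(\tilde{\eta}(m)+\tilde{\eta}(n)-\tilde{\eta}(m+n)\bigr)/l,
\]
which equals $0$ if $\tilde{\eta}(m)+\tilde{\eta}(n)<l$ and equals $1$ otherwise, matching the stated formula. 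Hence the restriction $\E^{\eta}:=\tilde{\E}^{\eta}|_{K}$ takes values in $\N$ on $K_{+}$, so $\E^{\eta}\in\duale K_{+}$.

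Because $\eta_{*}\colon K_{M}\arr K_{\Z/l\Z}$ sends $K_{+,M}$ onto $K_{+,\Z/l\Z}$ (by surjectivity of $\eta$) and $\E^{\eta}=\E^{\id}\circ\eta_{*}$ by construction, Lemma~\ref{lem:comparison smooth sequences for M covers} reduces the claim to the case $M=\Z/l\Z$, $\eta=\id$. There I would set $w_{a}:=ae_{1}-e_{a}\in K$ for $a=2,\dots,l-1$. The telescoping identity $w_{a+1}-w_{a}=e_{1}+e_{a}-e_{a+1}=v_{1,a}$ (valid for $1\le a\le l-2$) yields
\[
  w_{a}\;=\;v_{1,1}+v_{1,2}+\cdots+v_{1,a-1},
\]
so each $w_{a}$ lies in $K_{+}\cap\Ker\E^{\id}$. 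The same telescoping summed up to $k=l-1$ also gives $le_{1}=\sum_{k=1}^{l-1}v_{1,k}\in K_{+}$ with $\E^{\id}(le_{1})=1$, supplying the vector $v$ required by Proposition~\ref{lem:equivalent condition for a smooth integral extremal ray}.

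It remains to see that $w_{2},\dots,w_{l-1}$ span $\Ker\E^{\id}$. Written in the basis $e_{1},\dots,e_{l-1}$ of $\Z^{M}/\langle e_{0}\rangle$, the family $\{le_{1},w_{2},\dots,w_{l-1}\}$ has triangular matrix with diagonal $l,-1,-1,\dots,-1$ and determinant $\pm l=[\Z^{M}/\langle e_{0}\rangle:K]$, so it is a $\Z$-basis of $K$. Since $\E^{\id}$ vanishes on each $w_{a}$ and sends $le_{1}$ to $1$, the $w_{a}$ form a basis of $\Ker\E^{\id}$, which is therefore generated by elements of $K_{+}$. Proposition~\ref{lem:equivalent condition for a smooth integral extremal ray} then delivers the conclusion that $\E^{\eta}$ is a smooth integral extremal ray. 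The only mildly delicate step is the index computation identifying $\{le_{1},w_{2},\dots,w_{l-1}\}$ as a full basis of $K$, but the triangular shape makes it immediate.
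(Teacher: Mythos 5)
Your proof is correct and follows essentially the same route as the paper: realize $\E^{\eta}$ as (the normalization of) the restriction of an explicit functional on $\Z^{M}/\langle e_{0}\rangle$ to see it lies in $\duale K_{+}$, reduce to $M=\Z/l\Z$, $\eta=\id$ via Lemma~\ref{lem:comparison smooth sequences for M covers}, and verify the criterion of Proposition~\ref{lem:equivalent condition for a smooth integral extremal ray} with an explicit basis of $K$. The only cosmetic difference is that the paper takes $v_{1,1},\dots,v_{1,l-1}$ directly as the $\Z$-basis of $K$, whereas you use the partial sums $w_{a}=v_{1,1}+\cdots+v_{1,a-1}$ together with $le_{1}=\sum_{k}v_{1,k}$ — a unimodular change of the same basis.
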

\begin{proof}
$\E^{\eta}\in\duale K_{+}$ because, if $\sigma\colon\Z/l\Z\arr\N$
is the obvious section, $\E^{\eta}$ is the restriction of the map
$\Z^{M}/<e_{0}>\arr\Z$ sending $e_{m}$ to $\sigma(\eta(m))$. In
order to conclude the proof, we will apply \ref{lem:comparison smooth sequences for M covers}
and \ref{lem:equivalent condition for a smooth integral extremal ray}.
Set $N=\Z/l\Z$. One clearly has $\E^{\eta}=\E^{\id}\circ\eta_{*}$
and so we can assume $M=\Z/l\Z$ and $\eta=\id$. In this case one
can check that $v_{1,1},v_{1,2},\dots,v_{1,l-1}$ is a $\Z$-base
of $K$ such that $\E^{\eta}(v_{1,j})=0$ if $j<l-1$, $\E^{\eta}(v_{1,l-1})=1$.
\end{proof}
Those particular rays have been already defined in \cite[Equation 2.2]{Pardini1991}.
\begin{notation}
\label{not:ZME}If $\phi\colon\tilde{K}_{+}\arr\Z^{M}/<e_{0}>$ is
the usual map we set $\stZ_{M}^{\underline{\E}}=\stX_{\phi}^{\underline{\E}}$
(see definition \ref{def:T+epsilon sottolineato}) for any sequence
$\underline{\E}$ of elements of $\duale K_{+}$. Remember that if
$\underline{\E}$ is a smooth sequence then $\stZ_{M}^{\underline{\E}}$
is a smooth open subset of $\stZ_{M}$ (see \ref{thm:toric open substack of Z phi via smooth sequence})
and its points have the description given in \ref{pro:points of XphiE}.

Set $\Phi_{M}$ for the union over all $d>1$ of the sets of surjective
maps $M\arr\Z/d\Z$.\end{notation}
\begin{thm}
\label{thm:fundamental theorem for hleqone}Let $\underline{\E}=(\E^{\eta})_{\eta\in\Phi_{M}}$.
We have
\[
\{h\leq1\}=\bigcup_{\eta\in\Phi_{M}}\stZ_{M}^{\E^{\eta}}
\]
In particular $\{h\leq1\}\subseteq\stZ_{M}^{\textup{sm}}$ and $\pi_{\underline{\E}}$
induces an equivalence of categories
\[
\{(\underline{\shL},\underline{\shM},\underline{z},\lambda)\in\stF_{\underline{\E}}\;|\; V(z_{\eta})\cap V(z_{\mu})=\emptyset\text{ if }\eta\neq\mu\}=\pi_{\underline{\E}}^{-1}(\{h\leq1\})\arrdi{\simeq}\{h\leq1\}
\]
\end{thm}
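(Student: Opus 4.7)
My strategy is to first establish the set-theoretic identity $\{h\leq 1\}=\bigcup_{\eta\in\Phi_{M}}\stZ_{M}^{\E^{\eta}}$ on geometric points, then deduce the remaining assertions from the machinery of Section~\ref{sec:stack Xphi}. Since each $\E^{\eta}$ is a smooth integral extremal ray by \ref{pro:Rita's smooth integral extremal rays}, \ref{thm:toric open substack of Z phi via smooth sequence} makes each $\stZ_{M}^{\E^{\eta}}$ an open substack of $\stZ_{M}$; combined with upper semicontinuity of $h$, both sides are open substacks whose coincidence can be checked on geometric points. For the inclusion $\supseteq$, take an algebraically closed field $k$ and a point $\lambda\cdot 0^{\E^{\eta}}\in\stZ_{M}^{\E^{\eta}}(k)$ (per \ref{pro:points of XphiE}) with $\eta\colon M\twoheadrightarrow\Z/l\Z$, and compute $h$ on the corresponding algebra $B$. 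Since $\psi_{m,-m}$ is a unit exactly when $\eta(m)=0$, we have $H_{B/k}=\ker\eta$; passing to the maximal torsor quotient $B/C$ of \ref{def:of the maximal torsor and the subgroup associated}, the $\Z/l\Z$-graded algebra thus obtained has $\psi'_{i,j}\in C^{*}$ iff $i+j<l$, and \ref{lem:showing that hm and h are well defined} forces $h_{B,t}=1$ exactly when $\overline{t}=1\in\Z/l\Z$. Summing over the $|\ker\eta|$ preimages of $1$ gives $h_{B/k}=1$.

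For the reverse inclusion $\subseteq$, reduce to $k$ algebraically closed. The case $h_{B/k}=0$ gives $B\in\Bi\Di M(k)$, which lies inside every $\stZ_{M}^{\E^{\eta}}(k)$, so assume $h_{B/k}=1$. Applying \ref{cor:An algebra with H =00003D 0 is generated in the degrees where h=00003D1} to the maximal torsor quotient $B/C$, the $C$-algebra $B/C$ is generated in a unique degree $\overline{m_{1}}\in M/H_{B/k}$; this forces $M/H_{B/k}$ to be cyclic of order $l>1$ with $\overline{m_{1}}$ a generator, and I set $\eta$ to be the projection $M\twoheadrightarrow M/H_{B/k}\simeq\Z/l\Z$. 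A singly-generated $\Z/l\Z$-graded $k$-algebra of the appropriate rank has multiplication $\psi'_{i,j}=0$ iff $i+j\geq l$ and $\psi'_{i,j}\in k^{*}$ otherwise, so after an appropriate rescaling of the generator it coincides with $0^{\E^{\id}}$. Propagating through the torsor $C$ via \ref{lem:factorization of covers through torsors on local rings}, the full multiplication on $B$ takes the form $\lambda_{m,n}\cdot 0^{\E^{\eta}_{m,n}}$, and \ref{rem:differ by torsor implies iso} together with \ref{pro:characterization of points of Zphi} identifies this point with $\stZ_{M}^{\E^{\eta}}(k)$.

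With the set-theoretic equality in hand, $\{h\leq 1\}\subseteq\stZ_{M}^{\textup{sm}}$ follows because each $\stZ_{M}^{\E^{\eta}}$ is smooth by \ref{lem:decomposition of T+E and open smooth subscheme}. For the equivalence of categories, I apply \ref{pro:piE for theta isomorphism} to $\underline{\E}=(\E^{\eta})_{\eta\in\Phi_{M}}$ --- a sequence of distinct smooth integral extremal rays --- together with the collection $\Theta=\{\{\E^{\eta}\}\}_{\eta\in\Phi_{M}}$ of singleton smooth sequences; by \ref{lem:fundamental lemma for all the classification for h} the preimage $\pi_{\underline{\E}}^{-1}(\{h\leq 1\})$ is cut out by the conditions $V(z_{\eta})\cap V(z_{\mu})=\emptyset$ for $\eta\neq\mu$, since no singleton in $\Theta$ contains two distinct $\E^{\eta}$. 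The principal technical obstacle is the reverse inclusion, where the structural analysis of the singly-generated $\Z/l\Z$-graded quotient $B/C$ must be reconstructed on the nose as $0^{\E^{\id}}$ modulo $k^{*}$-rescaling, and then lifted back through the torsor using the unit-propagation of \ref{lem:factorization of covers through torsors on local rings}.
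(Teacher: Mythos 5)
Your proposal is correct and follows essentially the same route as the paper: reduce to geometric points, identify $H_{B/k}=\Ker\eta$ via the maximal torsor, analyze the singly-generated cyclic quotient $C_q[x]/(x^l)$, and then obtain the categorical equivalence from \ref{pro:piE for theta isomorphism} with $\Theta=\{(\E^{\eta})\}_{\eta\in\Phi_M}$. The only step stated more loosely than in the paper is ``propagating through the torsor'': \ref{lem:factorization of covers through torsors on local rings} by itself only controls the multiplication up to units of $C$, so to see that $\psi$ is genuinely of the form $\lambda\, 0^{\E^{\eta}}$ with $\lambda$ a homomorphism $K\arr k^{*}$ (needed before invoking \ref{pro:characterization of points of Zphi} and \ref{rem:differ by torsor implies iso}) you should, as the paper does, trivialize $C\simeq k[H]$ and exhibit the explicit graded basis $v_hv_r^i$ whose multiplication is exactly $0^{\E^{\eta}}$.
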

\begin{proof}
The last part of the statement follows from the first one just applying
\ref{pro:piE for theta isomorphism} with $\Theta=\{(\E^{\eta})\}_{\eta\in\Phi_{M}}$.
Let $k$ be an algebraically closed field and $B\in\MCov(k)$ with
graded basis $\{v_{m}\}_{m\in M}$ and multiplication $\psi$.

$\supseteq$. Assume $B\in\stZ_{M}^{\E^{\eta}}(k)$. If B is a torsor
we will have $h_{B/k}=0$. Otherwise we can write $\psi=\xi0^{\E^{\eta}}$
for some $\xi\colon K\arr k^{*}$. Up to change $\Spec k$ with a
geometrical point of the maximal torsor of $B/k$, we can assume that
$M=\Z/d\Z$ and $\eta=\id$. In particular $H_{B/k}=0$ and, from
the definition of $\E^{\id}$, we get $B\simeq k[x]/(x^{d})$. So
$h_{B/k}=\dim_{k}m_{B}/m_{B}^{2}=1$.

$\subseteq$. Assume $h_{B/k}=1$. Set $C$ for the maximal torsor
of $B/k$ (see \ref{def:of the maximal torsor and the subgroup associated}),
$H=H_{B/k}$ and $l=|M/H|$. $h_{B/k}=1$ means that there exists
a unique $\overline{r}\in M/H$ (where $r\in M$) such that $h_{B/k,r}=1$
and so $C_{q}[v_{r}]=B_{q}\simeq C_{q}[x]/(x^{l})$ for all (maximal)
primes $q$ of $C$. In particular $B=C[v_{r}]\simeq C[x]/(x^{l})$
and $\overline{r}$ generates $M/H$. Let $\eta\colon M\arr M/H\simeq\Z/l\Z$
be the projection. We want to prove that $B\in\stZ_{M}^{\E^{\eta}}$.
Up to change $k$ with a geometrical point of some fppf extension
of $k$, we can assume $C=k[H]$, i.e. $v_{h}v_{h'}=v_{h+h'}$ if
$h,h'\in H$. Finally the elements $v_{h}v_{r}^{i}$ for $h\in H$
and $0\leq i<l$ define an $M$-graded basis of $B/k$ whose associated
multiplication is $0^{\E^{\eta}}$.\end{proof}
\begin{thm}
\label{thm:for hleqone}Let $\underline{\E}=(\E^{\eta})_{\eta\in\Phi_{M}}$
and let $X$ be a locally noetherian and locally factorial scheme.
Set $\catC_{X}^{1}=\{(\underline{\shL},\underline{\shM},\underline{z},\lambda)\in\stF_{\underline{\E}}(X)\;|\;\codim_{X}V(z_{\eta})\cap V(z_{\mu})\geq2\text{ if }\eta\neq\mu\}$
and $\catD_{X}^{1}=\{Y\arrdi fX\in\MCov(X)\st h_{f}(p)\leq1\ \forall p\in X\text{ with }\codim_{p}X\leq1\}$. 

Then $\pi_{\underline{\E}}$ induces an equivalence of categories
\[
\catD_{X}^{1}=\pi_{\underline{\E}}^{-1}(\catC_{X}^{1})\arrdi{\simeq}\catC_{X}^{1}
\]
\end{thm}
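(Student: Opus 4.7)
The plan is to obtain this statement as an immediate specialization of Theorem \ref{thm:fundamental theorem for locally factorial schemes}, taking as input the collection $\Theta = \{(\E^\eta)\}_{\eta \in \Phi_M}$ of singleton sequences drawn from $\underline{\E}$.

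First I would check the hypotheses of that general theorem. By Proposition \ref{pro:Rita's smooth integral extremal rays}, every $\E^\eta$ is a smooth integral extremal ray for $K_+$, and distinct $\eta \in \Phi_M$ clearly produce distinct rays (for instance by comparing their supports, or by reading off the cyclic quotient from the kernel). Hence $\underline{\E}$ is a sequence of pairwise distinct smooth integral extremal rays, and each element of $\Theta$ is a smooth sequence with ray in $\underline{\E}$, so Theorem \ref{thm:fundamental theorem for locally factorial schemes} is applicable.

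Then I would match the two categories appearing in that theorem with the categories $\catC_X^1$ and $\catD_X^1$ here. On the source side, since every $\underline{\delta} \in \Theta$ has length one, the requirement in $\catC_X^\Theta$ that $\codim_X V(z_{i_1}) \cap \cdots \cap V(z_{i_s}) \geq 2$ whenever no $\underline{\delta} \in \Theta$ contains $\{\E^{i_1},\dots,\E^{i_s}\}$ reduces, because the rays in $\underline{\E}$ are distinct, to the pairwise condition $\codim_X V(z_\eta) \cap V(z_\mu) \geq 2$ for $\eta \neq \mu$, which is exactly the defining condition of $\catC_X^1$. On the target side, Theorem \ref{thm:fundamental theorem for hleqone} identifies the open substack $\{h \leq 1\} \subseteq \MCov$ with $\bigcup_{\eta \in \Phi_M} \stZ_M^{\E^\eta} = \stX_{\phi_M}^\Theta$. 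Combined with the compatibility $h_f = h \circ u$ between the upper semicontinuous function $h \colon |\MCov| \to \N$ and the classifying morphism $u \colon X \to \MCov$ of a cover $f$, the defining condition of $\catD_X^\Theta$ (namely $\chi|_{\overline{k(p)}} \in \stX_{\phi_M}^\Theta$ for every $p$ with $\codim_p X \leq 1$) becomes precisely $h_f(p) \leq 1$ at every such $p$, which is the definition of $\catD_X^1$.

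Once these identifications are in place, the equivalence is supplied verbatim by Theorem \ref{thm:fundamental theorem for locally factorial schemes}. I do not expect any genuine obstacle in carrying this out: the substantive work is entirely contained in the two theorems being invoked, with Theorem \ref{thm:fundamental theorem for hleqone} providing the explicit toric description of the $h \leq 1$ stratum and Theorem \ref{thm:fundamental theorem for locally factorial schemes} supplying the codimension-one descent on locally factorial schemes. The present result is thus purely a bookkeeping specialization of that framework to $\Theta = \{(\E^\eta)\}_{\eta \in \Phi_M}$.
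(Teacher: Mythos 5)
Your proposal is correct and coincides with the paper's own proof, which consists precisely of applying Theorem \ref{thm:fundamental theorem for locally factorial schemes} with $\Theta=\{(\E^{\eta})\}_{\eta\in\Phi_{M}}$; your additional verifications (distinctness and smoothness of the rays via \ref{pro:Rita's smooth integral extremal rays}, and the identification of $\catD_{X}^{1}$ with $\catD_{X}^{\Theta}$ through Theorem \ref{thm:fundamental theorem for hleqone}) are exactly the bookkeeping the paper leaves implicit.
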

\begin{proof}
Apply \ref{thm:fundamental theorem for locally factorial schemes}
with $\Theta=\{(\E^{\eta})\}_{\eta\in\Phi_{M}}$.\end{proof}
\begin{thm}
\label{thm:regular in codimension 1 covers}Let $\underline{\E}=(\E^{\eta})_{\eta\in\Phi_{M}}$
and let $X$ be a locally noetherian and locally factorial scheme
without isolated points and $(\car X,|M|)=1$, i.e. $1/|M|\in\odi X(X)$.
Set 
\[
Reg_{X}^{1}=\{Y/X\in\MCov(X)\st Y\text{ regular in codimension }1\}
\]
and
\[
\widetilde{Reg}_{X}^{1}=\left\{ (\underline{\shL},\underline{\shM},\underline{z},\lambda)\in\stF_{\underline{\E}}(X)\left|\begin{array}{c}
\forall\E\neq\delta\in\underline{\E}\;\codim_{X}V(z_{\E})\cap V(z_{\delta})\geq2\\
\forall\E\in\underline{\E}\forall p\in X^{(1)}\; v_{p}(z_{\E})\leq1
\end{array}\right.\right\} 
\]
Then we have an equivalence of categories
\[
\widetilde{Reg}_{X}^{1}=\pi_{\underline{\E}}^{-1}(Reg_{X}^{1})\arrdi{\simeq}Reg_{X}^{1}
\]
\end{thm}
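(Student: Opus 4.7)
The plan is to reduce to Theorem \ref{thm:for hleqone} by first establishing the inclusion $Reg_X^1 \subseteq \catD_X^1$, i.e.\ that regularity in codimension $1$ forces $h \leq 1$ at codimension $\leq 1$ points of $X$, and then to read off the extra numerical constraint $v_p(z_\E)\leq 1$ from the local Kummer structure of the cover.

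\textbf{Reducing to the codimension $\leq 1$ situation.} Both conditions defining $Reg_X^1$ and $\widetilde{Reg}_X^1$ are local on $X$ and testable at points $p\in X$ with $\codim_p X\leq 1$: for the total space $Y$, regularity in codimension $1$ means regularity of $\odi{Y,q}$ at codimension $1$ points $q\in Y$, and such a $q$ necessarily lies over some $p\in X$ with $\codim_p X\leq 1$ since $Y\to X$ is finite. Thus I would first fix a point $p\in X$ with $\codim_p X\leq 1$ and work with $A=\odi{X,p}$, which is either a field or a DVR (by local factoriality), and with $B=\odi{Y,p}\otimes_{\odi X}\odi{X,p}$.

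\textbf{Step 1: $Y$ regular in codimension $1$ implies $h_f\leq 1$ at $p$.} When $\codim_p X=0$, $A$ is a field and the regularity of $Y$ at the corresponding codimension $0$ points of $Y$ gives that $B$ is reduced; combined with $(\car A,|M|)=1$, $B$ is étale over $A$, so $B$ is a $\Di M$-torsor and $h_f(p)=0$. When $\codim_p X=1$, let $C$ be the maximal torsor of $B/A$ as in \ref{def:of the maximal torsor and the subgroup associated}; since $|H_{B/A}|$ divides $|M|$, $C/A$ is étale, hence $C$ is a finite product of DVRs. For a maximal prime $q$ of $B$, $B_q/C_{q\cap C}$ is a finite extension of DVRs with $H_{B_q/C_{q\cap C}}=0$ on which $\Di{M/H_{B/A}}$ acts faithfully and tamely; so the completion is a totally ramified tame Galois extension of complete DVRs, which forces $M/H_{B/A}$ to be cyclic, say of order $d$, and $\hat B_q\simeq \hat C_{q\cap C}[v]/(v^d-u\pi)$ for a unit $u$ and uniformizer $\pi$. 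In particular $B_q$ is generated over $C_{q\cap C}$ by one element in a single degree, so $h_f(p)=1$ by \ref{cor:An algebra with H =00003D 0 is generated in the degrees where h=00003D1}.

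\textbf{Step 2: Applying the previous theorem.} The first step shows $Reg_X^1\subseteq\catD_X^1$, so by Theorem \ref{thm:for hleqone} every $Y/X\in Reg_X^1$ lifts uniquely to an object $(\underline\shL,\underline\shM,\underline z,\lambda)\in\catC_X^1$. It remains to identify which such lifts lie in $\widetilde{Reg}_X^1$, i.e.\ to match the condition {}``$Y$ regular at every point over $p\in X^{(1)}$'' with $v_p(z_\E)\leq 1$ for all $\E\in\underline{\E}$.

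\textbf{Step 3: Matching the valuation condition with codimension $1$ regularity.} Fix $p\in X^{(1)}$ with local ring $A$, uniformizer $t$, and fppf-locally trivialize the data $(\underline\shL,\underline\shM,\underline z,\lambda)$. Over $U=\Spec A$, by the disjointness condition already in $\catC_X^1$, at most one of the sections $z_\E$ is a non-unit at $p$, say $z_{\E^\eta}=u t^k$ with $u\in A^*$ and $\E^\eta$ the ray associated to some $\eta\colon M\twoheadrightarrow\Z/d\Z$. Using the explicit formula $\psi_{m,n}=\underline z^{\underline\E(v_{m,n})}/\lambda_{m,n}$ of \ref{rem:description of piE}, picking any lift $r\in M$ of a generator of $M/\ker\eta$ and iterating, one computes $v_r^d=(\text{unit})\cdot t^k v_{dr}$, and since $dr\in\ker\eta=H_{B/A}$ the element $v_{dr}$ is a unit in $B$. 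Thus, locally at a maximal prime of $B$ over $p$, $B\simeq C[v]/(v^d-w t^k)$ with $w$ a unit and $C/A$ étale; this ring is regular if and only if $k=1$, because $d$ is coprime to $\car k(p)$. Hence regularity of $Y$ at points over $p$ is equivalent to $v_p(z_{\E^\eta})\leq 1$ for the distinguished $\eta$, and trivially $v_p(z_\E)\leq 0$ (i.e.\ $z_\E$ a unit) for the others; together this is exactly $v_p(z_\E)\leq 1$ for every $\E\in\underline\E$.

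Combining the three steps, the equivalence $\widetilde{Reg}_X^1=\pi_{\underline\E}^{-1}(Reg_X^1)\arrdi{\simeq}Reg_X^1$ follows from Theorem \ref{thm:for hleqone} restricted to the subcategory cut out by the valuation condition. The main obstacle is Step 1: the assertion that tame totally ramified Galois extensions of complete DVRs are cyclic (hence force $M/H_{B/A}$ cyclic and $h=1$), which is where the hypothesis $(\car X,|M|)=1$ is essential; the rest is a direct computation with the explicit parametrization $\pi_{\underline\E}$.
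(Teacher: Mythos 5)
Your overall strategy is the same as the paper's: show $Reg_{X}^{1}\subseteq\catD_{X}^{1}$, invoke Theorem \ref{thm:for hleqone} to get the unique lift to $\catC_{X}^{1}$, and then do a local computation over a DVR (reduce to the maximal torsor, kill the torsor part by an \'etale base change, and read off $B\simeq C[v]/(v^{d}-wt^{k})$, regular iff $k=1$) to match the valuation condition; your Step 3 is essentially the paper's argument verbatim. The one genuine divergence is Step 1: to get $h_{f}\leq1$ at codimension $\leq1$ points you pass to completions and invoke the cyclicity of tame totally ramified Galois extensions of complete DVRs, whereas the paper obtains the bound in one line from the cotangent description of $h$, namely $h_{f}(q)\leq\dim_{k(p)}m_{p}/m_{p}^{2}=1$ for $p\in Y^{(1)}$ over $q$ (using that $h_{f}(q)$ is computed by the non-trivially-graded part of $m_{B}/m_{B}^{2}$, equivalently by $\Omega_{B/C}\otimes k(p)$). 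Your route is correct in substance but heavier, and it glosses over the fact that the structure group is the diagonalizable $\Di{M/H}$ rather than a constant group, so the inertia-theoretic cyclicity argument requires first adjoining the relevant roots of unity (or passing to a strict henselization); this is harmless but should be said. The paper's tangent-space bound avoids the issue entirely and also covers the codimension $0$ case without a separate reducedness argument.
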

\begin{proof}
We will make use of \ref{thm:for hleqone}. If $Y\arrdi fX\in Reg_{X}^{1}$,
$p\in Y^{(1)}$ and $q=f(p)$ then $h_{f}(q)\leq\dim_{k(p)}m_{p}/m_{p}^{2}=1$.
So $Reg_{X}^{1}\subseteq\catD_{X}^{1}$. So we have only to check
that $\widetilde{Reg}_{X}^{1}=\pi_{\underline{\E}}^{-1}(Reg_{X}^{1})\subseteq\catC_{X}^{1}$.
In particular we can assume that $X=\Spec R$, where $R$ is a DVR.
Let $\chi\in\catC_{X}^{1}$, $ $$A/R\in\catD_{X}^{1}$ the associated
covers, $H=H_{A/R}$ and $C$ be the maximal torsor of $A/R$. We
have to prove that $\chi\in\widetilde{Reg}_{X}^{1}$ if and only if
$A$ is regular in codimension $1$. Since $D_{R}(H)$ is etale over
$R$ so is also $\Spec C$. It is so easy to check that, up to change
$R$ with a localization of $C$ and $M$ with $M/H$, we can assume
that $H=0$. Since $\chi\in\catC_{X}^{1}$, the multiplication of
$A$ over $R$ is of the form $\psi=\mu z^{r\E^{\phi}}$, where $\mu\colon K\arr R^{*}$
is an $M$-torsor, $z$ is a parameter of $A$, $\phi\colon M\arr\Z/l\Z$
is an isomorphism and $r=v_{R}(z_{\E^{\phi}})$. Moreover $v_{R}(z_{\E^{\psi}})=0$
if $\psi\neq\phi$. Up to change $M$ with $\Z/l\Z$ through $\phi$
we can assume $\phi=\id$. Finally, since $\mu$ induces an (fppf)
torsor which is etale over $R$, up to change $R$ with an etale neighborhood,
we can assume $\mu=1$. After these reductions we have $A=R[X]/(X^{|M|}-z^{r})$
which is regular in codimension $1$ if and only if $r=1$.\end{proof}
\begin{rem}
In the theorem above one can replace the condition 'regular in codimension
$1$' in the definition of $Reg_{X}^{1}$ with 'normal' thanks to
Serre's conditions, since all the fibers involved are Gorenstein.
\end{rem}

\begin{rem}
Theorem \ref{thm:regular in codimension 1 covers} is a rewriting
of Theorem $2.1$ and Corollary $3.1$ of \cite{Pardini1991} extended
to locally noetherian and locally factorial schemes without isolated
points, where an object of $\stF_{\underline{\E}}(X)$ is called a
building data.
\end{rem}

\section{The locus $h\leq2$}

In this section we want to give a characterization of the open subset
$\{h\leq2\}\subseteq\MCov$ as done in \ref{thm:for hleqone} for
$\{h\leq1\}$. The general problem we want to solve can be stated
as follows.
\begin{problem}
\label{prob: general problem for hleqtwo}Find a sequence of smooth
integral extremal rays $\underline{\E}$ for $M$ and a collection
$\Theta$ of smooth sequences with rays in $\underline{\E}$ such
that (see \ref{not:ZME})
\[
\{h\leq2\}=\bigcup_{\underline{\delta}\in\Theta}\stZ_{M}^{\underline{\delta}}
\]
or, equivalently, such that, for any algebraically closed field $k$,
the algebras $A\in\MCov(k)$ with $h_{A/k}\leq2$ are exactly the
algebras associated to a multiplication of the form $\psi=\omega0^{\E}$
where $\omega\colon K_{+}\arr k^{*}$ and $\E\in<\underline{\delta}>_{\N}$
for some $\underline{\delta}\in\Theta$.
\end{problem}
For example in the case $h\leq1$ the analogous problem is solved
taking $\underline{\E}=(\E^{\phi})_{\phi\in\Phi_{M}}$ and $\Theta=\{(\E)\text{ for }\E\in\underline{\E}\}$
(see \ref{thm:fundamental theorem for hleqone}). Once we have found
a pair $\underline{\E},\Theta$ as in \ref{prob: general problem for hleqtwo}
we can formally apply theorems \ref{pro:piE for theta isomorphism}
and \ref{thm:fundamental theorem for locally factorial schemes}.
This is done in theorems \ref{thm:fundamental thm for hleqtwo} and
\ref{thm:fundamental thm locally factoria hleqtwo}.

The first problem (see \ref{not:starting from a cover, m.n generate M})
to solve is to describe $M$-graded algebras over a field $k$ (yielding
$\Di M$-covers) generated in two degrees. As for the case $h\leq1$,
where we can reduce to study $\Z/d\Z$-graded algebras generated in
degree $1$ and then considering surjective maps $M\arr\Z/d\Z$, also
in the case $h\leq2$ we can restrict our attention to the study of
algebras generated in two degrees $m,n\in M$. It is easy to see (\ref{pro:description group generated by two elements})
that a group with two marked elements generating it is canonically
isomorphic to
\[
M_{r,\alpha,N}=\Z^{2}/(r,-\alpha),(0,N)\comma e_{1}\comma e_{2}
\]
for some natural numbers $0\leq\alpha<N\comma r>0$. 

So our problem become the study of $\Di{M_{r,\alpha,N}}$-covers $A$
over a field $k$ generated in degrees $e_{1},e_{2}$ and such that
$H_{A/k}=0$ (thanks to \ref{cor:An algebra with H =00003D 0 is generated in the degrees where h=00003D1},
when $H_{A/k}=0$ the number $h_{A/k}$ is simply the minimum number
of degrees in which $A$ is generated). These algebras are related
to the following set: given $q\in\Z$, define $d_{q}$ as the only
integer $0<d_{q}\leq N$ such that $d_{q}\equiv-q\alpha$ mod $N$
and define
\[
\Omega_{N-\alpha,N}=\{0<q\leq o(\alpha,\Z/N\Z)=N/(N,\alpha)\;|\; d_{q'}<d_{q}\text{ for any }0<q'<q\}
\]
The point is that if we take $z_{A}$ as the minimum $h>0$ for which
there exist $s\in\N$ and $\lambda\in k$ such that $hm=sn$ and $v_{m}^{h}=\lambda v_{n}^{s}$,
then $\overline{q}_{A}=z_{A}/r\in\Omega_{N-\alpha,N}$ (see \ref{pro:overlineqA is in omeganminusalpha,n}).
We will show that also the converse holds, in the sense that for any
$\overline{q}\in\Omega_{N-\alpha,N}$ there exists a $\Di{M_{r,\alpha,N}}$-cover
$A/k$ as above with $\overline{q}_{A}=\overline{q}$. The precise
statement is \ref{pro:Universal algebras generated by m,n with overline q fixed}.
This gives a solution to the first problem and also suggests how to
proceed for the next one, i.e. find the sequence $\underline{\E}$
of problem \ref{prob: general problem for hleqtwo}.

It turns out that the integral extremal rays $\E$ for $M$ such that
$h_{\E}=2$ correspond to particular sequences of the form $\chi=(r,\alpha,N,\overline{q},\phi)$,
where $0\leq\alpha<N$, $r>0$, $\overline{q}\in\Omega_{N-\alpha,N}$
and $\phi\colon M\arr M_{r,\alpha,N}$ is a surjective map (see \ref{pro:classification sm int ray htwo}).
The sequence of smooth integral extremal rays {}``needed'' to describe
the substack $\{h\leq2\}$ is composed by the {}``old'' rays $(\E^{\eta})_{\eta\in\Phi_{M}}$
and by these new rays. Finally the smooth sequences in the family
$\Theta$ of problem \ref{prob: general problem for hleqtwo} will
be all given by elements of the dual basis of particular $\Z$-basis
of $K$ (see \ref{lem:lambda,delta for overlineq}).

In the last subsection we will see (Theorem \ref{thm:NC in codimension one})
that the $\Di M$-covers of a locally noetherian and locally factorial
scheme with no isolated points and with $(\car X,|M|)=1$ whose total
space is normal crossing in codimension $1$ can be described in the
spirit of classification \ref{thm:regular in codimension 1 covers}
and extending this result.

\subsection{Good sequences.}

In this subsection we provide some general technical results in order
to work with $M$-graded algebras over local rings. So we will consider
given a local ring $D$, a sequence $\underline{m}=m_{1},\dots,m_{r}\in M$
and $C\in\MCov(D)$ generated in degrees $m_{1},\dots,m_{r}$. Since
$\Pic\Spec D$=0 for any $u\in M$ we have $C_{u}\simeq D$. Given
$u\in M$, we will call $v_{u}$ a generator of $C_{u}$ and we will
also use the abbreviation $v_{i}=v_{m_{i}}$. Moreover, if $\underline{A}=(A_{1},\dots,A_{r})\in\N^{r}$
we will also write
\[
v^{\underline{A}}=v_{1}^{A_{1}}\cdots v_{r}^{A_{r}}
\]

\begin{defn}
A sequence for $u\in M$ is a sequence $\underline{A}\in\N^{r}$ such
that $A_{1}m_{1}+\cdots+A_{r}m_{r}=u$. Such a sequence will be called
\emph{good }if the map $C_{m_{1}}^{A_{1}}\otimes\cdots\otimes C_{m_{r}}^{A_{r}}\arr C_{u}$
is surjective, i.e. $v^{\underline{A}}$ generates $C_{u}$. If $r=2$
we will talk about pairs instead of sequences.\end{defn}
\begin{rem}
Any $u\in M$ admits a good sequence since, otherwise, we will have
$C_{u}=(D[v_{1},\dots,v_{r}])_{u}\subseteq m_{D}C_{u}$. If $\underline{A}$
is a good sequence and $\underline{B}\leq\underline{A}$, then also
$\underline{B}$ is a good.\end{rem}
\begin{lem}
\label{lem:fundamental local rings and generators}Let $\underline{A}$,
$\underline{B}$ be two sequences for some element of $M$ and assume
that $\underline{A}$ is good. Set $\underline{E}=\min(\underline{A},\underline{B})=(\min(A_{1},B_{1}),\dots,\min(A_{r},B_{r}))$
and take $\lambda\in D$. Then 
\[
v^{\underline{B}}=\lambda v^{\underline{A}}\then v^{\underline{B}-\underline{E}}=\lambda v^{\underline{A}-\underline{E}}
\]
\end{lem}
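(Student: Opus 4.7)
The plan is to exploit the fact that in a $\Di M$-cover over a local ring every graded piece $C_u$ is free of rank one, so the coefficient expressing one generator in terms of another is unique, and then to reduce by producing a divisibility via a good subsequence.

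First I would set $u = \sum_i A_i m_i = \sum_i B_i m_i$ and $u' = u - \sum_i E_i m_i$, and note that the vectors $\underline{A} - \underline{E}$ and $\underline{B} - \underline{E}$ lie in $\N^r$ (this is the whole point of taking the componentwise minimum) and are both sequences for $u'$. Since $\underline{A} - \underline{E} \leq \underline{A}$ and $\underline{A}$ is good, the remark preceding the lemma gives that $\underline{A} - \underline{E}$ is good as well; hence $v^{\underline{A} - \underline{E}}$ generates the free rank-one $D$-module $C_{u'}$.

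Next, because $v^{\underline{B} - \underline{E}}$ lies in $C_{u'}$, there exists a unique $\mu \in D$ with
\[
v^{\underline{B} - \underline{E}} = \mu\, v^{\underline{A} - \underline{E}}.
\]
Multiplying both sides by $v^{\underline{E}}$ gives $v^{\underline{B}} = \mu\, v^{\underline{A}}$. Comparing with the hypothesis $v^{\underline{B}} = \lambda\, v^{\underline{A}}$ yields $(\mu - \lambda)\, v^{\underline{A}} = 0$ in $C_u$, and since $\underline{A}$ is good the element $v^{\underline{A}}$ freely generates $C_u$, so it has trivial annihilator in $D$. Therefore $\mu = \lambda$, which is the required equality.

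There is essentially no obstacle: the argument rests only on the observation that subvectors of a good sequence remain good (hence produce a free rank-one generator in the lower graded component) combined with the uniqueness of the coefficient in a free rank-one module. Multiplication by $v^{\underline{E}}$ might not be injective in general on $C$, but we do not need cancellation — we use instead that the quotient element $\mu$ already exists independently, and that the original identity in $C_u$ forces $\mu$ to coincide with $\lambda$.
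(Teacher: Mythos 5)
Your proof is correct and follows the same route as the paper's: produce $\mu$ with $v^{\underline{B}-\underline{E}}=\mu v^{\underline{A}-\underline{E}}$ using that $\underline{A}-\underline{E}$ is good, multiply back by $v^{\underline{E}}$, and conclude $\mu=\lambda$ from the fact that $v^{\underline{A}}$ is a free generator of $C_u\simeq D$. No issues.
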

\begin{proof}
Clearly we have $v^{\underline{E}}(v^{\underline{B}-\underline{E}}-\lambda v^{\underline{A}-\underline{E}})=0$.
On the other hand, since $\underline{A}-\underline{E}$ is a good
sequence, there exists $\mu\in D$ such that $v^{\underline{B}-\underline{E}}=\mu v^{\underline{A}-\underline{E}}$.
Since $\underline{A}$ is a good sequence, substituting we get $v^{\underline{A}}(\mu-\lambda)=0\then\mu=\lambda$.
\end{proof}

\subsection{\label{sub:algebras gen in two degrees}$M$-graded algebras generated
in two degrees.}
\begin{defn}
Given $0\leq\alpha<N\comma r>0$ set $M_{r,\alpha,N}=\Z^{2}/(r,-\alpha),(0,N)$.\end{defn}
\begin{prop}
\label{pro:description group generated by two elements}A finite abelian
group $M$ with two marked elements $m,n\in M$ generating it is canonically
isomorphic to $(M_{r,\alpha,N},e_{1},e_{2})$ where $r=\min\{s>0\st sm\in<n>\}$,
$rm=\alpha n$ and $N=o(n)$. Moreover we have: $|M|=Nr$, $o(m)=rN/(\alpha,N)$
and 
\[
m,n\neq0\text{ and }m\neq n\iff N>1\text{ and }(r>1\text{ or }\alpha>1)
\]
\end{prop}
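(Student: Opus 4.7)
The proof is a direct computation working with the surjection $\pi\colon\Z^{2}\arr M$ defined by $e_{1}\mapsto m$, $e_{2}\mapsto n$, which is surjective since $m,n$ generate $M$. The whole statement reduces to identifying $\Ker\pi$ and reading off its invariants.

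First I would check that $(r,-\alpha)$ and $(0,N)$ lie in $\Ker\pi$: this is immediate from $rm=\alpha n$ and $Nn=0$. For the converse inclusion, take $(a,b)\in\Ker\pi$, i.e. $am+bn=0$. Then $am=-bn\in\langle n\rangle$, so by the minimality defining $r$ we have $r\mid a$; writing $a=kr$ we get $k\alpha n+bn=0$, hence $N\mid(k\alpha+b)$ and so $(a,b)=k(r,-\alpha)+l(0,N)$ for a suitable $l$. Thus $\Ker\pi=\langle(r,-\alpha),(0,N)\rangle$ and the induced map $M_{r,\alpha,N}\arr M$ is a canonical isomorphism sending $e_{1}\mapsto m$, $e_{2}\mapsto n$.

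The cardinality $|M|=Nr$ follows from the fact that $|\Z^{2}/\Ker\pi|$ equals the absolute value of the determinant of the relation matrix $\bigl(\begin{smallmatrix}r & 0\\-\alpha & N\end{smallmatrix}\bigr)$. For $o(m)$ I would look for the minimal $s>0$ such that $(s,0)\in\Ker\pi$, i.e. $s=kr$ and $k\alpha=lN$ for some $k,l\in\Z$; the smallest such positive $k$ is $N/(\alpha,N)$, giving $o(m)=rN/(\alpha,N)$.

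Finally, for the equivalence I would characterize each degenerate case in terms of $r,\alpha,N$ using the normalization $0\le\alpha<N$: $n=0\iff N=1$; $m=0\iff(1,0)\in\Ker\pi\iff r=1$ and $\alpha=0$; and $m=n\iff(1,-1)\in\Ker\pi\iff r=1$ and $\alpha=1$. Therefore the conjunction $m,n\neq 0$ and $m\neq n$ is equivalent to $N>1$ together with the negation of the remaining two bad cases, which is exactly $r>1$ or $\alpha>1$. None of the steps is an obstacle: the only real point is to set up $\pi$ and exploit minimality of $r$ correctly in step identifying the kernel.
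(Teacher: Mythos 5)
Your proof is correct and follows essentially the same route as the paper: both present $M$ via the relation matrix $\bigl(\begin{smallmatrix}r & 0\\ -\alpha & N\end{smallmatrix}\bigr)$ and extract $|M|=Nr$ from its determinant. The only organizational difference is that you identify $\Ker(\Z^{2}\arr M)$ directly using the minimality of $r$, whereas the paper gets the isomorphism from surjectivity plus the cardinality count $|M|=o(n)\cdot[M:\langle n\rangle]=Nr$; both are fine, and your explicit case analysis for the final equivalence correctly fills in what the paper leaves as "easy to prove."
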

\begin{proof}
We have
\[
0\arr\Z^{2}\arrdi{\scriptsize{\left(\begin{array}{cc}
r & 0\\
-\alpha & N
\end{array}\right)}}\Z^{2}\arr M_{r,\alpha,N}\arr0\then|M_{r,\alpha,N}|=\left|\det\left(\begin{array}{cc}
r & 0\\
-\alpha & N
\end{array}\right)\right|=rN
\]
and clearly $e_{1},e_{2}$ generate $M$. Moreover $M_{r,\alpha,N}/<e_{2}>\simeq\Z/r\Z$
and therefore $r$ is the minimum such that $re_{1}\in<e_{2}>$. Finally
it is easy to check that $N=o(e_{2})$. If now $M,r,\alpha,N$ are
as in the statement, there exists a unique map $ $$M_{r,\alpha,N}\arr M$
sending $e_{1},e_{2}$ to $m,n$. This map is an isomorphism since
it is clearly surjective and $|M|=o(m)o(n)/|<m>\cap<n>|=o(n)r=|M_{r,\alpha,N}|$.
The last equivalence in the statement is now easy to prove.\end{proof}
\begin{notation}
\label{not:for alpha,N,r e M}In this subsection we will fix a finite
abelian group $M$ generated by two elements $0\neq m,n\in M$ such
that $m\neq n$. Up to isomorphism, this means $M=M_{r,\alpha,N}$
with $m=e_{1}\comma n=e_{2}$ and conditions $0\leq\alpha<N\comma r>0\comma N>1\comma(r>1\text{ or }\alpha>1)$. 

We will write $d_{q}$ for the only integers $0<d_{q}\leq N$ such
that $qrm+d_{q}n=0$, for $q\in\Z$, or, equivalently, $d_{q}\equiv-q\alpha\text{ mod }(N)$.
\end{notation}

\begin{problem}
\label{not:starting from a cover, m.n generate M}Let $k$ be a field.
We want to describe, up to isomorphism, algebras $A\in\MCov(k)$ such
that $A$ is generated in degrees $m,n$ and $H_{A/k}=0$. Thanks
to \ref{cor:An algebra with H =00003D 0 is generated in the degrees where h=00003D1},
this is equivalent to asking for an algebra $A$ such that $H_{A/k}=0$
and
\[
\{l\in M\st h_{A/k,l}=1\}\subseteq\{m,n\}
\]
The solution of this problem is contained in \ref{pro:Universal algebras generated by m,n with overline q fixed}. 
\end{problem}
In this subsection we will fix an algebra $A$ as in \ref{not:starting from a cover, m.n generate M}.
$\{v_{l}\}_{l\in M}$ will be a graded basis of $A$ and $\psi$ the
associated multiplication. Note that $H_{A/k}=0$ means $v_{m},v_{n}\notin A^{*}$.
\begin{defn}
Define
\[
z=\min\{h>0\;|\; v_{m}^{h}=\lambda v_{n}^{i}\text{ for some }\lambda\in k\text{ and }hm=in\}
\]
\[
x=\min\{h>0\;|\; v_{n}^{h}=\mu v_{m}^{i}\text{ for some }\mu\in k\text{ and }hn=im\}
\]
Denote by $0\leq y<o(n)$, $0\leq w<o(m)$ the elements such that
$zm=yn$, $xn=wm$, by $\lambda,\mu\in k$ the elements such that
$v_{m}^{z}=\lambda v_{n}^{y}$, $v_{n}^{x}=\mu v_{m}^{w}$, with the
convention that $\lambda=0$ if $v_{n}^{y}=0$ and $\mu=0$ if $v_{m}^{w}=0$.
Finally set $\overline{q}=z/r$ and define the map of sets   \[   \begin{tikzpicture}[xscale=3.5,yscale=-0.6]     \node (A0_0) at (0, 0) {$\{0,1,\dots,z-1\}$};     \node (A0_1) at (1, 0) {$\{0,1,\dots,o(n)\}$};     \node (A1_0) at (0, 1) {$c$};     \node (A1_1) at (1, 1) {$\min\{d\in\N \ | \ v_{m}^{c}v_{n}^{d}=0\}$};     \path (A0_0) edge [->] node [auto] {$\scriptstyle{f}$} (A0_1);     \path (A1_0) edge [|->,gray] node [auto] {$\scriptstyle{}$} (A1_1);   \end{tikzpicture}   \] 
We will also write $\overline{q}_{A}\comma z_{A}\comma x_{A}\comma y_{A}\comma w_{A}\comma\lambda_{A}\comma\mu_{A}\comma f_{A}$
if necessary.
\end{defn}
We will see that $A$ is uniquely determined by $\overline{q}$ and
$\lambda$ up to isomorphism.
\begin{lem}
\label{lem:good pair for A<z}Given $l\in M$ there exists a unique
good pair $(a,b)$ for $l$ with $0\leq a<z$. Moreover $0\leq b<f(a)$.\end{lem}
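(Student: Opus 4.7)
The plan is to establish existence by a descent argument, uniqueness by a case analysis, and $b<f(a)$ as a formal consequence of the definitions. Three ingredients will be used throughout: the $1$-dimensionality of each graded piece $A_u$ over $k$ (so that multiplication maps $A_u\otimes_k A_v\to A_{u+v}$ are either zero or isomorphisms); the vanishing $v_n^N=0$ coming from $H_{A/k}=0$ (since $n\neq 0$ forces $v_n\notin A^*$, and $v_n^N\in A_0=k$ cannot be a unit unless it is nonzero); and the defining relation $v_m^z=\lambda v_n^y$ at degree $z$.

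For existence, I start with any good pair $(a_0,b_0)$ for $l$, which exists because $A$ is generated over $k$ in degrees $m,n$ and $A_l\neq 0$. When $a_0\ge z$, the identity $v_m^{a_0}v_n^{b_0}=\lambda\,v_m^{a_0-z}v_n^{b_0+y}$ lets me replace $(a_0,b_0)$ by $(a_0-z,b_0+y)$: indeed, goodness of $(a_0,b_0)$ gives $v_m^zv_n^{b_0}\ne 0$, hence $v_m^z\ne 0$, and the very definition of $z$ (requiring some literal relation $v_m^z=\lambda v_n^i$) then forces $v_n^y\ne 0$ and $\lambda\in k^*$, so the reduced pair is good too. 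Since $y\ge 0$, iteration preserves $b\ge 0$ and terminates with $0\le a<z$.

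For uniqueness, let $(a,b),(a',b')$ be two good pairs with $0\le a\le a'<z$, and write $v_m^{a'}v_n^{b'}=c\,v_m^av_n^b$ with $c\in k^*$. If $a=a'$, the $M$-relation forces $b\equiv b'\pmod N$, and $v_n^N=0$ then forces $b=b'$. Otherwise set $h=a'-a\in(0,z)$, so $hm=(b-b')n$ in $M$. In the case $b\ge b'$, cancellation in the one-dimensional graded pieces yields a literal equality $v_m^h=c'v_n^{b-b'}$ with $c'\in k^*$, contradicting the minimality of $z$. In the case $b<b'$, the element $v_m^hv_n^{b'-b}$ lies in $A_0=k$; combining $(v_m^hv_n^{b'-b})\,v_m^av_n^b=v_m^{a'}v_n^{b'}=c\,v_m^av_n^b$ with $v_m^av_n^b\neq 0$ gives $v_m^hv_n^{b'-b}=c\in k^*$, so $v_m^h\in A^*$, whence $hm\in H_{A/k}=\{0\}$; but $0<h<z\le o(m)$ rules out $hm=0$ (the bound $z\le o(m)$ follows because $v_m^{o(m)}\in A_0=k$ trivially gives a relation $v_m^{o(m)}=\lambda v_n^0$ at $h=o(m)$).

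The inequality $b<f(a)$ is immediate from $v_m^av_n^b\ne 0$ and the definition of $f(a)$ as the smallest $d$ with $v_m^av_n^d=0$. The main technical obstacle I anticipate is the uniqueness sub-case $b<b'$, where the cover-theoretic hypothesis $H_{A/k}=0$ interacts with the bound $z\le o(m)$; once these are invoked, the remaining arguments reduce to linear algebra in the one-dimensional graded pieces.
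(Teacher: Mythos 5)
Your proof is correct and follows essentially the same route as the paper's: existence by descending $a$ in steps of $z$ via the relation $v_m^{z}=\lambda v_n^{y}$, and uniqueness by cancelling the common good sub-monomial and then invoking either the minimality of $z$ (when $b\ge b'$) or the non-invertibility of $v_m$, $v_n$ forced by $H_{A/k}=0$ (when $b<b'$). The only differences are cosmetic: you treat the case $a=a'$ explicitly and justify the cancellation step directly from the one-dimensionality of the graded pieces rather than by citing the paper's preceding cancellation lemma.
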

\begin{proof}
\emph{Existence. }We know that there exists a good pair $(a,b)$ for
$l$ and we can assume that $a$ is minimum. If $a\geq z$ we can
write $v_{m}^{a}v_{n}^{b}=\lambda v_{m}^{a-z}v_{n}^{b+y}$. Therefore
$\lambda\neq0$ and $(a-z,b+y)$ is a good pair for l, contradicting
the minimality of $a$. Finally $v_{m}^{a}v_{n}^{b}\neq0$ means $b<f(a)$.

\emph{Uniqueness. }Let $(a,b)$, $(a',b')$ be two good pairs for
$l$ and assume $0\leq a<a'<z$. So there exists $\omega\in k^{*}$
such that
\[
v_{m}^{a}v_{n}^{b}=\omega v_{m}^{a'}v_{n}^{b'}\then v_{n}^{b}=\omega v_{m}^{a'-a}v_{n}^{b'}
\]
If $b\geq b'$ then $a'-a\geq z$ by definition of $z$, while if
$b<b'$ then $v_{n}$ is invertible.\end{proof}
\begin{defn}
\label{not:definition of E delta starting from the cover}Given $l\in M$
we will write the associated good pair as $(\E_{l},\delta_{l})$ with
$\E_{l}<z$. $\E,\delta$ will be considered as maps $\Z^{M}/<e_{0}>\arr\Z$
and, if necessary, we will also write $\E^{A},\delta^{A}$.\end{defn}
\begin{notation}
Up to isomorphism, we can change the given basis to 
\[
v_{l}=v_{m}^{\E_{l}}v_{n}^{\delta_{l}}
\]
so that the multiplication $\psi$ is given by
\begin{equation}
v_{a}v_{b}=v_{m}^{\E_{a}+\E_{b}}v_{n}^{\delta_{a}+\delta_{b}}=\psi_{a,b}v_{m}^{\E_{a+b}}v_{n}^{\delta_{a+b}}=\psi_{a,b}v_{a+b}\label{eq:multiplications psi for m,n first}
\end{equation}
\end{notation}
\begin{cor}
$f$ is a decreasing function and
\begin{equation}
f(0)+\cdots+f(z-1)=|M|\label{eq:the sum of f's is |M|}
\end{equation}
\end{cor}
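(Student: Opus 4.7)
The plan is to deduce both statements essentially as formal consequences of Lemma \ref{lem:good pair for A<z}, i.e.\ of the fact that every $l\in M$ has a \emph{unique} good pair $(\mathcal E_l,\delta_l)$ with $0\le \mathcal E_l<z$ and $0\le \delta_l<f(\mathcal E_l)$.

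For monotonicity of $f$, I would argue by contradiction: suppose $0\le c<c'<z$ satisfy $f(c)<f(c')$. By definition of $f(c)$ we have $v_m^{c}v_n^{f(c)}=0$, so multiplying by $v_m^{c'-c}$ gives $v_m^{c'}v_n^{f(c)}=0$. But $f(c)<f(c')$ together with the definition of $f(c')$ forces $v_m^{c'}v_n^{f(c)}\neq 0$, a contradiction. Hence $f(c)\ge f(c')$ whenever $c<c'$.

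For the identity $f(0)+\cdots+f(z-1)=|M|$, consider the set
\[
S=\{(a,b)\in\N^2\mid 0\le a<z,\ 0\le b<f(a)\},
\]
whose cardinality is precisely $\sum_{c=0}^{z-1}f(c)$, together with the map $S\to M$, $(a,b)\mapsto am+bn$. Lemma \ref{lem:good pair for A<z} says exactly that this map is a bijection: the existence part shows surjectivity onto $M$ (since $m,n$ generate $M$, every $l\in M$ admits a good pair, which can be taken in $S$), and the uniqueness part shows injectivity (two good pairs in $S$ representing the same element of $M$ must coincide). Therefore $|S|=|M|$, which is the desired equality.

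I do not expect any serious obstacle here: both assertions are bookkeeping consequences of the preceding lemma, and the only subtle point is remembering that the uniqueness of the good pair in the prescribed range forces $(a,b)\mapsto am+bn$ to be injective on $S$, not merely that each element of $M$ has \emph{some} representative there.
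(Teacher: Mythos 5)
Your proof is correct and follows essentially the same route as the paper: the identity comes from identifying the set $S=\{(a,b)\mid 0\le a<z,\ 0\le b<f(a)\}$ with $M$ via good pairs, which is exactly the paper's counting argument (the paper leaves the monotonicity of $f$ implicit; your direct argument for it is fine). The one step you leave tacit is that every pair $(a,b)\in S$ is actually a good pair — immediate, since $b<f(a)$ means $v_m^a v_n^b\neq 0$ and a nonzero element of the one-dimensional $k$-space $A_{am+bn}$ generates it — and this observation is needed before the uniqueness part of Lemma \ref{lem:good pair for A<z} can be applied to two arbitrary elements of $S$ to conclude injectivity.
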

\begin{proof}
If $(a,b)$ is a pair such that $0\leq a<z$ and $0\leq b<f(a)$ then
$v_{m}^{a}v_{n}^{b}\neq0$, i.e. $(a,b)$ is a good pair for $am+bn$.
So
\[
\sum_{c=0}^{z-1}f(c)=|\{(a,b)\;|\;0\leq a<z,\;0\leq b<f(a)\}|=|M|
\]
\end{proof}
\begin{rem}
\label{rem:some remarks on f}The following pairs are good:
\[
(z-1)m:(z-1,0),\;(x-1)n:(0,x-1),\; zm=yn:(0,y)\comma xn=wm:(w,0)
\]
i.e. $v_{m}^{z-1},v_{n}^{x-1},v_{n}^{y},v_{m}^{w}\neq0$. In particular
$f(0)\geq x,y+1$ and $f(c)>0$ for any $c$. Indeed
\begin{alignat*}{3}
v_{m}^{z-1}=\omega v_{m}^{a}v_{n}^{b} & \then & v_{m}^{z-1-a}=\omega v_{n}^{b} & \then & a=z-1,\; b=0\\
v_{m}^{z}=\omega v_{m}^{a}v_{n}^{b} & \then & v_{m}^{z-a}=\omega v_{n}^{b} & \then & a=0,\; b=y
\end{alignat*}
where $(a,b)$ are good pairs for the given elements and, by symmetry,
we get the result.
\end{rem}

\begin{rem}
\label{rem:lambda is 0 iff mu is 0}If $\lambda\neq0$ or $\mu\neq0$
then $x=y$, $z=w$ and $\lambda\mu=1$. Assume for example $\lambda\neq0$.
If $y=0$ then $v_{m}^{z}=\lambda\neq0$ and so $v_{m}$ is invertible.
So $y>0$ and, since $v_{n}^{y}=\lambda^{-1}v_{m}^{z}$, we also have
$y\geq x$. Now
\[
0\neq v_{m}^{z}=\lambda v_{n}^{y}=\lambda\mu v_{n}^{y-x}v_{m}^{w}
\]
So $\mu\neq0$ and $(y-x,w)$ is a good pair. As before $w\geq z$
and therefore
\[
\lambda\mu v_{n}^{y-x}v_{m}^{w-z}=1\then y=x,\; w=x\text{ and }\lambda\mu=1
\]
\end{rem}
\begin{lem}
\label{lem:computation of E and delta}Let $a,b\in M$.$ $We have:
\begin{itemize}
\item Assume $\E_{a,b}>0$. If $\delta_{a,b}\leq0$ then $\E_{a,b}\geq z\comma\delta_{a,b}\geq-y$.
Moreover $\psi_{a,b}\neq0\iff\lambda\neq0,\E_{a,b}=z,\delta_{a,b}=-y(=-x)$
and in this case $\psi_{a,b}=\lambda$.
\item Assume $\E_{a,b}<0$. Then $\E_{a,b}\geq-w,\delta_{a,b}\geq x$. Moreover
$\psi_{a,b}\neq0\iff\mu\neq0,\E_{a,b}=-w(=-z),\delta_{a,b}=x$ and
in this case $\psi_{a,b}=\mu$.
\item Assume $\E_{a,b}=0$. Then we have $\delta_{a,b}=0$ and $\psi_{a,b}=1$
or $\delta_{a,b}\geq o(n)$ and $\psi_{a,b}=0$.
\end{itemize}
\end{lem}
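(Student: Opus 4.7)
The plan is to substitute $v_l=v_m^{\E_l}v_n^{\delta_l}$ into the multiplication rule $v_av_b=\psi_{a,b}v_{a+b}$, producing
\[ v_m^{\E_a+\E_b}v_n^{\delta_a+\delta_b}=\psi_{a,b}\,v_m^{\E_{a+b}}v_n^{\delta_{a+b}}. \]
Since $(\E_{a+b},\delta_{a+b})$ is a good pair, the right-hand monomial generates $A_{a+b}$, so \ref{lem:fundamental local rings and generators} lets me cancel the common minimum of the $v_m$-exponents and of the $v_n$-exponents on each side. The reduced equation depends only on the signs of $\E_{a,b}$ and $\delta_{a,b}$, and splits the proof into the three cases of the statement.

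For Case $\E_{a,b}=0$, the balancing identity $\E_{a,b}m+\delta_{a,b}n=0$ in $M$ degenerates to $\delta_{a,b}\,n=0$, so $\delta_{a,b}\in o(n)\Z$. Uniqueness of the good pair with first coordinate $<z$ (\ref{lem:good pair for A<z}) then shows that either $(\E_a+\E_b,\delta_a+\delta_b)$ coincides with $(\E_{a+b},\delta_{a+b})$, giving $\delta_{a,b}=0$ and $\psi_{a,b}=1$, or else $v_av_b=0$, in which case $\psi_{a,b}=0$ and the positive integer $\delta_{a,b}$, being a multiple of $o(n)$, satisfies $\delta_{a,b}\geq o(n)$. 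The case $\E_{a,b}<0$ is completely symmetric to the case $\E_{a,b}>0,\ \delta_{a,b}\leq0$ under the involution $m\leftrightarrow n$, which exchanges $(z,y,\lambda)$ with $(x,w,\mu)$, so only the latter case needs to be treated in detail.

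In the mixed-sign case the reduced equation is $v_m^{\E_{a,b}}=\psi_{a,b}v_n^{-\delta_{a,b}}$. The bound $\E_{a,b}\geq z$ is immediate from the definition of $z$ when $\psi_{a,b}\neq0$, and from $f(\E_{a,b})=0$ combined with $f(c)>0$ for $c<z$ (\ref{rem:some remarks on f}) when $\psi_{a,b}=0$. For the companion bound $\delta_{a,b}\geq-y$, I use that $\E_{a,b}m\in\langle n\rangle$ forces $\E_{a,b}\in r\Z$, write $\E_{a,b}=z+kr$, substitute $v_m^z=\lambda v_n^y$, and combine the resulting rewritten equation with the ceiling $v_n^{o(n)}=0$ (which holds because $H_{A/k}=0$ makes $v_n^{o(n)}\in A_0=k$ a non-unit) and with the mod-$o(n)$ congruence $-\delta_{a,b}\equiv(\overline{q}+k)\alpha$ imposed by degree balance. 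Finally, if $\psi_{a,b}\neq0$, good-pair uniqueness applied to the degree $\E_{a,b}m=-\delta_{a,b}n$ pins down $\E_{a,b}=z$ and $-\delta_{a,b}=y$, so the equation becomes $v_m^z=\psi_{a,b}v_n^y$ and comparison with $v_m^z=\lambda v_n^y$ gives $\psi_{a,b}=\lambda\neq0$; the equality $y=x$ is already the content of \ref{rem:lambda is 0 iff mu is 0}. I expect the second inequality in each of Cases 1 and 2 to be the main obstacle, as it is the only step that does not come from a single application of the cancellation lemma and requires combining the ceiling forced by $H_{A/k}=0$ with the modular structure of $M_{r,\alpha,N}$.
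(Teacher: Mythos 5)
Your overall strategy (substitute $v_l=v_m^{\E_l}v_n^{\delta_l}$, cancel common factors via \ref{lem:fundamental local rings and generators}, and split on the signs of $\E_{a,b}$ and $\delta_{a,b}$) is the paper's, and your treatment of the case $\E_{a,b}=0$ is essentially correct. But there are two genuine gaps. First, the case $\E_{a,b}<0$ is \emph{not} obtained from the case $\E_{a,b}>0$, $\delta_{a,b}\leq0$ by the involution $m\leftrightarrow n$: the pair $(\E_l,\delta_l)$ is the good pair normalized by $\E_l<z$, and this normalization is not symmetric in $m$ and $n$. Concretely, when $\lambda\neq0$ (so $z=w$, $x=y$ by \ref{rem:lambda is 0 iff mu is 0}) the element $l=xn=zm$ has $(\E_l,\delta_l)=(0,x)$, while the good pair normalized by requiring the $n$-exponent to be $<x$ is $(z,0)$; so the swapped invariants are not $(\delta_l,\E_l)$, and the differences $\E_{a,b},\delta_{a,b}$ do not simply exchange. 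This failure occurs exactly when $\mu\neq0$, i.e.\ exactly when the conclusion $\psi_{a,b}=\mu$ is at stake. The paper runs a separate, if structurally parallel, argument for $\E_{a,b}<0$, whose key new input is $0\leq-\E_{a,b}\leq\E_{a+b}<z$, making $(-\E_{a,b},0)$ a good pair.

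Second, your route to $\delta_{a,b}\geq-y$ (write $\E_{a,b}=z+kr$, invoke $v_n^{o(n)}=0$ and the congruence $-\delta_{a,b}\equiv(\overline{q}+k)\alpha$ mod $N$) does not close. The congruence together with $0\leq-\delta_{a,b}<N$ only pins $-\delta_{a,b}$ to the value $N-d_{\overline{q}+k}$, and for $q>\overline{q}$ one may well have $d_q<d_{\overline{q}}$ (e.g.\ $N=5$, $\alpha=3$, $\overline{q}=2$: $d_3=1<d_2=4$), so modular arithmetic alone allows $-\delta_{a,b}>y$. Moreover, any argument extracting information from $v_m^{\E_{a,b}}=\psi_{a,b}v_n^{-\delta_{a,b}}$ is empty when $\psi_{a,b}=0$, whereas the bound must hold unconditionally. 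What is actually needed, and what the paper uses, are the a priori range bounds $\E_{a,b}\leq\E_a+\E_b<2z$ and $-\delta_{a,b}\leq\delta_{a+b}<f(\E_{a+b})\leq f(0)$: when $\delta_{a,b}\leq-y$ these make $(\E_{a,b}-z,0)$ and $(0,-\delta_{a,b}-y)$ two good pairs for the same element of $M$, and uniqueness from \ref{lem:good pair for A<z} forces $\E_{a,b}=z$, $\delta_{a,b}=-y$; comparing the cancelled equation with $v_m^z=\lambda v_n^y$ then gives $\psi_{a,b}=\lambda$. You should also record the easy sub-case $\E_{a,b}>0$, $\delta_{a,b}>0$, where $\psi_{a,b}=v_m^{\E_{a,b}}v_n^{\delta_{a,b}}$ is a non-unit of $k$ and hence $0$; this is needed for the ``moreover'' equivalence in the first bullet.
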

\begin{proof}
Set $\psi=\psi_{a,b}$. We start with the case $\E_{a,b}>0$. From
\ref{eq:multiplications psi for m,n first} we get
\[
v_{m}^{\E_{a,b}}v_{n}^{\delta_{a}+\delta_{b}}=\psi v_{n}^{\delta_{a+b}}
\]
If $\delta_{a,b}>0$ then $v_{m}^{\E_{a,b}}v_{n}^{\delta_{a,b}}=\psi$
and so $\psi=0$ since $v_{m}\notin A^{*}$. If $\delta_{a,b}\leq0$
we instead have $v_{m}^{\E_{a,b}}=\psi v_{n}^{-\delta_{a,b}}$ and
so $\E_{a,b}\geq z$. If $-\delta_{a,b}<y$ then $(0,-\delta_{a,b})$
is good. So we can write
\[
v_{m}^{\E_{a,b}-z}\lambda v_{n}^{y+\delta_{a,b}}=\psi\then\psi=0
\]
since $v_{n}$ is not invertible. If $\delta_{a,b}\leq-y$ we have
\[
0\leq\E_{a,b}-z<z,\;0\leq-\delta_{a,b}-y<f(0),\;(\E_{a,b}-z)m=(-\delta_{a,b}-y)n,\; v_{m}^{\E_{a,b}-z}\lambda=\psi v_{n}^{-\delta_{a,b}-y}
\]
and so both $(\E_{a,b}-z,0)$ and $(0,-\delta_{a,b}-y)$ are good
pair for the same element of $M$. Therefore we must have $\E_{a,b}=z$,
$\delta_{a,b}=-y$ and $\psi=\lambda$.

Now assume $\E_{a,b}=0$. If $\delta_{a,b}<0$ then $v_{n}^{-\delta_{a,b}}\psi=1$
which is impossible. So $\delta_{a,b}\geq0$. If $\delta_{a,b}=0$
clearly $\psi=1$. If $\delta_{a,b}>0$ then $v_{n}^{\delta_{a,b}}=\psi$
and so $\psi=0$ and $\delta_{a,b}\geq o(n)$.

Finally assume $\E_{a,b}<0$. From \ref{eq:multiplications psi for m,n first}
we get
\[
v_{n}^{\delta_{a}+\delta_{b}}=\psi v_{m}^{-\E_{a,b}}v_{n}^{\delta_{a+b}}
\]
We must have $\delta_{a,b}>0$ since $v_{m}$ is not invertible. So
$v_{n}^{\delta_{a,b}}=\psi v_{m}^{-\E_{a,b}}$ and $\delta_{a,b}\geq x$,
from which
\[
v_{n}^{\delta_{a,b}-x}\mu v_{m}^{w}=\psi v_{m}^{-\E_{a,b}}
\]
Note that, since $0\leq-\E_{a,b}\leq\E_{a+b}<z$, $(-\E_{a,b},0)$
is a good pair. If $w>-\E_{a,b}$ then $\psi=0$. So assume $w\leq-\E_{a,b}$.
Arguing as above we must have $\delta_{a,b}=x$, $\E_{a,b}=-w$ and
$\psi=\mu$.
\end{proof}

\begin{lem}
\label{lem:From lambda not zero to zero A'}Define
\[
A'=k[s,t]/(s^{z},s^{c}t^{f(c)}\:\text{for}\:0\leq c<z)
\]
Then $A'\in\MCov(k)$ with graduation $\deg s=m$, $\deg t=n$ and
it satisfies the requests of \ref{not:starting from a cover, m.n generate M},
i.e. $A'$ is generated in degrees $m,n$ and $H_{A'/k}=0$. Moreover
we have
\[
\overline{q}_{A'}=\overline{q}_{A}\comma z_{A'}=z_{A},\; y_{A'}=y_{A},\;\E^{A'}=\E^{A},\;\delta^{A'}=\delta^{A},\;\lambda_{A'}=\mu_{A'}=0\comma f_{A'}=f_{A}
\]
\end{lem}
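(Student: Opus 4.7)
The plan is to show that the monomials $\{s^{c}t^{d}\st 0\leq c<z\comma 0\leq d<f(c)\}$ form a $k$-basis of $A'$, and that the map $(c,d)\mapsto cm+dn$ gives a bijection from this set onto $M$; this will make $A'$ an $M$-graded cover with trivial "torsor part" in which all the computed invariants match those of $A$.

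First I would handle the algebra structure of $A'$. The ideal defining $A'$ is monomial, so $A'$ has a $k$-spanning set consisting of those monomials $s^{c}t^{d}$ not divisible by any of the generators $s^{z}$ or $s^{c'}t^{f(c')}$, i.e.\ exactly the set $S=\{s^{c}t^{d}\st 0\leq c<z\comma 0\leq d<f(c)\}$, since $f$ is decreasing by Remark \ref{rem:some remarks on f}. Next I would show the $k$-linear independence by constructing the grading. Consider the map $\gamma\colon S\arr M$, $s^{c}t^{d}\mapsto cm+dn$; I claim it is injective, hence bijective by \eqref{eq:the sum of f's is |M|}. Indeed, if $cm+dn=c'm+d'n$ with $0\leq c\leq c'<z$ then $(c'-c)m=(d-d')n$; the minimality of $z$ in its definition forces $c'-c=0$, whence $dn=d'n$ with $0\leq d,d'<f(c)\leq f(0)$. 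Since $f(0)$ is the smallest positive integer with $f(0)n=0$ (because $(0,f(0))$ would otherwise not be a good pair for $f(0)n=0$; more precisely $f(0)\leq o(n)$ by construction, and any smaller relation would contradict the definition of $f$), we get $d=d'$. Thus the elements of $S$ lie in pairwise distinct graded pieces, so they are $k$-independent and span the corresponding graded piece.

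Step two is to read off that $A'\in\MCov(k)$: every graded piece is one-dimensional and $A'_{0}=k$, so $A'$ is a $\Di M$-cover. Moreover $s^{z}=0$ in $A'$ implies $v_{m}=s\notin{A'}^{*}$, and similarly $t^{f(0)}=0$ with $f(0)\geq 1$ gives $v_{n}=t\notin{A'}^{*}$. Since $A'$ is generated in degrees $m,n$, Corollary \ref{cor:An algebra with H =00003D 0 is generated in the degrees where h=00003D1} (or a direct check using $v_{l}=s^{\E_{l}}t^{\delta_{l}}$) gives $H_{A'/k}=0$.

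Finally I would compare invariants. By definition $v_{m}^{z}=s^{z}=0$ in $A'$; since any smaller power $s^{h}$ with $0<h<z$ equals a nonzero element $s^{h}t^{0}\in S$ and hence cannot be a scalar multiple of any $v_{n}^{i}=t^{i}$ (as these lie in different graded pieces unless $hm=in$, which forces $h\geq z$ by minimality), we conclude $z_{A'}=z$, $y_{A'}=y$, and $\lambda_{A'}=0$. The symmetric argument on $t$ gives $x_{A'}=x$, $w_{A'}=w$ and $\mu_{A'}=0$; consequently $\overline{q}_{A'}=z/r=\overline{q}_{A}$. For $l\in M$, the unique good pair $(c,d)$ with $0\leq c<z$ given by Lemma \ref{lem:good pair for A<z} is now the element of $S$ landing on $l$ under $\gamma$, which coincides with the pair $(\E_{l},\delta_{l})$ of Definition \ref{not:definition of E delta starting from the cover}; hence $\E^{A'}=\E^{A}$ and $\delta^{A'}=\delta^{A}$. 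Lastly $f_{A'}(c)=\min\{d\st s^{c}t^{d}=0\text{ in }A'\}=f(c)$ by construction of the defining ideal.

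The only genuine obstacle is the injectivity of $\gamma$ on $S$, which relies on carefully using the minimality property defining $z$ together with the fact that $f(c)\leq f(0)\leq o(n)$; the rest is bookkeeping.
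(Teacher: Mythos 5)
Your overall strategy (standard monomials of the monomial ideal form a basis, the degree map puts them in bijection with $M$, then read off the invariants) is the same as the paper's, but the step on which everything hinges is justified incorrectly. You claim that $cm+dn=c'm+d'n$ with $0\leq c\leq c'<z$ gives $(c'-c)m=(d-d')n$ and that ``the minimality of $z$ in its definition forces $c'-c=0$''. This is a non sequitur: $z$ is defined by the \emph{algebra} condition $v_m^h=\lambda v_n^i$ in $A$, not by the group-theoretic condition $hm=in$ alone. Whenever $\overline{q}>1$ there are many $0<h<z$ with $hm\in<n>$ (e.g.\ $h=r$, since $rm=\alpha n$ and $z=\overline{q}r$), so the group relation by itself gives you nothing. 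The same misreading reappears when you argue $z_{A'}=z$ (``$hm=in$ forces $h\geq z$ by minimality''). The correct way to get injectivity of $(c,d)\mapsto cm+dn$ on $S$ is to use what is already proved for $A$: by the corollary to Lemma \ref{lem:good pair for A<z}, every pair $(c,d)$ with $0\leq c<z$, $0\leq d<f(c)$ is a good pair for $cm+dn$, and by the uniqueness part of that lemma each element of $M$ has only one such pair; this is exactly what the paper means by ``they all have different degrees''. (Your auxiliary claim that ``$f(0)$ is the smallest positive integer with $f(0)n=0$'' is also false as stated --- $f(0)=x$ and $xn=wm$ need not vanish; what you actually need is $f(0)\leq o(n)$.)

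Two smaller points. First, where you deduce linear independence from ``distinct graded pieces'', what really does the work is the standard fact that the monomials outside a monomial ideal form a $k$-basis of the quotient; the paper instead proves non-vanishing of each $s^{c'}t^{d'}$ explicitly by showing $k[s,t]/(s^{c'+1},t^{d'+1})\simeq A'/(s^{c'+1},t^{d'+1})$. Either route is fine once distinctness of degrees is in place. Second, your ``symmetric argument'' asserting $x_{A'}=x_A$ and $w_{A'}=w_A$ is not part of the statement and is actually false in general when $\lambda_A\neq 0$ (then $x_A=y_A$ by Remark \ref{rem:lambda is 0 iff mu is 0}, whereas $x_{A'}=x_{\overline{q}}$ by Proposition \ref{pro:from an algebra to overline q}); you should only claim, as the paper does, that $s^z=0$ forces $\lambda_{A'}=0$ and then $\mu_{A'}=0$ by Remark \ref{rem:lambda is 0 iff mu is 0}. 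With the injectivity step repaired via Lemma \ref{lem:good pair for A<z}, the rest of your bookkeeping goes through.
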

\begin{proof}
Clearly the elements $s^{c}t^{d}$ for $0\leq c<z$, $0\leq d<f(c)$
generates $A'$ as a $k$-space. Since they are $\sum_{c=0}^{z-1}f(c)=|M|$
and they all have different degrees, it's enough to prove that any
of them are non-zero. So let $(c',d')$ a pair as always. It is enough
to show that $B=k[s,t]/(s^{c'+1},t^{d'+1})\arr A'/(s^{c'+1},t^{d'+1})$
is an isomorphism. But $c'<z$ implies that $s^{z}=0$ in $B$. If
$c'<c$ then $s^{c}t^{f(c)}=0$ in $B$ and finally if $c'\geq c$
then $d'+1\leq f(c')\leq f(c)$ and so $s^{c}t^{f(c)}=0$ in B.

$A'$ is clearly generated in degrees $m,n$ and $H_{A'/k}=0$ since
$s^{z}=t^{f(0)}=0$ and $z,f(0)>0$. $s^{z}=0t^{y}$ implies that
$z'=z_{A'}\leq z$. Assume by contradiction $z'<z$. From $0\neq s^{z'}=\lambda't^{y'}$
we know that $t^{y'}\neq0$ so that $y'<f(0)$. Therefore $(\E_{z'm},\delta_{z'm})=(z',0)=(0,y')$
and so $z'=0$, which is a contradiction. Then $z'=z$, $y_{A'}=y'=y$.
Also $s^{z}=0t^{y}$ and $t^{y}\neq0$ imply $\lambda_{A'}=0$ and,
thanks to \ref{rem:lambda is 0 iff mu is 0}, $\mu_{A'}=0$. Finally
by construction we also have $\E^{A'}=\E$, $\delta^{A'}=\delta$
and $f_{A'}=f$.\end{proof}
\begin{lem}
\label{lem:overline q from a cover}We have
\[
{\displaystyle d_{\overline{q}}=\max_{1\leq q\leq\overline{q}}d_{q}}
\]
\end{lem}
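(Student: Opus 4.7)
The plan is to establish the sharper statement that $d_q < d_{\overline{q}}$ whenever $1 \leq q < \overline{q}$ (strictly), which together with the trivial equality at $q = \overline{q}$ yields the asserted identity for the maximum (and incidentally shows $\overline{q}_A \in \Omega_{N-\alpha,N}$). The argument rests on two elementary observations about the local graded algebra $A$: a vanishing coming from the fact that $A_0 = k$ together with $H_{A/k} = 0$, and a non-vanishing coming from the definition of $z$.

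First I would note that $v_m^{qr} v_n^{d_q}$ has $M$-degree $qrm + d_q n = q\alpha n + d_q n$, and by the definition of $d_q$ we have $q\alpha + d_q \equiv 0 \pmod N$, so this element lies in $A_0 = k$. Because $q \geq 1$ and $H_{A/k} = 0$ forces $v_m \in m_A$, the product sits in $m_A \cap k = 0$, giving $v_m^{qr} v_n^{d_q} = 0$, i.e.\ $f(qr) \leq d_q$.

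Next I would exploit the minimality of $z$: for $1 \leq q < \overline{q}$ one has $qr < z$, so by definition there is no $\mu \in k$ and $i \in \N$ satisfying both $qrm = in$ and $v_m^{qr} = \mu v_n^i$. If $d_q = N$ then $qrm = 0$ and one could take $i = 0$, $\mu = v_m^{qr} \in A_0 = k$, contradicting the above; hence $d_q < N$ for such $q$, and the minimal admissible $i$ is $N - d_q > 0$. The nonexistence of $\mu$ for this $i$ (and hence all larger ones) then forces simultaneously $v_m^{qr} \neq 0$ and $v_n^{N - d_q} = 0$; the latter says $N - d_q \geq f(0)$, i.e.\ $d_q \leq N - f(0)$.

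Finally I compare with $y = d_{\overline{q}}$ (or $y = 0$ when $d_{\overline{q}} = N$). Since $(0,y)$ is the good pair for $zm = yn$, the element $v_n^y$ generates the $1$-dimensional piece $A_{yn}$, so $v_n^y \neq 0$ and therefore $y < f(0)$. Combining, $d_q \leq N - f(0) \leq N - y - 1$. When $y \geq 1$ one has $d_{\overline{q}} = N - y$, so $d_q \leq N - y - 1 < d_{\overline{q}}$; when $y = 0$ (i.e.\ $\overline{q} = o(\alpha,\Z/N\Z)$), the inequality $d_q \leq N = d_{\overline{q}}$ is trivial. The only step requiring care is the case analysis forced by the asymmetric conventions $d_q \in (0,N]$ versus $y \in [0,N)$, together with the edge case $d_{\overline{q}} = N$, but the relation $y + d_{\overline{q}} \equiv 0 \pmod N$ makes this essentially automatic.
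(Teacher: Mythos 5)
Your proof is correct and is essentially the paper's argument: both reduce the statement to the chain $N-d_{q}\geq f(0)>y=N-d_{\overline{q}}$ for $1\leq q<\overline{q}$, together with the identity $d_{\overline{q}}=N-y$. You obtain the first inequality directly from the minimality of $z$ (showing $v_{m}^{qr}\neq0$ and $v_{n}^{N-d_{q}}=0$) and the second from $v_{n}^{y}\neq0$ as recorded in \ref{rem:some remarks on f}, whereas the paper first reduces to $\lambda=0$ via \ref{lem:From lambda not zero to zero A'} so as to identify $f(0)$ with $x$ and then invokes the uniqueness of good pairs from \ref{lem:good pair for A<z}; your route bypasses that reduction, but the underlying mechanism is the same.
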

\begin{proof}
Thanks to \ref{lem:From lambda not zero to zero A'} we can assume
$\lambda=0$ and, therefore, $\mu=0$. So $v_{n}^{x}=0$, $v_{n}^{x-1}\neq0$
and $v_{n}^{y}\neq0$ imply $y<x=f(0)$. Let $1\leq q<\overline{q}$
and $l=qr$. We have $(\E_{l},\delta_{l})=(qr,0)$. If $N-d_{q}<x=f(0)$
then we will also have $(\E_{l},\delta_{l})=(0,N-d_{q})$ and so $q=0$,
which is not the case. So $N-d_{q}\geq x>y=N-d_{\overline{q}}\then d_{q}<d_{\overline{q}}$.\end{proof}
\begin{lem}
\label{lem:From a cover we get overline q}Define $\hat{q}$ as the
only integers $0\leq\hat{q}<\overline{q}$ such that
\[
d_{\hat{q}}=\min_{0\leq q<\overline{q}}d_{q}
\]
If $\lambda=0$ we have $d_{\hat{q}}\leq x=f(0)\text{ and }f(c)=\left\{ \begin{array}{cc}
x & \text{if }0\leq c<\hat{q}r\\
d_{\hat{q}} & \text{if }\hat{q}r\leq c<z
\end{array}\right.$\end{lem}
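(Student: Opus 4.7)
The plan is to reduce the problem to a combinatorial partition in $\mathbb{Z}/N$ by combining universal upper bounds on $f$, a per-column sum identity, and a translation of good pairs into arcs.

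First I would establish the bounds. Since $\lambda=0$, Remark~\ref{rem:lambda is 0 iff mu is 0} forces $\mu=0$, so $v_n^x=0$ and hence $f(c)\le x$ for every $c\in[0,z)$. For each $q\in[1,\overline q)$ the pair $(qr,d_q)$ represents $0\in M$ by the defining relation $qrm+d_qn=0$, while the unique good pair for $0$ (Lemma~\ref{lem:good pair for A<z}) is $(0,0)$ because $v_0=1\neq0$; therefore $v_m^{qr}v_n^{d_q}=0$ and $f(qr)\le d_q$. The function $f$ is non-increasing since $v_m^cv_n^d=0$ implies $v_m^{c+1}v_n^d=0$. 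The degenerate case $\hat q=0$ is disposed of directly: uniqueness of $\hat q$ together with $d_0=N$ forces $\overline q=1$, hence $z=r$, and the column-sum identity below gives $f\equiv N=x=d_{\hat q}$. So assume henceforth $\hat q\ge1$; then $f(\hat q r)\le d_{\hat q}$ and monotonicity yield $f(c)\le d_{\hat q}$ for $c\in[\hat q r,z)$.

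Next I would derive the per-column identity. By Lemma~\ref{lem:good pair for A<z} the good pairs in $[0,z)\times[0,\infty)$ biject with $M$, and the good pair of $l\in M$ has first coordinate congruent to $c_l\bmod r$, where $(c_l,d_l)\in[0,r)\times[0,N)$ is the canonical representative of $l$. Since for each $c_0\in[0,r)$ there are exactly $N$ elements $l$ with $c_l=c_0$, this gives
\[
\sum_{q=0}^{\overline q-1}f(c_0+qr)=N.
\]
Tracking how heights translate into canonical $d_l$-values shows that the height $d\in[0,f(c_0+qr))$ in column $c_0+qr$ corresponds to $d_l=(d+q\alpha)\bmod N$, so column $c_0+qr$ contributes precisely the arc $A_q:=[q\alpha,\,q\alpha+f(c_0+qr))\pmod N$ of $\mathbb{Z}/N$, and the arcs $\{A_q\}_{q\in[0,\overline q)}$ partition $\mathbb{Z}/N$.

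The final step---which I expect to be the main obstacle---is the combinatorial determination of the arc sizes from the prescribed starts $\{q\alpha\bmod N\}$, total length $N$, non-increasing-in-$q$ sizes, and the bounds $f(c_0+qr)\le\min(x,d_q)$. The key observation is that the wrap-around arc (the one terminating at $N\equiv0$) has size equal to the smallest cyclic distance back to $0$ among all starts, which is exactly $d_{\hat q}=\min_q d_q$ by the definition of $\hat q$; the start realizing this minimum is $q=\hat q$, so the wrap-around arc is $A_{\hat q}$. Monotonicity then forces $f(c_0+qr)\le d_{\hat q}$ for every $q\ge\hat q$, and, using the bound $q\alpha\bmod N\ge x$ for $q\in[1,\overline q)$ (i.e.\ $d_q\le N-x$, as established in the proof of Lemma~\ref{lem:overline q from a cover}), one checks that the remaining arcs to the ``left'' of $A_{\hat q}$ cannot be larger than $x$ and must saturate both these bounds for the column sum $\sum_q f(c_0+qr)=N$ to hold; this pins $f(c_0+qr)=x$ for $q<\hat q$ and $f(c_0+qr)=d_{\hat q}$ for $q\ge\hat q$, independently of $c_0$. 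In particular $d_{\hat q}=f(\hat q r)\le f(0)=x$ by monotonicity, completing the argument.
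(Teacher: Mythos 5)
Your first two steps are sound, and the per-column refinement $\sum_{q=0}^{\overline q-1}f(c_0+qr)=N$ together with the arc picture in $\Z/N\Z$ is a genuinely nice observation (the paper only uses the coarser identity $\sum_c f(c)=|M|$ from \ref{eq:the sum of f's is |M|}). Your identification of the terminal arc is also correct: since each arc $[q\alpha,\,q\alpha+f(c_0+qr))$ has length $f(c_0+qr)\le f(qr)\le d_q$, no arc crosses $0$, so the arc containing $N-1$ must end at $N$ and must start at the largest start; this gives $f(c_0+\hat qr)=d_{\hat q}$ and hence $d_{\hat q}\le f(0)=x$.

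The gap is in your final sentence. From $f(c_0+qr)\le x$ for $q<\hat q$, $f(c_0+qr)\le d_{\hat q}$ for $q\ge\hat q$, and $\sum_qf(c_0+qr)=N$, you may conclude that all the bounds are saturated only if you already know that the bounds sum to at most $N$, i.e.\ that $\hat q\,x+(\overline q-\hat q)\,d_{\hat q}\le N$. This inequality (in fact an equality) is precisely the nontrivial arithmetic content of Lemma \ref{lem:fundamental for the case m,n}, which the paper proves by a separate double induction on $\beta$ and $q$, and which you nowhere establish; the phrase ``one checks'' conceals the entire difficulty. The paper's own proof avoids any counting of this kind: it proves the matching \emph{lower} bound $f(c)\ge\min(x,\,d_q\ \text{for}\ 0\le qr\le c)$ directly, by writing $v_m^cv_n^{f(c)}=0\cdot v_m^av_n^b$ with $(a,b)$ the good pair of $cm+f(c)n$ and cancelling via Lemma \ref{lem:fundamental local rings and generators}, and then identifies $\hat q$ by analysing the unique good pair of $-n$. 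Your arc picture could in principle be completed without the identity: since the arcs are non-wrapping intervals partitioning $[0,N)$ and all have positive length, each must extend exactly to the next start, so every $f(c_0+qr)$ is forced to equal a gap of the set $\{q\alpha\bmod N\}_{0\le q<\overline q}$; but the determination of those gaps is again essentially the induction of Lemma \ref{lem:fundamental for the case m,n} (a three-distance-type analysis), and you have not carried it out.
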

\begin{proof}
We want first prove that $f(c)=\min(x,d_{q}\;\text{for}\;0\leq qr\leq c)$.
Clearly we have the inequality $\leq$ since $v_{n}^{x}=v_{m}^{qr}v_{n}^{d_{q}}=0$.
Set $d=f(c)$ and let $(a,b)$ a good pair for $cm+dn$, so that $v_{m}^{c}v_{n}^{d}=0v_{m}^{a}v_{n}^{b}$.
We cannot have $b\geq d$ since otherwise $v_{m}^{c}=0$ implies $c\geq z$.
If $a\geq c$ then $v_{n}^{d}=0$ and so $d=f(c)\geq x$. Conversely
if $a<c$ then $0\leq c-a=qr\leq c<z$ and $0<d-b=d_{q}\leq d=f(c)$.

We are now ready to prove the writing of $f$. Note that the pairs
$(qr,d_{q}-1)$, with $0\leq q<\overline{q}$, are all the possible
pairs for $-n$. So there exists a unique $0\leq\tilde{q}<\overline{q}$
such that $(\tilde{q}r,d_{\tilde{q}}-1)$ is good. In particular if
$0\leq q\neq\tilde{q}<\overline{q}$ we have a writing 
\[
v_{m}^{qr}v_{n}^{d_{q}-1}=0v_{m}^{\tilde{q}}v_{n}^{d_{\tilde{q}}-1}\then\left\{ \begin{array}{ccccc}
q<\tilde{q} & \then & v_{n}^{d_{q}-1}=0 & \then & d_{q}\geq x\\
q>\tilde{q} & \then & d_{q}>d_{\tilde{q}}
\end{array}\right.
\]
Since $v_{n}^{d_{\tilde{q}}-1}\neq0$ we must have $d_{\hat{q}}\leq x$.
This shows that $\tilde{q}=\hat{q}$ and the writing of $f$. Finally
If $\overline{q}>1$ then $\hat{q}>0$ and so $d_{\hat{q}}\leq x=f(0)$
since $f$ is a decreasing function. If $\overline{q}=1$ then $\hat{q}=0$
and so $N=d_{\hat{q}}=f(0)\leq x\leq N$.\end{proof}
\begin{defn}
We will continue to use notation from \ref{lem:From a cover we get overline q}
for $\hat{q}$ and we will also write $\hat{q}_{A}$ if necessary.
\end{defn}

\subsection{\label{sub:the invariant overlineq}The invariant $\overline{q}$.}
\begin{lem}
\label{lem:fundamental for the case m,n}Let $\beta,N\in\N$, with
$N>1$, and define $d_{q}^{\beta}=d_{q}$, for $q\in\Z$, the only
integer $0<d_{q}\leq N$ such that $d_{q}\equiv q\beta$ mod $N$.
Set
\[
\Omega_{\beta,N}=\{0<q\leq o(\beta,\Z/N\Z)=N/(N,\beta)\;|\; d_{q'}<d_{q}\text{ for any }0<q'<q\},
\]
set $q_{n}$ for the $n$-th element of it and denote by $0\leq\hat{q}<q_{n}$
the only number such that 
\[
d_{\hat{q}}=\min_{0\leq q<q_{n}}d_{q}
\]
Then we have relations $\hat{q}N+q_{n}d_{\hat{q}}-\hat{q}d_{q_{n}}=N$
and, if $n>1$, $q_{n}=q_{n-1}+\hat{q}$, $d_{q_{n}}=d_{q_{n-1}}+d_{\hat{q}}$
and $d_{q_{n-1}}+d_{q}>N$ for $q<\hat{q}$.\end{lem}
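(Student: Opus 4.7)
My plan is to prove the lemma by induction on $n$, based throughout on the additivity rule
\[
d_{q+q'} \;=\; d_q + d_{q'} - N\cdot\mathbf{1}\{d_q+d_{q'}>N\},
\]
which follows from $d_q\equiv q\beta\pmod{N}$ and $d_q\in\{1,\dots,N\}$. The base case $n=1$ is immediate: $q_1=1$, $\hat q=0$, and the identity reads $0\cdot N+1\cdot N-0=N$.

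For $n\geq 2$, the core is to identify $\hat q$ with $r:=q_n-q_{n-1}$; once this is done, the recurrences $q_n=q_{n-1}+\hat q$ and $d_{q_n}=d_{q_{n-1}}+d_{\hat q}$ are immediate (the latter from additivity, since the alternative $d_{q_{n-1}}+d_r=d_{q_n}+N$ would force $d_r>N$), and the inequality $d_{q_{n-1}}+d_q>N$ for $0<q<\hat q$ follows because the opposite inequality would, via additivity, produce a new record $q_{n-1}+q$ strictly between $q_{n-1}$ and $q_n$, contradicting the fact that $q_n$ is the next record after $q_{n-1}$. To prove $\hat q=r$, I would verify that $d_r$ uniquely minimizes $d$ on $[0,q_n)$. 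On $[0,r)$, the strict inequality $d_q>d_r$ for $0<q<r$ follows from the same additivity argument just given; on the range $(q_{n-1},q_n)$, writing $q=q_{n-1}+q'$ with $q'\in(0,r)$ and using that $q$ is not a record forces $d_q=d_{q_{n-1}}+d_{q'}-N$, from which $d_q>d_r$ follows. The remaining interval $(r,q_{n-1}]$ (which is nonempty only when $r<q_{n-1}$) is handled by an inductive comparison with the step-$(n-1)$ minimizer $\hat q_{n-1}=q_{n-1}-q_{n-2}$, using the inductive hypothesis that $d$ attains its minimum on $[0,q_{n-1})$ uniquely at $\hat q_{n-1}$.

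Once $\hat q=q_n-q_{n-1}$ and the two recurrences are proved, the first identity $\hat q N+q_nd_{\hat q}-\hat q d_{q_n}=N$ is best seen via the rank-$2$ lattice $L:=\{(a,b)\in\Z^2\mid b\equiv a\beta\pmod{N}\}$, of index $N$ in $\Z^2$. Setting $v_n:=(q_n,d_{q_n}-N)$ and $w_n:=(\hat q,d_{\hat q})$, both lie in $L$ and the identity is precisely $\det(v_n\mid w_n)=N$; since $w_n=v_n-v_{n-1}$ this reduces to $\det(v_{n-1}\mid v_n)=N$, which is stable under the continued-fraction-type recurrence $v_{n+1}=a_nv_n-v_{n-1}$ (with $a_n$ a positive integer) encoded in the record recurrences, so it propagates from the base case $\det(v_0\mid v_1)=N$. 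The main obstacle will be the case analysis for ``$\hat q=r$'' when $r<q_{n-1}$: relating $\hat q_n$ to $\hat q_{n-1}$ requires a delicate inductive bookkeeping of the gap sequence $(q_n-q_{n-1})_n$, essentially a reformulation of the classical theory of best one-sided approximations to $\beta/N$.
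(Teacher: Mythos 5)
Your overall architecture (establish the $n>1$ recurrences first, then deduce the identity) matches the paper's, and several pieces are sound: the base case, the deduction of $d_{q_n}=d_{q_{n-1}}+d_{\hat q}$ from $\hat q=q_n-q_{n-1}$, the inequality $d_{q_{n-1}}+d_q>N$ for $q<\hat q$, the strict minimality of $d_r$ on $(0,r)$, and the translation of the first identity into $\det(v_n\mid w_n)=N$. But two steps do not go through as written. First, in the identification $\hat q=r$: on $(q_{n-1},q_n)$ the formula $d_q=d_{q_{n-1}}+d_{q'}-N$ yields $d_q>d_r$ only if $d_{q'}>d_r+(N-d_{q_{n-1}})$, which is strictly stronger than the bound $d_{q'}>d_r$ you obtained on $[0,r)$ (and fails with equality at $q'=r-q_{n-1}$ when $r>q_{n-1}$, since then $q=r$ itself lies in this interval); and on $(r,q_{n-1}]$ your "inductive comparison" needs $d_r\le d_{\hat q_{n-1}}$, which by the inductive minimality of $\hat q_{n-1}$ on $[0,q_{n-1})$ forces $r=\hat q_{n-1}$ --- i.e.\ that the gap $q_n-q_{n-1}$ repeats the previous gap whenever it is smaller than $q_{n-1}$ --- and that is precisely the content you have not supplied. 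The paper sidesteps this whole case analysis with a short contradiction: if $\hat q\neq r$, put $q'=q_n-\hat q$; then $d_{q_n}=d_{q'}+d_{\hat q}$ and $d_{\hat q}<d_r=d_{q_n}-d_{q_{n-1}}$ give $d_{q'}>d_{q_{n-1}}$ with $q_{n-1}<q'<q_n$, producing a forbidden record strictly between $q_{n-1}$ and $q_n$. I would adopt that argument.

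Second, the determinant propagation is circular as stated. Since $v_{n-1},v_n$ span a finite-index sublattice containing $v_{n+1}$, writing $v_{n+1}=av_n+bv_{n-1}$ the claim $\det(v_n\mid v_{n+1})=N$ is literally equivalent to $b=-1$; the record recurrences you have proved only give $v_{n+1}=v_n+w_{n+1}$ and say nothing about how $w_{n+1}=(\hat q_{n+1},d_{\hat q_{n+1}})$ sits relative to $w_n$ and $v_n$. To get $w_{n+1}-w_n\in\Z v_n$ you need the dual fact that a new minimum record arises from the previous one by repeatedly adding the current maximum record --- which is your own $n>1$ recurrence applied with $N-\beta$ in place of $\beta$ (under $d_q\mapsto N-d_q$ maxima and minima swap). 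This is exactly the $\beta\leftrightarrow N-\beta$ duality on which the paper's proof of the first identity rests: it runs a double induction on $q$ over all residues $\beta$ simultaneously, reducing the case $\hat q_n>q_{n-1}$ to the dual datum $(N-\beta)$ at the smaller index $\hat q_n$. Without setting up that duality (or an equivalent analysis of the minimum-record sequence), your "continued-fraction-type recurrence $v_{n+1}=a_nv_n-v_{n-1}$" is assumed rather than proved, so the induction does not close.
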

\begin{proof}
First of all note that all is defined also in the extremal case $\beta=0$.
In this case $\Omega_{\beta,N}=\{1\}$. Assume first $n>1$. Set $\tilde{q}=q_{n}-q_{n-1}$
so that $d_{q_{n}}=d_{q_{n-1}}+d_{\tilde{q}}$ since $d_{q_{n}}>d_{q_{n-1}}$.
Assume by contradiction that $\tilde{q}\neq\hat{q}$. Since $\tilde{q}<q_{n}$
we have $d_{\hat{q}}<d_{\tilde{q}}$. Let also $q'=q_{n}-\hat{q}$
and, as above, we can write $d_{q_{n}}=d_{q'}+d_{\hat{q}}$. Now
\[
d_{q_{n}}-d_{q'}=d_{\hat{q}}<d_{\tilde{q}}=d_{q_{n}}-d_{q_{n-1}}\then d_{q_{n-1}}<d_{q'}
\]
Since $q_{n-1}\in\Omega_{\beta,N}$ we must have $q'>q_{n-1}$, which
is a contradiction because otherwise, being $q'<q_{n}$, we must have
$q'=q_{n}$. So $\tilde{q}=\hat{q}$. For the last relation note that,
since $q_{n}$ is the first $q>q_{n-1}$ such that $d_{q}>d_{q_{n-1}}$,
then $\hat{q}$ is the first such that $d_{q_{n-1}}+d_{\hat{q}}\leq N$.

Now consider the first relation. We need to do induction on all the
$\beta$. So we will write $d_{q}^{\beta}$ and $q_{n}^{\beta}$ in
order to remember that those numbers depend on to $\beta$. The induction
statement on $1\leq q<N$ is: for any $0\leq\beta<N$ and for any
$n$ such that $q_{n}^{\beta}\leq q$ the required formula holds.
The base step is $q=1$. In this case we have $n=1$, $q_{1}=1$,
$\hat{q}=0$, $d_{0}=N$ and the formula can be proven directly. For
the induction step we can assume $q>1$ and $n>1$. We will write
$\hat{q}_{n}^{\beta}$ for the $\hat{q}$ associated to $n$ and $\beta$.
First of all note that, by the relations proved above, we can write
\[
\hat{q}_{n}^{\beta}N+q_{n}^{\beta}d_{\hat{q}_{n}^{\beta}}^{\beta}-\hat{q}_{n}^{\beta}d_{q_{n}^{\beta}}^{\beta}=\hat{q}_{n}^{\beta}N+q_{n-1}^{\beta}d_{\hat{q}_{n}^{\beta}}^{\beta}-\hat{q}_{n}^{\beta}d_{q_{n-1}^{\beta}}^{\beta}
\]
and so we have to prove that the second member equals $N$. If $\hat{q}_{n}^{\beta}\leq q_{n-1}^{\beta}$
then $\hat{q}_{n-1}^{\beta}=\hat{q}_{n}^{\beta}$ and the formula
is true by induction on $q-1\geq q_{n-1}^{\beta}$. So assume $\hat{q}_{n}^{\beta}>q_{n-1}^{\beta}$
and set $\alpha=N-\beta$. Clearly we will have
\[
o=o(\alpha,\Z/N\Z)=o(\beta,\Z/N\Z)\text{ and }d_{q}^{\beta}+d_{q}^{\alpha}=N\text{ for any }0<q<o
\]
Moreover
\[
d_{\hat{q}_{n}^{\beta}}^{\beta}<d_{q}^{\beta}\text{ for any }0<q<q_{n}^{\beta}\then d_{\hat{q}_{n}^{\beta}}^{\alpha}>d_{q}^{\alpha}\text{ for any }0<q<\hat{q}_{n}^{\beta}\then\exists l\text{ s.t. }q_{l}^{\alpha}=\hat{q}_{n}^{\beta}
\]
and
\[
d_{q_{n-1}^{\beta}}^{\beta}\geq d_{q}^{\beta}\text{ for any }0<q<q_{n}^{\beta}\then d_{q_{n-1}^{\beta}}^{\alpha}\leq d_{q}^{\alpha}\text{ for any }0\leq q<q_{l}^{\alpha}=\hat{q}_{n}^{\beta}\then\hat{q}_{l}^{\alpha}=q_{n-1}^{\beta}
\]
Using induction on $q_{l}^{\alpha}=\hat{q}_{n}^{\beta}<q_{n}^{\beta}\leq q$
we can finally write
\begin{alignat*}{1}
N= & \hat{q}_{l}^{\alpha}N+q_{l}^{\alpha}d_{\hat{q}_{l}^{\alpha}}^{\alpha}-\hat{q}_{l}^{\alpha}d_{q_{l}^{\alpha}}^{\alpha}=q_{n-1}^{\beta}N+\hat{q}_{n}^{\beta}d_{q_{n-1}^{\beta}}^{\alpha}-q_{n-1}^{\beta}d_{\hat{q}_{n}^{\beta}}^{\alpha}\\
= & q_{n-1}^{\beta}N+\hat{q}_{n}^{\beta}(N-d_{q_{n-1}^{\beta}}^{\beta})-q_{n-1}^{\beta}(N-d_{\hat{q}_{n}^{\beta}}^{\beta})=\hat{q}_{n}^{\beta}N+q_{n-1}^{\beta}d_{\hat{q}_{n}^{\beta}}^{\beta}-\hat{q}_{n}^{\beta}d_{q_{n-1}^{\beta}}^{\beta}
\end{alignat*}

\end{proof}
We continue to keep notation from \ref{not:for alpha,N,r e M}. With
$d_{q}$ we will always mean $d_{q}^{N-\alpha}$ as in \ref{lem:fundamental for the case m,n}.
Lemma \ref{lem:overline q from a cover} can be restated as:
\begin{prop}
\label{pro:overlineqA is in omeganminusalpha,n}Let $A$ be an algebra
as in \ref{not:starting from a cover, m.n generate M}. Then $\overline{q}_{A}\in\Omega_{N-\alpha,N}$.
\end{prop}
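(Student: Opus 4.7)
The plan is to verify the three defining requirements of $\Omega_{N-\alpha,N}$ for $\overline{q}_A$: strict positivity, the upper bound $\overline{q}_A\leq N/(N,\alpha)$, and the strict maximality $d_{q'}<d_{\overline{q}_A}$ for $0<q'<\overline{q}_A$. Positivity is immediate since $z_A>0$ and $r>0$.

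For the upper bound, I would set $o=N/(N,\alpha)=o(\alpha,\Z/N\Z)$, so that $o\alpha\equiv0\text{ mod }N$. In $M=M_{r,\alpha,N}$ this forces $(or)m=o\alpha n=0$, so $v_m^{or}$ lies in the degree $0$ part $A_0=k$. Writing $v_m^{or}=\mu=\mu v_n^{0}$ for some $\mu\in k$ realizes $or$ as a value of $h$ in the set defining $z_A$, giving $z_A\leq or$ and hence $\overline{q}_A\leq o$.

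For the strict inequality, which is the main point, Lemma \ref{lem:overline q from a cover} already supplies the weak bound $d_{q'}\leq d_{\overline{q}_A}$ for all $0<q'\leq\overline{q}_A$. To upgrade to strict inequality when $q'<\overline{q}_A$, I would suppose $d_{q'}=d_{\overline{q}_A}$ and derive a contradiction. Since $d_q\equiv-q\alpha\text{ mod }N$, the assumed equality gives $(\overline{q}_A-q')\alpha\equiv0\text{ mod }N$. Setting $q''=\overline{q}_A-q'$, one has $0<q''<\overline{q}_A$ and $(q''r)m=q''\alpha n=0$ in $M$, so $v_m^{q''r}\in A_0=k$ can be written as $\mu v_n^{0}$ for some $\mu\in k$. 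This contradicts the minimality clause defining $z_A=\overline{q}_A r$. No serious obstacle is expected: once Lemma \ref{lem:overline q from a cover} is in hand, the proof reduces to bookkeeping with the congruence $d_q\equiv-q\alpha\text{ mod }N$ together with the minimality definition of $z_A$, using crucially that the definition of $z_A$ permits the coefficient $\lambda$ to vanish, so $\mu=0$ is admissible in both steps.
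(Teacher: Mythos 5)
Your proof is correct. The paper gives no separate argument for this proposition: it presents it as a ``restatement'' of Lemma \ref{lem:overline q from a cover}, whose statement only records $d_{\overline{q}}=\max_{1\leq q\leq\overline{q}}d_{q}$, so strictly speaking two things are left to the reader --- the bound $\overline{q}_{A}\leq N/(N,\alpha)$ (which also guarantees that the set defining $z_{A}$ is nonempty) and the promotion of the weak inequality to the strict one required in the definition of $\Omega_{N-\alpha,N}$. You supply both. Your upper bound via $v_{m}^{o(m)}\in A_{0}=k$ is the natural argument. For strictness your route genuinely differs from the paper's: inside the proof of Lemma \ref{lem:overline q from a cover} strictness is in fact established directly, by first reducing to $\lambda=0$ via Lemma \ref{lem:From lambda not zero to zero A'} and then using the uniqueness of good pairs to get $N-d_{q}\geq x>y=N-d_{\overline{q}}$ for $0<q<\overline{q}$; you instead take the stated maximum at face value and rule out equality $d_{q'}=d_{\overline{q}_{A}}$ by noting that it forces $(\overline{q}_{A}-q')\alpha\equiv0\bmod N$, hence $v_{m}^{(\overline{q}_{A}-q')r}\in A_{0}$, contradicting the minimality of $z_{A}$ (and here it is indeed essential, as you point out, that $\lambda=0$ is admissible in the definition of $z_{A}$). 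Your version is more elementary --- it avoids the good-pair machinery and the reduction to $\lambda=0$ --- and has the merit of making the proposition literally deducible from the Lemma as stated; the paper's version buys nothing extra here, since the stronger strict statement is simply not recorded in the Lemma.
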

So given an algebra $A$ as in \ref{not:starting from a cover, m.n generate M}
we can associate to it the number $\overline{q}_{A}\in\Omega_{N-\alpha,N}$.
Conversely we will see that any $\overline{q}\in\Omega_{N-\alpha,N}$
admits an algebra $A$ as in \ref{not:starting from a cover, m.n generate M}
such that $\overline{q}=\overline{q}_{A}$. It turns out that all
the objects $z_{A}$, $y_{A}$, $f_{A}$, $\E^{A}$, $\delta^{A}$,
$\hat{q}_{A}$ and, if $\lambda_{A}=0$, $x_{A}$, $w_{A}$ associated
to $A$ only depend on $\overline{q}_{A}$. Therefore in this subsection,
given $\overline{q}\in\Omega_{N-\alpha,N}$, we will see how to define
such objects independently from an algebra $A$.

In this subsection we will consider given an element $\overline{q}\in\Omega_{N-\alpha,N}$.
\begin{defn}
\label{not:alpha, N, r, overq, q', hatq, f}Set $\hat{q}$ for the
only integer $0\leq\hat{q}<\overline{q}$ such that $d_{\hat{q}}=\min_{0\leq q<\overline{q}}d_{q}$,
$q'=\overline{q}-\hat{q}$, $z=\overline{q}r$, $y=N-d_{\overline{q}}$,
\[
x=\left\{ \begin{array}{cc}
N-d_{q'} & \text{if }\overline{q}>1\\
N & \text{if }\overline{q}=1
\end{array}\right.\comma w=\left\{ \begin{array}{cc}
q'r & \text{if }\overline{q}>1\\
0 & \text{if }\overline{q}=1
\end{array}\right.\comma f(c)=\left\{ \begin{array}{cc}
x & \text{if }0\leq c<\hat{q}r\\
d_{\hat{q}} & \text{if }\hat{q}r\leq c<z
\end{array}\right.
\]
We will also write $\hat{q}_{\overline{q}}\comma q_{\overline{q}}'\comma z_{\overline{q}}\comma x_{\overline{q}}\comma f_{\overline{q}}\comma y_{\overline{q}}\comma w_{\overline{q}}$
if necessary.\end{defn}
\begin{rem}
Using notation from \ref{lem:fundamental for the case m,n} we have
$\overline{q}=q_{n}$ for some $n$ and, if $n>1$, i.e. $\overline{q}>1$,
$q_{n-1}=q'$. Note that $zm=yn$, $wm=xn$, $y<x$, $w<z$. Moreover,
from \ref{lem:fundamental for the case m,n} and from a direct computation
if $\overline{q}=1$, we obtain $zx-yw=|M|$. Finally if $\overline{q}>1$
one has relations $\hat{q}r=z-w\text{ and }d_{\hat{q}}=x-y$.\end{rem}
\begin{lem}
\label{lem:definition of epsilon delta}We have that:
\begin{enumerate}
\item $f$ is a decreasing function and $\sum_{c=0}^{z-1}f(c)=|M|$;
\item \label{enu:writing of the form Am+Bn}any element $t\in M$ can be
uniquely written as 
\[
t=Am+Bn\text{ with }0\leq A<z,0\leq B<f(A)
\]

\end{enumerate}
\end{lem}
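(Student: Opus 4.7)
For part (1) I treat the cases $\overline q=1$ and $\overline q>1$ separately. If $\overline q=1$ then $\hat q=0$ and $f\equiv N$ on $[0,r)$, so $f$ is trivially (weakly) decreasing and $\sum_{c=0}^{z-1} f(c)=rN=|M|$. If $\overline q>1$, then $f$ takes the value $x$ on $[0,\hat q r)$ and the value $d_{\hat q}$ on $[\hat q r,z)$; the inequality $x\ge d_{\hat q}$ reduces to $d_{q'}+d_{\hat q}=d_{\overline q}\le N$, which is the second relation of Lemma~\ref{lem:fundamental for the case m,n} applied to $q_n=\overline q$, $q_{n-1}=q'$. For the sum I would compute $\sum_{c=0}^{z-1} f(c)=r(\hat q x+q' d_{\hat q})$, substitute $x=N-d_{q'}$, $q'=\overline q-\hat q$ and $d_{q'}=d_{\overline q}-d_{\hat q}$ to rewrite this as $r(\hat q N+\overline q d_{\hat q}-\hat q d_{\overline q})$, and then invoke the first relation of Lemma~\ref{lem:fundamental for the case m,n} to conclude the sum equals $rN=|M|$.

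For part (2) the plan is a counting argument. The set $S=\{(A,B)\in\Z^2:0\le A<z,\;0\le B<f(A)\}$ has cardinality $|M|$ by part (1), so it suffices to show the map $\pi\colon S\to M$, $(A,B)\mapsto Am+Bn$, is injective. I would first identify the kernel $\Lambda$ of $\Z^2\to M$. When $\overline q>1$, both $(\hat q r,d_{\hat q})$ and $(-q'r,x)$ lie in $\Lambda$, using $rm=\alpha n$ in $M$ together with the congruences $d_{\hat q}\equiv-\hat q\alpha$ and $d_{q'}\equiv-q'\alpha\pmod N$; the sublattice $\Lambda'$ they generate has index $\hat q r\,x+q'r\,d_{\hat q}=rN=|M|$ in $\Z^2$ by part (1), so $\Lambda=\Lambda'$. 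When $\overline q=1$, I argue directly that $S=[0,r)\times[0,N)$ is a fundamental domain for $\Lambda=\langle(r,-\alpha),(0,N)\rangle$ by reducing the first coordinate modulo $r$ via $(r,-\alpha)$ and then the second modulo $N$.

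Assuming $\overline q>1$, the injectivity check is a short sign analysis on the coefficients $(u,v)$ expressing $(A'-A,B'-B)=u(\hat q r,d_{\hat q})+v(-q'r,x)$ for $(A,B),(A',B')\in S$ with $A\le A'$. If $v\ge 1$, then $u\ge 0$ forces $B'-B\ge vx\ge x$, contradicting $|B'-B|<x$, while $u\le -1$ forces $u\hat q-vq'\le-\overline q$ and hence $A'-A\le-z<0$, contradicting $A\le A'$; the case $v\le-1$ is symmetric. Hence $v=0$. Then $A'-A=u\hat q r\ge 0$ gives $u\ge 0$, and $u\ge 1$ would force $A'\ge\hat q r$, so $f(A')=d_{\hat q}$, while $B'-B=u d_{\hat q}\ge d_{\hat q}$ would force $B'\ge d_{\hat q}$, contradicting $B'<f(A')$. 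Therefore $u=v=0$ and $(A,B)=(A',B')$. The main obstacle is really spotting the two kernel generators $(\hat q r,d_{\hat q})$ and $(-q'r,x)$ whose determinant matches $|M|$ exactly by part~(1); once the identification $\Lambda=\Lambda'$ is in place, the remaining injectivity reduces to the elementary bookkeeping above.
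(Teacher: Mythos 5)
Your proof is correct. Part (1) follows essentially the paper's own computation: both reduce the sum $\sum_{c}f(c)$ to the identity $\hat{q}N+\overline{q}d_{\hat{q}}-\hat{q}d_{\overline{q}}=N$ of Lemma \ref{lem:fundamental for the case m,n} (the paper routes this through $zx-yw=|M|$, you substitute directly), and both get monotonicity from $d_{\overline{q}}=d_{q'}+d_{\hat{q}}\leq N$. For part (2) you share the paper's counting reduction --- part (1) gives exactly $|M|$ admissible pairs, so only injectivity is needed --- but your injectivity argument takes a genuinely different route. The paper works inside $M$: it reduces a general coincidence $Am+Bn=A'm+B'n$ to the two special cases $Am+Bn=0$ and $Am=B'n$, and rules those out using the combinatorics of $\Omega_{N-\alpha,N}$ (that $d_{q'}$ is the maximum and $d_{\hat{q}}$ the minimum of the $d_{q}$ for $0\leq q<\overline{q}$). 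You instead work in $\Z^{2}$: you exhibit $(\hat{q}r,d_{\hat{q}})$ and $(-q'r,x)$ as kernel vectors of $\Z^{2}\arr M$ whose determinant is, by part (1), exactly $|M|=[\Z^{2}:\Ker(\Z^{2}\arr M)]$, so they generate the kernel, and a short sign analysis shows no nonzero kernel vector is a difference of two admissible pairs. Your route makes the role of part (1) more transparent (it is literally the index computation), at the cost of first having to spot the kernel generators; the paper's version avoids lattice bookkeeping but leans harder on the structure of $\Omega_{N-\alpha,N}$. One small point to make explicit: in the step ``$A'-A=u\hat{q}r\geq0$ gives $u\geq0$'' you implicitly use $\hat{q}\geq1$. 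This does hold whenever $\overline{q}>1$ (if $\alpha=0$ then $\Omega_{N-\alpha,N}=\{1\}$, so $\overline{q}>1$ forces $\alpha\neq0$, hence $d_{1}<N=d_{0}$ and the minimum is not attained at $q=0$), but it deserves a sentence --- otherwise one falls back on the second coordinate, where $d_{\hat{q}}=N$ and $|B'-B|<x\leq N$ force $u=0$ anyway.
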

\begin{proof}
$1)$ If $\overline{q}=1$ it is enough to note that $\hat{q}=0$,
$d_{0}=N$ and $Nr=|M|$. So assume $\overline{q}>1$. We have $x=N-d_{q'}\geq d_{\hat{q}}$
since $d_{\overline{q}}=d_{q'}+d_{\hat{q}}$ and 
\[
\sum_{c=0}^{z-1}f(c)=\hat{q}rx+(\overline{q}r-\hat{q}r)d_{\hat{q}}=(z-w)x+w(x-y)=zx-wy=|M|
\]

$2)$ First of all note that the writings of the form $Am+Bn$ with
$0\leq A<z$, $0\leq B<f(A)$ are $\sum_{c=0}^{z-1}f(c)=|M|$. So
it's enough to prove that they are all distinct. Assume to have writings
$Am+Bn=A'm+B'n\text{ with }0\leq A'\leq A<z,0\leq B<f(A),0\leq B'<f(A')$.

$A'=B'=0$, i.e. $Am+Bn=0$. If $A=0$ then $B=0$ since $f(0)=x\leq N$.
If $A>0$, we can write $A=qr$ for some $0<q<\overline{q}$. In particular
$\overline{q}>1$ and $B=d_{q}<f(A)$. If $q<\hat{q}$ then $f(A)=x=N-d_{q'}>d_{q}$
contradicting \ref{lem:fundamental for the case m,n}, while if $q\geq\hat{q}$
then $f(A)=d_{\hat{q}}\leq d_{q}$.

$A'=B=0$, i.e. $Am=B'n$. If $A=0$ then $B'=0$ as above. If $A>0$
we can write $A=qr$ for some $0<q<\overline{q}$. Again $\overline{q}>1$.
In particular $B'=N-d_{q}<f(0)=x=N-d_{q'}$ and so $d_{q'}<d_{q}$,
while $d_{q'}=\max_{0<q<\overline{q}}d_{q}$.

\emph{General case}. We can write $(A-A')m+Bn=B'n$ and we can reduce
to the previous cases since if $B\geq B'$ then $B-B'\leq B<f(A)\leq f(A-A')$,
while if $B<B'$ then $B'-B\leq B'<f(A')\leq f(0)$.\end{proof}
\begin{defn}
Given $l\in M$ we set $(\E_{l},\delta_{l})$ the unique pair for
$l$ such that $0\leq\E_{t}<z\comma0\leq\delta_{t}<f(\E_{t})$ and
we will consider $\E\comma\delta$ as maps $\Z^{M}/<e_{0}>\arr\Z$.
We will also write $\E^{\overline{q}}\comma\delta^{\overline{q}}$
if necessary.\end{defn}
\begin{prop}
\label{pro:from an algebra to overline q}Let $A$ be an algebra as
in \ref{not:starting from a cover, m.n generate M}. Then
\[
z_{A}=z_{\overline{q}_{A}}\comma y_{A}=y_{\overline{q}_{A}}\comma\hat{q}_{A}=\hat{q}_{\overline{q}_{A}}\comma\E^{A}=\E^{\overline{q}_{A}}\comma\delta^{A}=\delta^{\overline{q}_{A}}\comma f_{A}=f_{\overline{q}_{A}}
\]
and, if $\lambda_{A}=0$, then $x_{A}=x_{\overline{q}_{A}}\comma w_{A}=w_{\overline{q}_{A}}$.\end{prop}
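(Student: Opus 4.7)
The plan is to reduce everything to the single non-trivial assertion $f_A=f_{\overline{q}_A}$, which in turn follows from Lemma \ref{lem:From a cover we get overline q} together with the reduction step in Lemma \ref{lem:From lambda not zero to zero A'}. Everything else is essentially a matter of comparing definitions.

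First I would dispose of the tautological equalities. By Definition \ref{not:alpha, N, r, overq, q', hatq, f}, $z_{\overline{q}_A}=\overline{q}_A r = z_A$ follows immediately from $\overline{q}_A=z_A/r$. The identity $\hat q_A=\hat q_{\overline{q}_A}$ is equally formal: both quantities are declared to be the unique integer $0\le\hat q<\overline{q}_A$ minimizing $d_{\hat q}$, so the definitions literally coincide. For $y_A=y_{\overline{q}_A}$, I would note that $y_A$ is the unique element of $\{0,\dots,N-1\}$ representing $\overline{q}_A\alpha$ modulo $N$, since $z_A m=\overline{q}_A r m=\overline{q}_A\alpha n$ in $M$; on the other side $y_{\overline{q}_A}=N-d_{\overline{q}_A}$ lies in the same range and satisfies the same congruence (as $d_{\overline{q}_A}\equiv-\overline{q}_A\alpha\pmod N$), hence they coincide.

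For the core identity $f_A=f_{\overline{q}_A}$ without the hypothesis $\lambda_A=0$, I would pass through the auxiliary algebra $A'$ supplied by Lemma \ref{lem:From lambda not zero to zero A'}: it satisfies $\lambda_{A'}=0$, $\overline{q}_{A'}=\overline{q}_A$, and $f_{A'}=f_A$. Since $A'$ meets the hypothesis of Lemma \ref{lem:From a cover we get overline q}, that lemma gives $f_{A'}=f_{\overline{q}_{A'}}$ directly, and chaining yields $f_A=f_{\overline{q}_A}$. Once this is in hand, the equalities $\E^A=\E^{\overline{q}_A}$ and $\delta^A=\delta^{\overline{q}_A}$ are forced by uniqueness of the canonical writing: Lemma \ref{lem:good pair for A<z} characterizes $(\E_l^A,\delta_l^A)$ as the unique pair with $0\le\E_l^A<z_A$ and $0\le\delta_l^A<f_A(\E_l^A)$ such that $l=\E_l^A m+\delta_l^A n$, while Lemma \ref{lem:definition of epsilon delta}(\ref{enu:writing of the form Am+Bn}) characterizes $(\E_t^{\overline{q}_A},\delta_t^{\overline{q}_A})$ by the same conditions relative to $z_{\overline{q}_A}$ and $f_{\overline{q}_A}$; the already established equalities of these data force the pairs to agree.

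Finally, for the last clause, when $\lambda_A=0$ Remark \ref{rem:lambda is 0 iff mu is 0} gives $\mu_A=0$, and the symmetric version of the argument running through Lemma \ref{lem:From a cover we get overline q} (applied after swapping $m$ and $n$, or read off from the explicit algebra $A'$ in Lemma \ref{lem:From lambda not zero to zero A'}) yields $v_n^{x_A}=0\neq v_n^{x_A-1}$. Therefore $x_A=f_A(0)=f_{\overline{q}_A}(0)=x_{\overline{q}_A}$, the last equality by a direct inspection of the formula for $f_{\overline{q}}$ in Definition \ref{not:alpha, N, r, overq, q', hatq, f}. Since $w_A$ is uniquely determined by $x_A n=w_A m$ in $M$ together with $0\le w_A<o(m)$, and the analogous relation holds for $x_{\overline{q}_A}\comma w_{\overline{q}_A}$, we conclude $w_A=w_{\overline{q}_A}$. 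I do not foresee a serious obstacle: the substantive combinatorial work has been absorbed into Lemmas \ref{lem:From lambda not zero to zero A'} and \ref{lem:From a cover we get overline q}, and the present proposition is essentially a consolidation of those results with the uniqueness of the canonical writing.
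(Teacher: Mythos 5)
There is a genuine gap, and it sits exactly at the one substantive step of the proposition. Your reduction of the trivial identities ($z$, $y$, $\hat q$, and the passage to $\lambda_A=0$ via $A'$) matches the paper and is fine, as is the deduction of $\E^A=\E^{\overline q_A}$ and $\delta^A=\delta^{\overline q_A}$ from $f_A=f_{\overline q_A}$ by uniqueness of the canonical writing. The problem is your claim that Lemma \ref{lem:From a cover we get overline q} ``gives $f_{A'}=f_{\overline q_{A'}}$ directly.'' It does not: that lemma expresses $f_{A'}$ by the formula $f_{A'}(c)=x_{A'}$ for $0\leq c<\hat qr$ and $f_{A'}(c)=d_{\hat q}$ for $\hat qr\leq c<z$, where $x_{A'}$ is the \emph{algebra-theoretic} invariant (the minimal $h>0$ with $v_n^h=\mu v_m^i$), whereas $f_{\overline q}$ is defined in \ref{not:alpha, N, r, overq, q', hatq, f} by the same formula with the \emph{combinatorial} quantity $x_{\overline q}=N-d_{q'}$ (for $\overline q>1$). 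The two formulas agree if and only if $x_{A'}=x_{\overline q}$, which is precisely the content of the proposition's last clause. Your final paragraph then derives $x_A=x_{\overline q_A}$ from $x_A=f_A(0)=f_{\overline q_A}(0)$, i.e.\ from the equality $f_A=f_{\overline q_A}$ you claimed earlier --- so the argument is circular, and the identification $x_A=N-d_{q'}$ is never actually proved.

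The paper closes this gap by a double count: $\sum_{c=0}^{z-1}f_{\overline q}(c)=|M|$ holds by \ref{lem:definition of epsilon delta}(1) (which rests on the nontrivial identity $\hat qN+\overline q\,d_{\hat q}-\hat q\,d_{\overline q}=N$ of \ref{lem:fundamental for the case m,n}), while $\sum_{c=0}^{z-1}f_A(c)=|M|$ holds by \eqref{eq:the sum of f's is |M|}; writing both sums as $r\hat q\,x+(z-\hat qr)d_{\hat q}$ via \ref{lem:From a cover we get overline q} and the definition of $f_{\overline q}$, and cancelling, gives $x_A=x_{\overline q}$ when $\overline q>1$ (so $\hat q>0$), the case $\overline q=1$ being immediate since then $\hat q=0$ and $x_A=f_A(0)=d_0=N=x_1$. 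You need to insert this (or an equivalent) argument; without it the proposition is not proved.
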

\begin{proof}
Set $\overline{q}=\overline{q}_{A}$. Then $z_{A}=\overline{q}r=z_{\overline{q}}$
and $z_{A}m=y_{A}n=y_{\overline{q}}n$ implies $y_{A}=y_{\overline{q}}$.
Also $\hat{q}_{A}=\hat{q}_{\overline{q}}$ by definition. Taking into
account \ref{lem:From lambda not zero to zero A'} we can now assume
$\lambda_{A}=0$. We claim that all the remaining equalities follow
from $x_{A}=x_{\overline{q}}$. Indeed clearly $w_{A}=w_{\overline{q}}$.
Also by definition of $f_{\overline{q}}$ and thanks to \ref{lem:From a cover we get overline q}
we will have $f_{A}=f_{\overline{q}}$ and therefore $\E^{A}=\E^{\overline{q}}\comma\delta^{A}=\delta^{\overline{q}}$,
that conclude the proof.

We now show that $x_{A}=x_{\overline{q}}$. If $\overline{q}=1$ then
$\hat{q}=0$ and so, from \ref{lem:From a cover we get overline q},
we have $d_{\hat{q}}=N=x_{A}=x_{1}$. If $\overline{q}>1$, by definition
of $f_{\overline{q}}$ and thanks to \ref{lem:definition of epsilon delta}
and \ref{lem:From a cover we get overline q}, we can write
\[
|M|=\sum_{c=0}^{z_{\overline{q}}-1}f_{\overline{q}}(c)=r\hat{q}_{\overline{q}}x_{\overline{q}}+(z_{\overline{q}}-\hat{q}_{\overline{q}}r)d_{\hat{q}_{\overline{q}}}=\sum_{c=0}^{z_{A}-1}f_{A}(c)=r\hat{q}_{A}x_{A}+(z_{A}-\hat{q}_{A}r)d_{\hat{q}_{A}}
\]
and so $x_{A}=x_{\overline{q}}$.\end{proof}
\begin{defn}
\label{def:universal algebra}Define the $M$-graded $\Z[a,b]$-algebra
\[
A^{\overline{q}}=\Z[a,b][s,t]/(s^{z}-at^{y},t^{x}-bs^{w},s^{\hat{q}r}t^{d_{\hat{q}}}-a^{\gamma}b)\text{ where }\gamma=\left\{ \begin{array}{cc}
0 & \text{if }\overline{q}=1\\
1 & \text{if }\overline{q}>1
\end{array}\right.
\]
with $M$-graduation $\deg s=m$, $\deg t=n$. If are given elements
$a_{0},b_{0}$ of a ring $C$ we will also write $A_{a_{0},b_{0}}^{\overline{q}}=A^{\overline{q}}\otimes_{\Z[a,b]}C$,
where $\Z[a,b]\arr C$ sends $a,b$ to $a_{0},b_{0}$.\end{defn}
\begin{prop}
\label{pro:general algebra for m,n 2}$A^{\overline{q}}\in\MCov(\Z[a,b])$,
it is generated in degrees $m,n$ and $\{v_{l}=s^{\E_{l}}t^{\delta_{l}}\}_{l\in M}$
is an $M$-graded basis for it.\end{prop}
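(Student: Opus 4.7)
My plan is to establish three points in sequence: (i) that the ideal defining $A^{\overline{q}}$ is $M$-homogeneous under $\deg s=m$, $\deg t=n$, $\deg a=\deg b=0$; (ii) that the $v_{l}$ span $A^{\overline{q}}$ as a $\Z[a,b]$-module; (iii) that the $v_{l}$ are $\Z[a,b]$-linearly independent. Generation in degrees $m,n$ is tautological from the presentation, and because $\Pic\Spec\Z[a,b]=0$, the basis property will automatically upgrade to $A^{\overline{q}}\in\MCov(\Z[a,b])$.

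For (i), the three generators of the ideal are $M$-homogeneous if and only if $zm=yn$, $xn=wm$ and $\hat{q}rm+d_{\hat{q}}n=0$ in $M$. The first two are how $y$ and $w$ were defined in \ref{not:alpha, N, r, overq, q', hatq, f}. For the third, when $\overline{q}>1$ I use $\hat{q}r=z-w$ and $d_{\hat{q}}=x-y$ to write $\hat{q}rm+d_{\hat{q}}n=(zm-yn)+(xn-wm)=0$; when $\overline{q}=1$, $\hat{q}=0$, $d_{\hat{q}}=N$, and $N\cdot n=0$ by definition of $N=o(n)$.

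For (ii), I would run a reduction on any monomial $s^{A}t^{B}$ of degree $l=Am+Bn$. Phase one: while $A\geq z$, apply $s^{z}=at^{y}$, strictly decreasing $A$. Phase two (with $A<z$): if $A<\hat{q}r$ and $B\geq x$, apply $t^{x}=bs^{w}$ to obtain $(A+w,B-x)$ with $A+w<\hat{q}r+w=z$; if $A\geq\hat{q}r$ and $B\geq d_{\hat{q}}$, apply $s^{\hat{q}r}t^{d_{\hat{q}}}=a^{\gamma}b$ to obtain $(A-\hat{q}r,B-d_{\hat{q}})$ with $A-\hat{q}r<z-\hat{q}r=w<z$. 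Both steps strictly decrease $B$ while preserving $A<z$, so phase two terminates. The terminal pair satisfies $0\leq A<z$ and $0\leq B<f(A)$, and by the uniqueness in \ref{lem:definition of epsilon delta} it must equal $(\E_{l},\delta_{l})$; the accumulated scalar is a monomial in $a,b$, so $s^{A}t^{B}\in\Z[a,b]\cdot v_{l}$.

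The main obstacle is (iii), which I propose to handle by specialization over $\Spec\Z[a,b]$. First, setting $(a,b)=(1,1)$ collapses the three relations to the $M$-identities $zm=yn$, $xn=wm$, $\hat{q}rm+d_{\hat{q}}n=0$, yielding a surjective ring map $\Z[M]\to A^{\overline{q}}_{1,1}$ sending $[l]$ to $v_{l}$; the rank bound $\leq|M|$ from (ii) forces this to be an isomorphism, so $v_{l}\neq 0$ at $(1,1)$. Next, over an algebraically closed field $k$ with $a_{0},b_{0}\in k^{*}$, the rescaling $s\mapsto\xi s$, $t\mapsto\eta t$ with $\xi,\eta\in k^{*}$ chosen so that $\xi^{z}=a_{0}\eta^{y}$ and $\eta^{x}=b_{0}\xi^{w}$ (solvable because $\xi^{|M|}=a_{0}^{x}b_{0}^{y}$ determines $\xi$ and then $\eta$, using $zx-wy=|M|$) produces an isomorphism $A^{\overline{q}}_{a_{0},b_{0}}\otimes k\simeq A^{\overline{q}}_{1,1}\otimes k\simeq k[M]$. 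The delicate consistency check is that the rescaled third relation also acquires constant $1$; this follows from $\xi^{\hat{q}r}\eta^{d_{\hat{q}}}=\xi^{z-w}\eta^{x-y}=(\xi^{z}\eta^{-y})(\eta^{x}\xi^{-w})=a_{0}b_{0}=a_{0}^{\gamma}b_{0}$ when $\overline{q}>1$, and from $\eta^{N}=b_{0}$ together with $\xi^{r}=a_{0}\eta^{\alpha}$ when $\overline{q}=1$ (where $w=0$, $\gamma=0$, $d_{\hat{q}}=N$). Thus $v_{l}\neq 0$ in every fiber over $\{ab\neq 0\}$, and any relation $\sum_{l}c_{l}(a,b)v_{l}=0$ in $A^{\overline{q}}$ forces $c_{l}(a_{0},b_{0})=0$ on the dense open $\{ab\neq 0\}\subseteq\Spec\Z[a,b]$; since $\Z[a,b]$ is a domain, each $c_{l}=0$.
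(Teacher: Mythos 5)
Your overall strategy is sound and genuinely different from the paper's: the paper reduces at once to fibers over fields and treats three cases separately ($a,b\in k^{*}$ via a lattice-index argument showing $s^{c}t^{d}/v_{cm+dn}\in k^{*}$, and the degenerate fibers $a=0$, $b=0$ via the explicit dimension count $zx-(z-\hat{q}r)(x-d_{\hat{q}})=|M|$), whereas you prove spanning integrally by a rewriting algorithm and then extract linear independence from the fibers over the dense open $\{ab\neq0\}$ alone, using that $\Z[a,b]$ is a domain. This buys you the right not to analyze the degenerate fibers at all. Your steps (i) and (ii) check out: the homogeneity identities follow from the definitions of $y,w$ and from $\hat{q}r=z-w$, $d_{\hat{q}}=x-y$ (resp. $d_{\hat{q}}=N$ when $\overline{q}=1$), and the reduction terminates because phase one strictly decreases $A$ while phase two strictly decreases $B$ and the inequalities $\hat{q}r+w=z$ and $w<z$ keep $A<z$, so the terminal pair is $(\E_{l},\delta_{l})$ by the uniqueness in \ref{lem:definition of epsilon delta}. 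The rescaling in (iii), including the consistency check $\xi^{\hat{q}r}\eta^{d_{\hat{q}}}=(\xi^{z}\eta^{-y})(\xi^{-w}\eta^{x})=a_{0}^{\gamma}b_{0}$, is also correct.

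The one genuine flaw is the sentence ``the rank bound $\leq|M|$ from (ii) forces this to be an isomorphism.'' A surjection from a free $\Z$-module of rank $n$ onto a module generated by $n$ elements need not be injective (e.g.\ $\Z\twoheadrightarrow\Z/2\Z$), so this does not establish $\Z[M]\simeq A_{1,1}^{\overline{q}}$ --- and your entire independence argument rests on that isomorphism, since it is what makes the classes of the $v_{l}$ linearly independent in each fiber over $\{ab\neq0\}$. Fortunately the repair uses only what you already proved in (i): the identities $zm=yn$, $xn=wm$, $\hat{q}r\,m+d_{\hat{q}}n=0$ in $M$ show that $s\mapsto[m]$, $t\mapsto[n]$ gives a well-defined ring map $A_{1,1}^{\overline{q}}\to\Z[M]$, and the composite $\Z[M]\to A_{1,1}^{\overline{q}}\to\Z[M]$ fixes $[m]$ and $[n]$, hence is the identity; so your surjection is split injective and therefore an isomorphism. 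With that one-line patch the proof is complete.
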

\begin{proof}
We have to prove that, for any $l\in M$, $(A^{\overline{q}})_{l}=\Z[a,b]v_{l}$
and we can check this over a field $k$, i.e. considering $A=A_{a,b}^{\overline{q}}$
with $a,b\in k$. We first consider the case $a,b\in k^{*}$, so that
$s,t\in A^{*}$. Let $\pi\colon\Z^{2}\arr M$ the map such that $\pi(e_{1})=m\comma\pi(e_{2})=n$.
The set $T=\{(a,b)\in\Ker\pi\st s^{a}t^{b}\in k^{*}\}$ is a subgroup
of $\Ker\pi$ such that $(z,-y),(-w,x)\in T$. Since $\det\left(\begin{array}{cc}
z & -w\\
-y & x
\end{array}\right)=zx-wy=|M|$ we can conclude that $T=\Ker\pi$. Therefore $v_{l}$ generate $(A^{\overline{q}})_{l}$
since for any $c,d\in\N$ we have $s^{c}t^{d}/v_{cm+dn}\in k^{*}$
and $0\neq v_{l}\in A^{*}$.

Now assume $a=0$. If $\overline{q}=1$ then $\hat{q}=w=0$, $d_{\hat{q}}=x=N$
and so $A=k[s,t]/(s^{z},t^{N}-b)$ satisfies the requests. If $\overline{q}>1$
it is easy to see that $v_{l}$ generates $A_{l}$. On the other hand
$\dim_{k}A=|\{(A,B)\st0\leq A<z,0\leq B<x,A\leq\hat{q}r\text{ or }B\leq d_{\hat{q}}\}|=zx-(z-\hat{q}r)(x-d_{\hat{q}})=zx-yw=|M|$.
The case $b=0$ is similar.\end{proof}
\begin{thm}
\label{pro:Universal algebras generated by m,n with overline q fixed}Let
$k$ be a field. If $\overline{q}\in\Omega_{N-\alpha,N}$ and $\lambda\in k$,
with $\lambda=0$ if $\overline{q}=N/(\alpha,N)$, then
\[
A_{\overline{q},\lambda}=k[s,t]/(s^{z_{\overline{q}}}-\lambda t^{y_{\overline{q}}},t^{x_{\overline{q}}},s^{\hat{q}_{\overline{q}}r}t^{d_{\hat{q}_{\overline{q}}}})
\]
is an algebra as in \ref{not:starting from a cover, m.n generate M}
with $\overline{q}_{A_{\overline{q},\lambda}}=\overline{q}$ and $\lambda_{A_{\overline{q},\lambda}}=\lambda$.
Conversely, if $A$ is an algebra as in \ref{not:starting from a cover, m.n generate M}
then $\overline{q}_{A}\in\Omega_{N-\alpha,N}$, $\lambda_{A}\in k$,
$\lambda_{A}=0$ if $\overline{q}_{A}=N/(\alpha,N)$ and $A\simeq A_{\overline{q}_{A},\lambda_{A}}$.\end{thm}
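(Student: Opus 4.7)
The plan is to present the first (constructive) direction by recognizing $A_{\overline{q},\lambda}$ as a specialization of the universal algebra $A^{\overline{q}}/\Z[a,b]$ of \ref{def:universal algebra}, and then to obtain the converse by writing down an explicit comparison map $A_{\overline{q}_A,\lambda_A}\arr A$ and checking its three defining relations pointwise. The geometric content is already packaged in Propositions \ref{pro:general algebra for m,n 2} and \ref{pro:from an algebra to overline q}; what remains is bookkeeping.

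First I would observe that $A_{\overline{q},\lambda}$ is exactly $A^{\overline{q}}\otimes_{\Z[a,b]}k$ under $a\mapsto\lambda$, $b\mapsto 0$, so by \ref{pro:general algebra for m,n 2} it lies in $\MCov(k)$, is generated in degrees $m,n$, and admits the $M$-graded basis $\{s^{\E^{\overline{q}}_l}t^{\delta^{\overline{q}}_l}\}_{l\in M}$. The relation $t^{x_{\overline{q}}}=0$ makes $t$ nilpotent, and combined with $s^{z_{\overline{q}}}=\lambda t^{y_{\overline{q}}}$ one sees $s$ is nilpotent as soon as $y_{\overline{q}}>0$; the exceptional case $y_{\overline{q}}=0$ is precisely $\overline{q}=N/(\alpha,N)$, where the hypothesis $\lambda=0$ forces $s^{z_{\overline{q}}}=0$ anyway. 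Hence every homogeneous basis element of nonzero degree is nilpotent, so $H_{A_{\overline{q},\lambda}/k}=0$. To identify the invariants, note that for $0<h<z_{\overline{q}}$ the uniqueness in \ref{lem:definition of epsilon delta} yields $\E^{\overline{q}}_{hm}=h$, so $s^h$ is the distinguished basis vector in degree $hm$ and cannot be of the form $\mu t^i$; together with the defining relation $s^{z_{\overline{q}}}=\lambda t^{y_{\overline{q}}}$ this gives $z_{A_{\overline{q},\lambda}}=z_{\overline{q}}$ and $\lambda_{A_{\overline{q},\lambda}}=\lambda$, i.e.\ $\overline{q}_{A_{\overline{q},\lambda}}=\overline{q}$.

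For the converse, $\overline{q}_A\in\Omega_{N-\alpha,N}$ is \ref{pro:overlineqA is in omeganminusalpha,n}; if $\overline{q}_A=N/(\alpha,N)$ then $y_A=y_{\overline{q}_A}=0$, and $v_m^{z_A}=\lambda_A$ forces $\lambda_A=0$ because $H_{A/k}=0$. Next I define $\varphi\colon A_{\overline{q}_A,\lambda_A}\arr A$ by $s\mapsto v_m$, $t\mapsto v_n$ and check the three defining relations:
\begin{itemize}
\item $s^{z_{\overline{q}_A}}-\lambda_A t^{y_{\overline{q}_A}}\mapsto v_m^{z_A}-\lambda_A v_n^{y_A}=0$ by the very definition of $z_A,y_A,\lambda_A$;
\item $t^{x_{\overline{q}_A}}\mapsto v_n^{f_A(0)}=0$, using the identity $f_A=f_{\overline{q}_A}$ from \ref{pro:from an algebra to overline q} (so in particular $f_A(0)=x_{\overline{q}_A}$, whether or not $\lambda_A=0$);
\item $s^{\hat{q}_{\overline{q}_A}r}t^{d_{\hat{q}_{\overline{q}_A}}}\mapsto v_m^{\hat{q}_{\overline{q}_A}r}v_n^{f_A(\hat{q}_{\overline{q}_A}r)}=0$, again by $f_A=f_{\overline{q}_A}$.
\end{itemize}
Thus $\varphi$ is a well-defined graded $k$-algebra map, surjective since $A$ is generated by $v_m,v_n$; since both source and target have $k$-dimension $|M|$ (the source by the first part of the proof, the target by the $\Di M$-cover property), $\varphi$ is an isomorphism.

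The only conceptually delicate point is verifying $t^{x_{\overline{q}_A}}\mapsto 0$ when $\lambda_A\neq 0$: in that regime $x_A=y_A<x_{\overline{q}_A}$, so the naive relation $v_n^{x_A}=\mu_Av_m^{w_A}$ is nonzero and one might worry that $\varphi$ is ill-defined. The point is that this is not the relation being checked — the constant term $f_A(0)=x_{\overline{q}_A}$ (not $x_A$) controls the vanishing of pure powers of $v_n$, and this is exactly what \ref{pro:from an algebra to overline q} guarantees via the passage through $A'$ of \ref{lem:From lambda not zero to zero A'}. Once this subtlety is resolved, the rest of the argument reduces to applying the already-established $f_A=f_{\overline{q}_A}$ and a dimension count.
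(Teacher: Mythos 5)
Your proposal is correct and follows essentially the same route as the paper's proof: both directions hinge on recognizing $A_{\overline{q},\lambda}$ as the specialization $A^{\overline{q}}_{\lambda,0}$ of the universal algebra from \ref{pro:general algebra for m,n 2}, identifying $\overline{q}_{A_{\overline{q},\lambda}}=\overline{q}$ via the uniqueness of good pairs in \ref{lem:definition of epsilon delta}, and in the converse checking the three defining relations of the comparison map $s,t\mapsto v_m,v_n$ through the identity $f_A=f_{\overline{q}_A}$ of \ref{pro:from an algebra to overline q}. Your explicit remark that $f_A(0)=x_{\overline{q}_A}$ (rather than $x_A$) is what makes $v_n^{x_{\overline{q}_A}}=0$ even when $\lambda_A\neq0$ is exactly the point the paper's terser argument relies on.
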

\begin{proof}
Consider $A=A_{\overline{q},\lambda}$, which is just $A_{\lambda,0}^{\overline{q}}$.
Clearly $t\notin A^{*}$. On the other hand $s\notin A^{*}$ since
$y=0\iff z=o(m)\iff\overline{q}=N/(\alpha,N)$. Therefore $H_{A/k}=0$
and $A$ is an algebra as in \ref{not:starting from a cover, m.n generate M}.
Moreover clearly $\overline{q}_{A}\leq\overline{q}$. If by contradiction
this inequality is strict, we will have a relation $s^{qr}=\omega t^{y'}$
with $0\leq q<\overline{q}$. Since $s^{qr}=v_{qrm}\neq0$ we will
have that $t^{y'}\neq0$ and $y'<x$, a contradiction thanks to \ref{lem:definition of epsilon delta}.
In particular $\lambda=\lambda_{A}$.

Now let $A$ be as in \ref{not:starting from a cover, m.n generate M}
and set $\overline{q}=\overline{q}_{A}$, $\lambda=\lambda_{A}$.
We already know that $\overline{q}\in\Omega_{N-\alpha,N}$ (see \ref{pro:overlineqA is in omeganminusalpha,n}).
We claim that the map $A_{\overline{q},\lambda}\arr A$ sending $s,t$
to $v_{m},v_{n}$ is well defined and so an isomorphism. Indeed we
have $v_{m}^{z}=\lambda v_{n}^{y}$ by definition and, thanks to \ref{pro:from an algebra to overline q},
we have $v_{m}^{\hat{q}r}v_{n}^{d_{\hat{q}}}=0$ since $d_{\hat{q}}=f_{A}(\hat{q}r)$
and $v_{n}^{x}=0$ since $f_{A}(0)=x$. Finally if $\overline{q}=N/(\alpha,N)$
then $y=y_{A}=0$ and $z=o(m)$, so that $\lambda_{A}=v_{m}^{o(m)}=0$.\end{proof}
\begin{cor}
If $k$ is an algebraically closed field then, up to graded isomorphism,
the algebras as in \ref{not:starting from a cover, m.n generate M}
are exactly $A_{\overline{q},1}$ if $\overline{q}\in\Omega_{N-\alpha,N}-\{N/(\alpha,N)\}$
and $A_{\overline{q},0}$ if $\overline{q}\in\Omega_{N-\alpha,N}$.\end{cor}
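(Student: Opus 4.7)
The plan is to use Theorem~\ref{pro:Universal algebras generated by m,n with overline q fixed} to reduce the classification to the question: for which pairs $(\overline{q},\lambda)$ and $(\overline{q}',\lambda')$ (with $\overline{q},\overline{q}'\in\Omega_{N-\alpha,N}$, $\lambda,\lambda'\in k$, and $\lambda=0$ forced when $\overline{q}=N/(\alpha,N)$) are the algebras $A_{\overline{q},\lambda}$ and $A_{\overline{q}',\lambda'}$ graded isomorphic? First, by Theorem~\ref{pro:Universal algebras generated by m,n with overline q fixed}, every algebra $A$ as in \ref{not:starting from a cover, m.n generate M} is isomorphic to some $A_{\overline{q}_A,\lambda_A}$, so it suffices to analyze these models.

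Next I would observe that $\overline{q}$ is an isomorphism invariant: indeed, from its very definition $z_A=\overline{q}_A\, r$ is the minimum positive integer $h$ such that $v_m^h\in k v_n^*$, which is intrinsic to $A$ as a graded algebra. Hence $A_{\overline{q},\lambda}\simeq A_{\overline{q}',\lambda'}$ forces $\overline{q}=\overline{q}'$.

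With $\overline{q}$ fixed, the case $\overline{q}=N/(\alpha,N)$ is immediate, since then $y_{\overline{q}}=0$ and Theorem~\ref{pro:Universal algebras generated by m,n with overline q fixed} forces $\lambda=0$, giving only the one model $A_{\overline{q},0}$. When $\overline{q}<N/(\alpha,N)$ we have $y=y_{\overline{q}}>0$, and here I would split into two subcases according to whether $\lambda=0$ or $\lambda\neq0$. The condition $\lambda=0$ is intrinsic, being equivalent to $v_m^{z}=0$ in $A$, so $A_{\overline{q},0}$ and $A_{\overline{q},\lambda}$ with $\lambda\neq 0$ are non-isomorphic. It remains to show that all $A_{\overline{q},\lambda}$ with $\lambda\neq 0$ are mutually isomorphic, and this is where algebraic closure is used: choose $\mu\in k$ with $\mu^{y}=\lambda$ and consider the graded substitution $s\mapsto s$, $t\mapsto \mu^{-1} t$, which sends $s^{z}-t^{y}$ to $s^{z}-\lambda t^{y}$, sends $t^{x}$ to $\mu^{-x}t^{x}$, and sends $s^{\hat{q}r}t^{d_{\hat{q}}}$ to $\mu^{-d_{\hat{q}}}s^{\hat{q}r}t^{d_{\hat{q}}}$, yielding the desired isomorphism $A_{\overline{q},1}\simeq A_{\overline{q},\lambda}$.

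The only subtle step is checking that the rescaling genuinely produces an algebra isomorphism between the presentations written in Theorem~\ref{pro:Universal algebras generated by m,n with overline q fixed}, i.e.\ that the relations $t^{x}=0$ and $s^{\hat{q}r}t^{d_{\hat{q}}}=0$ are preserved (they are, since rescaling $t$ by a unit multiplies these monomials by units). No serious obstacle is expected here; the heavy lifting has already been done in Theorem~\ref{pro:Universal algebras generated by m,n with overline q fixed} and Proposition~\ref{pro:from an algebra to overline q}, and what remains is just this unit rescaling argument made possible by algebraic closure.
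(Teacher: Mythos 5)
Your proof is correct and follows essentially the same route as the paper: reduce to the models $A_{\overline{q},\lambda}$ via Theorem \ref{pro:Universal algebras generated by m,n with overline q fixed}, observe that $\overline{q}$ and the vanishing of $\lambda$ are graded-isomorphism invariants, and identify all $A_{\overline{q},\lambda}$ with $\lambda\neq 0$ by rescaling $t$ by a $y$-th root of $\lambda$ (exactly the substitution $t\mapsto\sqrt[y]{\lambda}\,t$ used in the paper). The only quibble is a bookkeeping slip in the direction of your substitution: $t\mapsto\mu^{-1}t$ with $\mu^{y}=\lambda$ sends $s^{z}-\lambda t^{y}$ to $s^{z}-t^{y}$ (so it is the map out of $A_{\overline{q},\lambda}$), not $s^{z}-t^{y}$ to $s^{z}-\lambda t^{y}$ as written, but this does not affect the argument.
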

\begin{proof}
Clearly the algebras above cannot be isomorphic. Conversely if $\lambda\in k^{*}$
(and $\overline{q}<N/(\alpha,N)$) the transformation $t\arr\sqrt[y]{\lambda}t$
with $y=y_{\overline{q}}$ yields an isomorphism $A_{\overline{q},\lambda}\simeq A_{\overline{q},1}$. 
\end{proof}

\subsection{\label{sub:smooth integral rays for hleqtwo}Smooth integral rays
for $h\leq2$.}

In this subsection we continue to keep notation from \ref{not:for alpha,N,r e M},
i.e. $M=M_{r,\alpha,N}$ and we will considered given an element $\overline{q}\in\Omega_{N-\alpha,N}$.
\begin{rem}
We have $z=1\iff\overline{q}=r=1$ and $x=1\iff\overline{q}=N$. Indeed
the first relation is clear, while for the second one note that, by
definition of $x$ and since $N>1$, we have $x=1\iff d_{q'}=N-1\iff\overline{q}=N/(\alpha,N),(\alpha,N)=1$.\end{rem}
\begin{lem}
\label{lem:lambda,delta for overlineq}The vectors of $K_{+}$
\begin{equation}
\begin{array}{lc}
v_{cm,dn} & 0<c<z,0<d<f(c)\\
v_{m,im} & 0<i<z-1\\
v_{n,jn} & 0<j<x-1\\
v_{m,(z-1)m} & \text{if }z>1\\
v_{n,(x-1)n} & \text{if }x>1
\end{array}\label{eq:base for the case m,n}
\end{equation}
form a basis of $K$. Assume $\overline{q}r\neq1$ and $\overline{q}\neq N$,
i.e. $z,x>1$, and denote by $\Lambda,\Delta$ the last two terms
of the dual basis of \ref{eq:base for the case m,n}. Then $\Lambda,\Delta\in\duale K_{+}$
and they form a smooth sequence. Moreover $\Lambda=1/|M|(x\E+w\delta)$,
$\Delta=1/|M|(y\E+z\delta)$ and
\[
\Lambda_{m,-m}=\Delta_{n,-n}=1\comma\Lambda_{n,-n}=\left\{ \begin{array}{cc}
0 & \text{if }\overline{q}=1\\
1 & \text{otherwise}
\end{array}\right.\comma\Delta_{m,-m}=\left\{ \begin{array}{cc}
0 & \text{if }\overline{q}=N/(\alpha,N)\\
1 & \text{otherwise}
\end{array}\right.
\]
\end{lem}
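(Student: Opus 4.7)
The plan is to split the proof into three steps: verify the basis claim, identify the last two dual basis functionals with explicit formulas, and then use \ref{lem:computation of E and delta} to establish positivity and the smooth-sequence property. For the basis, a direct count using $\sum_{c=0}^{z-1} f(c) = |M|$ from \ref{lem:definition of epsilon delta} shows the list has exactly $|M|-1 = \rk K$ elements, so it suffices to prove spanning (independence then follows automatically, $K$ being $\Z$-free of the right rank). Given $v = \sum_{t\in M - \{0\}} a_t e_t \in K$, I would run an elimination procedure processing $t\in M - \{0,m,n\}$ by decreasing lexicographic order of $(\E_t,\delta_t)$ and killing the coefficient of $e_t$ using $v_{\E_t m,\,\delta_t n}$ when both coordinates are $\geq 1$, $v_{m,(\E_t-1)m}$ when $\delta_t=0,\,\E_t\geq 2$, or $v_{n,(\delta_t-1)n}$ when $\E_t=0,\,\delta_t\geq 2$. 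The remainder lies in $\Z e_m \oplus \Z e_n$ and, being in $K$, must sit in $\Ker(\Z^2 \to M) = \Z(z,-y) + \Z(-w,x)$; the two wrap-around vectors $v_{m,(z-1)m}$ and $v_{n,(x-1)n}$ together with the telescoped m- and n-chains produce exactly these two generators.

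For the dual functionals, the key observation is that $\E,\delta$ are $\Z$-linear maps $\Z^M/\langle e_0\rangle \to \Z$ whose restrictions to $K$ land in $\Ker(\Z^2\to M) = \Z(z,-y) + \Z(-w,x)$. Inverting this change of basis via the identity $xz-yw=|M|$ yields
\[
\Lambda = \frac{x\E + w\delta}{|M|}, \qquad \Delta = \frac{y\E + z\delta}{|M|}
\]
as well-defined $\Z$-linear maps $K\to\Z$. To identify them with the last two dual basis elements I would evaluate on each basis vector using $\E_{cm}=c,\,\delta_{cm}=0$ for $0<c<z$ and symmetrically for $dn$, and check that $\Lambda,\Delta$ vanish on the bulk and the non-wrap-around chains while on $v_{m,(z-1)m}$ and $v_{n,(x-1)n}$ they return $(1,0)$ and $(0,1)$ respectively.

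Positivity of $\Lambda(v_{u,v})$ and $\Delta(v_{u,v})$ I would deduce from \ref{lem:computation of E and delta} applied to the universal algebra $A^{\overline q}_{0,0}$ (where $\lambda=\mu=0$): a short case split on the sign of $\E_{u,v}$, together with the bounds $\E_{u,v}\geq z,\,\delta_{u,v}\geq -y$ in the subcase $\E_{u,v}>0,\,\delta_{u,v}\leq 0$ and $\E_{u,v}\geq -w,\,\delta_{u,v}\geq x$ when $\E_{u,v}<0$ supplied by the lemma, makes both expressions $\geq 0$ via $xz-yw=|M|$. The smooth-sequence property is then immediate: $v_{m,(z-1)m}$ and $v_{n,(x-1)n}$ lie in $K_+$ and are dual to $(\Lambda,\Delta)$, while $\Ker(\Lambda,\Delta)$ is spanned by the remaining basis vectors, each of the form $v_{u,v}\in K_+$. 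Finally, the four numerical values $\Lambda_{m,-m},\Lambda_{n,-n},\Delta_{m,-m},\Delta_{n,-n}$ reduce to direct plug-in once one identifies the canonical pairs of $-m,-n$ from \ref{lem:definition of epsilon delta}, splitting on $\overline q=1$ versus $\overline q>1$ and on $y=0$ (i.e.\ $\overline q = N/(\alpha,N)$) versus $y>0$. The main obstacle will be the bookkeeping in the basis argument, in particular ensuring that the wrap-around vectors — whose leading terms are $-e_{yn}$ and $-e_{wm}$ — do not collide destructively with the opposite chain during the elimination.
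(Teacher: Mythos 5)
Your proposal is correct, and the second half (the formulas for $\Lambda,\Delta$, the positivity argument via \ref{lem:computation of E and delta} applied to the $\lambda=\mu=0$ specialization, the smooth-sequence claim, and the final plug-in for $\Lambda_{m,-m},\Delta_{n,-n}$, etc.) coincides with the paper's proof essentially step for step: the paper also solves the $2\times 2$ system $az-yb=1$, $xb-aw=0$ using $zx-yw=|M|$, and runs the same sign case-split on $\E_{a,b}$. Where you genuinely diverge is the basis claim. The paper passes to the quotient $K/K'$ by the span $K'$ of all listed vectors except the two wrap-around ones, observes that $(\E,\delta)$ kills $K'$, identifies $\Z^M/\langle e_0,K'\rangle\simeq\Z^2$ via $(\E,\delta)$ and $\tau$, and then uses $|\det U|=zx-yw=|M|$ to see that the two wrap-around vectors map isomorphically onto $\Ker(\Z^2\to M)$, whence $\sigma$ is an isomorphism. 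You instead prove spanning directly by an explicit elimination (reduce each $e_t$ to $e_{\E_t m}+e_{\delta_t n}$, telescope the two chains down to $e_m,e_n$, and absorb the remainder, which lies in $\Ker(\Z^2\to M)=\Z(z,-y)+\Z(-w,x)$, into the wrap-around vectors) and conclude independence from the cardinality count $\sum_c f(c)=|M|$. Both are valid; your route is more elementary and avoids the quotient bookkeeping, at the cost of having to check that the elimination terminates (your decreasing-lex order does the job) -- and your worry about "destructive collisions" is moot precisely because you only need spanning, not a direct independence check. One small tidy-up: the basis assertion of the lemma is stated without assuming $z,x>1$, and your elimination as written presupposes both wrap-around vectors are present; you should dispose of the degenerate cases $z=1$ or $x=1$ separately (there $M$ is cyclic generated by $n$ resp.\ $m$ and the list reduces to a single chain), as the paper does in two lines.
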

\begin{proof}
Note that we cannot have $z=x=1$ since otherwise $|M|=f(0)=x=1$,
i.e. $M=0$. The vectors of (\ref{eq:base for the case m,n}) are
at most $\rk K$ since
\[
\sum_{c=1}^{z-1}(f(c)-1)+z-2+x-2+2=\sum_{c=0}^{z-1}(f(c)-1)+z-1=|M|-z+z-1=|M|-1=\rk K
\]
If $z=1$ then (\ref{eq:base for the case m,n}) is $v_{n,n},\dots,v_{n,(x-1)n}$.
So $x=|M|=N$, i.e. $n$ generates $M$, and (\ref{eq:base for the case m,n})
is a base of $K$. In the same way if $x=1$, then $m$ generates
$M$ and (\ref{eq:base for the case m,n}) is a base of $K$.

So we can assume that $z,x>1$. $\E$ and $\delta$ define a map $\Z^{M}/<e_{0}>\arrdi{(\E,\delta)}\Z^{2}$.
Denote by $K'$ the subgroup of $K$ generated by the vectors in (\ref{eq:base for the case m,n}),
except the last two lines. We claim that $(\E,\delta)_{|K'}=0$. This
follows by a direct computation just observing that if we have a writing
$Am+Bn$ as in \ref{lem:definition of epsilon delta}, \ref{enu:writing of the form Am+Bn})
then $(\E,\delta)(e_{Am+Bn})=(A,B)$. Consider the diagram   \[   \begin{tikzpicture}[xscale=2.2,yscale=-1.2]     
\node (A0_0) at (0.1, 0.6) {$\scriptstyle{\sigma(e_1)=v_{m,(z-1)m}}$};     
\node (A0_1) at (0.9, 0.6) {$\scriptstyle{\sigma(e_2)=v_{n,(x-1)n}}$};     
\node (A1_0) at (0, 1) {$\Z^2$};     
\node (A1_1) at (1, 1) {$K/K'$};     
\node (A1_2) at (2, 1) {$\Z^M/<e_0,K'>$};    
\node (A1_3) at (3, 1) {$\Z^2$};     
\node (A1_4) at (4, 1) {$\Z^M/<e_0,K'>$};     
\node (A1_5) at (5, 1) {$M$};     
\node (A2_3) at (3.2, 1.4) {$\scriptstyle{\tau(e_1)=e_m}$};     
\node (A2_4) at (3.7, 1.4) {$\scriptstyle{\tau(e_2)=e_n}$};     
\node (A2_5) at (4.7, 1.4) {$\scriptstyle{p(e_l)=l}$};     

\path (A1_4) edge [->]node [auto,swap] {$\scriptstyle{p}$} (A1_5);     \path (A1_0) edge [->]node [auto] {$\scriptstyle{\sigma}$} (A1_1);     \path (A1_0) edge [->,bend left=35]node [auto] {$\scriptstyle{U}$} (A1_3);     \path (A1_1) edge [right hook->]node [auto] {$\scriptstyle{}$} (A1_2);     \path (A1_2) edge [->]node [auto] {$\scriptstyle{(\E,\delta)}$} (A1_3);     \path (A1_3) edge [->,bend right=45]node [auto] {$\scriptstyle{\pi}$} (A1_5);     \path (A1_3) edge [->]node [auto,swap] {$\scriptstyle{\tau}$} (A1_4);   \end{tikzpicture}   \]  We have $(\E,\delta)(v_{m,(z-1)m})=(z,-y)$ since $y<x=f(0)$ and
$(\E,\delta)(v_{n,(x-1)n})=(-w,x)$ since $w<z$. So $|\det U|=zx-yw=|M|$
and, since $\pi\circ U=0$, $U$ is an isomorphism onto $\Ker\pi$.
Moreover $\tau^{-1}=(\E,\delta)$ since $e_{l}\equiv\E_{l}e_{m}+\delta_{l}e_{n}\text{ mod }K'$.
It follows that $\sigma$ is an isomorphism and so (\ref{eq:base for the case m,n})
is a basis of $K$.

Consider now the second part of the statement. Clearly $\Lambda,\Delta\in<\E,\delta>_{\Q}$.
Therefore we have
\[
\Lambda=a\E+b\delta,\left\{ \begin{array}{c}
\Lambda(v_{m,(z-1)m})=1=az-yb\\
\Lambda(v_{n,(x-1)n})=0=xb-aw
\end{array}\then\left\{ \begin{array}{c}
a=x/|M|\\
b=w/|M|
\end{array}\right.\right.
\]
and the analogous relation for $\Delta$ follows in the same way.
Now note that, thanks to \ref{pro:Universal algebras generated by m,n with overline q fixed}
and \ref{pro:from an algebra to overline q}, we have that $\E=\E^{A}\comma\delta=\delta^{A}$
for an algebra $A$ as in \ref{not:starting from a cover, m.n generate M}
with $\overline{q}_{A}=\overline{q}$, $\lambda_{A}=0$ and sharing
the same invariants of $\overline{q}$. So we can apply \ref{lem:computation of E and delta}.
We want to prove that $\Lambda,\Delta\in\duale K_{+}$ so that they
form a smooth sequence by construction. Assume first that $\E_{a,b}>0$.
Clearly $\Lambda_{a,b},\Delta_{a,b}\geq0$ if $\delta_{a,b}\geq0$.
On the other hand if $\delta_{a,b}<0$ we know that $\E_{a,b}\geq z$
and $\delta_{a,b}\geq-y$ and so
\[
|M|\Lambda_{a,b}=x\E_{a,b}+w\delta_{a,b}\geq xz-yw=|M|\text{ and }|M|\Delta_{a,b}=y\E_{a,b}+z\delta_{a,b}\geq yz-zy=0
\]
The other cases follows in the same way. It remains to prove the last
relations. Since $-n=\hat{q}rm+(d_{\hat{q}}-1)n$, we have $\E_{n,-n}=\hat{q}r$
and $\delta_{n,-n}=d_{\hat{q}}$. Using the relation $zx-wy=|M|$
the values of $\Lambda_{n,-n}\comma\Delta_{n,-n}$ can be checked
by a direct computation. Similarly, considering the relations $-m=(\hat{q}r-1)m+d_{\hat{q}}n$
if $1<\overline{q}$, $-m=(r-1)m+(N-\alpha)n$ if $\overline{q}=1$
and $\alpha\neq0$, $-m=(r-1)m$ if $\alpha=0$, we can compute the
values of $\Lambda_{m,-m}$ and $\Delta_{m,-m}$.\end{proof}
\begin{prop}
\label{pro:general algebra for m,n}The multiplication of $A^{\overline{q}}$
(see \ref{def:universal algebra}) with respect to the basis $v_{l}=v_{m}^{\E_{l}}v_{n}^{\delta_{l}}$
is: $a^{\E^{\phi}}$ if $\overline{q}=N$, where $\phi\colon M\arrdi{\simeq}\Z/|M|\Z,$
$\phi(m)=1$; $b^{\E^{\eta}}$ if $\overline{q}r=1$, where $\eta\colon M\arrdi{\simeq}\Z/|M|\Z,$
$\phi(n)=1$; $a^{\Lambda}b^{\Delta}$ if $\overline{q}r\neq1\comma\overline{q}\neq N$,
where $\Lambda,\Delta$ are the rays defined in \ref{lem:lambda,delta for overlineq}.\end{prop}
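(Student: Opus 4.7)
The goal is to compute, for each $a,b\in M$, the scalar $\psi_{a,b}\in\Z[a,b]$ determined by
\[
v_m^{\E_a+\E_b}\, v_n^{\delta_a+\delta_b}\;=\;\psi_{a,b}\, v_m^{\E_{a+b}}\,v_n^{\delta_{a+b}}
\qquad\text{in }A^{\overline{q}},
\]
which makes sense because by Proposition~\ref{pro:general algebra for m,n 2} each graded piece $(A^{\overline{q}})_l$ is a free $\Z[a,b]$-module of rank $1$ with basis $v_l=v_m^{\E_l}v_n^{\delta_l}$. The plan is to reduce this to a single computation in the Laurent localization of $A^{\overline{q}}$ and separately address the two edge cases where one of $z,x$ equals $1$.

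First I would dispose of the extremal cases. If $\overline{q}=N$, then $(\alpha,N)=1$, $y=0$, $x=1$, so the relations $s^z=at^y,\; t^x=bs^w,\; s^{\hat q r}t^{d_{\hat q}}=a^\gamma b$ collapse: $t=bs^w$ and $s^{|M|}=a$. Hence $M$ is cyclic of order $|M|$ under $\phi\colon m\mapsto 1$ and $A^{\overline{q}}\simeq\Z[a][s]/(s^{|M|}-a)$ with $v_i=s^i$; a direct check gives $\psi_{i,j}=a$ or $1$ according to whether $i+j\geq|M|$ or not, which is exactly $a^{\E^{\phi}_{i,j}}$. The case $\overline{q}r=1$ is entirely symmetric with $\eta\colon n\mapsto 1$.

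For the generic case $\overline{q}r\neq 1$, $\overline{q}\neq N$, both $z,x>1$ so the basis of $K$ from~\ref{lem:lambda,delta for overlineq} is available and $\Lambda,\Delta\in\duale K_{+}$. I would work inside the localization $B:=A^{\overline{q}}[a^{-1},b^{-1}]$. Using $s^{zx}=a^x t^{xy}=a^xb^y s^{yw}$, i.e. $s^{|M|}=a^xb^y$ (and the symmetric identity for $t^{|M|}$), both $s=v_m$ and $t=v_n$ become units in $B$, and from $s^z=at^y$, $t^x=bs^w$ one deduces $a=s^z t^{-y}$ and $b=t^x s^{-w}$. In $B$ the equality to prove becomes a monomial identity: one needs $s^{\E_{a,b}} t^{\delta_{a,b}}=a^{\Lambda_{a,b}} b^{\Delta_{a,b}}=s^{\Lambda_{a,b}z-\Delta_{a,b}w}\,t^{-\Lambda_{a,b}y+\Delta_{a,b}x}$, i.e.
\[
\begin{pmatrix} z & -w \\ -y & x \end{pmatrix}
\begin{pmatrix} \Lambda_{a,b}\\ \Delta_{a,b}\end{pmatrix}
\;=\;\begin{pmatrix} \E_{a,b}\\ \delta_{a,b}\end{pmatrix},
\]
which is exactly the definition $\Lambda=\tfrac{1}{|M|}(x\E+w\delta)$, $\Delta=\tfrac{1}{|M|}(y\E+z\delta)$ combined with $zx-yw=|M|$. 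Finally, since $A^{\overline{q}}$ is free (hence torsion-free) over $\Z[a,b]$ and $\psi_{a,b}\in\Z[a,b]$ with $\Lambda_{a,b},\Delta_{a,b}\geq 0$, the identity in $B$ descends to the desired identity in $A^{\overline{q}}$.

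The main obstacle is not the localization step, which is formal, but ensuring that the three defining relations of $A^{\overline{q}}$ really do make $s,t$ invertible in $B$ without introducing extra relations that would alter $\psi_{a,b}$; in other words, checking that the Laurent algebra generated by the images of $s,t$ in $B$ really is $\Z[a^{\pm1},b^{\pm 1}][s^{\pm1},t^{\pm1}]$ modulo only $s^z=at^y$ and $t^x=bs^w$ (the third relation $s^{\hat q r}t^{d_{\hat q}}=a^\gamma b$ being a consequence once $\overline{q}>1$, because then $\hat q r=z-w$ and $d_{\hat q}=x-y$, giving $s^{\hat q r}t^{d_{\hat q}}=(s^zt^{-y})(t^xs^{-w})=ab$). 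This bookkeeping, together with the separate verification that freeness of $A^{\overline{q}}$ over $\Z[a,b]$ survives the case distinction, is the only delicate point.
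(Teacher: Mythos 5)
Your route is genuinely different from the paper's. The paper never localizes: it first builds the $\Di M$-cover $B$ over $\Z[a,b]$ whose multiplication is \emph{declared} to be $a^{\Lambda}b^{\Delta}$ (legitimate since $\Lambda,\Delta\in\duale K_{+}$, so this is a monoid map $\tilde{K}_{+}\arr(\Z[a,b],\cdot)$), checks that a graded basis $\{\omega_{l}\}$ of $B$ satisfies $\omega_{l}=\omega_{m}^{\E_{l}}\omega_{n}^{\delta_{l}}$ together with the three defining relations of $A^{\overline{q}}$, and so obtains a surjection $A^{\overline{q}}\arr B$ carrying the basis $v_{l}$ to the basis $\omega_{l}$, hence an isomorphism. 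That argument buys complete immunity from zero-divisor issues in $A^{\overline{q}}$. Your computation reduces everything to inverting the matrix with rows $(z,-w)$ and $(-y,x)$ of determinant $|M|$, which is arguably more transparent, and your descent step (freeness of $A^{\overline{q}}$ over $\Z[a,b]$ together with $\Lambda_{a,b},\Delta_{a,b}\geq0$) is correct.

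There is, however, one concrete gap, exactly at the point you flag. Your proof that $s$ becomes a unit in $B=A^{\overline{q}}[a^{-1},b^{-1}]$ runs through ``$s^{zx}=a^{x}b^{y}s^{yw}$, i.e.\ $s^{|M|}=a^{x}b^{y}$'': the second step cancels $s^{yw}$, which presupposes that $s$ is a non-zero-divisor --- essentially what you are trying to establish ($A^{\overline{q}}$ is not a domain, and multiplication by $s$ permutes the basis $v_{l}$ up to the unknown structure constants $\psi_{m,l}$, so injectivity is not available a priori). You then compound this by discarding the one relation that rescues you, asserting that $s^{\hat{q}r}t^{d_{\hat{q}}}=a^{\gamma}b$ follows from the other two; that verification is itself carried out in the Laurent ring, i.e.\ after $s,t$ have been inverted, so it cannot be used to justify the inversion. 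The repair is to use the third relation first: if $\overline{q}>1$ then $\hat{q}\geq1$ and $d_{\hat{q}}\geq1$, so $s\cdot(s^{\hat{q}r-1}t^{d_{\hat{q}}})=ab$ exhibits $s$ (and symmetrically $t$) as a unit of $B$; if $\overline{q}=1$ (hence $r>1$) then $\gamma=0$, $\hat{q}=0$, $d_{0}=N$ give $t^{N}=b$, so $t$ is a unit and then $s^{r}=at^{y}$ makes $s$ one. With that substitution your argument goes through. (For the edge case $\overline{q}=N$ you should also justify dropping the third relation: there it reads $bs^{\hat{q}r+w}=ab$ with $\hat{q}r+w=\overline{q}r=|M|$, so it does follow from $s^{|M|}=a$ and $t=bs^{w}$; alternatively, the surjection $\Z[a,b][s]/(s^{|M|}-a)\arr A^{\overline{q}}$ of free modules of equal rank is automatically an isomorphism.)
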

\begin{proof}
In the proof of \ref{pro:general algebra for m,n 2} we have seen
that if $x=1$ $(\overline{q}=N)$, then $M=<m>$ and $A^{\overline{q}}=\Z[a,b][s]/(s^{|M|}-a)$,
while if $z=1$ $(\overline{q}r=1)$ then $M=<n>$ and $A^{\overline{q}}=\Z[a,b][t]/(t^{|M|}-b)$.
So we can assume $x,z>1$. Let $B$ the $\Di M$-cover over $\Z[a,b]$
given by multiplication $\psi=a^{\Lambda}b^{\Delta}$ and denote by
$\{\omega_{l}\}_{l\in M}$ a graded basis (inducing $\psi)$. By definition
of $\Lambda,\Delta$ we have $\omega_{l}=\omega_{m}^{\E_{l}}\omega_{n}^{\delta_{l}}$
for any $l\in M$ and $\psi_{m,(z-1)m}=a,\;\psi_{n,(x-1)n}=b$. Therefore
\[
\omega_{m}^{z}=\omega_{m}\omega_{(z-1)m}=a\omega_{zm}=a\omega_{yn}=a\omega_{n}^{y}\comma\omega_{n}^{x}=\omega_{n}\omega_{(x-1)n}=b\omega_{xn}=b\omega_{wm}=b\omega_{m}^{w}
\]
and, checking both cases $\overline{q}=1$ and $\overline{q}>1$,
$\omega_{m}^{\hat{q}r}\omega_{n}^{d_{\hat{q}}}=\omega_{-n}\omega_{n}=a^{\Lambda_{n,.n}}b^{\Delta_{n,.n}}=a^{\gamma}b$.
In particular we have an isomorphism $A^{\overline{q}}\arr B$ sending
$v_{m},v_{n}$ to $\omega_{m},\omega_{n}$.\end{proof}
\begin{notation}
From now on M will be any finite abelian group. If $\phi\colon M\arr M_{r,\alpha,N}$
is a surjective map, $r,\alpha,N$ satisfy the conditions of \ref{not:for alpha,N,r e M},
$\overline{q}\in\Omega_{N-\alpha,N}$ with $\overline{q}r\neq1\comma\overline{q}\neq N$
then we set $\Lambda^{r,\alpha,N,\overline{q},\phi}=\Lambda\circ\phi_{*}\comma\Delta^{r,\alpha,N,\overline{q},\phi}=\Delta\circ\phi_{*}$,
where $\Lambda,\Delta$ are the rays defined in \ref{lem:lambda,delta for overlineq}
with respect to $r,\alpha,N,\overline{q}$. If $\phi=\id$ we will
omit it.\end{notation}
\begin{defn}
Set
\[
\Sigma_{M}=\left\{ (r,\alpha,N,\overline{q},\phi)\left|\begin{array}{c}
0\leq\alpha<N\comma r>0\comma N>1\comma(r>1\text{ or }\alpha>1)\\
\overline{q}\in\Omega_{N-\alpha,N}\comma\overline{q}r\neq1\comma\overline{q}\alpha\not\equiv1\text{ mod }N\\
\overline{q}\neq N/(\alpha,N)\comma\phi\colon M\arr M_{r,\alpha,N}\text{ surjective}
\end{array}\right.\right\} 
\]
and $\Delta^{*}\colon\Sigma_{M}\arr\{\text{smooth integral extremal rays of }M\}$.\end{defn}
\begin{rem}
Since $e_{2},e_{1}$ generate $M_{r,\alpha,N}$, there exist unique
$\duale r,\duale{\alpha},\duale N$ with an isomorphism $\duale{(-)}\colon M_{r,\alpha,N}\arr M_{\duale r,\duale{\alpha},\duale N}$
sending $e_{2},e_{1}$ to $e_{1},e_{2}$. One can check that $\duale r=(\alpha,N)$,
$\duale N=rN/(\alpha,N)$ and $\duale{\alpha}=\tilde{q}r$, where
$\tilde{q}$ is the only integer $0\leq\tilde{q}<N/(\alpha,N)$ such
that $\tilde{q}\alpha\equiv(\alpha,N)\text{ mod }N$. 

If $A$ is an algebra as in \ref{not:starting from a cover, m.n generate M}
for $M_{r,\alpha,N}$, then, through $\duale{(-)}$, $A$ can be thought
as a $M_{\duale r,\duale{\alpha},\duale N}$-cover that we will denote
by $\duale A$. $\duale A$ is an algebra as in \ref{not:starting from a cover, m.n generate M}
with respect to $M_{\duale r,\duale{\alpha},\duale N}$ with $\overline{q}_{\duale A}=x_{A}/(\alpha,N)$,
$\lambda_{\duale A}=\mu_{A}$. We can define a bijection $\duale{(-)}\colon\Omega_{N-\alpha,N}-\{N/(N,\alpha)\}\arr\Omega_{\duale N-\duale{\alpha},\duale N}-\{\duale N/(\duale{\alpha},\duale N)\}$
in the following way. Given $\overline{q}$ take an algebra $A$ as
in \ref{not:starting from a cover, m.n generate M} for $M_{r,\alpha,N}$
with $\overline{q}_{A}=\overline{q}$ and $\lambda_{A}\neq0$, which
exists thanks to \ref{pro:Universal algebras generated by m,n with overline q fixed},
and set $\duale{\overline{q}}=\overline{q}_{\duale A}$. Taking into
account \ref{rem:lambda is 0 iff mu is 0} and \ref{pro:from an algebra to overline q},
$\duale{\overline{q}}=y_{\overline{q}}/(\alpha,N)$ since $x_{A}=y_{A}=y_{\overline{q}}$
and $\duale{(-)}$ is well defined and bijective since $\lambda_{\duale A}=\mu_{A}=\lambda_{A}^{-1}$.
Note that the condition $\overline{q}\alpha\equiv1\text{ mod }N$
is equivalent to $\duale r=1$ and $\duale{\overline{q}}=1$

Finally if $\phi\colon M\arr M_{r,\alpha,N}$ is a surjective morphism
then we set $\duale{\phi}=\duale{(-)}\circ\phi\colon M\arr M_{\duale r,\duale{\alpha},\duale N}$.
Note that in any case we have the relation $\duale{\duale{(-)}}=\id$.
In particular, since $\duale 1=\alpha/\duale r$, $\overline{q}=\duale{\alpha}/r$
is the dual of $1\in\Omega_{\duale N-\duale{\alpha},\duale N}$.\end{rem}
\begin{prop}
\label{pro:trivial ray for lambda,delta}Let $r,\alpha,N$ be as in
\ref{not:for alpha,N,r e M}, $\overline{q}\in\Omega_{N-\alpha,N}$
with $\overline{q}r\neq1\comma\overline{q}\neq N$ and $\phi\colon M\arr M_{r,\alpha,N}$
be a surjective map. Set $\chi=(r,\alpha,N,\overline{q},\phi)$. Then
\begin{enumerate}
\item $\overline{q}=N/(\alpha,N)$: $\Delta^{\chi}=\E^{\xi}$, $\xi\colon M\arrdi{\phi}M_{r,\alpha,N}\arr M_{r,\alpha,N}/<m>\simeq<n>\simeq\Z/(\alpha,N)\Z$;
$\overline{q}\alpha\equiv1\text{ mod }N$: $\Delta^{\chi}=\E^{\zeta}$,
$\zeta\colon M\arrdi{\phi}M_{r,\alpha,N}=<e_{1}>$;
\item $\overline{q}=1$: $\Lambda^{\chi}=\E^{\omega}$, $\omega\colon M\arrdi{\phi}M_{r,\alpha,N}\arr M_{r,\alpha,N}/<n>=<m>\simeq\Z/r\Z$;\newline$w_{\overline{q}}=1$:
$\Lambda^{\chi}=\E^{\theta},\theta\colon M\arrdi{\phi}M_{r,\alpha,N}=<e_{2}>$;
\item $\overline{q}>1$ and $w_{\overline{q}}\neq1$: $\Lambda^{\chi}=\Delta^{r,\alpha,N,\overline{q}-\hat{q},\phi}$.
\end{enumerate}
In particular in the first two cases we have $h_{\Lambda^{\chi}}=h_{\Delta^{\chi}}=1$.\end{prop}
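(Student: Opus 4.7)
The overall approach is to reduce via functoriality to $M=M_{r,\alpha,N}$, $\phi=\id$, and then in each case verify the equality either by direct evaluation on the basis of $K$ given in \eqref{eq:base for the case m,n}, or, in the hardest case~(3), by a conceptual argument comparing two graded presentations of one universal algebra. Since both sides of each identity are natural in $\phi$ (we have $\Lambda^{\chi}=\Lambda\circ\phi_{*}$, $\Delta^{\chi}=\Delta\circ\phi_{*}$, and $\E^{\xi}=\E^{\xi'}\circ\phi_{*}$ whenever $\xi=\xi'\circ\phi$), this reduction is immediate, and afterwards Lemma~\ref{lem:lambda,delta for overlineq} characterizes $\Lambda$ and $\Delta$ as the dual basis elements of $v_{m,(z-1)m}$ and $v_{n,(x-1)n}$ in the basis \eqref{eq:base for the case m,n} of $K$.

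For cases~(1) and~(2), each proposed ray is of the form $\E^{\eta}$ for some $\eta\colon M\to\Z/l\Z$, and by the definition recalled in \ref{pro:Rita's smooth integral extremal rays}, $\E^{\eta}(v_{a,b})=1$ iff $\sigma(\eta(a))+\sigma(\eta(b))\geq l$. I would evaluate the proposed $\E^{\eta}$ on each of the five families of basis vectors in \eqref{eq:base for the case m,n} using the explicit formulas for $x$, $y$, $z$, $w$, $f$, $\hat{q}$, $d_{\hat{q}}$ from \ref{not:alpha, N, r, overq, q', hatq, f}, and check that the overflow $\sigma(\eta(\cdot))+\sigma(\eta(\cdot))\geq l$ occurs only at the final basis vector on which $\Delta$ (or $\Lambda$) takes the value $1$. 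For instance, in case~(1a) with $\xi\colon M\to\Z/(\alpha,N)\Z$ sending $n\mapsto 1$, one has $\xi(cm)=0$ and $\xi(dn)=d\bmod(\alpha,N)$, and the constraints $0<d<f(c)\leq x$ together with $x=d_{\hat{q}}+y$ (or $x=N$ if $\overline{q}=1$) prevent overflow except precisely at $v_{n,(x-1)n}$; the three other subcases are symmetric. The assertion $h_{\Lambda^{\chi}}=h_{\Delta^{\chi}}=1$ in these cases then follows at once from Theorem~\ref{thm:fundamental theorem for hleqone}.

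For case~(3), I would use the universal algebra $A^{\overline{q}}$ of \ref{def:universal algebra}. By \ref{pro:general algebra for m,n}, its multiplication in the graded basis $v_{l}=s^{\E_{l}^{\overline{q}}}t^{\delta_{l}^{\overline{q}}}$ is $a^{\Lambda}b^{\Delta}$; specializing at $a=0$, $b=1$ yields a $\Di M$-cover over the base field whose multiplication is $0^{\Lambda}$, so that $\Supp\Lambda=\{v_{a,b}\mid\psi_{a,b}=0\}$. In the specialized algebra the defining relations give $s^{z}=0$, $t^{x}=s^{w}=s^{q'r}\neq 0$, and $s^{\hat{q}r}t^{d_{\hat{q}}}=0$, while for $0<q<q'$ the element $s^{qr}$ is a distinct nonzero element of the basis and hence cannot be a scalar multiple of a power of $t$; this shows $\overline{q}_{A}=q'$ with $\lambda_{A}=1$. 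By \ref{pro:Universal algebras generated by m,n with overline q fixed} the algebra is then isomorphic to $A^{q'}_{1,0}$, whose multiplication in its own graded basis is $0^{\Delta_{q'}}$, by \ref{pro:general algebra for m,n} applied to $q'$. Since two graded bases of the same $\Di M$-cover differ only by a $\Di M$-torsor, and such a torsor preserves the zero locus of the multiplication, we obtain $\Supp\Lambda_{\overline{q}}=\Supp\Delta_{q'}$. As both rays are normalized integral extremal (Lemma~\ref{lem:lambda,delta for overlineq}), the corollary immediately following the definition of extremality forces two such rays with equal support to coincide, yielding $\Lambda^{\chi}=\Delta^{r,\alpha,N,q',\phi}$.

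The main obstacle is precisely case~(3): one must carefully identify two distinct graded presentations of the same $\Di M$-cover $A^{\overline{q}}_{0,1}\cong A^{q'}_{1,0}$ and invoke that integer-valued, surjective extremal rays are determined by their supports. The verifications for cases~(1) and~(2) are routine but require patience in separating the five families of basis vectors and in handling the formulas of \ref{not:alpha, N, r, overq, q', hatq, f}.
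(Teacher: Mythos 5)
Your proposal is correct. For case (3) you follow essentially the same route as the paper: specialize the universal algebra $A^{\overline{q}}$ of \ref{def:universal algebra} at $a=0$, $b=1$ to obtain a cover with multiplication $0^{\Lambda}$, compute that its invariants are $\overline{q}=q'=\overline{q}-\hat{q}$ and $\lambda=1$, and then invoke \ref{pro:Universal algebras generated by m,n with overline q fixed} together with \ref{pro:general algebra for m,n} applied to $q'$; you make explicit the concluding step (two normalized integral extremal rays with equal support coincide, via the corollary following the definition of extremal rays) which the paper leaves implicit, a small gain in rigor. For cases (1) and (2) your method genuinely differs: you verify $\Delta=\E^{\xi}$ (and the analogous identities) by evaluating both sides on the explicit $\Z$-basis (\ref{eq:base for the case m,n}) of $K$, using that $\Lambda$ and $\Delta$ are by definition dual basis elements, whereas the paper again specializes $A^{\overline{q}}$ (at $a=1,b=0$ or $a=0,b=1$), simplifies the resulting presentation using $y=0$, $y=1$, $w=0$ or $w=1$, and recognizes the algebras $k[s,t]/(s^{o(m)}-1,t^{(\alpha,N)})$, $k[s]/(s^{|M|})$, etc.\ as those attached to the multiplications $0^{\E^{\eta}}$. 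Your route avoids having to identify the algebra attached to $\E^{\eta}$, at the cost of careful bookkeeping with $x,y,z,w,f,\hat{q}$ (for instance, in case (1) one needs $x=d_{\hat{q}}=(\alpha,N)$ so that the overflow defining $\E^{\xi}$ occurs exactly at $v_{n,(x-1)n}$); the paper's route is more uniform across the five sub-cases. Both arguments ultimately rest on the same principle, namely that the multiplication $0^{\E}$, i.e.\ $\Supp\E$, determines a normalized extremal ray, and your derivation of $h_{\Lambda^{\chi}}=h_{\Delta^{\chi}}=1$ from \ref{thm:fundamental theorem for hleqone} is the intended one.
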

\begin{proof}
We can assume $M=M_{r,\alpha,N}$ and $\phi=\id$. The algebra associated
to $0^{\Lambda^{\chi}}$, $0^{\Delta^{\chi}}$ are respectively $C_{\overline{q}}=k[s,t]/(s^{z},t^{x}-s^{w},s^{\hat{q}r}t^{d_{\hat{q}}}-0^{\gamma})$,
$B_{\overline{q}}=k[s,t]/(s^{z}-t^{y},t^{x},s^{\hat{q}r}t^{d_{\hat{q}}})$
by \ref{pro:general algebra for m,n}. 

$1)$ If $\overline{q}=N/(\alpha,N)$, then $z=o(m)$, $y=0$, $d_{\hat{q}}=(\alpha,N)$
and so $B_{\overline{q}}=k[s,t]/(s^{o(m)}-1,t^{(\alpha,N)})$, the
algebra associated to $0^{\E^{\xi}}$. If $\overline{q}\alpha\equiv1\text{ mod }N$
then $\duale r=(\alpha,N)=1$ and $\overline{q}=\duale{\alpha}/r$,
i.e. $\duale{\overline{q}}=1$. So $y=1$ and $B_{\overline{q}}\simeq k[s]/(s^{|M|})$,
the algebra associated to $0^{\E^{\gamma}}$. 

$2)$ If $\overline{q}=1$, then $z=r$, $\hat{q}=w=0$, $x=d_{\hat{q}}=N$
and so $C_{1}=k[s,t](t^{n}-1,s^{r})$, the algebra associated to $0^{\E^{\omega}}$.
If $w=1$ then $\overline{q}>1$ and so $C_{\overline{q}}=k[t]/(t^{|M|}),$
the algebra associated to $0^{\E^{\theta}}$.

$3)$ If $\overline{q}>1$ then $H_{C_{\overline{q}}}=0$ and so $C_{\overline{q}}$
is an algebra as in \ref{not:starting from a cover, m.n generate M}.
An easy computation shows that $z_{C_{\overline{q}}}=w>1$, so that
$\overline{q}_{C_{\overline{q}}}=\overline{q}-\hat{q}$ and $\lambda_{\overline{q}}=1$.
Therefore $\Lambda^{\chi}=\Delta^{r,\alpha,N,\overline{q}-\hat{q}}$
by \ref{pro:Universal algebras generated by m,n with overline q fixed}. \end{proof}
\begin{prop}
\label{pro:classification sm int ray htwo}$\duale{\Sigma}_{M}=\Sigma_{M}$
and we have a bijection 
\[
\Delta^{*}\colon\Sigma_{M}/\duale{(-)}\arr\{\text{smooth integral extremal rays }\E\text{ with }h_{\E}=2\}
\]
\end{prop}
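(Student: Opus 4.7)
The plan is to interpret both sides of the claimed bijection via the classification of $M$-graded $\overline{k}$-algebras $A$ with $H_A = 0$ and $h_A = 2$ developed in subsections 4.2 and 4.3, which matches them with tuples $(r,\alpha,N,\overline{q},\phi)$ up to interchange of the two generators. First I would establish $\duale{\Sigma}_M = \Sigma_M$: since $\duale{\duale{(-)}} = \id$, it suffices to check that each defining condition of $\Sigma_M$ is preserved by the duality. Using the explicit formulas $\duale{r} = (\alpha,N)$, $\duale{N} = rN/(\alpha,N)$, $\duale{\alpha} = \tilde{q}r$ (with $\tilde{q}\alpha \equiv (\alpha,N) \bmod N$), $\duale{\overline{q}} = (N-d_{\overline{q}})/(\alpha,N)$, together with the identity $zx - wy = |M|$, a direct computation shows that $\overline{q}r = 1$ (i.e.\ $z=1$) corresponds under duality to $\duale{\overline{q}} = \duale{N}/(\duale{\alpha},\duale{N})$, and $\overline{q}\alpha \equiv 1 \bmod N$ corresponds to $\duale{\overline{q}}\,\duale{r} = 1$; the remaining conditions are interchanged or preserved similarly.

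\paragraph{Well-definedness and image.} For $\chi \in \Sigma_M$, Lemma \ref{lem:lambda,delta for overlineq} shows that $\Lambda^\chi,\Delta^\chi$ form a smooth sequence, so by \ref{lem:equivalent condition for a smooth integral extremal ray} each is a smooth integral extremal ray. The algebra corresponding to $\psi = 0^{\Delta^\chi}$ is the specialization at $a=1,b=0$ of the universal algebra $A^{\overline{q}}$ from Proposition \ref{pro:general algebra for m,n}, namely $B_{\overline{q}}$; the conditions $\overline{q}r \neq 1$, $\overline{q} \neq N/(\alpha,N)$, $\overline{q}\alpha \not\equiv 1 \bmod N$ in $\Sigma_M$ are precisely what guarantee, via Proposition \ref{pro:trivial ray for lambda,delta}, that $\Delta^\chi$ does not degenerate to some $\E^\eta$, so that $H_{B_{\overline{q}}} = 0$ and $B_{\overline{q}}$ is genuinely generated in two degrees, giving $h_{\Delta^\chi} = 2$. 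The descent $\Delta^\chi = \Delta^{\duale{\chi}}$ follows from the algebra-to-ray dictionary: the $M$-graded $\overline{k}$-algebras $B_{\overline{q}}$ (for $\chi$) and $B_{\duale{\overline{q}}}$ (for $\duale{\chi}$) coincide, since $\duale{\chi}$ describes the same algebra with the roles of the two generators interchanged; by \ref{pro:characterization of points of Zphi} two integral extremal rays with isomorphic associated algebras over $\overline{k}$ share the same support, and being normalized they coincide.

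\paragraph{Bijectivity.} For injectivity: if $\Delta^\chi = \Delta^{\chi'}$, then $B_{\overline{q}}$ and $B_{\overline{q}'}$ are $M$-graded isomorphic; from such an algebra $A$ one recovers the unordered pair $\{m,n\} = \{l \in M \st h_{A,l} = 1\}$ (hence the triple $(r,\alpha,N)$ up to the $\duale{(-)}$-interchange) and then $\overline{q} = \overline{q}_A$ via \ref{pro:from an algebra to overline q}, so $\chi'$ is either $\chi$ or $\duale{\chi}$. For surjectivity: given a smooth integral extremal ray $\E$ with $h_\E = 2$, the algebra $A_\E$ over $\overline{k}$ defined by $\psi = 0^\E$ has $H_{A_\E} = H_\E$, and after modding out its maximal torsor we obtain an $M/H_\E$-graded algebra generated in exactly two degrees $\bar{m},\bar{n}$. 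Proposition \ref{pro:description group generated by two elements} yields $(M/H_\E,\bar{m},\bar{n}) \cong (M_{r,\alpha,N},e_1,e_2)$ and Theorem \ref{pro:Universal algebras generated by m,n with overline q fixed} produces $\overline{q} \in \Omega_{N-\alpha,N}$ with a scalar $\lambda \in \{0,1\}$ (after rescaling within the torsor class). If $\lambda = 1$ then $\E = \Delta^\chi$ with $\chi = (r,\alpha,N,\overline{q},\phi)$; if $\lambda = 0$ then $\E = \Lambda^\chi = \Delta^{\duale{\chi}}$. In both cases $\chi$ (resp.\ $\duale{\chi}$) lies in $\Sigma_M$, because the hypothesis $h_\E = 2$ rules out, via \ref{pro:trivial ray for lambda,delta}, exactly the excluded cases in the definition of $\Sigma_M$.

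\paragraph{Main obstacle.} The hardest step is the descent $\Delta^\chi = \Delta^{\duale{\chi}}$: the two rays are defined via different $\Z$-bases of $K$ (the basis of \ref{lem:lambda,delta for overlineq} associated to $\chi$ versus its analogue for $\duale{\chi}$), so equality is not visible at the level of formulas like $\Delta = (y\E + z\delta)/|M|$ even though one can verify $\duale{z} = x$, $\duale{y} = w$ and identify $\duale{\E},\duale{\delta}$ with appropriate reductions of $\delta,\E$. Bypassing this basis-dependent calculation via the algebra-to-ray correspondence, as indicated above, is the cleanest route; the subsidiary obstacle is checking in the surjectivity step that the tuple extracted from the algebra lands in $\Sigma_M$ rather than one of the degenerate loci that collapse $h_\E$ to $1$.
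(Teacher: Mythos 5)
Your overall architecture is the one the paper uses: reduce to $H_{\E}=0$, identify the algebra of $0^{\E}$ with some $A_{\overline{q},\lambda}$ via \ref{pro:Universal algebras generated by m,n with overline q fixed}, read off the ray from \ref{pro:general algebra for m,n}, and recover $\chi$ up to $\duale{(-)}$ from the graded algebra for injectivity. Your explicit argument for the descent $\Delta^{\chi}=\Delta^{\duale{\chi}}$ (same algebra, hence same support by \ref{pro:characterization of points of Zphi}, hence same normalized extremal ray) is a welcome addition, since the paper leaves that point implicit. However, the surjectivity step contains a genuine error in the branch $\lambda=0$. By \ref{pro:general algebra for m,n} the multiplication of $A^{\overline{q}}$ is $a^{\Lambda}b^{\Delta}$, so $A_{\overline{q},0}=A^{\overline{q}}_{0,0}$ has multiplication $0^{\Lambda}0^{\Delta}=0^{\Lambda+\Delta}$, \emph{not} $0^{\Lambda}$; and the identity $\Lambda^{\chi}=\Delta^{\duale{\chi}}$ you assert is false, since $\Delta^{\duale{\chi}}=\Delta^{\chi}$ by your own descent argument while $\Lambda^{\chi}\neq\Delta^{\chi}$ (they are distinct members of a dual basis; the correct identity, from \ref{pro:trivial ray for lambda,delta}, is $\Lambda^{\chi}=\Delta^{r,\alpha,N,\overline{q}-\hat{q},\phi}$). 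The correct resolution is that the case $\lambda=0$ simply cannot occur for an extremal $\E$: if it did, then $\Supp\E=\Supp(\Lambda+\Delta)=\Supp\Lambda\cup\Supp\Delta$, and minimality of $\Supp\E$ forces $\Supp\Lambda=\Supp\Delta=\Supp\E$, hence $\Lambda$ and $\Delta$ are both positive rational multiples of $\E$, contradicting their $\Q$-independence (equivalently, $\Supp\Delta\subsetneq\Supp(\Lambda+\Delta)$ by \ref{rem:the equivalently in definition of smooth sequence}, so $\Lambda+\Delta$ does not have minimal support). Without this observation your map is not well defined on that branch, so the gap must be closed.

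A smaller but related slip: in your verification of $\duale{\Sigma}_{M}=\Sigma_{M}$ you pair the condition $\overline{q}r=1$ with $\duale{\overline{q}}=\duale{N}/(\duale{\alpha},\duale{N})$. Since you correctly pair $\overline{q}\alpha\equiv1$ with $\duale{\overline{q}}\,\duale{r}=1$, involutivity forces $\overline{q}r=1$ to pair with $\duale{\overline{q}}\,\duale{\alpha}\equiv1$, while $\overline{q}=N/(\alpha,N)$ is self-dual (indeed $\duale{y}=z>0$ whenever $\duale{\overline{q}}$ is defined, and the bijection $\duale{(-)}$ already excludes $\duale{N}/(\duale{\alpha},\duale{N})$ from its target). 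Your proposed pairing is internally inconsistent, so the ``direct computation'' as described would not close. Both issues are repairable, but as written the proof does not go through.
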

\begin{proof}
$\duale{\Sigma}_{M}\subseteq\Sigma_{M}$ since $\overline{q}\alpha\not\equiv1\text{ mod }N$
is equivalent to $\duale{\overline{q}}\duale r\neq1$. Now let $\E$
be a smooth integral ray such that $h_{\E}=2$ and $A$ the associated
algebra over some field $k$. We can assume $H_{A/k}=H_{\E}=0$. $h_{\E}=2$
means that there exist $0\neq m,n\in M$, $m\neq n$ such that $ $$A$
is generated in degrees $m,n$. So $M=M_{r,\alpha,N}$ as in \ref{not:for alpha,N,r e M}
and $A$ is an algebra as in \ref{not:starting from a cover, m.n generate M}.
By \ref{pro:Universal algebras generated by m,n with overline q fixed}
and \ref{pro:trivial ray for lambda,delta} we can conclude that there
exist $\chi\in\Sigma_{M}$ such that $\E=\Delta^{\chi}$.

Now let $\chi=(r,\alpha,N,\overline{q},\phi)\in\Sigma_{M}$. We have
to prove that $h_{\Delta^{\chi}}=2$ and, since $M_{r,\alpha,N}\neq0$,
assume by contradiction that $h_{\Delta^{\chi}}=1$. We can assume
$M=M_{r,\alpha,N}$ and $\phi=\id$. Note that $h_{\Delta^{\chi}}=1$
means that the associated algebra $B$ is generated in degree $m$
or $n$. If $A$ is an algebra as in \ref{not:starting from a cover, m.n generate M},
then $A$ is generated in degree $n$ if and only if $z=1$, that
means $\overline{q}r=1$. So $B$ is generated in degree $m$, i.e.
$\duale B$ is generated in degree $e_{2}\in M_{\duale r,\duale{\alpha},\duale N}$,
which is equivalent to $1=z_{\duale B}=\duale{\overline{q}}\duale r=1$,
and, as we have seen, to $\overline{q}\alpha\equiv1\text{ mod N}$. 

Now let $\chi'=(r',\alpha',N',\overline{q}',\phi')\in\Sigma_{M}$
such that $\E=\Delta^{\chi}=\Delta^{\chi'}$. Again we can assume
$H_{\E}=0$ and take $B,B'$ the algebras associated respectively
to $\chi,\chi'$. By definition of $\Delta_{*}$, $\phi,\phi'$ are
isomorphisms. If $g=\phi'\circ\phi^{-1}\colon M_{r,\alpha,N}\arr M_{r',\alpha',N'}$
then we have a graded isomorphism $p\colon B\arr B'$ such that $p(B_{l})=B'_{g(l)}$.
Therefore $g(\{e_{1},e_{2}\})=\{e_{1},e_{2}\}$, i.e. $g=\id$ or
$g=\duale{(-)}$. It is now easy to show that $\chi'=\chi$ or $\chi'=\duale{\chi}$.\end{proof}
\begin{notation}
We set $\Phi_{M}=\{\phi\colon M\arr\Z/l\Z\st l>1\comma\phi\text{ surjective}\}$,
$\Theta_{M}^{2}=\{\E^{\phi}\}_{\phi\in\Phi_{M}}\cup\{(\Lambda^{\chi},\Delta^{\chi})\}_{\chi\in\overline{\Sigma}_{M}}$,
where $\overline{\Sigma}_{M}$ is the set of sequences $(r,\alpha,N,\overline{q},\phi)$
where $r,\alpha,N\in\N$ satisfy $0\leq\alpha<N,r>0,r>1\text{ or }\alpha>1$,
$\overline{q}\in\Omega_{N-\alpha,N}$ satisfy $\overline{q}r\neq1$,
$\overline{q}\neq N$ and $\phi\colon M\arr M_{r,\alpha,N}$ is a
surjective map. Finally set $\underline{\E}=(\E^{\phi},\Delta^{\chi})_{\phi\in\Phi_{M},\chi\in\Sigma_{M}/\duale{(-)}}$.\end{notation}
\begin{thm}
\label{thm:fundamental thm for hleqtwo}Let $M$ be a finite abelian
group. Then 
\[
\{h\leq2\}=(\bigcup_{\phi\in\Phi_{M}}\stZ_{M}^{\E^{\phi}})\bigcup(\bigcup_{(\Lambda,\Delta)\in\Theta_{M}^{2}}\stZ_{M}^{\Lambda,\Delta})
\]
In particular $\{h\leq2\}\subseteq\stZ_{M}^{\textup{sm}}$. Moreover
$\pi_{\underline{\E}}\colon\stF_{\underline{\E}}\arr\MCov$ induces
an equivalence of categories
\[
\left\{ (\underline{\shL},\underline{\shM},\underline{z},\lambda)\in\stF_{\underline{\E}}\left|\begin{array}{c}
V(z_{\E^{1}})\cap\cdots\cap V(z_{\E^{r}})\neq\emptyset\text{ iff}\\
r=1\text{ or }(r=2\text{ and }(\E^{1},\E^{2})\in\Theta_{M}^{2})
\end{array}\right.\right\} =\pi_{\underline{\E}}^{-1}(h\leq2)\arrdi{\simeq}\{h\leq2\}
\]
\end{thm}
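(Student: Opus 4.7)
The plan is to reduce the theorem to the classification results of Subsections~\ref{sub:algebras gen in two degrees}--\ref{sub:smooth integral rays for hleqtwo} together with the general machinery of Section~\ref{sec:stack Xphi}. Once the set-theoretic equality
\begin{equation*}
\{h\leq 2\} = \bigcup_{\phi\in\Phi_{M}}\stZ_{M}^{\E^{\phi}}\;\cup\;\bigcup_{(\Lambda,\Delta)\in\Theta_{M}^{2}}\stZ_{M}^{\Lambda,\Delta}
\end{equation*}
is in hand, the smoothness inclusion $\{h\leq2\}\subseteq\stZ_{M}^{\textup{sm}}$ will follow from Lemma~\ref{lem:decomposition of T+E and open smooth subscheme}, since $\E^{\phi}$ is a smooth integral extremal ray by Proposition~\ref{pro:Rita's smooth integral extremal rays} and each $(\Lambda^{\chi},\Delta^{\chi})\in\Theta_{M}^{2}$ is a smooth sequence by Lemma~\ref{lem:lambda,delta for overlineq}. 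The final equivalence of stacks will then be an immediate application of Proposition~\ref{pro:piE for theta isomorphism} to the collection $\Theta=\{(\E^{\phi})\}_{\phi\in\Phi_{M}}\cup\Theta_{M}^{2}$.

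So the heart of the argument is the displayed equality. Both sides are open substacks of the flat $\Z$-stack $\stZ_{M}$, so one reduces to verifying the equality of $\overline{k}$-points for an arbitrary field $k$, using the characterization from Proposition~\ref{pro:points of XphiE}: an element of $\stZ_{M}^{\underline{\delta}}(\overline{k})$ is of the form $\lambda\cdot 0^{\E}$ with $\lambda\colon K\arr\overline{k}^{*}$ and $\E\in\langle\underline{\delta}\rangle_{\N}$.

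For the inclusion $\supseteq$, a geometric point of $\stZ_{M}^{\E^{\phi}}$ already has $h\leq 1$ by Theorem~\ref{thm:fundamental theorem for hleqone}. For a geometric point of $\stZ_{M}^{\Lambda^{\chi},\Delta^{\chi}}$ the associated algebra is, after pullback, a specialization of the universal algebra $A^{\overline{q}}$ of Definition~\ref{def:universal algebra}, whose multiplication is computed to be $a^{\Lambda}b^{\Delta}$ in Proposition~\ref{pro:general algebra for m,n}; since $A^{\overline{q}}$ is generated by two elements $s,t$ whose degrees are pulled back from $M_{r,\alpha,N}$, any such specialization has $h\leq 2$.

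For the inclusion $\subseteq$, let $A\in\MCov(k)$ over an algebraically closed $k$ with $h_{A/k}\leq2$. First I would quotient by the maximal torsor via Lemma~\ref{lem:factorization of covers through torsors on local rings}, reducing to $H_{A/k}=0$; then Corollary~\ref{cor:An algebra with H =00003D 0 is generated in the degrees where h=00003D1} shows $A$ is generated in at most two nonzero degrees. The case $h\leq1$ is covered by Theorem~\ref{thm:fundamental theorem for hleqone}; when $h=2$ the algebra fits the setup of \ref{not:starting from a cover, m.n generate M} with two distinct nonzero generators $m,n$, so Theorem~\ref{pro:Universal algebras generated by m,n with overline q fixed} yields $A\simeq A_{\overline{q},\lambda}$ for a unique $\overline{q}\in\Omega_{N-\alpha,N}$, and Proposition~\ref{pro:general algebra for m,n} presents its multiplication as $a^{\E^{\phi}}$, $b^{\E^{\eta}}$, or $a^{\Lambda}b^{\Delta}$. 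The main obstacle I anticipate is the careful bookkeeping of the degenerate configurations of Proposition~\ref{pro:trivial ray for lambda,delta}, in which one of $\Lambda^{\chi},\Delta^{\chi}$ collapses onto some $\E^{\psi}$: these are precisely the cases excluded from $\Sigma_{M}$ by the conditions $\overline{q}r\neq1$, $\overline{q}\neq N/(\alpha,N)$, $\overline{q}\alpha\not\equiv 1\bmod N$, so one must verify that every algebra produced in a boundary case is still captured either by some $\stZ_{M}^{\E^{\phi}}$ or by a non-degenerate $\stZ_{M}^{\Lambda,\Delta}$, matching the quotient $\Sigma_{M}/\duale{(-)}$ from Proposition~\ref{pro:classification sm int ray htwo} and thus ensuring the union on the right-hand side is both exhaustive and free of redundancy.
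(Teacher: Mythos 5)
Your proposal is correct and follows essentially the same route as the paper: the set-theoretic identity is reduced (via geometric points and the maximal-torsor reduction to $H_{A/k}=0$) to the classification of algebras generated in two degrees in \ref{pro:Universal algebras generated by m,n with overline q fixed} together with the computation of their multiplications in \ref{pro:general algebra for m,n}, and the stack equivalence is then \ref{pro:piE for theta isomorphism} applied to $\Theta=\Theta_{M}^{2}$ (which, note, already contains the singletons $\{\E^{\phi}\}$, so your enlarged $\Theta$ is the same collection); the degenerate configurations you flag are exactly those absorbed by \ref{pro:trivial ray for lambda,delta} and \ref{pro:classification sm int ray htwo}, which is how the paper handles them as well.
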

\begin{proof}
The writing of $\{h\leq2\}$ follows from \ref{pro:Universal algebras generated by m,n with overline q fixed}
and \ref{pro:general algebra for m,n}. Taking into account \ref{pro:classification sm int ray htwo},
the last part instead follows from \ref{pro:piE for theta isomorphism}
taking $\Theta=\Theta_{M}^{2}$.
\end{proof}
In \cite{Maclagan2002} the authors prove that the toric Hilbert schemes
associated to a polynomial algebra in two variables are smooth and
irreducible. The same result is true more generally for multigraded
Hilbert schemes, as proved later in \cite{Maclagan2010}. Here we
obtain an alternative proof in the particular case of equivariant
Hilbert schemes:
\begin{cor}
\label{cor:MHilbmn smooth and irreducible}If $M$ is a finite abelian
group and $m,n\in M$ then $\MHilb^{m,n}$ is smooth and irreducible.\end{cor}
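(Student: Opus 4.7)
\emph{Proof plan.} The plan is to reduce the statement for $\MHilb^{m,n}$ to the analogous statement for the stack $\MCov^{m,n}$ and then apply the structural results developed earlier. By Remark \ref{rem:relation Mhilbm MCovm}, $\MHilb^{m,n}$ sits in a diagram in which the two vertical arrows are $\Di{\Z^{M}/<e_{0}>}$-torsors and the horizontal arrows are either vector bundles or open immersions. Since all of these operations preserve smoothness and irreducibility, it suffices to prove that $\MCov^{m,n}$ is smooth and irreducible.

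First I would show the inclusion $\MCov^{m,n} \subseteq \{h \leq 2\}$. For a geometric point $A \in \MCov(k)$ of $\MCov^{m,n}$, let $C$ denote the maximal torsor of $A/k$ (Definition \ref{def:of the maximal torsor and the subgroup associated}) and let $B = A \otimes_{C} k(p)$ for a maximal prime $p$ of $C$. By base change, $B$ is still generated in the images $\bar m,\bar n$ of $m,n$ as a $k(p)$-algebra, and $H_{B/k(p)} = 0$, so Corollary \ref{cor:An algebra with H =00003D 0 is generated in the degrees where h=00003D1} yields $\{l \mid h_{B/k(p),l} = 1\} \subseteq \{\bar m,\bar n\}$, hence $h_{A/k} = h_{B/k(p)} \leq 2$. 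Theorem \ref{thm:fundamental thm for hleqtwo} then gives the chain of inclusions $\MCov^{m,n} \subseteq \{h \leq 2\} \subseteq \stZ_{M}^{\textup{sm}}$, which proves the smoothness half.

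For irreducibility I would split into two cases. If $m,n$ do not generate $M$, then any $A \in \MCov^{m,n}(k)$ would satisfy $A_{l} = 0$ for $l$ outside the proper subgroup $<m,n> \subsetneq M$, contradicting the fact that $A_{l} \simeq k$ for every $l \in M$; hence $\MCov^{m,n}$ is empty and the statement is vacuous. Otherwise $m,n$ generate $M$: on the trivial torsor $k[M]$ one has $v_{m}^{o(m)} = 1$ and $v_{n}^{o(n)} = 1$, so $v_{m}$ and $v_{n}$ are invertible in the $k$-subalgebra they generate, and any $v_{l}$ with $l = am + bn$ can be written as a Laurent monomial (hence a polynomial) in $v_{m}, v_{n}$. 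Thus $\Bi\Di M \subseteq \MCov^{m,n}$, so $\MCov^{m,n}$ is a nonempty open substack of the irreducible stack $\stZ_{M}$ (Corollary \ref{cor:MCov as global quotient}), hence dense in and irreducible.

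The main technical point to be careful about is the inclusion $\MCov^{m,n} \subseteq \{h \leq 2\}$ in the second paragraph: since $h$ is defined only after quotienting by the maximal torsor, one must pass from the global data on $A$ through $C$ to the fiber $B$ in order to translate the generation hypothesis into a bound on $h$. Once this bookkeeping is in place, the corollary is a formal consequence of the structural theorem \ref{thm:fundamental thm for hleqtwo} combined with the irreducibility of $\stZ_{M}$.
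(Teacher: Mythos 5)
Your proof is correct and follows the same route as the paper: the paper's own argument is exactly the reduction to $\MCov^{m,n}$ via the diagram in \ref{rem:relation Mhilbm MCovm} together with the chain $\MCov^{m,n}\subseteq\{h\leq2\}\subseteq\stZ_{M}^{\textup{sm}}$. You have merely spelled out the details the paper leaves implicit (the passage through the maximal torsor to bound $h$, the degenerate case where $m,n$ do not generate $M$, and the nonemptiness/irreducibility bookkeeping), all of which are correct.
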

\begin{proof}
Taking into account the diagram in \ref{rem:relation Mhilbm MCovm}
it is enough to note that $\MCov^{m,n}\subseteq\{h\leq2\}\subseteq\stZ_{M}^{\textup{sm}}$.\end{proof}
\begin{prop}
\label{pro:when sigmaM is empty}$\Sigma_{M}=\emptyset$ if and only
if $M\simeq(\Z/2\Z)^{l}$ or $M\simeq(\Z/3\Z)^{l}$.\end{prop}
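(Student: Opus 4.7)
The plan is to prove both directions by exploiting the fact that any $M_{r,\alpha,N}$ is generated by at most two elements (the images of $e_1,e_2$), so every $M_{r,\alpha,N}$ that can appear as the target of a surjection from $M$ must itself be a $2$-generated quotient of $M$. For the forward direction I enumerate all such quotients when $M \simeq (\Z/2\Z)^l$ or $M \simeq (\Z/3\Z)^l$ and show each possible triple $(r,\alpha,N)$ admits no valid $\overline{q}$. For the backward direction I exhibit an explicit element of $\Sigma_M$ whenever $M$ has a cyclic quotient of order at least $4$, then verify the latter condition holds whenever $M$ is not of one of the two excluded forms.

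For $M = (\Z/2\Z)^l$, the $2$-generated quotients are $\Z/2\Z$ and $(\Z/2\Z)^2$. The only candidate $(r,\alpha,N)$ with target $\Z/2\Z$ is $(1,0,2)$ or $(1,1,2)$, both of which violate the condition $r>1$ or $\alpha>1$. The only candidate with target $(\Z/2\Z)^2$ is $(2,0,2)$, but $d_q \equiv -2q \equiv 0 \bmod 2$ gives $\Omega_{2,2}=\{1\}$, and $\overline{q}=1$ violates $\overline{q}\neq N/(\alpha,N)=1$. For $M=(\Z/3\Z)^l$, the only $\Z/3\Z$ quotient candidate is $(1,2,3)$; here $\Omega_{1,3}=\{1,2,3\}$, but the three exclusions $\overline{q}\neq 1$ (from $\overline{q}r\neq 1$), $2\overline{q}\not\equiv 1\bmod 3$ forcing $\overline{q}\neq 2$, and $\overline{q}\neq N/(\alpha,N)=3$ eliminate every element. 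The only $(\Z/3\Z)^2$ candidate is $(3,0,3)$, for which $\Omega_{3,3}=\{1\}$ and again $\overline{q}=1=N/(\alpha,N)$ fails.

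For the backward direction, I first claim that if $M$ is not isomorphic to $(\Z/2\Z)^l$ or $(\Z/3\Z)^l$, then $M$ admits a cyclic quotient $\Z/n\Z$ with $n\geq 4$. Indeed, by the structure theorem $M \simeq \prod \Z/p_i^{a_i}\Z$: if some $p_i^{a_i}\geq 4$ we take $n=p_i^{a_i}$; otherwise each $p_i^{a_i}\in\{2,3\}$, and since $M$ is neither purely $2$-power nor purely $3$-power both primes must appear, yielding a $\Z/6\Z$ quotient. Given such a surjection $\phi\colon M\to \Z/n\Z$, I take $(r,\alpha,N)=(1,n-1,n)$, observing $M_{1,n-1,n}\simeq \Z/n\Z$ (with $e_2\mapsto 1$, $e_1\mapsto -1$), and check that $\overline{q}=2$ lies in $\Omega_{1,n}=\{1,\dots,n\}$ and satisfies all remaining conditions: $\overline{q}r=2\neq 1$, $\overline{q}\alpha \equiv -2 \not\equiv 1\bmod n$ since $n\geq 4$, and $\overline{q}=2\neq n=N/(\alpha,N)$ since $(n-1,n)=1$.

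The main obstacle is the case analysis in the forward direction: one must correctly enumerate the $2$-generated quotients (including ruling out the case where $M_{r,\alpha,N}$ would be cyclic of order $rN$ rather than elementary abelian, e.g.\ verifying that $M_{3,1,3}\simeq \Z/9\Z \neq (\Z/3\Z)^2$), and then identify precisely which of the three exclusion conditions $\overline{q}r\neq 1$, $\overline{q}\alpha\not\equiv 1 \bmod N$, $\overline{q}\neq N/(\alpha,N)$ kills each remaining candidate $\overline{q}\in \Omega_{N-\alpha,N}$. The extremal cases where $\Omega_{N-\alpha,N}$ degenerates to $\{1\}$ are precisely the ones ruled out by the last exclusion.
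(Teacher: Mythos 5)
Your proof is correct and follows essentially the same route as the paper: the non-emptiness direction uses exactly the paper's witness $(1,l-1,l,2,\phi)$ attached to a cyclic quotient of order $\geq4$ (you just make explicit the structure-theorem argument that such a quotient exists), and the emptiness direction for $(\Z/2\Z)^{l}$ and $(\Z/3\Z)^{l}$ proceeds by the same enumeration of two-generated quotients. The only divergence is that where the paper disposes of the cyclic sub-case $M_{1,\alpha,p}\simeq\Z/p\Z$ by citing the extremal-ray classification, you check the conditions on $\Omega_{N-\alpha,N}$ and the three exclusions directly, which is a harmless (and arguably more self-contained) substitute.
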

\begin{proof}
For the only if, note that if $\phi\colon M\arr\Z/l\Z$ with $l>3$
is surjective, then, taking $m=l-1\comma n=1\in\Z/l\Z$, we have $\Z/l\Z\simeq M_{1,l-1,l}$
and $(1,l-1,l,2,\phi)\in\Sigma_{M}$.

For the converse set $M=(\Z/p\Z)^{l}$, where $p=2,3$ and, by contradiction,
assume to have $(r,\alpha,N,\overline{q},\phi)\in\Sigma_{M}$. In
particular $\phi$ is a surjective map $M\arr M_{r,\alpha,N}$. If
$e_{1},e_{2}\in M_{r,\alpha,N}$ are $\F_{p}$-independent then $M_{r,\alpha,N}=<e_{1}>\times<e_{2}>$,
$\alpha=0$, $\Omega_{N-\alpha,N}=\{1\}$ and therefore $\overline{q}=1=N/(\alpha,N)$,
which implies that $\chi\notin\Sigma_{M}$. On the other hand, if
$M_{1,\alpha,p}\simeq\Z/p\Z$, the only extremal rays for $\Z/p\Z$
are $\E^{\id}$ and, if $p=3$, $\E^{-\id}$ since $K_{+\Z/p\Z}\simeq\N^{p-1}$
by \ref{pro:smooth DMCov}.\end{proof}
\begin{thm}
\label{thm:fundamental thm locally factoria hleqtwo}Let $M$ be a
finite abelian group and $X$ be a locally noetherian and locally
factorial scheme. Set 
\[
\catC_{X}^{2}=\left\{ (\underline{\shL},\underline{\shM},\underline{z},\delta)\in\stF_{\underline{\E}}(X)\left|\begin{array}{c}
\codim_{X}V(z_{i_{1}})\cap\cdots\cap V(z_{i_{s}})\geq2\\
\text{if }\nexists\underline{\delta}\in\Theta_{M}^{2}\text{ s.t. }\E^{i_{1}},\dots\E^{i_{s}}\subseteq\underline{\delta}
\end{array}\right.\right\} 
\]
and
\[
\catD_{X}^{2}=\{Y\arrdi fX\in\MCov(X)\st h_{f}(p)\leq2\ \forall p\in X\text{ with }\codim_{p}X\leq1\}
\]
Then $\pi_{\underline{\E}}$ induces an equivalence of categories
\[
\catD_{X}^{2}=\pi_{\underline{\E}}^{-1}(\catC_{X}^{2})\arrdi{\simeq}\catC_{X}^{2}
\]
\end{thm}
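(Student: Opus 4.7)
The strategy is to apply the general codimension-one lifting result \ref{thm:fundamental theorem for locally factorial schemes}, with the sequence
\[
\underline{\E}=(\E^{\phi},\Delta^{\chi})_{\phi\in\Phi_{M},\,\chi\in\Sigma_{M}/\duale{(-)}}
\]
and the collection of smooth sequences $\Theta:=\Theta_{M}^{2}$. The substantive content is already packaged into Theorem \ref{thm:fundamental thm for hleqtwo}; what remains is essentially bookkeeping: verifying the hypotheses of the general theorem and matching the notations $\catC_X^2$, $\catD_X^2$ with $\catC_X^{\Theta_M^2}$, $\catD_X^{\Theta_M^2}$.

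First I would check the hypotheses of \ref{thm:fundamental theorem for locally factorial schemes}. The rays in $\underline{\E}$ are distinct smooth integral extremal rays: the $\E^{\phi}$ are such by \ref{pro:Rita's smooth integral extremal rays}, the $\Delta^{\chi}$ are such by \ref{pro:classification sm int ray htwo}, and the two families are disjoint because $h_{\E^{\phi}}=1$ while $h_{\Delta^{\chi}}=2$. Next, every member of $\Theta_{M}^{2}$ must be a smooth sequence with rays in $\underline{\E}$: the singletons $(\E^{\phi})$ are smooth sequences by \ref{lem:equivalent condition for a smooth integral extremal ray}, the pairs $(\Lambda^{\chi},\Delta^{\chi})$ are smooth sequences by \ref{lem:lambda,delta for overlineq}, and \ref{pro:trivial ray for lambda,delta} shows that for every $\chi\in\overline{\Sigma}_{M}$ the ray $\Lambda^{\chi}$ coincides with some $\E^{\phi}$ or with some $\Delta^{\chi'}$, so the rays appearing in $\Theta_{M}^{2}$ are indeed drawn from $\underline{\E}$.

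Second, I would identify the two sides of \ref{thm:fundamental theorem for locally factorial schemes} with the categories in the statement. The defining condition of $\catC_{X}^{2}$ coincides verbatim with the one defining $\catC_{X}^{\Theta_M^2}$, so $\catC_{X}^{2}=\catC_{X}^{\Theta_M^2}$. For $\catD_X^2$, recall from \ref{thm:fundamental thm for hleqtwo} that under the isomorphism $\MCov\simeq\stX_{\phi_{M}}$ the open substack $\{h\leq 2\}\subseteq\MCov$ corresponds exactly to $\stX_{\phi_{M}}^{\Theta_{M}^{2}}$ (and in particular lies in $\stZ_{M}^{\textup{sm}}$). Since by construction $h_{f}(p)$ is the value of the upper semicontinuous function $h\colon|\MCov|\to\N$ at the image of the classifying map, the condition $h_{f}(p)\leq 2$ at a point $p$ with $\codim_{p}X\leq 1$ is equivalent to $\chi_{|\overline{k(p)}}\in\stX_{\phi_{M}}^{\Theta_{M}^{2}}$, showing $\catD_{X}^{2}=\catD_{X}^{\Theta_{M}^{2}}$. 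Invoking \ref{thm:fundamental theorem for locally factorial schemes} then yields the equivalence $\catC_{X}^{2}=\pi_{\underline{\E}}^{-1}(\catD_{X}^{2})\to\catD_{X}^{2}$.

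The present statement therefore presents no genuine obstacle of its own: the hard work --- the classification of $M$-graded algebras generated in two degrees via the invariant $\overline{q}$, the production of the smooth extremal rays $\Delta^{\chi}$, and the identification of $\{h\leq 2\}$ as a union of toric smooth open substacks of $\stZ_{M}$ --- has already been carried out in Sections \ref{sub:algebras gen in two degrees}--\ref{sub:smooth integral rays for hleqtwo}, culminating in \ref{thm:fundamental thm for hleqtwo}. Once that is available, the present theorem is a three-line deduction.
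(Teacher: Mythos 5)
Your proof is correct and follows exactly the paper's route: the paper's own proof is the single line ``apply Theorem \ref{thm:fundamental theorem for locally factorial schemes} with $\Theta=\Theta_{M}^{2}$,'' and your additional verifications (that the rays of $\underline{\E}$ are distinct smooth integral extremal rays, that $\Theta_{M}^{2}$ consists of smooth sequences with rays drawn from $\underline{\E}$ via \ref{pro:trivial ray for lambda,delta}, and that $\catD_{X}^{2}=\catD_{X}^{\Theta_{M}^{2}}$ through the identification $\{h\leq2\}=\stX_{\phi_{M}}^{\Theta_{M}^{2}}$ of \ref{thm:fundamental thm for hleqtwo}) are precisely the bookkeeping the paper leaves implicit.
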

\begin{proof}
Apply \ref{thm:fundamental theorem for locally factorial schemes}
with $\Theta=\Theta_{M}^{2}$.\end{proof}
\begin{rem}
In general $\{h\leq3\}$ doesn't belong to the smooth locus on $\stZ_{M}$.
For example, if $M=\Z/4\Z$, $\MCov=\{h\leq3\}$ is integral but not
smooth by \ref{pro:smooth DMCov} and \ref{rem:MCov for M=00003DZfour}.
\end{rem}

\subsection{Normal crossing in codimension 1}

In this subsection we want describe, in the spirit of classification
\ref{thm:regular in codimension 1 covers}, covers of a locally noetherian
and locally factorial scheme with no isolated points and with $(\car X,|M|)=1$
whose total space is normal crossing in codimension $1$.
\begin{defn}
\label{def:normal crossing codimesion one}A scheme $X$ is normal
crossing in codimension $1$ if for any codimension $1$ point $p\in X$
there exists a local and etale map $\widehat{\odi{}}_{X,p}\arr R$,
where $R$ is $k[[x]]$ or $k[[s,t]]/(st)$ for some field $k$ and
$ $$\widehat{\odi{}}_{X,p}$ denote the completion of $\odi{X,p}$.\end{defn}
\begin{rem}
If $X$ is locally of finite type over a perfect field $k$, one can
show that the above condition is equivalent to having an open subset
$U\subseteq X$ such that $\codim_{X}X-U\geq2$ and there exists an
etale coverings $\{U_{i}\arr U\}$ with etale maps $U_{i}\arr\Spec k[x_{1},\dots,x_{n_{i}}]/(x_{1}\cdots x_{r_{i}})$
for any $i$. Anyway we will not use this property.\end{rem}
\begin{notation}
In this subsection we will consider a field $k$ and we will set $A=k[[s,t]]/(st)$.
Given an element $\xi\in\Aut_{k}k[[x]]$ we will write $\xi_{x}=\xi(x)$
so that, if $p\in k[[x]]$ then $\xi(p)(x)=p(\xi_{x})$. We will call
$I\in\Aut_{k}k[[s,t]]$ the unique map such that $I(s)=t\comma I(t)=s$.
Given $B\in k^{*}$ we will denote by $\underline{B}$ the automorphism
of $k[[x]]$ such that $\underline{B}_{x}=Bx$.

Finally, given $f\in k[[x_{1},\dots,x_{n}]]$ and $g\in k[x_{1},\dots,x_{n}]$
the notation $f=g+\cdots$ will mean $f\equiv g\text{ mod }(x_{1},\dots,x_{r})^{\deg g+1}$.
\end{notation}
The first problem to deal with is to describe the action on $A$ of
a finite group $M$ and check when $A$ is a $\Di M$-cover over $A^{M}$,
assuming to have the $|M|$-roots of unity in $k$. We start collecting
some general facts about $A$.
\begin{prop}
We have:
\begin{enumerate}
\item $A=k\oplus sk[[s]]\oplus tk[[t]]$
\item Given $f,g\in A-\{0\}$ then $fg=0$ if and only if $f\in sk[[s]],g\in tk[[t]]$
or vice versa.
\item Any automorphism in $\Aut_{k}A$ is of the form $(\xi,\eta)$ or $I(\xi,\eta)$
where $\xi,\eta\in\Aut_{k}k[[x]]$ and $(\xi,\eta)(f(s,t))=f(\xi_{s},\eta_{t})$.
\item If $\xi\in\Aut_{k}k[[x]]$ has finite order then $\xi=\underline{B}$
where $B$ is a root of unity in $k$. In particular if $(\xi,\eta)\in\Aut_{k}A$
has finite order then $\xi=\underline{B}\comma\eta=\underline{C}$
where $B\comma C$ are roots of unity in $k$.
\item Let $f\in k[[x]]-\{0\}$, $B,C$ roots of unity in $k$. Then $f(Bx)=Cf(x)$
if and only if $C=B^{r}$ for some $r>0$ and, if we choose the minimum
$ $$r$, $f\in x^{r}k[[x^{o(B)}]]$.
\end{enumerate}
\end{prop}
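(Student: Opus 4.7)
The plan is to prove the five parts sequentially, each reducing to an elementary check once the structural decomposition in (1) is in hand.

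\textbf{Parts (1) and (2).} First I would observe that every element of $k[[s,t]]$ has a unique expansion $a + \sum_{i\ge1}a_is^i + \sum_{j\ge1}b_jt^j + \sum_{i,j\ge1}c_{ij}s^it^j$, and passing to the quotient by $(st)$ kills the double sum, yielding (1). For (2), using (1) write $f = a + f_1 + f_2$, $g = b + g_1 + g_2$ with $f_1,g_1 \in sk[[s]]$ and $f_2,g_2 \in tk[[t]]$. Because $st=0$ the cross products $f_1g_2$ and $f_2g_1$ vanish, so
\[
fg = ab + (ag_1+bf_1+f_1g_1) + (ag_2+bf_2+f_2g_2),
\]
with the three summands in distinct pieces of the decomposition. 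Vanishing of each piece, combined with the fact that $k[[s]]$ and $k[[t]]$ are domains, forces (after possibly swapping $f$ and $g$) $a=b=0$, $f_2=g_1=0$, i.e.\ $f \in sk[[s]]$ and $g \in tk[[t]]$.

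\textbf{Part (3).} The minimal primes of $A$ are $\mathfrak{p}_1 = sk[[s]]$ and $\mathfrak{p}_2 = tk[[t]]$, as $A/(s) \cong k[[t]]$ and $A/(t) \cong k[[s]]$ are domains with $(s)\cap(t)=(st)=0$. Any $\phi \in \Aut_k A$ permutes these two primes. If $\phi$ preserves both, it restricts to $k$-algebra automorphisms $\xi$ of $k \oplus sk[[s]] \cong k[[s]]$ and $\eta$ of $k \oplus tk[[t]] \cong k[[t]]$, and by (1) it is determined by the pair, giving $\phi=(\xi,\eta)$. If $\phi$ swaps the two primes, then $I\phi$ preserves them and the previous case gives $\phi = I(\xi,\eta)$.

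\textbf{Part (4).} Writing $\xi(x) = B_1x + b_2x^2 + \cdots$, the equality $\xi^n=\id$ read off at the linear coefficient yields $B_1^n=1$. Set $B=B_1$. In the setting of the paper the order $n$ of $\xi$ is invertible in $k$, and I would linearize $\xi$ by the averaging operator
\[
\tau(x) := \frac{1}{n}\sum_{i=0}^{n-1} B^{-i}\xi^i(x) = x + (\text{higher order}),
\]
which is thus an automorphism of $k[[x]]$. A direct calculation shows $\tau\circ\xi = \underline{B}\circ\tau$, so $\tau$ conjugates $\xi$ onto $\underline{B}$. Interpreting this equality after the change of uniformizer $x \mapsto \tau(x)$ gives $\xi=\underline{B}$; the statement for $(\xi,\eta) \in \Aut_k A$ follows by doing this separately in each component. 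The main subtlety here is that the averaging argument a priori only gives $\xi$ conjugate to $\underline{B}$, and the plan is to interpret the statement as an equality after a suitable choice of coordinate.

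\textbf{Part (5).} Expanding $f = \sum_{i\ge0}a_ix^i$, the equation $f(Bx)=Cf(x)$ becomes $a_iB^i=Ca_i$ for every $i$, so each nonzero $a_i$ satisfies $B^i=C$. Let $r$ be the minimum index with $a_r\neq0$; then $C=B^r$, which gives the first direction and fixes $r$ uniquely. For any other $i$ with $a_i\neq0$ one has $B^{i-r}=1$, i.e.\ $o(B)\mid i-r$, which is exactly the assertion $f \in x^r k[[x^{o(B)}]]$. The converse direction is a one-line computation by the same coefficient comparison.
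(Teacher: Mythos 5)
Parts (1), (2), (3) and (5) are correct and are essentially the intended arguments. For (3) the paper gets the dichotomy slightly more directly by noting that $\theta(s)\theta(t)=0$ and invoking (2), whereas you pass through the two minimal primes $sA=sk[[s]]$ and $tA=tk[[t]]$; these are the same computation in different clothing, and your version has the small advantage of making it transparent that every pair $(\xi,\eta)$ conversely does define an automorphism.

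Part (4) is where there is a genuine gap, and you have half-diagnosed it yourself. First, the averaging operator $\tau=\frac1n\sum_{i}B^{-i}\xi^i$ needs $n$ invertible in $k$, which is not among the stated hypotheses (it holds in every application, since the proposition is only used for groups $M$ with $(\car k,|M|)=1$, but it must be assumed). More seriously, producing $\tau$ with $\tau\circ\xi=\underline{B}\circ\tau$ does not prove $\xi=\underline{B}$, and no argument can: the statement as written is false. The automorphism $\sigma$ of $k[[x]]$ with $\sigma_x=-x/(1+x)=-x+x^2-x^3+\cdots$ satisfies $\sigma^2=\id$ in any characteristic but is not of the form $\underline{B}$. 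Your closing step, ``interpreting this equality after the change of uniformizer $x\mapsto\tau(x)$ gives $\xi=\underline{B}$,'' silently replaces $\xi$ by a conjugate, which is a different assertion. (The paper's own suggested route --- compare coefficients of $\xi_x$ --- hits the same wall: writing $\xi_x=Bx+b_mx^m+\cdots$, the $x^m$-coefficient of $(\xi^n)_x$ equals $b_mB^{n-1}\sum_{i=0}^{n-1}B^{i(m-1)}$, which vanishes automatically whenever $B^{m-1}\neq1$, so $\xi^n=\id$ does not force $b_m=0$.) The statement that is actually true, and all that is used later, is the conjugacy version: a finite-order $\xi$ with order prime to $\car k$ becomes $\underline{B}$ after a change of the coordinate $x$. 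This suffices downstream because the classification lemma for normal-crossing actions linearizes the whole $M$-action by hand (via an equivariant splitting of $m_A\arr m_A/m_A^2$ and Weierstrass preparation) before writing any element as $\underline{B}$ or $I(\xi,\eta)$ with linear entries. Your proof should state and prove this corrected version rather than claim the literal one.
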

\begin{proof}
$1)$ is straightforward and $2)$ follows easily writing $f$ and
$g$ as in $1)$. For $3)$ note that if $\theta\in\Aut_{k}A$ then
$\theta(s)\theta(t)=0$ and apply $2)$. Finally $4)$ and $5)$ can
be shown looking at the coefficients of $\xi_{x}$ and of $f$.\end{proof}
\begin{lem}
\label{lem:NC excluding stupid subgroups}If $M<\Aut_{k}A$ is a finite
subgroup containing only automorphisms of the form $(\xi,\eta)$ then
$A^{M}\simeq A$.\end{lem}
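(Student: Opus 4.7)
The plan is to exploit the decomposition $A=k\oplus sk[[s]]\oplus tk[[t]]$ given by part $(1)$ of the preceding proposition, which is preserved by every element of $M$ precisely because $M$ consists only of automorphisms of the form $(\xi,\eta)$ (the swap $I$ would exchange the two summands, which is why we must exclude it). This reduces the computation of $A^M$ to computing invariants on each of the two branches $k[[s]]$ and $k[[t]]$ separately.

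First I would invoke part $(4)$ to write each element of $M$ as $(\underline{B},\underline{C})$ for roots of unity $B,C\in k^{*}$. Restricting to $sk[[s]]$ gives a group homomorphism $M\arr k^{*}$, $(\underline{B},\underline{C})\mapsto B$, whose image is a finite subgroup of $k^{*}$, hence cyclic of some order $N_{1}$; similarly the restriction to $tk[[t]]$ yields a cyclic image of some order $N_{2}$. By part $(5)$ applied to the cyclic action of order $N_{1}$ on $k[[s]]$, the invariants are $(k[[s]])^{M}=k[[s^{N_{1}}]]$ (the elements on which every $\underline{B}$ in the image acts trivially are those whose nonzero monomials $s^{i}$ satisfy $B^{i}=1$ for every $B$ in the image, i.e. $N_{1}\mid i$). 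Hence $(sk[[s]])^{M}=s^{N_{1}}k[[s^{N_{1}}]]$, and symmetrically $(tk[[t]])^{M}=t^{N_{2}}k[[t^{N_{2}}]]$.

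Combining, $A^{M}=k\oplus s^{N_{1}}k[[s^{N_{1}}]]\oplus t^{N_{2}}k[[t^{N_{2}}]]$. I would then define
\[
\varphi\colon k[[s',t']]/(s't')\arr A^{M},\qquad s'\mapsto s^{N_{1}},\ t'\mapsto t^{N_{2}},
\]
which is well defined because $s^{N_{1}}\cdot t^{N_{2}}=0$ in $A$ (as $st=0$). Surjectivity is immediate from the description of $A^{M}$ above, and injectivity follows from part $(2)$ together with the fact that $\varphi$ is clearly injective on each of the three summands $k$, $s'k[[s']]$, $t'k[[t']]$ of the source.

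There is no real obstacle here; the only subtlety is the hypothesis that $M$ contains only automorphisms of type $(\xi,\eta)$ and not of type $I(\xi,\eta)$, which is exactly what guarantees that the branch decomposition of $A$ is $M$-stable so that the invariants split as a sum over the two branches.
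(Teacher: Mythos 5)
Your proof is correct and follows essentially the same route as the paper: the paper simply asserts that $A^{M}\simeq k[[s^{a},t^{b}]]/(s^{a}t^{b})\simeq A$ with $a,b$ the lcm's of the orders of the roots of unity appearing in the first and second coordinates, which are exactly your $N_{1},N_{2}$ (the orders of the cyclic images of the two coordinate projections). You have merely supplied the branch-by-branch computation of invariants that the paper leaves to the reader.
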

\begin{proof}
It's easy to show that $A^{M}\simeq k[[s^{a},t^{b}]]/(s^{a}t^{b})\simeq A$
where $a=\text{lcm}\{i\st\exists(\underline{A},\underline{B})\in M\text{ s.t. }\ord A=i\}$
and $b=\text{lcm}\{i\st\exists(\underline{A},\underline{B})\in M\text{ s.t. }\ord B=i\}$.
\end{proof}
Since we are interested in covers of regular in codimension $1$ schemes
(and $A$ is clearly not regular) we can focus on subgroups $M<\Aut_{k}A$
containing some $I(\xi,\eta)$.
\begin{lem}
\label{lem:classification of actions for NC}Let $M<\Aut_{k}A$ be
a finite abelian group and assume that $(\car k,|M|)=1$ and that
there exists $I(\xi,\eta)\in M$. Then, up to equivariant automorphisms,
we have $M=<I(\id,\underline{B})>$ or, if $M$ is not cyclic, $M=<(\underline{C},\underline{C})>\times<I>$
where $\underline{B}\comma\underline{C}$ are roots of unity and $o(C)$
is even.\end{lem}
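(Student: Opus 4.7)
The plan is to split $M$ into its index-$2$ subgroup $N$ of elements of type $(\xi,\eta)$ (i.e.\ those that do not swap the two branches of $A$) and the remaining coset of $I$-type elements, classify each piece, and then simplify by equivariant conjugation.

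\emph{Classification of $N$.} Every element of $N$ is a finite-order automorphism of $A$ not involving $I$, so by part $(4)$ of the preceding proposition it has the form $(\underline{A},\underline{B})$ with $A,B$ roots of unity in $k$. Fixing an $I$-type element $I(\xi_{0},\eta_{0})\in M$, abelianness of $M$ forces $I(\xi_{0},\eta_{0})$ and $(\underline{A},\underline{B})$ to commute; comparing the images of $s$ and $t$ yields the coefficient relations $A\,a_{i}=B^{i}a_{i}$ and $B\,b_{i}=A^{i}b_{i}$ where $\xi_{0}(x)=\sum a_{i}x^{i}$ and $\eta_{0}(x)=\sum b_{i}x^{i}$. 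Since $\xi_{0},\eta_{0}$ are automorphisms, $a_{1},b_{1}\neq 0$, forcing $A=B$. Thus $N\hookrightarrow k^{*}$ via $(\underline{C},\underline{C})\mapsto C$, so is cyclic: $N=\langle(\underline{C},\underline{C})\rangle$ for some root of unity $C$.

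\emph{The $I$-type coset.} A direct calculation (using $I(\alpha,\beta)=(\beta,\alpha)I$) gives the conjugation formula
\[
(\alpha_{0},\beta_{0})\,I(\xi,\eta)\,(\alpha_{0},\beta_{0})^{-1}\;=\;I(\alpha_{0}^{-1}\xi\beta_{0},\,\beta_{0}^{-1}\eta\alpha_{0}).
\]
In the cyclic case $M=\langle\sigma\rangle$ with $\sigma=I(\xi,\eta)$, we have $\sigma^{2}=(\xi\eta,\eta\xi)\in N$, which together with Step~1 forces $\xi\eta=\eta\xi=\underline{D}$ for some root of unity $D$. Conjugation by $(\id,\xi^{-1})$ then transforms $\sigma$ into $I(\id,\xi\eta)=I(\id,\underline{D})$, proving the cyclic part of the claim. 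In the non-cyclic case $[M:N]=2$ with $N$ cyclic of order $n=o(C)$, the structure theorem for abelian groups gives $M\simeq\Z/n\Z\times\Z/2\Z$ with $n$ even, producing an order-$2$ element $\tau=I(\xi,\xi^{-1})\in M\setminus N$ (the condition $\tau^{2}=\id$ yields $\eta=\xi^{-1}$). Conjugation by $(\id,\xi^{-1})$ transforms $\tau$ into $I(\id,\id)=I$ and simultaneously sends $(\underline{C},\underline{C})$ to $(\underline{C},\xi\underline{C}\xi^{-1})$; the relation $\tau(\underline{C},\underline{C})=(\underline{C},\underline{C})\tau$ in the original $M$ forces $\xi\underline{C}=\underline{C}\xi$, so the diagonal form of $N$ is preserved and we conclude $M=\langle(\underline{C},\underline{C})\rangle\times\langle I\rangle$ with $o(C)$ even.

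\emph{Main obstacle.} The real content is the coefficient comparison in Step~1 that forces $A=B$, together with the compatibility argument in the non-cyclic step showing that a single conjugation simultaneously normalizes the involution $\tau$ to $I$ and preserves the diagonal shape of $N$. Both ultimately rest on the abelianness of $M$ and on the classification of finite-order automorphisms of $k[[x]]$ (part $(4)$ of the preceding proposition), whose use silently invokes the coprimality hypothesis $(\car k,|M|)=1$.
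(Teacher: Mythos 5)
Your proof is correct, but it takes a genuinely different route from the paper's. You work entirely inside $\Aut_{k}A$ using the explicit description of its elements (parts $(3)$ and $(4)$ of the preceding proposition): you split off the index-$2$ subgroup $N$ of non-swapping elements, show by a direct coefficient comparison that commutation with a fixed $I$-type element forces $N$ to be diagonal (hence cyclic, as a finite subgroup of $k^{*}$), and then normalize a generator of $M$ (cyclic case) or an involution in $M\setminus N$ (non-cyclic case) by an explicit conjugation $(\id,\xi^{-1})$, checking in the latter case that this conjugation fixes the diagonal subgroup because $\underline{C}$ commutes with $\xi$. The paper instead lifts the action to $k[[X,Y]]$ via an $M$-equivariant splitting of $m_{A}\arr m_{A}/m_{A}^{2}$ (this is where linear reductivity, i.e.\ $(\car k,|M|)=1$, enters for the paper), normalizes the defining relation to $y'^{2}-x'^{2}$ by Weierstrass preparation and Hensel's lemma so that $s,t$ become sums of character eigenvectors, and finishes with an $\Ext^{1}(\Z/2\Z,H)$ computation for the extension $0\arr H\arr M\arr\{-1,1\}\arr0$. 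Your argument is shorter and more elementary, at the price of leaning entirely on the stated classification of finite-order automorphisms of $k[[x]]$ as linear (which is indeed where the coprimality hypothesis is silently used, as you note — the statement of part $(4)$ fails for automorphisms of order divisible by $\car k$); the paper's lifting argument produces the eigencoordinate description of $s\pm t$ as a byproduct, but nothing in the sequel seems to require it beyond the normal forms you also obtain. The one group-theoretic point worth spelling out in your non-cyclic step — that $M\setminus N$ actually contains an involution — follows as you implicitly use it: $M\simeq\Z/2\Z\times\Z/n\Z$ with $n$ even has three involutions while the cyclic $N$ contains only one.
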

\begin{proof}
The existence of an element of the form $I(\xi,\eta)$ in $M$ implies
that $s$ and $t$ cannot be homogeneous in $m_{A}/m_{A}^{2}$, that
$2\mid|M|$ and therefore that $\car k\neq2$. 

Applying the exact functor $\Hom_{k}^{M}(m_{A}/m_{A}^{2},-)$, we
get that the surjection $m_{A}\arr m_{A}/m_{A}^{2}$ has a $k$-linear
and $M$-equivariant section. This means that there exists $x,y\in m_{A}$
such that $m_{A}=(x,y)$ and $M$ acts on $x\comma y$ with characters
$\chi,\zeta$. In this way we get an action of $M$ on $k[[X,Y]]$
and an equivariant surjective map $\phi\colon k[[X,Y]]\arr A$. Moreover
$\Ker\phi=(h)$, where $h=fg$ and $f\comma g\in k[[X,Y]]$ are such
that $\phi(f)=s$, $\phi(g)=t$. We can write $f=aX+bY+\cdots\comma g=cX+dY+\cdots$
with $ad-bc\neq0$. Since $ax+by=s$ in $m_{A}/m_{A}^{2}$ and $s$
is not homogeneous there, we have $a,b\neq0$. Similarly we get $c,d\neq0$.
In particular, up to normalize $f\comma g\comma x$ we can assume
$b=c=d=1$. Now $h=aX^{2}+(a+1)XY+Y^{2}+\cdots$ and applying Weierstrass
preparation theorem \cite[Theorem 9.2]{Lang2002}, there exists a
unique $\tilde{h}\in(h)$ such that $(\tilde{h})=(h)$ and $\tilde{h}=\psi_{0}(X)+\psi_{1}(X)Y+Y^{2}$.
The uniqueness of $\tilde{h}$ and the $M$-invariance of $(h)$ yield
the relations $m(\tilde{h})=\eta(m)^{2}\tilde{h}$,
\begin{equation}
m(\psi_{0})=\psi_{0}(\chi(m)X)=\eta(m)^{2}\psi_{0}\comma m(\psi_{1})=\psi_{1}(\chi(m)X)=\eta(m)\psi_{1}\label{eq:Relation for describe A NC}
\end{equation}
for any $m\in M$. Moreover $\tilde{h}=\mu h$ where $\mu\in k[[X,Y]]^{*}$
and, since the coefficient of $Y^{2}$ in both $h$ and $\tilde{h}$
is $1$, we also have $\mu(0)=1$. In particular $\psi_{0}=aX^{2}+\cdots\text{ and }\psi_{1}=(a+1)X+\cdots$
and so $(a+1)(\chi-\zeta)=0$ by \ref{eq:Relation for describe A NC}.
Since $s$ is not homogeneous in $m_{A}/m_{A}^{2}$, $\chi\neq\eta$
and $a=-1$. Since $\car k\neq2$ we can write $\tilde{h}=(Y+\psi_{1}/2)^{2}-(\psi_{1}^{2}/4-\psi_{0})=y'^{2}-z'$.
Note that $y',z'$ are homogeneous thanks to \ref{eq:Relation for describe A NC}.
Moreover, by Hensel's lemma, we can write $z'=X^{2}+\cdots=X^{2}q^{2}$
for an homogeneous $q\in k[[x]]$ with $q(0)=1$. So $x'=xq$ is homogeneous
and $\tilde{h}=y'^{2}-x'^{2}$. This means that we can assume $s=x-y$,
$t=x+y$. In particular $\chi^{2}=\eta^{2}$ and $M$ acts on $s,t$
as
\[
m(s)=\frac{\chi+\zeta}{2}(m)s+\frac{\chi-\zeta}{2}(m)t\qquad m(t)=\frac{\chi-\zeta}{2}(m)s+\frac{\chi+\zeta}{2}(m)t
\]
Consider the exact sequence
\begin{equation}
0\arr H\arr M\arrdi{\chi/\eta}\{-1,1\}\arr0\label{eq:exact sequence for NC}
\end{equation}
If $M$ is cyclic, say $M=<m>$, we have $\chi(m)=-\eta(m)$ and so
$m=I(\underline{B},\underline{B})$, where $B=(\chi(m)-\eta(m))/2$
is a root of unity. Up to normalize $s$ we can write $m=I(\id,\underline{B})$.

Now assume that $M$ is not cyclic. $H$ acts on $s$ and $t$ with
the character $\chi_{|H}=\zeta_{|H}$ and this yields an injective
homomorphism $\chi_{|H}\colon H\arr\{\text{roots of unity of }k\}$.
So $H=<(\underline{C},\underline{C})>$ for some root of unity $C$.
The extension \ref{eq:exact sequence for NC} corresponds to an element
of $\Ext^{1}(\Z/2\Z,H)\simeq H/2H$ that differs to the sequence $0\arr H\arr\Z/2o(C)\Z\arr\{-1,1\}\arr0$.
So $H/2H\simeq\Z/2\Z$, $o(C)$ is even and the sequence \ref{eq:exact sequence for NC}
splits. We can conclude that $M=<(\underline{C},\underline{C})>\times<m>$,
where $m=I(\underline{D},\underline{D})$ for some root of unity $D$
and $o(m)=2$. Normalizing $s$ we can write $m=I(\id,\underline{D})=I$.\end{proof}
\begin{prop}
\label{pro:NC complete description of invariants, algebras multiplication}Let
$M<\Aut_{k}A$ be a finite abelian group such that $(\car k,|M|)=1$
and that there exists $I(\xi,\eta)\in M$. Also assume that $k$ contains
the $|M|$-roots of unity. Then $A^{M}\simeq k[[z]]$, $A\in\MCov(A^{M})$
and only the following possibilities happen: there exists a row of
table \ref{tab:table for NC} such that $M\simeq H$ is generated
by $m,n$, $H\simeq M_{r,\alpha,N}$, $A\simeq B$ as $M$-covers,
where $\deg U=m\comma\deg V=n$ and $A$ over $A^{M}$ is given by
multiplication $z^{\E}$. Moreover all the rays of the form $\Delta^{*}$
in the table satisfy $h_{\Delta^{*}}=2$.

\begin{table}
\caption{\label{tab:table for NC}}
\begin{tabular}{|c|c|c|c|}
\hline 
$H$ & $m,n,r,\alpha,N,\overline{q}$ & $B$ & $\E$\tabularnewline
\hline 
\hline 
$\Z/2\Z$ & $1,1,1,1,2,1$ & $\frac{k[[z]][U]}{(U^{2}-z^{2})}$ & $2\E^{id}$\tabularnewline
\hline 
$(\Z/2\Z)^{2}$ & $(1,0),(0,1),2,0,2,1$ & $\frac{k[[z]][U,V]}{(U^{2}-z,V^{2}-z)}$ & $\E^{\pr_{1}}+\E^{\pr_{2}}$\tabularnewline
\hline 
$\begin{array}{c}
\Z/2l\Z\times\Z/2\Z\\
l>1
\end{array}$ & $(1,0),(1,1),2,2,2l,1$ & $\frac{k[[z]][U,V]}{(U^{2}-V^{2},V^{2l}-z)}$ & $\Delta^{2,2,2l,1}$\tabularnewline
\hline 
$\Z/4l\Z$ & $1,2l+1,1,2l+1,4l,2$ & $\frac{k[[z]][U,V]}{(U^{2}-V^{2},V^{2l+1}-zU,UV^{2l-1}-z)}$ & $\Delta^{1,2l+1,4l,2}$\tabularnewline
\hline 
$\begin{array}{c}
\Z/2l\Z\\
l>1\text{ odd}
\end{array}$ & $1,l+1,2,2,l,1$ & $\frac{k[[z]][U,V]}{(U^{2}-V^{2},V^{l}-z)}$ & $\Delta^{2,2,l,1}$\tabularnewline
\hline 
\end{tabular}

\end{table}
\end{prop}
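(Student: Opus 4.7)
The plan is to exploit Lemma \ref{lem:classification of actions for NC}, which already reduces the classification of $M<\Aut_k A$ (subject to the hypothesis that some $I(\xi,\eta)\in M$) to two shapes: (i) $M=\langle I(\id,\underline B)\rangle$ cyclic of order $2l$ where $l=o(B)$, and (ii) $M=\langle(\underline C,\underline C)\rangle\times\langle I\rangle$ with $o(C)=2l$ even. In each case I will (a) compute $A^M$ and show it is isomorphic to $k[[z]]$; (b) exhibit explicit homogeneous generators $U,V$ of $A$ giving the presentation $B$ in the table; (c) read off the $M$-graded basis and the multiplication constants $\psi_{a,b}\in A^M$; (d) match these to one of the rays $\E$ listed. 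Since $k$ contains the $|M|$-th roots of unity, $\Di M$ is a constant diagonalizable group and decomposing $A$ according to the characters of $M$ gives the $M$-grading; freeness of $A$ over $A^M$ of rank $|M|$ (checked by writing a monomial basis in $U,V$) then yields $A\in\MCov(A^M)$ automatically.

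In case (i), the subgroup $H=\langle(\underline B,\underline B)\rangle$ has order $l$ and fixes $s^l$ and $t^l$, so $A^H\simeq k[[s^l,t^l]]/(s^lt^l)$; the remaining $\Z/2\Z$ quotient acts via $I$ and yields $A^M=k[[s^l+t^l]]\simeq k[[z]]$. For $l=1$ we get $M\simeq\Z/2\Z$ and $A=k[[s+t]][U]/(U^2-(s+t)^2)$ with $U=s-t$, matching row 1 with $\E=2\E^{\id}$. For $l>1$ set $U=s-t$, $V=s+\zeta t$ for an appropriate $2l$-th root of unity $\zeta$; an elementary check distinguishes $M\simeq\Z/4l\Z$ (when $\zeta$ generates, i.e.\ row 4) from $M\simeq\Z/2l\Z\times\Z/2\Z$ with $l$ odd (row 5). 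In case (ii), the subgroup $\langle(\underline C,\underline C)\rangle$ fixes $s^{2l}$, $t^{2l}$, and $I$ swaps them, so again $A^M\simeq k[[z]]$; generators $U=s^a\pm t^a$ diagonalize the action and one recovers $M\simeq(\Z/2\Z)^2$ (row 2, when $l=1$) or $M\simeq\Z/2l\Z\times\Z/2\Z$ with $l>1$ (row 3).

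Having identified $A\simeq B$ in each row, computing the products $v_{a}v_{b}=\psi_{a,b}v_{a+b}$ on the monomial basis gives $\psi_{a,b}=z^{\E_{a,b}}$ with explicit non-negative integer exponents $\E_{a,b}$. To identify $\E$ with the tabulated ray, in rows 1--2 the identification is direct from the presentation (a positive integer combination of the maps $\E^{\phi}$ of \ref{pro:Rita's smooth integral extremal rays}); in rows 3--5 one recognizes $\E$ as the ray $\Delta^{r,\alpha,N,\overline q}$ built in \ref{lem:lambda,delta for overlineq} by comparing the defining relations of $B$ with the universal algebra $A^{\overline q}$ of \ref{def:universal algebra} and \ref{pro:general algebra for m,n}: the relations $U^2=V^2$, $V^{2l}=z$ in row 3 correspond to $s^z=t^y$, $t^x=s^w$ with $(r,\alpha,N,\overline q)=(2,2,2l,1)$, and similarly for rows 4 and 5. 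Finally, the last assertion $h_{\Delta^{*}}=2$ is exactly the content of Proposition \ref{pro:classification sm int ray htwo} once we verify $(r,\alpha,N,\overline q,\id)\in\Sigma_M$ for the three tuples in rows 3--5, which is immediate: one checks the finitely many inequalities $\overline q r\neq 1$, $\overline q\neq N/(\alpha,N)$, $\overline q\alpha\not\equiv 1\pmod N$ directly from the tabulated values.

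The main obstacle is case (i) with $l>1$: one must choose the generators $U,V$ so that the $M$-grading has the prescribed degrees $m,n$ in $M_{r,\alpha,N}$, and one must distinguish by parity of $l$ to get the correct cyclic-versus-non-cyclic group. A secondary nuisance is bookkeeping the precise exponents of $z$ in $\psi_{a,b}$ and matching them with $\Delta^{r,\alpha,N,\overline q}$ coordinate-by-coordinate; this is streamlined by appealing to the uniqueness of the multiplication associated to $\overline q$ established in \ref{pro:Universal algebras generated by m,n with overline q fixed} and \ref{pro:general algebra for m,n}, so only the invariants $\overline q$, $z$, $y$, $x$, $w$ need to be recomputed for each row.
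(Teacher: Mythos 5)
Your plan follows the same route as the paper's proof: reduce via Lemma \ref{lem:classification of actions for NC} to the two normal forms, compute $A^{M}$ and a pair of homogeneous generators in each case, read off the graded multiplication, match it to the universal algebras of \ref{def:universal algebra} through \ref{pro:general algebra for m,n} and \ref{pro:Universal algebras generated by m,n with overline q fixed}, and obtain $h_{\Delta^{*}}=2$ from \ref{pro:classification sm int ray htwo} after checking membership in $\Sigma_{M}$ (those checks are indeed immediate for the three tuples). Your two-step computation of $A^{M}$ in the cyclic case --- first take invariants under $\langle(\underline{B},\underline{B})\rangle=\langle g^{2}\rangle$, giving $k[[s^{l},t^{l}]]/(s^{l}t^{l})$, then under the residual swap --- is a slightly cleaner packaging of the paper's direct computation of every graded piece $A_{r}$, and it is correct.

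There is, however, one concrete error in case (i) with $l>1$: the element $U=s-t$ is not homogeneous for $g=I(\id,\underline{B})$ unless $B=1$. Indeed $g(s-t)=t-Bs$, which is a scalar multiple of $s-t$ only when $B=1$. The eigenvectors of $g$ in the span of $s,t$ are $s\pm\zeta t$ with $\zeta^{2}=B^{-1}$ (equivalently the paper's $Es+t$ and $-Es+t$ with $E^{2}=B$), so \emph{both} generators must be taken of this form; with $U=s-t$ the computation of the multiplication constants $\psi_{a,b}$ as elements of $A^{M}$ does not get off the ground. The fix is immediate, and once both generators lie in the correct eigenspaces the rest of your argument (distinguishing rows 4 and 5 by the parity of $l$, i.e.\ by whether $l+1$ generates $\Z/2l\Z$) goes through exactly as in the paper. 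A second, purely cosmetic slip: the group in row 5 is the cyclic group $\Z/2l\Z$ with $l>1$ odd, not $\Z/2l\Z\times\Z/2\Z$; the latter is row 3 and arises from your case (ii).
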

\begin{proof}
We can reduce to the actions obtained in \ref{lem:classification of actions for NC}.
We first consider the cyclic case, i.e. $M=<I(\id,\underline{B})>\simeq\Z/2l\Z$
where $l=o(B)$. There exists $E$ such that $E^{2}=B$. Given $0\leq r<|M|=2l$,
we want to compute $A_{r}=\{a\in A\st I(\id,\underline{B})a=E^{r}a\}$.
$a=c+f(s)+g(t)\in A_{r}$ if and only if $a=0$ if $r>0$, $f(t)=E^{r}g(t)$
and $g(Bs)=E^{r}f(s)$. Moreover $f(t)=E^{-r}g(Bt)=E^{-2r}f(Bt)\then f(Bt)=B^{r}f(t)$.
If we denote by $\delta_{r}$ the only integer such that $0\leq\delta_{r}<l$
and $\delta_{r}\equiv r\text{ mod }l$, we have that, up to constants,
$A^{r}$ is given by elements of the form $E^{r}f(s)+f(t)$ for $f\in X^{\delta_{r}}k[[X^{l}]]$.
Call $\beta=s^{l}+t^{l}\in A_{0}=A^{M}$ and $v_{r}=E^{r}s^{\delta_{r}}+t^{\delta_{r}}$,
$v_{0}=1$. We claim that $A^{M}=A_{0}=k[[\beta]]$ and $v_{r}$ freely
generates $A_{r}$ as an $A_{0}$ module. The first equality holds
since $A_{0}$ is a domain and we have relations
\[
\sum_{n\geq1}a_{n}s^{nl}+\sum_{n\geq1}a_{n}t^{nl}=\sum_{n\geq1}a_{n}(s^{l}+t^{l})^{n}=\sum_{n\geq1}a_{n}\beta^{n}
\]
while the second claim come from the relation
\[
E^{r}s^{\delta_{r}}(c+h(s))+t^{\delta_{r}}(c+h(t))=(E^{r}s^{\delta_{r}}+t^{\delta_{r}})(c+h(s)+h(t))\text{ for }h\in X^{l}k[[X^{l}]]
\]
 and the fact that $v_{r}$ is not a zero divisor in $A$.

So $A\in\MCov(k[[\beta]])$ and it is generated by $v_{1}=Es+t$ and
$v_{l+1}=-Es+t$ and so in degrees $1$ and $l+1$. If $l=1$, so
that $M\simeq\Z/2\Z$, $B=1$, $E=-1$ and $v_{1}^{2}=\beta^{2}$.
This means that $A\simeq k[[\beta]][U]/(U^{2}-\beta^{2})$ and its
multiplication over $k[[\beta]]$ is given by $\beta^{2\E^{\id}}$.
This is the first row. Assume $l>1$ and set $m=1\comma n=l+1$. Note
that $0\neq m\neq n$ and that $M\simeq M_{r,\alpha,N}$ for some
$r\comma\alpha\comma N$ that we are going to compute.

$l$ odd. We have $r=\alpha=2$ and $N=l$ since $<l+1>=<2>\subseteq\Z/2l\Z$.
Consider $\overline{q}=1\in\Omega_{N,N-\alpha}$ and the associated
numbers are $z=r=2\comma y=\alpha=2\comma\hat{q}=0\comma d_{\hat{q}}=x=N=l\comma w=0$.
Since $v_{1}^{z}=v_{l+1}^{y}$ and $v_{l+1}^{l}=\beta$, we will have
$A\simeq_{k[[\beta]]}A_{\lambda,\mu}^{1}$ where $\lambda,\mu=1,\beta\in k[[\beta]]$
(see \ref{def:universal algebra}) and therefore the multiplication
is $\beta^{\Delta^{2,2,l,1}}$ by \ref{pro:general algebra for m,n}.
This is the fifth row.

$l$ even. We have $r=1\comma\alpha=l+1\comma N=2l$ since $<l+1>=\Z/2l\Z$.
Since $d_{1}=l-1\equiv-\alpha$ and $d_{2}=2l-2\equiv2(-\alpha)$
modulo $2l$ we can consider $\overline{q}=2\in\Omega_{N-\alpha,N}$.
The associated numbers are $z=y=2\comma\hat{q}=1\comma d_{\hat{q}}=l-1\comma x=N-(d_{\overline{q}}-d_{\hat{q}})=l+1\comma w=1\equiv xn=(l+1)^{2}\text{ mod }2l$.
Since $v_{1}^{z}=v_{l+1}^{y}$, $v_{l+1}^{x}=\beta v_{1}$ and $v_{1}^{\hat{q}r}v_{l+1}^{d_{\hat{q}}}=\beta$,
we will have $A\simeq_{k[[\beta]]}A_{\lambda,\mu}^{2}$ where $\lambda,\mu=1,\beta\in k[[\beta]]$
whose multiplication is $\beta^{\Delta_{1,l+1,2l,2}}$. This is the
fourth row.

Now consider the case $M=<(\underline{C},\underline{C})>\times<I>$
with $o(C)=l$ even. Set $\beta=s^{l}+t^{l}$, $v_{1,0}=s+t$ and
$v_{1,1}=-s+t$. Note that $v_{r,i}$ is homogeneous of degree $(r,i)$.
Set $m=(1,0)\comma n=(1,1)$. They are generators of $M$ and so $M\simeq M_{r,\alpha,N}$
for some $r,\alpha,N$. We have $N=o(n)=l$, $r>1$ since $<n>\neq M$
and so $r=2$ since $2m=2n$. If $l=2$ we get $\alpha=0$ and if
$l>2$ we get $\alpha=2$. Choose $\overline{q}=1$ so that the associated
numbers are $z=2\comma y=\alpha\comma\hat{q}=0\comma d_{\hat{q}}=x=N=l\comma w=0$.
As done above, it is easy to see that $A^{M}=k[[\beta]]$. We first
consider the case $l=2$. Since $v_{1,0}^{2}=\beta\comma v_{1,1}^{2}=\beta$
we get a surjection $A_{\beta,\beta}^{1}\arr A$ which is an isomorphism
by dimesion. From the writing of $A_{\beta,\beta}^{1}$ we can deduce
directly that the multiplication is $\beta^{\E^{\pr_{1}}+\E^{\pr_{2}}}$,
where $\pr_{i}\colon(\Z/2\Z)^{2}\arr\Z/2\Z$ are the two projections.
This is the second row.

Now assume $l>2$. Since $v_{1,0}^{2}=v_{1,1}^{2}$ and $v_{1,1}^{l}=\beta$
and arguing as above we get $A\simeq_{k[[\beta]]}A_{\lambda,\mu}^{1}$
where $\lambda,\mu=1,\beta\in k[[\beta]]$ and the multiplication
$\beta^{\Delta^{2,2,l,1}}$. This is the third row.

Finally the last sentence is clear by definition of $\Sigma_{M}$
and \ref{pro:classification sm int ray htwo}.\end{proof}
\begin{rem}
\label{rem:Y/X NC then X has a k}If $X$ is a locally noetherian
integral scheme and there exists a $\Di M$-cover $Y/X$ such that
$Y$ is normal crossing in codimension $1$, then $X$ is defined
over a field. Indeed if $\car\odi X(X)=p$ then $\F_{p}\subseteq\odi X(X)$.
Otherwise $\Z\subseteq\odi X(X)$ and we have to prove that any prime
number $q\in\Z$ is invertible. We can assume $X=\Spec R$, where
$R$ is a local noetherian domain. If $\dim R=0$ then $R$ is a field,
otherwise, since $\alt(q)\leq1$, we can assume $\dim R=1$ and $R$
complete. By definition of normal crossing in codimension $1$, if
$Y=\Spec S$ and $p\in Y$ is over $m_{R}$ we have a flat and local
map $R\arr S\arr S_{p}\arr B$, such that $B$ contains a field $k$.
$q$ is a non zero divisor in $R$ and therefore in $B$. In particular
$0\neq q\in k^{*}\subseteq B^{*}$ and $q\in R^{*}$.\end{rem}
\begin{thm}
\label{thm:NC in codimension one}Let $M$ be a finite abelian group,
$X$ be a locally noetherian and locally factorial scheme with no
isolated points and $(\car X,|M|)=1$. Define
\[
NC_{X}^{1}=\{Y/X\in\MCov(X)\st Y\text{ is normal crossing in codimension }1\}
\]
Then $NC_{X}^{1}\neq\emptyset$ if and only if each connected component
of $X$ is defined over a field. In this case define
\[
\underline{\E}=\left(\begin{array}{c}
\E^{\phi}\text{ for }\phi\colon M\arr\Z/l\Z\text{ surjective with }l\geq1,\\
\Delta^{2,2,l,1,\phi}\text{ for }\phi\colon M\arr M_{2,2,l}\text{ surjective with }l\geq3,\\
\Delta^{1,2l+1,4l,2,\phi}\text{ for }\phi\colon M\arr M_{1,2l+1,4l}\text{ surjective with }l\geq1
\end{array}\right)
\]
and $\catC_{NC,X}^{1}$ as the subcategory of $\stF_{\underline{\E}}(X)$
of objects $(\underline{\shL},\underline{\shM},\underline{z},\lambda)$
such that:
\begin{enumerate}
\item for all $\E\neq\delta\in\underline{\E}$, $\codim V(z_{\E})\cap V(z_{\delta})\geq2$
except the case where $\E=\E^{\phi}\comma\delta=\E^{\psi}$   $$ \begin{tikzpicture}[xscale=2.0,yscale=-0.5]     \node (A0_2) at (2, 0) {$\Z/2\Z$};     \node (A1_0) at (0, 1) {$M$};     \node (A1_1) at (1, 1) {$(\Z/2\Z)^2$};     \node (A2_2) at (2, 2) {$\Z/2\Z$};     \path (A1_0) edge [->,bend left=65]node [auto,swap] {$\scriptstyle{\psi}$} (A2_2);     \path (A1_0) edge [->>]node [auto] {$\scriptstyle{}$} (A1_1);     \path (A1_1) edge [->]node [auto,swap] {$\scriptstyle{\pr_2}$} (A2_2);     \path (A1_1) edge [->]node [auto] {$\scriptstyle{\pr_1}$} (A0_2);     \path (A1_0) edge [->,bend right=65]node [auto] {$\scriptstyle{\phi}$} (A0_2);   \end{tikzpicture}  $$
in which $v_{p}(z_{\E^{\phi}})=v_{p}(z_{\E^{\psi}})=1$ if $p\in Y^{(1)}\cap V(z_{\E^{\phi}})\cap V(z_{\E^{\psi}})$;
\item for all $\E\in\underline{\E}$ and $p\in Y^{(1)}$ $v_{p}(z_{\E})\leq2$
and $v_{p}(z_{\E})=2$ if and only if $\E=\E^{\phi}$ where $\phi\colon M\arr\Z/2\Z$
is surjective.
\end{enumerate}
Then we have an equivalence of categories
\[
\catC_{NC,X}^{1}=\pi_{\underline{\E}}^{-1}(NC_{X}^{1})\arrdi{\simeq}NC_{X}^{1}
\]
\end{thm}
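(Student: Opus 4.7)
The plan is to reduce the theorem to a direct application of Theorem \ref{thm:fundamental theorem for locally factorial schemes} with a carefully chosen collection $\Theta$ of smooth sequences built from the rays in $\underline{\E}$, where the collection $\Theta$ is read off from the local classification Proposition \ref{pro:NC complete description of invariants, algebras multiplication}.

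First I would dispose of the non-emptiness equivalence: if $NC_X^1 \neq \emptyset$, Remark \ref{rem:Y/X NC then X has a k} applied component by component forces each connected component of $X$ to be defined over a field; conversely, if a component $X_0$ lies over a field $k$, the trivial cover $\Spec k[M] \times X_0 \to X_0$ is étale, hence regular, in codimension $1$ and therefore belongs to $NC_{X_0}^1$. From now on I may restrict to the main equivalence and work étale-locally, since both $NC_X^1$ and $\catC_{NC,X}^1$ are local for the étale topology and since the hypothesis $(\car X, |M|) = 1$ lets me assume, after étale base change, that the $|M|$-th roots of unity are available at each residue field of codimension $1$.

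Next I would define the collection $\Theta$ as follows: for every $\E \in \underline{\E}$ the singleton $(\E)$ (reflecting Rows 1, 3, 4, 5 of Table \ref{tab:table for NC}), together with all pairs $(\E^{\pr_1 \circ \phi}, \E^{\pr_2 \circ \phi})$ for $\phi \colon M \twoheadrightarrow (\Z/2\Z)^2$ (Row 2). By Proposition \ref{pro:classification sm int ray htwo} every $\Delta^{\ast}$ listed is a smooth integral extremal ray with $h = 2$, while $(\Z/2\Z)^2\text{-Cov}$ is smooth by Proposition \ref{pro:smooth DMCov}, so the pairs from Row 2 are bona fide smooth sequences. The locus $\stZ_M^\Theta \subseteq \stZ_M^{\textup{sm}}$ of Theorem \ref{thm:fundamental theorem for locally factorial schemes} is then, by the explicit description of points of $\stZ_M^{\underline{\delta}}$ in Proposition \ref{pro:points of XphiE} combined with Proposition \ref{pro:NC complete description of invariants, algebras multiplication}, precisely the set of those points of $\MCov$ whose associated local algebra (over an algebraically closed base field) is either a torsor or isomorphic to one of the five algebras $B$ listed in Table \ref{tab:table for NC}. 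Conditions (1) and (2) defining $\catC_{NC,X}^1$ are the translation, via Remark \ref{rem:description of piE}, of the valuations of the monomial multiplication $z^\E$ that appears at each codimension $1$ point: valuation $2$ occurs only in Row 1, valuation $1$ plus one other non-vanishing $z$ occurs only for Row 2, and otherwise exactly one $z_\E$ vanishes, with order $1$.

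The main obstacle is verifying that the cover $\chi \in \MCov(X)$ satisfies $\chi_{|\overline{k(p)}} \in \stZ_M^\Theta(\overline{k(p)})$ at every $p \in X^{(1)}$ if and only if $Y_p$ is normal crossing in codimension $1$. One direction is immediate: if $\chi$ lifts to an object of $\catC_{NC,X}^1$, the formula in Remark \ref{rem:description of piE} exhibits the completion $\widehat{\odi{Y,q}}$ at each point $q$ over a codimension $1$ point of $X$ explicitly as one of the algebras $B$ from Table \ref{tab:table for NC}, all of which are normal crossing. For the converse, I would Henselize at a codimension $1$ point $p$ of $X$, invoke Proposition \ref{pro:NC complete description of invariants, algebras multiplication} after extending the residue field to contain the needed roots of unity, and read off from the table that the multiplication of $\odi{Y,q}/\odi{X,p}$ is of the form $z^\E$ for one of the listed rays (or a pair, in the Row 2 case); Galois descent for the ray data along the étale extension then returns the required lift over a neighborhood of $p$. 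Having established this pointwise matching, Theorem \ref{thm:fundamental theorem for locally factorial schemes} applied with $\Theta$ as above yields the claimed equivalence $\catC_{NC,X}^1 = \pi_{\underline{\E}}^{-1}(NC_X^1) \xrightarrow{\simeq} NC_X^1$, uniqueness of the lift coming from Corollary \ref{cor:lift when Picsh is the same} applied to the codimension $\geq 2$ complement where conditions (1)--(2) might a priori fail.
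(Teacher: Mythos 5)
Your proposal is correct and follows essentially the same route as the paper: reduce via the general lifting theorem for locally factorial schemes to a pointwise computation at complete discrete valuation rings, and there invoke the classification of abelian actions on $k[[s,t]]/(st)$ (Table \ref{tab:table for NC}) to match the valuation conditions (1)--(2) with the rows of the table. The only cosmetic difference is that you apply \ref{thm:fundamental theorem for locally factorial schemes} directly with a bespoke collection $\Theta$ of singletons and $(\Z/2\Z)^{2}$-pairs, whereas the paper routes through its specialization \ref{thm:fundamental thm locally factoria hleqtwo} (i.e.\ $\Theta=\Theta_{M}^{2}$) and then refines by the valuation conditions; both amount to the same argument, and your plan correctly isolates the two points the general theorem does not cover, namely the reduction to $H_{D/R}=0$ via the maximal torsor and the identification of the complete local ring of $Y$ with $k[[s,t]]/(st)$ compatibly with the $M$-action.
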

\begin{proof}
The first claim comes from \ref{rem:Y/X NC then X has a k}. We will
make use of \ref{thm:fundamental thm locally factoria hleqtwo}. If
$Y/X\in NC_{Y}^{1}$ and $p\in Y^{(1)}$ we have $h_{Y/X}(p)\leq\dim_{k(p)}m_{p}/m_{p}^{2}\leq2$
since etale maps preserve tangent spaces and $\dim m_{A}/m_{A}^{2}\leq2$.
So $NC_{X}^{1}\subseteq\catD_{X}^{2}$.

Let $\underline{\delta}$ be the sequence of smooth integral rays
given in \ref{thm:fundamental thm locally factoria hleqtwo}. We know
that $\pi_{\underline{\delta}}^{-1}(NC_{X}^{1})\subseteq\catC_{X}^{2}$.
So we have only to prove that $\pi_{\underline{\delta}}^{-1}(NC_{X}^{1})\subseteq\stF_{\underline{\E}}(X)\subseteq\stF_{\underline{\delta}}(X)$
and that any element $Y\in NC_{X}^{1}$ locally satisfies the requests
of the theorem. So we can reduce to the case where $X=\Spec R$, where
$R$ is a complete DVR. Since $R$ contains a field, then $R\simeq k[[x]]$
. Let $\chi\in\pi_{\underline{\E}}^{-1}(\catD_{X}^{2})$ and $D$
the associated $M$-cover over $R$. Let $C$ be the maximal torsor
of $D/R$ and $H=H_{D/R}$. Note that, for any maximal ideal $q$
of $C$ we have $C_{q}\simeq k(q)[[x]]$ since $C/R$ is etale. Moreover
$\Spec D\in NC_{X}^{1}$ for $M$ if and only if for any maximal prime
$p$ of $D$ $\Spec D_{p}\in NC_{\Spec C_{q}}^{1}$ for $M/H$, where
$q=C\cap p$. In the same way $\chi\in\catC_{NC,X}^{1}$ for $M$
if and only if, for any maximal prime $q$ of C, $\chi_{|\Spec C_{q}}\in\catC_{NC,\Spec C_{q}}^{1}$
for $M/H$. We can therefore reduce to the case $H_{D/R}=0$. We can
also assume that $k$ contains the $|M|$-roots of unity.

First assume that $\Spec D\in NC_{Y}^{1}$. If $D$ is regular, the
conclusion comes from \ref{thm:regular in codimension 1 covers}.
So assume $D$ not regular and denote by $\mu\colon R=k[[x]]\arr D$
the associated map. We know that $D/m_{A}=k$. By Cohen's structure
theorem we can write $D=k[[y]]/I$ in such a way that $\mu_{|k}=\id_{k}$.
By definition, since $D$ is local and complete, there exists an etale
extension $D\arr B=L[[s,t]]/(st)$. Using the properties of complete
rings, $B/D$ is finite and so $B\simeq D\otimes_{k}L$. Up to change
the base $R$ with $R\otimes_{k}L$ we can assume that $D\simeq k[[s,t]]/(st)$.
$\mu_{|k}\colon k\arr D$ extends to a map $\nu\colon D\arr D$ sending
$s,t$ to itselves. This map is clearly surjective. Since $\Spec D$
contains $3$ points, $\nu$ induces a closed immersion $\Spec D\arr\Spec D$
which is a bijection. Since $D$ is reduced $\nu$ is an isomorphism.
This shows that we can write $D=A=k[[s,t]]/(st)$ in such a way that
$\mu_{|k}=\id_{k}$. So $\Di M\simeq\underline{M}$ acts as a subgroup
of $\Aut_{k}A$ such that $A^{M}\simeq k[[z]]$. In particular, by
\ref{lem:NC excluding stupid subgroups}, there exists $I(\xi,\eta)\in M$.
Up to equivariant isomorphisms the possibilities allowed are described
in \ref{pro:NC complete description of invariants, algebras multiplication}
and coincides with the ones of the statement. So $\chi\in\catC_{NC,X}^{1}$.

Now assume that $\chi\in\catC_{NC,X}^{1}$. By definition of $\pi_{\underline{\E}}$
the multiplication that defines $D$ over $R$ is something of the
form $\psi=\lambda z^{\E}$, where $\lambda$ is an $M$-torsor and
$\E$ is one of the ray of table \ref{tab:table for NC}. The case
$\E=\E^{\phi}$ comes from \ref{thm:regular in codimension 1 covers}.
Since, in our hypothesis, an $M$-torsor (in the fppf meaning) is
also an etale torsor, up to change the base $R$ by an etale neighborhood
(that maintains the form $k[[x]]$), we can assume $\lambda=1$. In
this case, thanks to \ref{lem:classification of actions for NC} and
\ref{pro:NC complete description of invariants, algebras multiplication},
we can conclude that $A\simeq k[[s,t]]/(st)$ as required.\end{proof}
\begin{cor}
Let $X$ be a locally noetherian and regular in codimension $1$ (normal)
scheme with no isolated points, $M$ be a finite abelian group with
$(\car X,|M|)=1$ and $|M|$ odd. If $Y/X$ is a $\Di M$-cover and
$Y$ is normal crossing in codimension $1$ then $Y$ is regular in
codimension $1$ (normal).\end{cor}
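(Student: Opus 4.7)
The plan is to combine Theorem \ref{thm:NC in codimension one} with Theorem \ref{thm:regular in codimension 1 covers}, exploiting the key numerical observation that every ``genuinely normal crossing'' local model in Table \ref{tab:table for NC} requires a quotient of $M$ of even order, while $|M|$ is odd by hypothesis. First I would apply Theorem \ref{thm:NC in codimension one} to reinterpret $Y/X$ as an object $\chi \in \catC_{NC,X}^1 \subseteq \stF_{\underline{\E}}(X)$, where $\underline{\E}$ contains rays $\E^\phi$ for surjections $\phi\colon M \to \Z/l\Z$, as well as rays $\Delta^{2,2,l,1,\phi}$ and $\Delta^{1,2l+1,4l,2,\phi}$ for surjections onto $M_{2,2,l}$ and $M_{1,2l+1,4l}$ respectively.

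Next I would observe that since $|M|$ is odd, no surjection from $M$ onto a group of even order exists. Because $|M_{2,2,l}| = 2l$ and $|M_{1,2l+1,4l}| = 4l$ are both even, the second and third families of rays are empty, and $\E^\phi$ survives only for $l$ odd; in particular no surjection $M \to \Z/2\Z$ occurs. Consequently, condition (2) in the definition of $\catC_{NC,X}^1$ forces $v_p(z_\E) \leq 1$ at every $p \in Y^{(1)}$, since $v_p(z_\E) = 2$ would require $\E = \E^\phi$ with $\phi\colon M \to \Z/2\Z$ surjective, which is impossible; and condition (1) reduces to $\codim_X V(z_\E) \cap V(z_\delta) \geq 2$ for all distinct $\E,\delta \in \underline{\E}$, since the exceptional case of that condition again demands a surjection onto $\Z/2\Z$. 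These are precisely the conditions defining $\widetilde{Reg}_X^1$ in Theorem \ref{thm:regular in codimension 1 covers} (the sequence $(\E^\eta)_{\eta \in \Phi_M}$ agrees with our truncated $\underline{\E}$, the only discrepancy being the trivial ray $l=1$, which carries no information). Therefore $\chi \in \widetilde{Reg}_X^1$ and $Y$ is regular in codimension $1$.

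For the normal case, suppose further that $X$ is normal. Then $X$ is $S_2$, and $Y \to X$ is finite and flat with fibres that are fppf-locally isomorphic to $k[M]$, hence Artinian and in particular Cohen--Macaulay, so $S_2$; by flat descent of $S_2$ along finite flat morphisms, $Y$ itself is $S_2$. Combined with the regularity in codimension $1$ established above, Serre's criterion yields that $Y$ is normal. The main (and essentially only) obstacle is the bookkeeping needed to verify that the sequence $\underline{\E}$ of Theorem \ref{thm:NC in codimension one} really collapses to the sequence used in Theorem \ref{thm:regular in codimension 1 covers} under the odd-order assumption, and that conditions (1) and (2) of $\catC_{NC,X}^1$ translate exactly to the two conditions defining $\widetilde{Reg}_X^1$; once this identification is made, no further computation is required.
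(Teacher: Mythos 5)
Your proof is correct and follows essentially the same route as the paper: the paper reduces to Serre's criterion (using that the fibers of $Y/X$ are Cohen--Macaulay) and then invokes Theorem \ref{thm:regular in codimension 1 covers} after "observing that $\widetilde{Reg}_{X}^{1}=\catC_{NC,X}^{1}$" — which is exactly the identification you spell out, namely that oddness of $|M|$ kills all rays and exceptional cases requiring a quotient of even order. Your version merely makes explicit the bookkeeping the paper leaves to the reader.
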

\begin{proof}
Since $Y/X$ has Cohen-Macaulay fibers it is enough to prove that
$Y$ is regular in codimension $1$ by Serre's criterion. So we can
assume $X=\Spec R$, where $R$ is a DVR, and apply \ref{thm:regular in codimension 1 covers}
just observing that $\widetilde{Reg}_{X}^{1}=\catC_{NC,X}^{1}$.\end{proof}
\begin{rem}
We keep notation from \ref{thm:NC in codimension one} and set $\underline{\delta}=(\E^{\eta},\eta\colon M\arr\Z/d\Z\text{ surjective },d>1)$.
We have that $\pi_{\underline{\delta}}^{-1}(NC_{X}^{1})=\catC_{NC,X}^{1}\cap\stF_{\underline{\delta}}$,
i.e. the covers $Y/X\in NC_{X}^{1}$ writable only with the rays in
$\underline{\delta}$, has the same writing of $\catC_{NC,X}^{1}$
but with object in $\stF_{\underline{\delta}}$. Therefore the multiplications
that yield a not smooth but with normal crossing in codimension $1$
covers are only $\E^{\phi}+\E^{\psi}$, where $\phi\comma\psi$ are
morphism as in $1)$, and $\E^{2\phi}$, where $\phi\colon M\arr\Z/2\Z$
is surjective. This result can also be found in \cite[Theorem 1.9]{Alexeev2011}.
In particular, if $M=(\Z/2\Z)^{r}$, where $\underline{\delta}=\underline{\E}$
thanks to \ref{pro:when sigmaM is empty}, these are the only possibilities.
\end{rem}

\nocite{Stillman2002,Maclagan2002,Laumon1999}

\bibliographystyle{beta}
\bibliography{biblio}

\end{document}